\renewcommand{\ALG@beginalgorithmic}{\scriptsize}
\DeclareFontFamily{U}{mathx}{}
\DeclareFontShape{U}{mathx}{m}{n}{ <-> mathx10 }{}
\DeclareSymbolFont{mathx}{U}{mathx}{m}{n}
\newif\ifextradetail
\newif\ifshownavigationpage
\newif\ifshowreminders
\newif\ifshownotationindex
\newif\ifshowtheoremlinks
\newif\ifshowtheoremtree
\newif\ifshowtheoremlist
\newif\ifshowequationlist
\newif\ifshowcomments
\newif\ifhighlight 
\newif\ifelaborate
\newif\ifshowaddressedcomments
\newif\ifshowrvin
\newif\ifshowrvout
\newcommand{\rvoutopacity}{20}
    \newcommand{\rvout}[1]{{\color{red!\rvoutopacity}{#1}} }
    \newcommand{\chrout}[1]{{\color{blue!\rvoutopacity}{#1}} }    
    \newcommand{\rvoutm}[1]{{\color{black!\rvoutopacity}{\ifmmode\text{\sout{\ensuremath{\displaystyle#1}}}\else\sout{#1}\fi}} } 
    \newcommand{\rvout}[1]{}
    \newcommand{\chrout}[1]{}
    \newcommand{\summ}[1]{{\color{blue}[summary: #1]} } 
    \newcommand{\chr}[1]{{\color{PineGreen}[CR: #1]} } 
    \newcommand{\xw}[1]{{\color{RoyalBlue}[XW: #1]} } 
        \newcommand{\chra}[1]{{\color{PineGreen}\sout{[CR: #1]}} } 
        \newcommand{\xwa}[1]{{\color{RoyalBlue}\sout{[XW: #1]}} } 
        \newcommand{\chra}[1]{} 
        \newcommand{\xwa}[1]{} 
    \newcommand{\summ}[1]{} 
    \newcommand{\chr}[1]{} 
    \newcommand{\chra}[1]{} 
    \newcommand{\xw}[1]{} 
    \newcommand{\xwa}[1]{} 
\newlist{thmdependence}{itemize}{10}
\setlist[thmdependence]{nosep,label=-}
\newcommand{\thmtreenode}[5]{\item[#1] \linkdest{location, thm tree #3} {#2}~\ref{#3} \linktopf{#3} \thmsum{#4}{#5}}
\newcommand{\thmtreeref}[2]{\item[\elsewhere] {{\hyperlink{location, thm tree #2}{\color{gray}#1}}}~\ref{#2}\thmsum{0.5}{}}
    \newcommand{\linksinthm}[1]{\emph{\linkdest{location, #1}\linktopf{#1} \linktothmtree{location, thm tree #1} }}
    \newcommand{\linksinthmwopf}[1]{\emph{\linkdest{location, #1} \linktothmtree{location, thm tree #1} }}
    \newcommand{\linksinpf}[1]{\linkdest{location, proof of #1}\linktothm{#1} \linktothmtree{location, thm tree #1} }
    \newcommand{\linksinthm}[1]{}
    \newcommand{\linksinthmwopf}[1]{}
    \newcommand{\linksinpf}[1]{}
    \newcommand{\notationdef}[2]{\linkdest{location, notation definition of #1}\hyperlink{location, notation index of #1}{#2}}
    \newcommand{\notationidx}[2]{\linkdest{location, notation index of #1}\hyperlink{location, notation definition of #1}{#2}}
    \newcommand{\notationdef}[2]{#2}
\newcommand{\linktopf}[1]{\hyperlink{location, proof of #1}{\pflinksymbol}}
\newcommand{\linktothm}[1]{\hyperlink{location, #1}{\thmlinksymbol}}
\newcommand{\linktothmtree}[1]{\hyperlink{#1}{\thmtreelinksymbol}}
\newcommand{\thmlinksymbol}{{\tiny [Theorem]}}
\newcommand{\pflinksymbol}{{\tiny [Proof]}}
\newcommand{\thmtreelinksymbol}{{\tiny [ThmTree]}}
\newcommand{\elsewhere}{}
\newcommand{\thmsum}[2]{\quad{\color{gray}\begin{minipage}[t]{#1\linewidth}{#2}\vspace{0.5\baselineskip}\end{minipage}}}
\newcommand{\linkdest}[1]{\Hy@raisedlink{\hypertarget{#1}{}}}
\newcommand{\elaborateopacity}{50}
\newcommand{\elaboratecolor}{RawSienna}
    \newcommand{\elaborate}[1]{{\color{\elaboratecolor!\elaborateopacity}{
    \begin{framed}
    \noindent {\footnotesize[Elaboration]}
    #1 
    \end{framed}
    }}\noindent}
    \newcommand{\elaborate}[1]{}
\newtheorem{theorem}{Theorem}
\newtheorem{lemma}[theorem]{Lemma}
\newtheorem{proposition}[theorem]{Proposition}
\newtheorem{definition}{Definition}
\newtheorem{result}{Result}
\newtheorem{assumption}{Assumption}
\newtheorem{remark}{Remark}
\DeclarePairedDelimiter{\ceil}{\lceil}{\rceil}
\DeclarePairedDelimiter\floor{\lfloor}{\rfloor}
\newcommand{\D}{\mathbb D}
\newcommand{\cmt}[1]{#1} 
\renewcommand{\cmt}[1]{} 
\renewcommand{\P}{\mathbf{P}}
\newcommand{\Q}{\mathbf{Q}}
\newcommand{\E}{\mathbf{E}}
\newcommand{\RV}{\mathcal{RV}}
\newcommand{\R}{\mathbb{R}}
\renewcommand{\complement}{c}
\renewcommand{\rvout}[1]{{\color{red!\rvoutopacity}{#1}} }
\newcommand{\rvouta}[1]{}
\def\delequal{\mathrel{\ensurestackMath{\stackon[1pt]{=}{\scriptscriptstyle\Delta}}}}
\def\distequal{\mathrel{\ensurestackMath{\stackon[1pt]{=}{\scriptstyle\mathcal{D}}}}}
\def\delequal{\mathrel{\ensurestackMath{\stackon[1pt]{=}{\scriptscriptstyle\Delta}}}}
\def\distequal{\mathrel{\ensurestackMath{\stackon[1pt]{=}{\scriptstyle d}}}}
\newcommand{\norm}[1]{\left\lVert#1\right\rVert}
\algrenewcommand\algorithmicrequire{\textbf{Require:}}
\algrenewcommand\algorithmicensure{\textbf{Postcondition:}}
\title{Strongly Efficient Rare-Event Simulation for Regularly Varying L\'evy Processes with Infinite Activities}
\DeclareMathAccent{\widecheck}{0}{mathx}{"71}
\author{Xingyu Wang, Chang-Han Rhee}
\begin{document}


    
    



    




\maketitle

\begin{abstract}
\noindent
In this paper, we address rare-event simulation for heavy-tailed L\'evy processes with infinite activities.
The presence of infinite activities poses a critical challenge, making it impractical to simulate or store the precise sample path of the Lévy process.
We present a rare-event simulation algorithm that incorporates an importance sampling strategy based on heavy-tailed large deviations, the stick-breaking approximation for the extrema of L\'evy processes, 
the Asmussen-Rosiński approximation,
and the randomized debiasing technique. 
By establishing a novel characterization for the Lipschitz continuity of the law of L\'evy processes, we show that the proposed algorithm is unbiased and strongly efficient under mild conditions, and hence applicable to a broad class of L\'evy processes.
In numerical experiments, our algorithm demonstrates significant improvements in efficiency compared to the crude Monte-Carlo approach.
\end{abstract}

\counterwithin{equation}{section}
\counterwithin{lemma}{section}
\counterwithin{theorem}{section}
\counterwithin{proposition}{section}
\counterwithin{figure}{section}
\counterwithin{table}{section}



\section{Introduction}

In this paper, we propose a strongly efficient rare-event simulation algorithm for heavy-tailed L\'evy processes with infinite activities.
Specifically, the goal is to estimate probabilities of the form $\P(X \in A)$,
where $X = \{ X(t):\ t \in [0,1] \}$ is a L\'evy process in $\R$,
$A$ is a subset of the càdlàg space that doesn't include the typical path of $X$ so that $\P(X \in A)$ is close to $0$,
and the event $\{X \in A\}$ is ``unsimulatable'' due to the infinite number of activities within any finite time interval.
The defining features of the problem are as follows.
\begin{itemize}
    \item 
        The increments of the L\'evy process $X(t)$ are heavy-tailed. Throughout this paper, we characterize the heavy-tailed phenomenon through the notion of regular variation and assume that the tail cdf $\P(\pm X(t) > x)$ decays roughly at a power-law rate of $1/x^\alpha$;
        see Definition~\ref{def: RV} for details.
        The notion of heavy tails provides the mathematical formulation for the extreme uncertainty that manifests in a wide range of real-world dynamics and systems,
        including 
        the spread of COVID-19 (see, e.g., \cite{cohen2022covid}), 
        traffic in computer and communication networks (see, e.g., \cite{li:hal-01891760}),
        financial assets (see, e.g., \cite{embrechts2013modelling,Borak2011}),
        and the training of deep neural networks (see, e.g., \cite{pmlr-v139-gurbuzbalaban21a, hodgkinson2021multiplicative}).

    \item 
        $A$ is a general subset of $\D$ (i.e., the space of the real-valued càdlàg functions over $[0,1]$) that involves the supremum of the path.
        For concreteness in our presentation, the majority of the paper focuses on
        \begin{align}
            A = 
            \Big\{\xi \in \mathbb{D}: \sup_{t \in [0,1]}\xi(t)\geq {a}; \sup_{t \in (0,1] }\xi(t) - \xi(t-) < {b} \Big\}.
            \label{def set A, intro}
        \end{align}
        Intuitively speaking, this is closely related to ruin probabilities under reinsurance mechanisms, as $\{X \in A\}$ requires the supremum of the process $X(t)$ over $[0,1]$ to exceed some threshold $a$ even though all upward jumps in $X(t)$ are bounded by $b$.
        Nevertheless, we stress that the algorithmic framework proposed in this paper is flexible enough to address more general form of events $\{X \in A\}$ that are of practical interest.
        For instance, we demonstrate in Section~\ref{sec: barrier option pricing} of the Appendix that the framework can also address rare event simulation in the context of barrier option pricing.

    \item 
        $X(t)$ possesses infinite activities; see Section~\ref{subsec: SBA, preliminary} for the precise definition.
        Consequently, it is
        computationally infeasible to simulate or store the entire sample path of such processes.
        In other words, we focus on a computationally challenging case where $\mathbf{I}\{X \in A\}$ cannot be exactly simulated or evaluated.
        Addressing such ``unsimulatable'' cases is crucial 
        due to the increasing popularity of L\'evy models with infinite activities in
        risk management and mathematical finance (see, e.g., \cite{4fc30f5f-894d-3c6c-9b15-01f8f8d74820, https://doi.org/10.1111/1467-9965.00020, ROSINSKI2007677, bianchi2011tempered, risks10080148}), 
        as they offer more accurate and flexible descriptions for the price and volatility of financial assets compared to the classical jump-diffusion models (see, e.g., \cite{https://doi.org/10.1111/mafi.12055}).
\end{itemize}
In summary, our goal is to tackle a practically significant yet computationally challenging task, where the nature of the rare events renders crude Monte Carlo methods highly inefficient, if not entirely infeasible, due to the infinite activities in $X(t)$.
To address these challenges, we integrate several mathematical machinery:
a design of importance sampling based on sample-path large deviations for heavy-tailed L\'evy processes in \cite{rhee2019sample},
the stick-breaking approximation in \cite{cazares2018geometrically} for L\'evy processes with infinite activities, 
and the randomized multilevel Monte Carlo debiasing technique in \cite{rhee2015unbiased}.
By combining these tools, we propose a rare event simulation algorithm for heavy-tailed L\'evy processes with infinite activities that attains strong efficiency
(see Definition~\ref{def: strong efficiency} for details).

As mentioned above, the first challenge is rooted in the nature of rare events as the crude Monte Carlo methods can be prohibitively expensive when estimating a small $p = \P(X \in A)$.
Instead, variance reduction techniques are often employed for efficient rare event simulation.
When the underlying uncertainties are light-tailed, the exponential tilting strategy guided by large deviation theories has been successfully applied in a variety of contexts; see, e.g., \cite{bucklew_ney_sadowsky_1990,boxma2018linear,TORRISI2004225,dupuis2007dynamic}.
However, the exponential tilting approach falls short in providing a principled and provably efficient design of the importance sampling estimators (see, for example, \cite{BASSAMBOO2007251})
due to fundamentally different mechanisms through which the rare events occur.
Instead, different importance sampling strategies (e.g., \cite{10.1214/07-AAP485,10.1145/1243991.1243995,blanchet2008efficient,blanchet_liu_2008,doi:10.1287/13-SSY114,10.1145/2517451})
and other variance reduction techniques such as conditional Monte Carlo (e.g., \cite{asmussen_kroese_2006,hult2016exact}) 
and Markov Chain Monte Carlo (e.g., \cite{gudmundsson_hult_2014}) have been proposed to address problems associated with specific types of processes or events.

Recent developments in heavy-tailed large deviations, such as those by \cite{rhee2019sample} and \cite{wang2023large},
offer critical insights into the design of efficient and universal importance sampling schemes for heavy-tailed systems.
Central to this development is the discrete hierarchy of heavy-tailed rare events, known as the {catastrophe principle}. The principle dictates
that rare events in heavy-tailed systems arise due to catastrophic failures of a small number of system components, and the number of such components governs the asymptotic rate at which the associated rare events occur.
This creates a discrete hierarchy among heavy-tailed rare events. 
By combining the defensive importance sampling design with such hierarchy, strongly efficient importance sampling algorithms have been proposed for a variety of rare events associated with random walks and compound Poisson processes in \cite{chen2019efficient}.
See also \cite{10.5555/3643142.3643148} for a tutorial on this topic.
In this paper, we adopt and extend this framework to encompass L\'evy processes with infinite activities.
The specifics of the importance sampling distribution are detailed in Section~\ref{subsec: importance sampling dist, algo}.

Another challenge arises from the simulation of L\'evy processes with infinite activities.
While the design of importance sampling algorithm in \cite{chen2019efficient} has been successfully applied to a wide range of stochastic systems that are exactly simulatable (including random walks, compound Poisson processes, 
iterates of stochastic gradient descent,
and several classes of queueing systems),
it cannot be implemented for L\'evy processes with infinite activities.
More specifically,
the simulation of the random vector $(X(t),\ M(t))$, where $M(t) = \sup_{s \leq t}X(t)$, poses a significant challenge in the case with infinite activities.
As of now, exact simulation of the extrema of L\'evy processes (excluding the compound Poisson case) is only available for 
specific cases (see, for instance, \cite{gonzalez2019exact,cázares2023fast,10.1214/20-EJP503}),
let alone the exact simulation of the joint law of $(X(t),\ M(t))$.
We therefore approach the challenge by considering the following questions:
$(i)$ Does there exist a provably efficient approximation algorithm for $(X(t),\ M(t))$,
and 
$(ii)$ Are we able to remove the approximation bias while still attaining strong efficiency in our rare-event simulation algorithm?

Regarding the first question, several classes of algorithms have been proposed for the approximate simulation of the extrema of L\'evy processes.
This includes the random walk approximations based on Euler-type discretization of the process (see e.g., \cite{10.1214/aoap/1177004597,Dia_Lamberton_2011,giles2017multilevel}),
the Wiener-Hopf approximation methods (see e.g. \cite{10.1214/10-AAP746,FERREIROCASTILLA2014985}) based on the fluctuation theory of L\'evy processes,
the jump-adapted Gaussian approximations (see e.g. \cite{10.1214/10-AAP695,DEREICH20111565}),
and the characteristic function approach in \cite{boyarchenko2023efficientevaluationjointpdf,boyarchenko2023simulationlevyprocessextremum}
based on efficient evaluation of joint cdf.
Nevertheless, the approximation errors in the aforementioned methods are either unavailable or exhibit a polynomial rate of decay.
Thankfully, the recently developed stick-breaking approximation (SBA) algorithm in \cite{cazares2018geometrically} provides a novel approach to the simulation of the joint law of $X(t)$ and $M(t)$.
The theoretical foundation of SBA
is the following
description for the concave majorants of L\'evy processes with infinite activities in \cite{pitman2012convex}:
$$
\big(X(t),\ M(t)\big) \distequal \Big( \sum_{j \geq 1}\xi_j,\ \sum_{j \geq 1}\max\{\xi_j,0\}\Big).
$$
Here, $(l_j)_{j \geq 1}$ is a sequence of iteratively generated non-negative RVs satisfying $\sum_{j \geq 1}l_j = t$ and $\E l_j = t/2^j\ \forall j \geq 1$;
conditioned on the values of $(l_j)_{j \geq 1}$, $\xi_j$'s are independently generated such that $\xi_j \distequal X(l_j)$.
While it is computationally infeasible to generate the entirety of the infinite sequences $(l_j)_{j \geq 1}$ and $(\xi_j)_{j \geq 1}$,
by terminating the procedure at the $m$-th step we yield
approximations of the form
\begin{align}
    \big( \hat X_m(t),\ \hat M_m(t) \big)
    \delequal
    \Big(
        \sum_{j = 1}^m \xi_j,\ \sum_{j = 1}^m \max\{\xi_j,0\}
    \Big).
    \label{def: SBA, intro}
\end{align}
We provide a review in Section~\ref{subsec: SBA, preliminary}.
In particular, due to $\E \big[\sum_{j > m}l_j\big] = t/2^m$, 
with each extra step in \eqref{def: SBA, intro} we expect to reduce the approximation error by half, thus leading to the geometric convergence rate of errors.
See \cite{cazares2018geometrically} for analyses of the approximation errors for different types of functionals.

Additionally, while SBA can be considered sufficiently accurate for a wide range of tasks,
eliminating the approximation errors is crucial in the context of 
rare-event simulation.
Otherwise, 
any effort to efficiently estimate a small probability might be fruitless and could be overwhelmed by potentially large errors in the algorithm.
In order to remove 
the approximation errors of SBA in \eqref{def: SBA, intro},
we employ the construction of unbiased estimators proposed in \cite{rhee2015unbiased}.
This can be interpreted as a randomized version of the multilevel Monte Carlo scheme \cite{hei01,gil08} when a sequence of biased yet increasingly more accurate approximations is available.
It allows us to construct an unbiased estimation algorithm that terminates within a finite number of steps.
By combining SBA, the randomized debiasing technique, and the design of importance sampling distributions based on heavy-tailed large deviations,
we propose Algorithm~\ref{algoISnoARA} for rare-event simulation of L\'evy processes with infinite activities.
In case that 
the exact sampling of $X(t)$, and hence the increments $\xi_j$'s in \eqref{def: SBA, intro}, is not available, 
we further incorporate the Asmussen-Rosiński approximation (ARA) in \cite{asmussen2001approximations}.
This approximation replaces the small-jump martingale in the L\'evy process $X(t)$ with a Brownian motion of the same variance, thus leading to Algorithm~\ref{algoIS}.
We note that the combination of SBA and the randomized debiasing technique has been explored in \cite{cazares2018geometrically},
and an ARA-incorporated version of SBA has been proposed in \cite{gonzalez2022simulation}.
However, the goal of proposing strongly efficient rare-event simulation algorithm 
adds another layer of difficulty
and sets our work apart from the existing literature.
In particular, 
the notion of strong efficiency demands that
the proposed estimator remains efficient under the importance sampling algorithm w.r.t.\ not just a given task, but throughout a sequence of increasingly more challenging rare-event simulation tasks as $\P(X \in A)$ tends to $0$.
This introduces a new dimension into the theoretical analysis that is not presented in 
\cite{cazares2018geometrically,gonzalez2022simulation}
and necessitates the development of new technical tools to characterize the performance of the algorithm when all these components (importance sampling, SBA, debiasing technique, and ARA) are in effect.

An important technical question in our analysis concerns the continuity of the law of the running supremum $M(t)$.
To provide high-level descriptions,
let us consider estimators for $\P(X \in A) = \E\big[\mathbf{I}\{X \in A\}\big]$
that admit the form $f(\hat X)$ where $\hat X$ is some approximation to the L\'evy process $X$
and 
$
f(\xi) = \mathbf{I}\{\xi \in A\}.
$
SBA and the debiasing technique allow us to construct $\hat X$ such that the deviation $\hat X - X$ has a small variance.
Nevertheless, the estimation can be fallible if $X$ concentrates on the boundary cases,
i.e., $X$ falls into a neighborhood of $\partial A$ fairly often.
Specializing to the case in \eqref{def set A, intro},
this requires obtaining sufficiently tight bounds for 
probabilities of form $\P(M(t) \in [x,x+\delta])$.
Nevertheless, the continuity of the law of the supremum $M(t)$ remains an active area of study, with many essential questions left open.
Recent developments regarding the law of $M(t)$ are mostly qualitative or focus on the cumulative distribution function (cdf);
see, e.g., \cite{10.1214/11-AOP708,coutinpontierngom2018,10.1214/11-AOP719,michna2012formula,michna2014distribution,10.1214/ECP.v18-2236}.
In short, addressing this aspect of the challenge requires us to establish novel and useful quantitative characterizations of the law of supremum $M(t)$. 

For our purpose of efficient rare event simulation, particularly under the importance sampling scheme detailed in Section~\ref{subsec: importance sampling dist, algo},
the following condition proves to be sufficient: 
\begin{align}
    \P\Big(X^{<z}(t) \in [x, x + \delta]\Big) \leq \frac{C}{t^\lambda\wedge 1}\delta\qquad \forall z \geq z_0,\ t > 0,\ x \in \R,\ \delta \in [0,1].
    \label{assumption, unif holder cont, intro}
\end{align}
Here, $X^{<z}(t)$ is a modulated version of the process $X(t)$ where all the upward jumps with sizes larger than $z$ are removed;
see Section~\ref{sec: algorithm} for the rigorous definition.
First, we establish in Theorem~\ref{theorem: strong efficiency without ARA} (resp. Theorem~\ref{theorem: strong efficiency}) that
Algorithm~\ref{algoISnoARA} (resp. Algorithm~\ref{algoIS}) does attain strong efficiency under condition~\eqref{assumption, unif holder cont, intro}.
More importantly, we demonstrate in Section~\ref{sec: lipschitz cont, A2} that
condition \eqref{assumption, unif holder cont, intro} is mild for L\'evy processes with infinitive activities, as it only requires the intensity of jumps to approach $\infty$ (hence attaining infinite activities in $X$) at a rate that is not too slow.
In particular, in Theorems~\ref{ CorollaryRVlevyMeasureAtOrigin } and \ref{ CorollarySemiStableLevyMeasureAtOrigin } we provide two sets of sufficient conditions for \eqref{assumption, unif holder cont, intro} that are easy to verify.
We note that
the representation of concave majorants for L\'evy processes developed in \cite{pitman2012convex}
proves to be a valuable tool for studying the law of $X(t)$ and $M(t)$.
As will be elaborated in the proofs in Section~\ref{sec: proof},
the key technical tool that allows us to connect condition~\ref{assumption, unif holder cont, intro} with the law of the supremum $M(t)$ is, again, the 
representation in \eqref{def: SBA, intro}.
See also
\cite{10.3150/23-BEJ1590}
for its application in studying the joint density of $X(t)$ and $M(t)$ of stable processes.

Some algorithmic contributions of this paper were presented in a preliminary form at a conference in \cite{10.5555/3466184.3466229} without rigorous proofs.
The current paper presents several significant extensions:
$(i)$ In addition to Algorithm~\ref{algoISnoARA}, we also propose an ARA-incorporated version of the importance sampling algorithm (see Algorithm~\ref{algoIS}) to address the case where $X(t)$ cannot be exactly simulated;
$(ii)$ Rigorous proofs of strong efficiency are provided in Section~\ref{sec: proof} in this paper; 
$(iii)$ We establish two sets of sufficient conditions for \eqref{assumption, unif holder cont, intro} in Section~\ref{sec: lipschitz cont, A2}, leveraging the properties of regularly varying or semi-stable processes.

The rest of the paper is structured as follows.
Section~\ref{sec: notations, preliminaries} reviews the theoretical foundations of our algorithms, including the heavy-tailed large deviation theories (Section \ref{subsec: review, LD of levy}), 
the stick-breaking approximations (Section \ref{subsec: SBA, preliminary}),
and the debiasing technique (Section \ref{subsec: unbiased estimation}).
Section~\ref{sec: algorithm} presents the importance sampling algorithms and establishes their strong efficiency.
Section~\ref{sec: lipschitz cont, A2} investigates the continuity of the law of $X(t)$ and provides sufficient conditions for \eqref{assumption, unif holder cont, intro},
a critical condition to ensure the strong efficiency of our importance sampling scheme.
Numerical experiments are reported in Section~\ref{sec: experiment}.
The proofs of all technical results are collected in
Section~\ref{sec: proof}.
In the Appendix,
Section~\ref{sec: barrier option pricing} extends the algorithmic framework to the context of barrier option pricing.

\section{Preliminaries}
\label{sec: notations, preliminaries}

In this section, we introduce some notations and results that will be frequently used when developing the strongly efficient rare-event simulation algorithm.

\subsection{Notations}
\label{subsec: notations}
Let $\mathbb N = \{0,1,2,\ldots\}$ be the set of non-negative integers.
For any positive integer $k$, let $\notationdef{set-for-integers-below-n}{[k]} = \{1,2,\ldots,k\}$.
For any $x,y \in \R$, let $\notationdef{wedge-min-operator}{x \wedge y} \delequal \min\{x,y\}$  and $\notationdef{vee-max-operator}{x\vee y} \delequal \max\{x,y\}$.
For any $x\in\mathbb{R}$, we define $(x)^+ \delequal x \vee 0$ as the positive part of $x$, and
$$
\notationdef{floor-operator}{\floor{x}} \delequal \max\{ n \in \mathbb{Z}:\ n \leq x \},\qquad 
    \notationdef{ceil-operator}{\ceil{x}} \delequal  \min\{n \in \mathbb{Z}:\ n \geq x\}
$$
as the floor and ceiling function.
Given a measure space $(\mathcal{X},\mathcal{F},\mu)$ and any set $A \in \mathcal{F}$, we use $\notationdef{measure-restricted-on-A}{\mu|_A( \cdot )} \delequal \mu( A \cap \cdot )$ to denote restriction of the measure $\mu$ on $A$.
For any random variable $X$ and any Borel measureable set $A$,
let $\notationdef{law-of-X}{\mathscr{L}(X)}$ be the law of $X$, and $\notationdef{law-of-X-on-A}{\mathscr{L}(X|A)}$ be the law of $X$ conditioned on event $A$.
Let $(\mathbb{D}_{[0,1],\R},\bm{d})$ be the metric space of $\notationdef{cadlag-space-D-01}{\mathbb{D}} = \mathbb{D}_{[0,1],\R}$ (i.e., the space of all real-valued càdlàg functions with domain $[0,1]$) equipped with Skorokhod $J_1$ metric $\bm{d}$.
Here, the metric $\bm d$ is defined by
\begin{align}
    \notationdef{skorokhod-j1-metric-d}{\bm{d}(x,y)} \delequal \inf_{\lambda \in \Lambda} \sup_{t \in [0,1]}|\lambda(t) - t| \vee |x(\lambda(t)) - y(t)|
    \label{def: J 1 metric}
\end{align}
with $\Lambda$ being the set of all increasing homeomorphisms from $[0,1]$ to itself. 

Henceforth in this paper, the heavy-tailedness of any random element will be captured by the notion of
regular variation.
\begin{definition}\label{def: RV}
    For any measurable function $\phi:(0,\infty) \to (0,\infty)$, we say that $\phi$ is \textbf{regularly varying} as $x \rightarrow\infty$ with index $\beta$ (denoted as $\phi(x) \in  \notationdef{notation-RV-LDP}{\RV_\beta}(x)$ as $x\to \infty$) if $\lim_{x \rightarrow \infty}\phi(tx)/\phi(x) = t^\beta$ for all $t>0$. 
     We also say that a measurable function $\phi(\eta)$
is {regularly varying} as $\eta \downarrow 0$ with index $\beta$ 
if $\lim_{\eta \downarrow 0} \phi(t\eta)/\phi(\eta) = t^\beta$ for any $t > 0$.
We denote this as $\phi(\eta) \in {\RV_{\beta}}(\eta)$ as $\eta \downarrow 0$.
\end{definition}
\noindent
For properties of regularly varying functions, see, for example, Chapter 2 of \cite{resnick2007heavy}.

Next, we discuss the L\'evy-Ito decomposition of one-dimensional L\'evy processes, i.e., $X(t) \in \R$.
The law of a one-dimensional L\'evy process $\{X(t):t \geq 0\}$ is completely characterized by its generating triplet $(c,\sigma,\nu)$ where $c\in \R$ represents the constant drift, $\sigma \geq 0$ is the magnitude of the Brownian motion term,
and the L\'evy measure $\nu$ characterizes the intensity of the jumps. 
More precisely,
\begin{align}
    X(t) \distequal ct + \sigma B(t)+ \int_{|x| \leq 1}x[ N([0,t]\times dx) - t\nu(dx) ] + \int_{|x| > 1}xN( [0,t]\times dx ) 
    \label{prelim: levy ito decomp}
\end{align}
where $\text{Leb}(\cdot)$ is the Lebesgue measure on $\R$, $B$ is a standard Brownian motion, the measure $\nu$ satisfies $\int (|x|^2\wedge 1) \nu(dx) < \infty$,
and $N$ is a Poisson random measure over $(0,\infty) \times \R$ with intensity measure $\text{Leb}((0,\infty))\times \nu$ and is independent of $B$.
For standard references on this topic, see Chapter 4 of \cite{sato1999levy}.

Given two sequences of non-negative real numbers $(x_n)_{n \geq 1}$ and $(y_n)_{n \geq 1}$, 
we say that $x_n = \bm{O}(y_n)$ (as $n \to \infty$) if there exists some $C \in [0,\infty)$ such that $x_n \leq C y_n\ \forall n\geq 1$.
Besides, we say that $x_n = \bm{o}(y_n)$ if $\lim_{n \rightarrow \infty} x_n/y_n = 0$.
The goal of this paper is described in the following definition of strong efficiency.
\begin{definition}\label{def: strong efficiency}
Let $(L_n)_{n \geq 1}$ be a sequence of random variables supported on a probability space $(\Omega,\mathcal{F},\P)$ and $(A_n)_{n \geq 1}$ be a sequence of events (i.e., $A_n \in \mathcal F\ \forall n$). 
We say that $(L_n)_{n \geq 1}$ are \notationdef{def-strong-efficiency}{unbiased and strongly efficient} estimators of $(A_n)_{n \geq 1}$ if
$$
\E L_n = \P(A_n) \ \forall n \geq 1;\qquad 
    \E L^2_n = \bm{O}\big(\P^2(A_n)\big)\ \text{ as }n \rightarrow \infty.
$$
\end{definition}
\noindent
We stress again that 
strongly efficient estimators $(L_n)_{n \geq 1}$ achieve uniformly bounded relative errors (i.e., the ratio between standard error and mean) for all $n \geq 1$.

\subsection{Sample-Path Large Deviations for Regularly Varying L\'evy Processes}
\label{subsec: review, LD of levy}

The key ingredient of our importance sampling algorithm is the recent development of the sample-path large deviations for  L\'evy processes with regularly varying increments; see \cite{rhee2019sample}.
To familiarize the readers with this mathematical machinery,
we start by reviewing the results in the one-sided cases, and then move onto the more general two-sided results.

Let $X(t) \in \R$ be a centered L\'evy process (i.e., $\E X(t) = 0\ \forall t > 0$) with generating triplet $(c,\sigma,\nu)$ such that the L\'evy measure $\nu$ is supported on $(0,\infty)$.
In other words, all the discontinuities in $X$ will be positive, hence one-sided.
Moreover, we are interested in the heavy-tailed setting where
the function $H_+(x) = \nu[x,\infty)$ is regularly varying as $x \rightarrow \infty$ with index $-\alpha$ where $\alpha > 1$.
Define a scaled version of the process as $\bar X_n(t) \delequal \frac{1}{n} X(nt)$,
and let $\bar X_n \delequal \{\bar X_n(t):\ t \in [0,1]\}$.
Note that $\bar X_n$ is a random element taking values in $\mathbb D$.

For all $l \geq 1$, let $\notationdef{set-D-l}{\mathbb{D}_l}$ be the subset of $\mathbb D$ containing all the non-decreasing step functions that has $l$ jumps (i.e., discontinuities) and vanishes at the origin.
Let $\mathbb{D}_0 = \{\bm{0}\}$ be the set that only contains the zero function $\bm{0}(t) \equiv 0$.
Let $\notationdef{set-D-<-l}{\mathbb D_{<l}} = \cup_{j = 0,1,\cdots,l-1}\mathbb{D}_l$.
For any $\beta > 0$, let $\notationdef{def-measure-nu-beta}{\nu_\beta}$ be the measure supported on $(0,\infty)$ with $\nu_\beta(x,\infty) = x^{-\beta}$.
For any positive integer $l$, let $\notationdef{def-nu-beta-l-fold}{\nu^l_\beta}$ be the $l-$fold product measure of $\nu_\beta$ restricted on $\{ \bm y = (y_1,\ldots,y_l) \in (0,\infty)^l:\ y_1 \geq y_2 \geq \cdots \geq y_l \}$.
Define the measure (for $l \geq 1$)
$$
\notationdef{measure-C-l-beta}{\mathbf C_l(\cdot)} \delequal \E\Bigg[ \nu_\beta^l\Big\{ \bm y \in (0,\infty)^l:\ \sum_{j = 1}^l y_j \mathbf{I}_{[U_j,1]}\in\cdot\Big\} \Bigg]
$$
where all $U_j$'s are iid copies of $\text{Unif}(0,1)$.
In case that $l = 0$, we set $\mathbf C^0_{\beta}$ as the Dirac measure on $\textbf{0}$. 
The following result provides sharp asymptotics for rare events associated with $\bar X_n$.
Henceforth in this paper, all measurable sets are understood to be Borel measurable.

\begin{result}[Theorem 3.1 of \cite{rhee2019sample}]
\label{result: LD of Levy, one-sided}
Let $A \subset \D$ be measurable.
Suppose that $\mathcal J(A) \delequal \min\{j \in \mathbb{N}:\ \mathbb{D}_j \cap A \neq \emptyset \} < \infty$ and
$A$ is bounded away from $\mathbb D_{<\mathcal J(A)}$ in the sense that $\bm{d}(A,\mathbb D_{<\mathcal J(A)}) > 0$.
Then
$$\mathbf C_{\mathcal J(A)}(A^\circ) \leq \liminf_{ n \rightarrow \infty }\frac{\P(\Bar{X}_n \in A)}{(n\nu[n,\infty) )^{\mathcal J(A)}}\leq \limsup_{ n \rightarrow \infty }\frac{\P(\Bar{X}_n \in A)}{(n\nu[n,\infty) )^{\mathcal J(A)}}\leq \mathbf C_{\mathcal J(A)}(A^-) < \infty $$
where $A^\circ,A^-$ are the interior and closure of $A$ respectively.
\end{result}
Intuitively speaking, Result~\ref{result: LD of Levy, one-sided} embodies a general principle that, in heavy-tailed systems, rare events arise due to several ``large jumps''. 
Here, $\mathcal J(A)$ denotes the minimum number of jumps required in $\bar X_n$ for event $\{\bar X_n \in A\}$ to occur.
As shown above in Result~\ref{result: LD of Levy, one-sided}, $\mathcal J(A)$ dictates the polynomial rate of decay of the probabilities of the rare events $\P(\bar X_n \in A)$.
Furthermore, results such as Corollary 4.1 in \cite{rhee2019sample} characterize the conditional limits of $\bar X_n$:
conditioning on the occurrence of rare events $\{ \bar X_n \in A\}$, the conditional law $\mathscr{L}(\bar X_n | \{ \bar X_n \in A\} )$
converges in distribution to that of a step function over $[0,1]$ with exactly $\mathcal J(A)$ jumps (of random sizes and arrival times) as $n \to \infty$.
Therefore, $\mathcal J(A)$ also dictates the most likely scenarios of the rare events. 
This insight proves to be critical when we develop the importance sampling distributions for the rare events simulation algorithm in Section~\ref{sec: algorithm}.

Results for the two-sided cases admit a similar yet slightly more involved form,
where the L\'evy process $X(t)$ exhibits both positive and negative jumps.
Specifically, let $X(t)$ be a centered L\'evy process such that for $H_+(x) = \nu[x,\infty)$ and $H_-(x) = \nu(-\infty,-x]$,
we have $H_+(x) \in \RV_{-\alpha}(x)$ and $H_-(x) \in \RV_{-\alpha^\prime}(x)$ as $x\rightarrow\infty$ for some $\alpha,\alpha^\prime > 1$.
Let $\notationdef{def-set-D-j,k}{\mathbb D_{j,k}}$ be the set containing all step functions in $\mathbb D$ vanishing at the origin that has exactly $j$ upward jumps and $k$ downward jumps.
As a convention, let $\mathbb D_{0,0} = \{\bm 0\}$.
Given $\alpha,\alpha^\prime > 1$, let 
$
\notationdef{def-set-D-<-j,k}{\mathbb D_{<j,k}} \delequal \bigcup_{ (l,m) \in \mathbb I_{<j,k} }\mathbb D_{l,m}
$
where 
$\notationdef{index-set-I-<-j,k}{\mathbb I_{<j,k}} \delequal \big\{ (l,m) \in \mathbb N^2 \symbol{92} \{(j,k)\}:\ l(\alpha-1) + m(\alpha^\prime - 1) \leq j(\alpha - 1) + k(\alpha^\prime - 1) \big\}.$
Let $\mathbf C_{0,0}$ be the Dirac measure on $\bm 0$.
For any $(j,k) \in \mathbb N^2 \symbol{92}\{(0,0)\}$ let
\begin{align}
    \notationdef{measure-C-j,k}{\mathbf C_{j,k}(\cdot)}
    \delequal 
    \E\Bigg[
    \nu^j_\alpha \times \nu^k_{\alpha^\prime}
    \bigg\{
    (\bm x,\bm y) \in (0,\infty)^j \times (0,\infty)^k:\ 
    \sum_{l = 1}^j x_l \mathbf{I}_{ [U_l,1] } - \sum_{m = 1}^k y_m \mathbf{I}_{[V_m,1]} \in \cdot  
    \bigg\}
    \Bigg]
    \label{def: measure C j k}
\end{align}
where all $U_l$'s and $V_m$'s are iid copies of Unif$(0,1)$ RVs.
Now, we are ready to state the two-sided result.

\begin{result}[Theorem 3.4 of \cite{rhee2019sample}]
\label{result: LD of Levy, two-sided}
Let $A \subset \D$ be measurable.
Suppose that
\begin{align}
    \big(\mathcal{J}(A),\mathcal{K}(A)\big)
    \in 
    \underset{ (j,k) \in \mathbb N^2,\ \mathbb D_{j,k} \cap A \neq \emptyset }{\text{argmin}}
    j(\alpha - 1) + k(\alpha^\prime - 1)
    \label{def: argument minimum LD two sided case}
\end{align}
and $A$ is bounded away from $\mathbb D_{<\mathcal{J}(A),\mathcal{K}(A)}$. Then the argument minimum in \eqref{def: argument minimum LD two sided case} is unique, and
\begin{align*}
    \mathbf{C}_{\mathcal{J}(A),\mathcal{K}(A)}(A^\circ) 
    & \leq 
    \liminf_{n \rightarrow \infty}\frac{ \P(\bar X_n \in A) }{ (n\nu[n,\infty))^{ \mathcal{J}(A) }(n\nu(-\infty,-n])^{ \mathcal{K}(A) } }
    \\
    & \leq 
    \limsup_{n \rightarrow \infty}\frac{ \P(\bar X_n \in A) }{ (n\nu[n,\infty))^{ \mathcal{J}(A) }(n\nu(-\infty,-n])^{ \mathcal{K}(A) } }
    \leq 
    \mathbf{C}_{\mathcal{J}(A),\mathcal{K}(A)}(A^-)  < \infty
\end{align*}
where $A^\circ,A^-$ are the interior and closure of $A$ respectively.
\end{result}

\subsection{Concave Majorants and Stick-Breaking Approximations of L\'evy Processes with Infinite Activities}
\label{subsec: SBA, preliminary}

Next, we review the distribution of the concave majorant of a L\'evy process with infinite activities characterized in \cite{pitman2012convex},
which paves the way to the stick-breaking approximation algorithm proposed in \cite{cazares2018geometrically}.
Let $X(t)$ be a L\'evy process  with generating triplet $(c,\sigma,\nu)$. We say that $X$ has \textbf{infinite activities} if $\sigma > 0$ or $\nu(\R) = \infty$.
Let $M(t) \delequal \sup_{s \leq t}X(s)$ be the running supremum of $X(t)$.
The results in \cite{pitman2012convex} establishes a Poisson–Dirichlet distribution that underlies the joint law of $X(t)$ and $M(t)$.
Specifically, we fix some $T > 0$ and let
$V_i$'s be iid copies of Unif$(0,1)$ RVs.
Recursively, let
\begin{align}
    l_1 = TV_1,\qquad l_j = V_j\cdot (T - l_1 - l_2 -\ldots - l_{j-1})\quad \forall j \geq 2.
    \label{prelim: stick breaking, generate sticks l i}
\end{align}
Conditioning on the values of $(l_j)_{j \geq 1}$, let $\xi_j$ be a random copy of $X(l_j)$, with all $\xi_j$ being independently generated.

\begin{result}[Theorem 1 in \cite{pitman2012convex}]
\label{result: concave majorant of Levy}
Suppose that the L\'evy process $X$ has infinite activities. Then (with $(x)^+ = \max\{x,0\}$)
\begin{align}
    \big(X(T),M(T)\big) \distequal \big( \sum_{j \geq 1}\xi_j, \sum_{j \geq 1}(\xi_j)^+ \big).
    \label{claim, stick breaking representation}
\end{align}
\end{result}

Based on the distribution characterized in \eqref{claim, stick breaking representation},
the stick-breaking approximation algorithm was proposed in \cite{cazares2018geometrically} 
where finitely many $\xi_i$'s are generated in order to approximate $X(T)$ and $M(T)$.
This approximation technique is a key component of our rare event simulation algorithm.
In particular, we utilize a coupling between different L\'evy processes based on the representation \eqref{claim, stick breaking representation} above.
For clarity of our description, we focus on two L\'evy processes $X$ and $\widetilde X$ with generating triplets $(c,\sigma,\nu)$ and $(\widetilde c, \widetilde \sigma, \widetilde \nu)$, respectively.
Suppose that both $X$ and $\widetilde X$ have infinite activities. 
We first generate $l_i$'s as described in \eqref{prelim: stick breaking, generate sticks l i}.
Conditioning on the values of $(l_i)_{i \geq 1}$, we then independently generate $\xi_i$ and $\widetilde \xi_i$, which are random copies of $X(l_i)$ and $\widetilde X(l_i)$, respectively.
Let $\widetilde M(t) \delequal \sup_{s \leq t}\widetilde X(s)$.
Applying Result~\ref{result: concave majorant of Levy}, we identify a coupling between $X(T),M(T),\widetilde X(T),\widetilde M(T)$ such that
\begin{align}
    \big( X(T),M(T),\widetilde X(T),\widetilde M(T) \big) 
    \distequal 
    \big( \sum_{i \geq 1}\xi_i, \sum_{i \geq 1}(\xi_i)^+,\sum_{i \geq 1}\widetilde \xi_i, \sum_{i \geq 1}(\widetilde \xi_i)^+\big).
    \label{prelim: SBA, coupling}
\end{align}

\begin{remark}
It is worth noticing that the method described above in fact 
implies the existence of a probability space $(\Omega,\mathcal{F},\P)$ that supports the entire sample paths $\{X(t):\ t \in [0,T]\}$ and $\{\widetilde X(t):\ t \in [0,T]\}$, 
whose endpoint values $X(T),\widetilde X(T)$ and suprema $M(T),\widetilde M(T)$ admit the joint law in \eqref{prelim: SBA, coupling}.
In particular, once we obtain $l_i$ based on \eqref{prelim: stick breaking, generate sticks l i},
one can generate $\Xi_i$ that are iid copies of the entire paths of $X$.
That is,
we generate a piece of sample path $\Xi_i$ on the stick $l_i$, and the quantities 
$\xi_i$ introduced earlier can be obtained by setting $\xi_i = \Xi_i(l_i)$.
To recover the sample path of $X$ based on the pieces $\Xi_i$,
it suffices to apply Vervatt transform onto each $\Xi_i$ and then reorder the pieces based on their slopes.
We refer the readers to theorem 4 in \cite{pitman2012convex}.
In summary, 
the method described above leads to a coupling between the sample paths of the underlying L\'evy processes $X$ and $\widetilde X$ such that \eqref{prelim: SBA, coupling} holds.
\end{remark}

\subsection{Randomized Debiasing Technique}
\label{subsec: unbiased estimation}

To achieve unbiasedness in our algorithm and remove the errors in the stick-breaking approximations, we apply the randomized multi-level Monte-Carlo technique studied in \cite{rhee2015unbiased}.
In particular, due to $\tau$ being finite (almost surely) in Result \ref{resultDebias} below,
the simulation of $Z$ relies only on $Y_0,Y_1,\cdots,Y_\tau$ instead of the infinite sequence $(Y_n)_{n \geq 0}$.

\begin{result}[Theorem 1 in \cite{rhee2015unbiased}]\label{resultDebias} 
Let random variables $Y$ and $(Y_m)_{m \geq 0}$ be such that $\lim_{m \rightarrow \infty}\E Y_m = \E Y$.
Let $\tau$ be a positive integer-valued random variable with unbounded support, independent of $(Y_m)_{m\geq 0}$ and $Y$.
Suppose that
\begin{align}
    \sum_{m \geq 1} \E|Y_{m-1} - Y|^2 \big/\P(\tau \geq m) < \infty,
    \label{condition, resultDebias}
\end{align}
then
$Z \delequal \sum_{m = 0}^\tau (Y_m - Y_{m - 1})\big/\P(\tau \geq m)$
(with the convention $Y_{-1} = 0$) satisfies
$$
\E Z = \E Y,\qquad
    \E Z^2  = \sum_{m \geq 0}\Bar{v}_m \big/\P(\tau \geq m)
$$
where $\Bar{v}_m = \E|Y_{m- 1} - Y|^2 - \E|Y_m - Y|^2$.

\end{result}{}

\section{Algorithm}
\label{sec: algorithm}

Throughout the rest of this paper, let $\notationdef{Levy-process-X}{X(t)}$ be a L\'evy process with generating triplet $(c_X,\sigma,\nu)$ satisfying the following heavy-tailed assumption.

\begin{assumption}
\label{assumption: heavy tailed process}
$\E X(1) = 0$. $X(t)$ is of infinite activity.
The Blumenthal-Getoor index $\notationdef{def-beta-blumenthal-getoor-index}{\beta} \delequal \inf\{p > 0: \int_{(-1,1)}|x|^p\nu(dx) < \infty\}$ satisfies $\beta < 2$.
Besides, one of the two claims below holds for the L\'evy measure $\nu$.
\begin{itemize}
    \item (One-sided cases) $\nu$ is supported on $(0,\infty)$, and function $H_+(x) = \nu[x,\infty)$ is regularly varying as $x \rightarrow \infty$ with index $-\alpha$ where $\alpha > 1$;
    \item (Two-sided cases) There exist $\notationdef{notation-heavy-tailed-index-alpha}{\alpha,\alpha^\prime} > 1$ such that  $H_+(x) = \nu[x,\infty)$ is regularly varying as $x \rightarrow \infty$ with index $-\alpha$ and $H_-(x) = \nu(-\infty,-x]$ is regularly varying as $x \rightarrow \infty$ with index $-\alpha^\prime$.
\end{itemize}
\end{assumption}

The other assumption on $X(t)$ revolves around the continuity of the law of $\notationdef{process-X-<z}{X^{<z}}$, which is the L\'evy process with generating triplet $(c_X,\sigma,\nu|_{(-\infty,z)})$.
That is, $X^{<z}$ is a modulated version of $X$ where all the upward jumps with size larger than $z$ are removed.

\begin{assumption}
\label{assumption: holder continuity strengthened on X < z t}
    There exist $\notationdef{constant-z0-A2}{z_0},\notationdef{constant-C-A2}{C},\notationdef{constant-lambda-A2}{\lambda} > 0$ such that
    $$
    \P\big(X^{<z}(t) \in [x, x + \delta]\big) \leq \frac{C\delta}{t^\lambda\wedge 1}\qquad \forall z \geq z_0,\ t > 0,\ x \in \R,\ \delta > 0.
    $$
\end{assumption}
\noindent
Assumption~\ref{assumption: holder continuity strengthened on X < z t} can be interpreted as a uniform version of Lipschitz continuity in the law of $X^{<z}(t)$.
In Section~\ref{sec: lipschitz cont, A2},
we show that Assumption \ref{assumption: holder continuity strengthened on X < z t} is a mild condition for L\'evy process with infinite activities and is easy to verify.

Next, we describe a class of target events $(A_n)_{n \geq 1}$ for which we propose a strongly efficient rare event simulation algorithm.
Let $\notationdef{scaled-process-bar-X-n}{\bar X_n(t)} = \frac{1}{n}X(nt)$ and $\bar X_n = \{\bar X_n(t):\ t \in [0,1]\}$ be the scaled version of the process.
Define events
\begin{align}
    \notationdef{def-target-set-A}{A} \delequal \{\xi \in \mathbb{D}: \sup_{t \in [0,1]}\xi(t)\geq \notationdef{def-parameter-a-in-set-A}{a}; \sup_{t \in (0,1] }\xi(t) - \xi(t-) < \notationdef{def-parameter-b-in-set-A}{b} \},\qquad 
    \notationdef{def-rare-event-A-n}{A_n} \delequal \{\bar X_n \in A\}. \label{def set A}
\end{align}
In words, $\xi \in A$ means that the path $\xi$ crossed barrier $a$ even though no upward jumps in $\xi$ is larger than $b$.
For technical reasons, we also impose the following mild condition on the values of the constants $a,b > 0$.

\begin{assumption}
\label{assumption: choice of a,b}
$a, b > 0$ and $a/b \notin \mathbb Z.$
\end{assumption}
\noindent


In this section, we present a strongly efficient rare-event simulation algorithm for $(A_n)_{n \geq 1}$.
Specifically, Section~\ref{subsec: importance sampling dist, algo} presents the design of the importance sampling distribution $\Q_n$,
Section~\ref{subsec: estimators Z_n} discusses how we apply the randomized Monte-Carlo debiasing technique in Result~\ref{resultDebias} in our algorithm,
Section~\ref{subsec: algo, SBA, and debiasing technique}
discusses how we combine the debiasing technique with SBA in Result~\ref{result: concave majorant of Levy},
and 
Section~\ref{subsec: sampling from IS distribution Qn} explains how to sample from the importance sampling distribution $\Q_n$.
Combining all these components in Section~\ref{subsec: strong efficiency and complexity}, we propose Algorithm~\ref{algoISnoARA} for rare-event simulation of $\P(A_n)$ and establish its strong efficiency in Theorem~\ref{theorem: strong efficiency without ARA}.
Section~\ref{subsec: algo, ARA, SBA, and debiasing technique} addresses the case where the exact simulation of $X^{<z}(t)$ is not available.


\subsection{Importance Sampling Distributions $\Q_n$}
\label{subsec: importance sampling dist, algo}

At the core of our algorithm is a principled design of importance sampling strategies based on heavy-tailed large deviations.
This can be seen as an extension of the framework proposed in \cite{chen2019efficient}.
First, note that
\begin{align}
    \notationdef{notation-l*-jump-number}{l^*} \delequal \ceil{a/b}
    \label{def l *}
\end{align}
indicates the number of jumps required to cross the barrier $a$ starting from the origin if no jump is allowed to be larger than $b$.
Based on the sample-path large deviations reviewed in Section~\ref{subsec: review, LD of levy},
we expect the events $A_n = \{\bar X_n \in A\}$ to be almost always caused by exactly $l^*$ large upward jumps in $\bar X_n$.
These insights reveal critical information regarding the conditional law $\P(\ \cdot\ |\bar X_n \in A)$.
More importantly, they lead to a natural yet effective choice of importance sampling distributions to focus on the $l^*$-large-jump paths and provides sufficient approximations to $\P(\ \cdot\ |\bar X_n \in A)$.
Specifically, for any $\notationdef{algo-param-gamma}{\gamma} \in (0,b)$, define events $\notationdef{notaiton-set-B-gamma-n}{B^\gamma_n} \delequal \{\bar{X}_n \in B^\gamma\}$ with
\begin{align}
    \notationdef{notation-set-B-gamma}{B^\gamma} \delequal \big\{ \xi \in \mathbb{D}: \#\{ t \in [0,1]: \xi(t) - \xi(t-) \geq \gamma \} \geq l^* \big\},
    \label{def set B gamma}
\end{align}
where, for any $\xi \in \D$, we define $\xi(t-) = \lim_{s \uparrow t}\xi(s)$ as the left-limit of $\xi$ at time $t$.
Intuitively speaking, the parameter $\gamma \in (0,b)$ acts as a threshold of ``large jumps'':
any path $\xi \in B^\gamma$ has at least $l^*$ upward jumps that are considered large relative to the threshold level $\gamma$.
To prevent the likelihood ratio from blowing up to infinity,
we then consider an importance sampling distribution with defensive mixtures (see \cite{hesterberg1995}) and define (for some $\notationdef{algo-param-w}{w}\in(0,1)$)
\begin{align}
    \notationdef{notation-ISDM-measure-Qn}{\mathbf{Q}_n(\cdot)} \delequal w\P(\cdot) + (1 - w) \P(\ \cdot\ | B^\gamma_n).
    \label{def: IS distribution Qn}
\end{align}
Sampling from $\P(\ \cdot\ | B^\gamma_n)$, and hence $\Q_n(\cdot)$, is straightforward and will be 
addressed in Section~\ref{subsec: sampling from IS distribution Qn}.

With the design of the importance sampling distribution $\Q_n$ in hand,
one would naturally consider an estimator for $\P(A_n)$ of form $\mathbf{I}_{A_n} \cdot \frac{d\P}{d\Q_n}$. This is due to
$$
\E^{\Q_n}\bigg[\mathbf{I}_{A_n}\frac{d\P}{d\Q_n}\bigg] = \E[\mathbf{I}_{A_n}] = \P(A_n).
$$
Here, we use $\E^{\Q_n}$ to denote the expectation operator under law $\Q_n$ and $\E$ for the expectation under $\P$.
Nevertheless, the exact evaluation or simulation of 
$
\mathbf{I}_{A_n} = \mathbf{I}\{\bar X_n \in A\}
$
is generally not computationally feasible 
due to the infinite activities of the process $X$, making it computationally infeasible to simulate or store the entire sample path with finite computational resources.
This marks a significant difference from the tasks in \cite{chen2019efficient},
which focus on random walks or compound Poisson processes with constant drifts that can be simulated exactly.
To overcome this challenge, we instead consider estimators $L_n$ in the form of
\begin{align}
    \notationdef{notation-IS-esitmator-Ln}{L_n} = Z_n \frac{d\P}{d\Q_n} 
    =
    \frac{Z_n}{ w + \frac{1 - w}{ \P(B^\gamma_n) }\mathbf{I}_{ B^\gamma_n } }
    \label{def: estimator Ln}
\end{align}
where $Z_n$ can be simulated within finite computational resources and allows $L_n$ to recover the right expectation under the importance sampling distribution $\Q_n$, i.e., $\E^{\Q_n}[L_n] = \P(A_n)$.
In Section~\ref{subsec: estimators Z_n}, we elaborate on the design of the estimators $Z_n$.

\subsection{Estimators $Z_n$}
\label{subsec: estimators Z_n}

Intuitively speaking, the goal is to construct $Z_n$'s that can be plugged into \eqref{def: estimator Ln} as unbiased estimators of $\mathbf{I}_{A_n}$.
To this end, we consider the following decomposition of the L\'evy process $X$.
For any $\xi \in \D$ and $t \geq 0$, let $\Delta\xi(t) = \xi(t) - \xi(t-)$ be the size of the discontinuity in $\xi$ at time $t$. 
Recall that $\gamma \in (0,b)$ is the threshold of large jumps in the definition of $B^\gamma$ in \eqref{def set B gamma}.
Let
\begin{align}
        \notationdef{notation-process-Jn}{J_n(t)} & \delequal \sum_{s \in [0,t]}\Delta X(s) \mathbf{I}\big( \Delta X(s) \geq n\gamma \big),
        \label{def: process J n Xi n}
        \\
    \notationdef{notation-process-Xi-n}{\Xi_n(t)} & \delequal X(t) - J_n(t) = X(t) - \sum_{s \in [0,t]}\Delta X(s) \mathbf{I}\big( \Delta X(s) \geq n\gamma \big).
    \nonumber
\end{align}
We highlight several important facts regarding the decomposition $X(t) = J_n(t) + \Xi_n(t)$.
\begin{itemize}
    \item By the definition of $\Q_n$, the law of $\Xi_n$ remains unchanged under both $\Q_n$ and $\P$, which is identical to the law of $X^{<n\gamma}$, namely, a L\'evy process with generating triplet $(c_X,\sigma,\nu|_{ (-\infty,n\gamma) })$.
    
    \item Under $\P$, the process $J_n$ admits the law of a L\'evy process with generating triplet $(0,0,\nu|_{[n\gamma,\infty)})$, which is a compound Poisson process.
    
    \item Under $\Q_n$, the path $\{J_n(t):\ t \in [0,n]\}$ follows the same law as a L\'evy process with generating triplet $(0,0,\nu|_{[n\gamma,\infty)})$, conditioned on having at least $l^*$ jumps over $[0,n]$.
    
    \item Under both $\P$ and $\Q_n$, the two processes $J_n$ and $\Xi_n$ are independent.
\end{itemize}

Let $\notationdef{notation-process-bar-Jn}{\bar J_n(t)} = \frac{1}{n}J_n(nt)$, $\bar J_n = \{\bar J_n(t):\ t \in [0,1]\}$,
$\notationdef{notation-process-bar-Xi-n}{\bar \Xi_n(t)} = \frac{1}{n}\Xi_n(nt)$, and $\bar \Xi_n = \{\bar \Xi_n(t):\ t \in [0,1]\}$.
We now discuss how the decomposition
$$
\bar X_n = \bar J_n + \bar \Xi_n
$$
can help us construct unbiased estimators of $\mathbf{I}_{A_n}$.
First, recall that $\gamma \in (0,b)$.
As a result, in the definition of events $A_n = \{\bar X_n \in A\}$ in \eqref{def set A},
the condition $\sup_{t \in (0,1]}\xi(t) - \xi(t-) < b$ only concerns the large jump process $\bar J_n$ since any upward jump in $\bar \Xi_n$ is bounded by $\gamma < b$.
Therefore, with
\begin{align}
    \notationdef{notation-set-E-in-estimator}{E} \delequal \{ \xi \in \mathbb{D}:\ \sup_{t \in (0,1]}\xi(t) - \xi(t-) < b  \},
    \qquad 
    \notationdef{notation-event-bar-En}{E_n} \delequal \{\bar J_n \in E\}
\end{align}
and 
$$
\notationdef{notaiton-running-supremum-Mt}{M(t)} \delequal \sup_{s \leq t}X(s),
    \qquad 
    \notationdef{notation-Y-*-n}{Y^*_n} \delequal \mathbf{I}\big( M(n) \geq na \big),
$$
we have
$$
\mathbf{I}_{A_n} = Y^*_n \mathbf{I}_{E_n}.
$$
As discussed above, the exact evaluation of $Y^*_n$ is generally not computationally possible. 
Instead, suppose that we have access to a sequence of random variables $(\hat{Y}^m_n)_{m \geq 0}$ that only take values in $\{0,1\}$ and provide progressively more accurate approximations to $Y^*_n$ as $m \rightarrow \infty$.
Then in light of the debiasing technique in Result~\ref{resultDebias}, one can consider (under the convention that $\hat Y^{-1}_n \equiv 0$)
\begin{align}
    \notationdef{notation-estimator-Zn}{Z_n} = \sum_{m = 0}^{ \tau}\frac{ \hat Y^m_n - \hat Y^{m-1}_n }{ \P(\tau \geq m) }\mathbf{I}_{E_n}
    \label{def: estimator Zn}
\end{align}
where $\notationdef{notation-truncation-time-tau}{\tau}$ is $\text{Geom}(\rho)$ for some $\notationdef{algo-param-rho}{\rho} \in (0,1)$ and is independent of everything else.
That is, $\P(\tau \geq m) = \rho^{m-1}$ for all $m \geq 1$.
Indeed, this construction of $Z_n$ is justified by the following proposition.
We defer the proof to Section~\ref{subsec: proof, prop strong efficiency}.

\begin{proposition}
\label{proposition: design of Zn}
\linksinthm{proposition: design of Zn}
Let $C_0 > 0$, $\rho_0 \in (0,1)$, $\mu > 2l^*(\alpha - 1)$, and $\bar m \in \mathbb N$. Suppose that
\begin{align}
    \P\Big(Y^*_n \neq \hat Y^m_n\ \Big|\ \mathcal{D}(\bar J_n) = k\Big) \leq C_0 \rho^m_0 \cdot (k+1)\qquad \forall k \geq 0,n\geq 1, m \geq \bar m
    \label{condition 1, proposition: design of Zn}
\end{align}
where $\mathcal{D}(\xi)$ counts the number of discontinuities in $\xi$ for any $\xi \in \mathbb D$.
Besides, suppose that for all $\Delta \in (0,1)$,
\begin{align}
    \P\Big(Y^*_n \neq \hat Y^m_n,\ \bar X_n \notin A^\Delta\ \Big|\ \mathcal{D}(\bar J_n) = k\Big)
    \leq \frac{C_0 \rho^m_0}{ \Delta^2 n^{\mu} }\qquad \forall n\geq 1, m \geq 0, k = 0,1,\cdots,l^* - 1,
    \label{condition 2, proposition: design of Zn}
\end{align}
where 
$\notationdef{notation-set-A-Delta}{A^\Delta} = \big\{\xi \in \mathbb{D}: \sup_{t \in [0,1]}\xi(t)\geq a - \Delta \big\}$.
Then given $\rho \in (\rho_0,1)$,
there exists some $\bar \gamma = \bar \gamma(\rho) \in (0,b)$
such that
for all $\gamma \in (0,\bar \gamma)$,
the estimators $(L_n)_{n \geq 1}$ specified in \eqref{def: estimator Ln} and \eqref{def: estimator Zn} are \textbf{unbiased and strongly efficient} for $\P(A_n) = \P(\bar X_n \in A)$ under the importance sampling distribution $\Q_n$ in \eqref{def: IS distribution Qn}.
\end{proposition}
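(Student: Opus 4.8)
The plan is to reduce everything to expectations under $\P$ via the change of measure $\frac{d\P}{d\Q_n} = \big(w + \frac{1-w}{\P(B^\gamma_n)}\mathbf{I}_{B^\gamma_n}\big)^{-1}$, establish unbiasedness with Result~\ref{resultDebias}, and then bound the second moment by decomposing on $\mathcal{D}_n := \mathcal{D}(\bar J_n)$, the number of large jumps. Because $\Q_n$ and $\P$ differ only in the law of $\bar J_n$, while the law of $\bar\Xi_n$ (together with its stick-breaking randomization from Result~\ref{result: concave majorant of Levy} and the independent geometric clock $\tau$) is unchanged and independent of $\bar J_n$, the standard change-of-measure identity gives $\E^{\Q_n}[L_n] = \E^{\P}[Z_n]$ and $\E^{\Q_n}[L_n^2] = \E^{\P}\big[Z_n^2\,\tfrac{d\P}{d\Q_n}\big]$. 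For unbiasedness I apply Result~\ref{resultDebias} with $Y = Y^*_n\mathbf{I}_{E_n}$ and $Y_m = \hat Y^m_n\mathbf{I}_{E_n}$: both $\lim_m\E Y_m = \E Y$ and the summability~\eqref{condition, resultDebias} follow from~\eqref{condition 1, proposition: design of Zn} and $\rho>\rho_0$ (since $\E|\hat Y^{m-1}_n - Y^*_n|^2 \le C_0\rho_0^{m-1}(\E\mathcal{D}_n+1)$ for large $m$ and $\P(\tau\ge m)=\rho^{m-1}$), so $\E^{\P}[Z_n] = \E[Y^*_n\mathbf{I}_{E_n}] = \E[\mathbf{I}_{A_n}] = \P(A_n)$ using $\mathbf{I}_{A_n}=Y^*_n\mathbf{I}_{E_n}$ from Section~\ref{subsec: estimators Z_n}.

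For strong efficiency, note $B^\gamma_n = \{\mathcal{D}_n\ge l^*\}$, so $\tfrac{d\P}{d\Q_n}$ equals $1/w$ on $\{\mathcal{D}_n<l^*\}$ and is at most $\P(B^\gamma_n)/(1-w)$ on $\{\mathcal{D}_n\ge l^*\}$, giving
$$\E^{\P}\!\big[Z_n^2\,\tfrac{d\P}{d\Q_n}\big]\ \le\ \tfrac{1}{w}\,\E^{\P}\!\big[Z_n^2\mathbf{I}_{\{\mathcal{D}_n<l^*\}}\big]\ +\ \tfrac{\P(B^\gamma_n)}{1-w}\,\E^{\P}\!\big[Z_n^2\mathbf{I}_{\{\mathcal{D}_n\ge l^*\}}\big].$$
Applying Result~\ref{resultDebias} conditionally on $\{\mathcal{D}_n=k\}$ writes $\E[Z_n^2\mid\mathcal{D}_n=k]$ as $\sum_{m\ge0}\bar v_m^{(k)}/\P(\tau\ge m)$ with $\bar v_m^{(k)}\le a_{m-1}^{(k)}:=\P(E_n,\hat Y^{m-1}_n\ne Y^*_n\mid\mathcal{D}_n=k)$. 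On $\{\mathcal{D}_n\ge l^*\}$ I bound $a_m^{(k)}\le1$ for $m<\bar m$ and $a_m^{(k)}\le C_0\rho_0^m(k+1)$ for $m\ge\bar m$ by~\eqref{condition 1, proposition: design of Zn}; since $\rho>\rho_0$ the series converges to $\E[Z_n^2\mid\mathcal{D}_n=k]\le C(k+1)$. As $\mathcal{D}_n$ is Poisson with mean $\lambda_n = n\nu[n\gamma,\infty)\to0$, this gives $\sum_{k\ge l^*}\P(\mathcal{D}_n=k)(k+1)\asymp\lambda_n^{l^*}$ and $\P(B^\gamma_n)\le\lambda_n^{l^*}/l^*!$; by regular variation $\lambda_n^{l^*}\asymp(n\nu[n,\infty))^{l^*}$, and by Results~\ref{result: LD of Levy, one-sided}--\ref{result: LD of Levy, two-sided} (with $\mathcal{J}(A)=l^*=\lceil a/b\rceil$, $\mathcal{K}(A)=0$; Assumption~\ref{assumption: choice of a,b} ensures $l^*b>a$ strictly, so $A$ is bounded away from $\mathbb{D}_{<l^*}$ in the $J_1$ metric and $\mathbf{C}_{l^*}(A^\circ)>0$) we have $\P(A_n)\gtrsim(n\nu[n,\infty))^{l^*}$. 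Hence the second term above is $O\big((n\nu[n,\infty))^{2l^*}\big)=O(\P^2(A_n))$.

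The delicate term is $\E^{\P}\big[Z_n^2\mathbf{I}_{\{\mathcal{D}_n<l^*\}}\big]$, where $\tfrac{d\P}{d\Q_n}=1/w$ offers no help, so each $\E[Z_n^2\mid\mathcal{D}_n=k]$ with $k<l^*$ must decay faster than $(n\nu[n,\infty))^{2l^*}$. The key geometric fact is that on $E_n\cap\{\mathcal{D}_n=k\}$ with $k<l^*$ the large-jump part of $\bar X_n$ contributes strictly less than $(l^*-1)b<a$ (strict because $a/b\notin\mathbb{Z}$), so with $\Delta_0:=\tfrac12\min\{1,\,a-(l^*-1)b\}>0$, the event $\{\bar X_n\in A^{\Delta_0}\}$ forces $\sup_{[0,1]}\bar\Xi_n>\Delta_0$; since $\bar\Xi_n$ has all upward jumps bounded by $\gamma$, attaining such an excursion requires $\gtrsim\Delta_0/\gamma$ jumps of size $\gtrsim\gamma$, so a standard peeling/large-deviations estimate yields $q_n:=\P(\sup_{[0,1]}\bar\Xi_n>\Delta_0)\le C(n\nu[n,\infty))^{N}$ with $N\uparrow\infty$ as $\gamma\downarrow0$. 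I then split the debiasing series at a cutoff $M_n=\Theta(\log n)$: for $m\le M_n$, combining~\eqref{condition 2, proposition: design of Zn} (with $\Delta=\Delta_0$) with the excursion bound gives $|\bar v_m^{(k)}|\lesssim C_0\rho_0^m/(\Delta_0^2 n^{\mu})+q_n$, so the head contributes $\lesssim n^{-\mu}+q_n\rho^{-M_n}$; for $m>M_n$, \eqref{condition 1, proposition: design of Zn} gives a tail $\lesssim(\rho_0/\rho)^{M_n}$. Choosing the constant in $M_n$ large enough (depending on $\rho,\rho_0,\alpha,l^*$ — this is the source of the dependence $\bar\gamma=\bar\gamma(\rho)$) makes $(\rho_0/\rho)^{M_n}=o\big((n\nu[n,\infty))^{2l^*}\big)$; the hypothesis $\mu>2l^*(\alpha-1)$ makes $n^{-\mu}=o\big((n\nu[n,\infty))^{2l^*}\big)$; and taking $\gamma<\bar\gamma$ small enough makes $q_n\rho^{-M_n}=o\big((n\nu[n,\infty))^{2l^*}\big)$. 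Summing over the finitely many $k<l^*$ against $\P(\mathcal{D}_n=k)\le1$ gives $\E^{\P}\big[Z_n^2\mathbf{I}_{\{\mathcal{D}_n<l^*\}}\big]=o(\P^2(A_n))$, finishing the argument.

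I expect the main obstacle to be precisely this last step: orchestrating the two hypotheses over the two ranges of $m$ — the tail needs~\eqref{condition 1, proposition: design of Zn}'s geometric-in-$m$ decay to remain summable against the growing weights $\rho^{-m}$, while the head needs~\eqref{condition 2, proposition: design of Zn}'s extra $n^{-\mu}$ factor for genuine smallness in $n$ — and quantifying how small $\gamma$ must be chosen so that the small-jump excursion probability $q_n$ beats $\rho^{-M_n}\cdot(n\nu[n,\infty))^{2l^*}$, an acyclic chain of constant selections $\rho,\rho_0,\alpha,l^*\rightsquigarrow M_n\rightsquigarrow\bar\gamma$. A secondary technical point is verifying that $A$ is bounded away from $\mathbb{D}_{<l^*}$ with $\mathbf{C}_{l^*}(A^\circ)>0$, so that Results~\ref{result: LD of Levy, one-sided}--\ref{result: LD of Levy, two-sided} deliver the two-sided estimate $\P(A_n)\asymp(n\nu[n,\infty))^{l^*}\asymp\P(B^\gamma_n)$ that ties the whole bound together.
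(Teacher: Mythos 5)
Your proof is correct and follows the paper's outer structure (change of measure from $\Q_n$ to $\P$, the decomposition of $\E^{\Q_n}[L_n^2]$ by whether $\mathcal{D}(\bar J_n)\ge l^*$, and the handling of the $\ge l^*$ piece via condition~\eqref{condition 1, proposition: design of Zn} and Poisson moments), but you take a genuinely different route on the delicate $\{\mathcal{D}(\bar J_n)<l^*\}$ term. The paper splits that term by whether $\bar X_n\in A^\Delta$, then applies H\"older's inequality with conjugate exponents $(p,q)$ chosen so that $\rho_0^{1/q}<\rho$, and invokes Lemma~\ref{lemma: LD, event A Delta}; the exponent $p$ then dictates how small $\bar\gamma$ must be (via $J_\gamma+l^*-1>2l^*p$). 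You instead truncate the debiasing series at $M_n=\Theta(\log n)$: the head ($m\le M_n$) is controlled pointwise in $m$ by adding the excursion bound $q_n=\P(\sup_{[0,1]}\bar\Xi_n>\Delta_0)$ to the $n^{-\mu}$ bound from~\eqref{condition 2, proposition: design of Zn}, the tail ($m>M_n$) by~\eqref{condition 1, proposition: design of Zn}, and the cutoff constant (hence $\bar\gamma$) is chosen to depend on $\rho,\rho_0,\alpha,l^*$. Both approaches yield the same acyclic chain $\rho,\rho_0\rightsquigarrow(\text{H\"older exponent or cutoff rate})\rightsquigarrow\bar\gamma$, and both ultimately rest on the same large-deviations fact that the small-jump part reaching a fixed positive level has probability $O\bigl((n\nu[n,\infty))^N\bigr)$ with $N\uparrow\infty$ as $\gamma\downarrow0$. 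The H\"older route keeps everything in a single $m$-summation and avoids an $n$-dependent cutoff; your cutoff route is arguably more transparent about how the two hypotheses divide work across ranges of $m$. One thing to flag: your excursion bound $q_n\lesssim(n\nu[n,\infty))^N$ is asserted as a ``standard peeling/large-deviations estimate,'' but stating and proving it rigorously (applying Results~\ref{result: LD of Levy, one-sided}/\ref{result: LD of Levy, two-sided} to $\{\xi:\sup\xi>\Delta_0,\ \text{all upward jumps}<\gamma\}$, verifying the boundedness-away hypothesis, and in the two-sided case handling arbitrary downward jumps) is essentially the content of the paper's Lemma~\ref{lemma: LD, event A Delta}; your proposal correctly identifies what is needed there but leaves that lemma as a black box.
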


\subsection{Construction of $\hat Y^m_n$}
\label{subsec: algo, SBA, and debiasing technique}

In light of Proposition \ref{proposition: design of Zn},
our next goal is to design $\hat Y^m_n$'s
that provide sufficient approximations to 
$
Y^*_n = \mathbf{I}( M(n) \geq na )
$
and
satisfy the conditions \eqref{condition 1, proposition: design of Zn} and \eqref{condition 2, proposition: design of Zn}.

Recall the decomposition of $X(t) = \Xi_n(t) + J_n(t)$ in \eqref{def: process J n Xi n}.
Under both $\Q_n$ and $\P$,
the processes $\Xi_n$ and $J_n$ are independent, and $\Xi_n$ admits the law of $X^{<n\gamma}$, i.e., a L\'evy process with generating triplet $(c_X,\sigma,\nu|_{ (-\infty,n\gamma) })$.
This section discusses how, after sampling $J_n$ from $\Q_n$, we approximate the supremum of $\Xi_n$.
Specifically, on event $\{\mathcal D(\bar J_n) = k\}$, i.e., the process $J_n$ makes $k$ jumps over $[0,n]$, $J_n$ admits the form of $\zeta_k$ with
\begin{align}
    \zeta_k(t) = \sum_{i = 1}^k z_i \mathbf{I}_{[u_i,n]}(t)\qquad \forall t \in [0,n]
    \label{representation, process J n with k jumps}
\end{align}
for some $z_i \in [n\gamma,\infty)$ and $u_1 < u_2 < \cdots < u_k$.
This allows us to partition $[0,n]$ into $k+1$ disjoint intervals $[0,u_1),\ [u_1,u_2),\ldots,\ [u_{k-1},u_k),\ [u_k,1]$.
We adopt the convention $u_0 \equiv 0, u_{k+1} \equiv 1$ and set 
\begin{align}
    \notationdef{notation-partition-I-i}{I_i} = [u_{i-1},u_i)\quad \forall i \in [k],\qquad I_{k+1} = [u_k,1]. \label{def, partition of time line, I_i}
\end{align}
For $\zeta_k = \sum_{i = 1}^k z_i \mathbf{I}_{[u_i,n]}$,
define
\begin{align}
    \notationdef{notation-M-n-i}{M_{n}^{(i),*}(\zeta_k)} \delequal \sup_{ t \in I_i}\Xi_n(t) - \Xi_n(u_{i-1})
    \label{def: M n i, supremum on piece i}
\end{align}
as the supremum of the fluctuations of $\Xi_n(t)$ over $I_i$.
Define random function
\begin{align}
    \notationdef{notation-Y-*-n-zeta}{Y^*_n(\zeta_k)}
    \delequal
    \max_{i \in [k+1]}\mathbf{I}\Big( \Xi_n(u_{i-1}) + \zeta_k(u_{i-1}) + M^{(i),*}_n(\zeta_k) \geq na \Big),
    \label{def, random function Y * n}
\end{align}
and note that $Y^*_n(J_n) = \mathbf{I}( \sup_{t \in [0,n]}X(t) \geq na )$.

In theory, the representation \eqref{def, random function Y * n} provides an algorithm for the simulation of $\mathbf{I}(\sup_{t \in [0,n]}X(t) \geq na)$.
Nevertheless, the exact simulation of the supremum $M^{(i),*}_n(\zeta_k)$ is generally not available. 
Instead, we apply SBA introduced in Section~\ref{subsec: SBA, preliminary} to approximate $M^{(i),*}_n(\zeta_k)$, thus providing the construction of $\hat Y^m_n$.
Specifically, define
\begin{align}
    l^{(i)}_1 & = V^{(i)}_1\cdot (u_{i} - u_{i-1}); \label{defStickLength1} \\ 
    \notationdef{notation-stick-length-l-i-j}{l^{(i)}_j} & = V^{(i)}_j\cdot (u_{i} - u_{i-1} - l^{(i)}_1 - l^{(i)}_2 -\cdots - l^{(i)}_{j-1})\qquad \forall j \geq 2 \label{defStickLength2}
\end{align}
where each $V^{(i)}_j$ is an iid copy of Unif$(0,1)$.
That is, for each $i \in [k+1]$, the sequence $(l^{(i)}_j)_{j \geq 1}$ is defined under the recursion in \eqref{prelim: stick breaking, generate sticks l i}, with $T = u_{i} - u_{i-1}$ set as the length of $I_i$.
Then, conditioning on the values of $l^{(i)}_j$'s, we sample
\begin{align}
    \xi^{(i)}_j \sim \P\Big(\Xi_n(l^{(i)}_j) \in \ \cdot\ \Big),
    \label{def xi i j, algo without ARA}
\end{align}
i.e.,
$\xi^{(i)}_j$ is an independent copy of $\Xi_n(l^{(i)}_j)$, with all $\xi^{(i)}_j$ being independently generated.
Result~\ref{result: concave majorant of Levy} then implies
$
\big(\Xi_n(u_i) - \Xi_n(u_{i-1}),\ M^{(i),*}_n(\zeta_k)\big) \distequal \big(\sum_{j \geq 1}\xi^{(i)}_j,\ \sum_{j \geq 1}(\xi^{(i)}_j)^+\big)
$
for each $i \in [k+1]$.
Furthermore, by summing up only finitely many $\xi^{(i)}_j$'s, we define
\begin{align}
    \notationdef{notation-hat-M-n-i-m}{\hat M^{(i),m}_n(\zeta_k)} = \sum_{j = 1}^{m + \ceil{\log_2(n^d)}  }( \xi^{(i)}_j)^+
    \label{def hat M i m without ARA}
\end{align}
as an approximation to $M^{(i),*}_n(\zeta_k)$ defined in \eqref{def: M n i, supremum on piece i}. 
Here, $d > 0$ is another parameter of the algorithm.
For technical reasons, we add an extra $\ceil{ \log_2(n^d) }$ term in the summation in \eqref{def hat M i m without ARA},
which helps ensure that the algorithm achieves strong efficiency as $n \to \infty$ while only introducing a minor increase in the computational complexity.

Now, we are ready to present the design of the approximators $\hat Y^m_n$.
For $\zeta_k = \sum_{i = 1}^k z_i \mathbf{I}_{[u_i,n]}$,
define the random function
\begin{align}
    \notationdef{notation-approximation-hat-Y-m-n}{\hat Y^m_n(\zeta_k)} = \max_{i \in [k+1]}\mathbf{I}\bigg( \sum_{q = 1}^{i-1}\sum_{j \geq 0}\xi^{(q)}_j + \sum_{q = 1}^{i-1}z_q + \hat M^{(i)}_n(\zeta_k) \geq na \bigg).
    \label{def hat Y m n without ARA}
\end{align}
Here, note that $\sum_{q = 1}^{i-1}z_q = \zeta_k(u_{i-1})$.
As a high-level description, the algorithm proceeds as follows.
After sampling $J_n$ from the importance sampling distribution $\Q_n$ defined in \eqref{def: IS distribution Qn},
we plug $\hat Y^m_n(J_n)$ into $Z_n$ defined in \eqref{def: estimator Zn},
which in turn allows us to simulate ${L_n} = Z_n \frac{d\P}{d\Q_n}$ as the importance sampling estimator under $\Q_n$.
\begin{remark}
At first glance, one may get the impression that the simulation of $\hat Y^m_n$ involves the summation of infinitely many elements in 
$\sum_{q = 1}^{i-1}\sum_{j \geq 0}\xi^{(q),m}_j$.
Fortunately, the truncation index $\tau$ in $Z_n$ (see \eqref{def: estimator Zn}) is almost surely finite.
Therefore, when running the algorithm in practice, after $\tau$ is decided, there is no need to simulate any $\hat Y^m_n$ beyond $m \leq \tau$.
Given the construction of $\hat M^{(i),m}_n(\zeta_k)$ in \eqref{def hat M i m without ARA},
the simulation of $\hat Y^m_n(\zeta_k)$ only requires (for each $i \in [k+1]$)
$
\xi^{(i)}_1,\xi^{(i)}_2,\ldots,\xi^{(i)}_{ \tau + \ceil{ \log_2(n^d) } },
$
as well as the sum $\sum_{j \geq \tau + \ceil{ \log_2(n^d) } + 1 }\xi^{(i)}_j$.
Furthermore, conditioning on the value of $u_i - u_{i-1} - \sum_{j = 1}^{\tau + \ceil{ \log_2(n^d) }}\xi^{(i)}_j = t$,
the sum $\sum_{j \geq \tau + \ceil{ \log_2(n^d) } + 1 }\xi^{(i)}_j$ admits the law of $\Xi_n(t)$ (see Result~\ref{result: concave majorant of Levy}).
This allows us to simulate $\sum_{j \geq \tau + \ceil{ \log_2(n^d) } + 1 }\xi^{(i)}_j$ in one shot.
\end{remark}

Note that to implement the importance sampling algorithm and ensure strong efficiency, the following tasks still remain to be addressed.
\begin{enumerate}[$(i)$]
    \item 
        As mentioned above, the evaluation of $\hat Y^m_n(J_n)$ requires the ability to first sample $J_n$ from the importance sampling distribution $\Q_n$ defined in \eqref{def: IS distribution Qn}.
        We address this in Algorithm~\ref{algoSampleJnFromQ} proposed in Section~\ref{subsec: sampling from IS distribution Qn}.
        In summary, the simulation algorithm of estimators $L_n$ is detailed in Algorithm~\ref{algoISnoARA}. 

    \item 
        The strong efficiency of the proposed algorithm needs to be justified by verifying the conditions in Proposition~\ref{proposition: design of Zn}.
        This will be done in Section~\ref{subsec: strong efficiency and complexity} by establishing Theorem~\ref{theorem: strong efficiency without ARA}.

    \item 
        Simulating $\xi^{(i)}_j$'s requires the exact simulation of $X^{<n\gamma}(t)$, which may not be computationally feasible in certain cases.
        To address this challenge, Section~\ref{subsec: algo, ARA, SBA, and debiasing technique} proposes Algorithm~\ref{algoIS}, which builds upon Algorithm~\ref{algoISnoARA} and incorporates another layer of approximation via ARA.
\end{enumerate}

\subsection{Sampling from $\P(\ \cdot\ | B^\gamma_n)$}
\label{subsec: sampling from IS distribution Qn}

In this section, we revisit the task of sampling from $\P(\ \cdot\ |B^\gamma_n)$,
which is at the core of the implementation of the importance sampling distribution $\Q_n$ in \eqref{def: IS distribution Qn}.

Recall that under $\P$, the process $J_n$ is a compound Poisson process with generating triplet $(0,0,\nu|_{[n\gamma,\infty)})$.
More precisely, let $\widetilde N_n(\cdot)$ be a Poisson process with rate $\nu[n\gamma,\infty)$,
and we use $(S_i)_{i \geq 1}$ to denote the arrival times of jumps in $\widetilde N_n(\cdot)$.
Let 
 $(W_i)_{i \geq 1}$ be a sequence of iid random variables from 
    $$\nu^{\text{normalized}}_n(\cdot) = \frac{ \nu_n(\cdot) }{\nu[n\gamma,\infty)},\qquad \nu_n(\cdot) = \nu\big( \cdot \cap [n\gamma,\infty) \big)$$
and let $W_i$'s be independent of $\widetilde N_n(\cdot)$.
Under $\P$, we have
$$
J_n(t) \distequal \sum_{i = 1}^{ \widetilde N_n(t) }W_i
    =
    \sum_{i \geq 1}W_i\mathbf{I}_{ [S_i,\infty) }(t)\qquad \forall t \geq 0.
$$
Furthermore, for each $k \geq 0$, conditioning on $\{ \widetilde{N}_n(n) = k\}$,
the law of $S_1,\ldots,S_k$ is equivalent to that of the order statistics of $k$ iid samples from Unif$(0,n)$,
and $W_i$'s are still independent of $S_i$'s with the law unaltered. 
Therefore, the sampling of $J_n$ from $\P(\ \cdot\ |B^\gamma_n)$ can proceed as follows.
We first generate $k$ from the distribution of Poisson$( n \nu[n\gamma,\infty) )$, conditioning on $k \geq l^*$.
Then, independently, we generate $S_1,\cdots,S_k$ as the order statistics of $k$ iid samples from Unif$(0,n)$,
and $W_1,\cdots,W_k$ as iid samples of law $\nu^{\text{normalized}}_n(\cdot)$.
It is worth mentioning that the sampling of $W_i$ can be addressed with the help of the inverse measure.
Specifically, define $Q^{\leftarrow}_n (y)\delequal{} \inf\{s > 0: \nu_n[s,\infty) < y\}$
as the inverse of $\nu_n$, and observe that
    $$y \leq \nu_n[s,\infty)\qquad \Longleftrightarrow \qquad Q^{\leftarrow}_n (y) \geq s.$$
More importantly, for $U \sim \text{Unif}(0,\nu_n[n\gamma,\infty))$,
the law of $Q^\leftarrow_n(U)$ is $\nu^\text{normalized}_n(\cdot)$.
This leads to the steps detailed in Algorithm~\ref{algoSampleJnFromQ}.

\begin{algorithm}[H]
  \caption{Simulation of $J_n$ from $\P (\ \cdot\ | B^\gamma_n)$}\label{algoSampleJnFromQ}
  \begin{algorithmic}[1]
    \Require $n \in \mathbb{N}, l^* \in \mathbb{N}, \gamma > 0$, the Lévy measure $\nu$.

\State Sample $k$ from a Poisson distribution with rate $n\nu[n\gamma,\infty)  $ conditioning on $k \geq l^*$
\State Simulate $\Gamma_1,\cdots,\Gamma_k \stackrel{\text{iid}}{\sim} Unif\big(0,\nu_n[n\gamma,\infty)\big)$
\State Simulate $U_1,\cdots,U_k \stackrel{\text{iid}}{\sim} Unif(0,n)$
\State \textbf{Return } $J_n = \sum_{i = 1}^k Q^{\leftarrow}_n(\Gamma_i)\mathbf{I}_{[U_i,n]}$
    
  \end{algorithmic}
\end{algorithm}

\subsection{Strong Efficiency and Computational Complexity}
\label{subsec: strong efficiency and complexity}

With all the discussions above, we propose Algorithm~\ref{algoISnoARA} for rare-event simulation of $\P(A_n)$.
Specifically, here is a list of the parameters of the algorithm.
\begin{itemize}
    \item $\gamma \in (0,b)$: the threshold in $B^\gamma$ defined in \eqref{def set B gamma},
    \item $w \in (0,1)$: the weight of the defensive mixture in $\Q_n$; see \eqref{def: IS distribution Qn},
    \item $\rho \in (0,1)$: the geometric rate of decay for $\P(\tau \geq m)$ in \eqref{def: estimator Zn},
    \item $d > 0$: determining the $\log_2(n^d)$ term in \eqref{def hat M i m without ARA}.
\end{itemize}
The choice of $w \in (0,1)$ won't affect the strong efficiency of the algorithm.
Meanwhile, 
under proper parametrization,
Algorithm~\ref{algoISnoARA} meets conditions \eqref{condition 1, proposition: design of Zn} and \eqref{condition 2, proposition: design of Zn} stated in Proposition~\ref{proposition: design of Zn} and attains strong efficiency.
This is verified in Theorem~\ref{theorem: strong efficiency without ARA}.

\begin{theorem}
    \label{theorem: strong efficiency without ARA}
    \linksinthm{theorem: strong efficiency without ARA}
Let $d > \max\{2,\ 2l^*(\alpha - 1)\}$ and $w \in (0,1)$.
There exists $\rho_0\in (0,1)$ such that the following claim holds:
Given $\rho \in (\rho_0,1)$, there exists $\bar \gamma \in (0,b)$ such that
Algorithm~\ref{algoISnoARA} is \textbf{unbiased and strongly efficient} under any $\gamma \in (0,\bar\gamma)$.
\end{theorem}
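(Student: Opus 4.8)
The plan is to deduce Theorem~\ref{theorem: strong efficiency without ARA} from Proposition~\ref{proposition: design of Zn}. Algorithm~\ref{algoISnoARA} produces exactly the estimators $L_n=Z_n\,\tfrac{d\P}{d\Q_n}$ with $Z_n$ as in \eqref{def: estimator Zn} and $\hat Y^m_n$ the stick-breaking approximators of \eqref{def hat Y m n without ARA}, so it suffices to exhibit constants $C_0>0$, $\rho_0\in(0,1)$, $\bar m\in\mathbb N$ and some $\mu>2l^*(\alpha-1)$ for which conditions \eqref{condition 1, proposition: design of Zn} and \eqref{condition 2, proposition: design of Zn} hold for this $\hat Y^m_n$; the hypothesis $d>\max\{2,\,2l^*(\alpha-1)\}$ is what makes such constants available. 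Given this, Proposition~\ref{proposition: design of Zn} yields the asserted $\rho_0$ and, for each $\rho\in(\rho_0,1)$, the threshold $\bar\gamma\in(0,b)$.

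First I would record two structural facts about the coupling behind Result~\ref{result: concave majorant of Levy}. For every deterministic $\zeta_k$, $\hat M^{(i),m}_n(\zeta_k)=\sum_{j=1}^{m+\ceil{\log_2(n^d)}}(\xi^{(i)}_j)^+$ is a partial sum of the nonnegative series whose full sum is $M^{(i),*}_n(\zeta_k)$, so $\hat M^{(i),m}_n(\zeta_k)\uparrow M^{(i),*}_n(\zeta_k)$ and hence $\hat Y^m_n(\zeta_k)\uparrow Y^*_n(\zeta_k)$ with $\hat Y^m_n(\zeta_k)\le Y^*_n(\zeta_k)$. Moreover, since $\sum_{q<i}\sum_{j\ge1}\xi^{(q)}_j+\sum_{q<i}z_q+M^{(i),*}_n(\zeta_k)=\sup_{t\in I_i}X(t)\le M(n)$ and $\hat M^{(i),m}_n\le M^{(i),*}_n$, we get $\hat Y^m_n(J_n)=1\Rightarrow M(n)\ge na$. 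Combining the two, $\{\hat Y^m_n(J_n)\neq Y^*_n(J_n)\}=\{Y^*_n=1,\ \hat Y^m_n=0\}\subseteq\{M(n)\ge na\}$. Since $\bar X_n\notin A^\Delta$ forces $M(n)<n(a-\Delta)<na$ for $\Delta>0$, the event in \eqref{condition 2, proposition: design of Zn} is empty, so that condition holds vacuously with any $\mu$, in particular with some $\mu>2l^*(\alpha-1)$.

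The substance is \eqref{condition 1, proposition: design of Zn}. On $\{\mathcal D(\bar J_n)=k\}$ write $J_n=\zeta_k=\sum_{i\le k}z_i\I_{[u_i,n]}$, inducing the partition $I_1,\dots,I_{k+1}$ of $[0,n]$ and, per piece, the stick-breaking sequence $(\xi^{(i)}_j)_j$ of independent $X^{<n\gamma}$-increments. Setting $c_i\delequal na-\Xi_n(u_{i-1})-\zeta_k(u_{i-1})$ and $R^{(i)}_{n,m}\delequal M^{(i),*}_n-\hat M^{(i),m}_n=\sum_{j>m+\ceil{\log_2(n^d)}}(\xi^{(i)}_j)^+\ge0$, one checks that on the discrepancy event some piece $i$ satisfies $0\le c_i-\hat M^{(i),m}_n\le R^{(i)}_{n,m}$; a union bound over the $k+1$ pieces reduces the task to a per-piece bound $\le C_0\rho_0^m$ uniform in $n$. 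For each piece and any $\epsilon_i>0$, the per-piece probability is at most $\P(\hat M^{(i),m}_n\in[c_i-\epsilon_i,\,c_i))+\P(R^{(i)}_{n,m}>\epsilon_i)$. The second term I would control by Markov's inequality and a moment bound on the SBA remainder: from $\E[\sum_{j>m'}l^{(i)}_j]=|I_i|\,2^{-m'}\le |I_i|\,2^{-m}n^{-d}$ with $m'=m+\ceil{\log_2(n^d)}$ (this is precisely what the $\ceil{\log_2(n^d)}$ offset buys), Jensen, and $\E|X^{<n\gamma}(t)|^p\le C(n)\,t$ for a fixed $p\in(\beta\vee1,\,2)$ with $C(n)$ at most polynomial in $n$ (the only unbounded contribution being $\int_{[1,n\gamma)}x^p\nu(dx)$, controlled by regular variation), one obtains $\E R^{(i)}_{n,m}\le C'\,|I_i|^{1/p}\,2^{-m/p}\,n^{(c_0-d)/p}$ with $c_0=(p-\alpha)^+<1$; balancing $\epsilon_i$ against this (with $\epsilon_i$ also accounting for the anti-concentration constant below) and using $d>\max\{2,2l^*(\alpha-1)\}$ to render the resulting $n$-exponent nonpositive — while matching the polynomial rate $(n\nu[n,\infty))^{l^*}$ of $\P(A_n)$ used inside Proposition~\ref{proposition: design of Zn} — yields a per-piece bound $\le C_0\rho_0^m$ with $\rho_0=2^{-1/(2p)}$, and the choice of $\epsilon_i$ adapted to $|I_i|$ keeps the sum over the $k+1$ pieces comparable to $k+1$.

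The remaining ingredient, which I expect to be the main obstacle, is the anti-concentration estimate $\P\big(\hat M^{(i),m}_n\in[x,x+\epsilon)\big)\le C''\big(|I_i|^{-\lambda}\vee1\big)\,\epsilon$ uniformly in $x\in\R$ and $n$ — an $\epsilon$-Lipschitz bound for the law of a finite stick-breaking partial sum, i.e.\ essentially for the law of the running supremum of $X^{<n\gamma}$. This is where Assumption~\ref{assumption: holder continuity strengthened on X < z t} enters. Conditioning on the stick lengths and on all but one increment $\xi^{(i)}_j$, one has $\hat M^{(i),m}_n=(\xi^{(i)}_j)^++h$ with $h\ge0$ fixed; on $\{x-h>\epsilon\}$, Assumption~\ref{assumption: holder continuity strengthened on X < z t} bounds $\P(\xi^{(i)}_j\in[x-h-\epsilon,\,x-h])\le C\epsilon/((l^{(i)}_j)^\lambda\wedge1)$, while the complementary event $\{x-h\le\epsilon\}$ is pushed onto the remaining (finitely many) increments and handled recursively; the atom of $(\xi^{(i)}_j)^+$ at $0$ is dissolved by first grouping together enough increments that the residual has a Lebesgue density, again via Assumption~\ref{assumption: holder continuity strengthened on X < z t} and the infinite activity of $X^{<n\gamma}$. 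The delicate points are doing this uniformly in the truncation level $z=n\gamma$ (so $C''$ does not degrade with $n$), robustly against the atom at the origin, and absorbing the $|I_i|^{-\lambda}$ factor from short pieces when summing over the partition. I would isolate this as a standalone lemma — a quantitative transfer of Lipschitz continuity from the marginal $\mathscr{L}(X^{<z}(t))$ to $\mathscr{L}(\sup_{s\le t}X^{<z}(s))$ through the concave-majorant representation of Result~\ref{result: concave majorant of Levy}. Granting that lemma, the per-piece and union bounds give \eqref{condition 1, proposition: design of Zn}, and Proposition~\ref{proposition: design of Zn} completes the proof.
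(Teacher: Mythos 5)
Your high-level plan — derive the theorem by verifying conditions \eqref{condition 1, proposition: design of Zn} and \eqref{condition 2, proposition: design of Zn} of Proposition~\ref{proposition: design of Zn} — is the right plan, and two parts of it are sound. The observation that for the no-ARA approximators one has $\hat W^{(i),m}_n(\zeta_k)\le W^{(i),*}_n(\zeta_k)$ termwise, hence $\hat Y^m_n\le Y^*_n$, hence $\{\hat Y^m_n\neq Y^*_n\}\subseteq\{Y^*_n=1\}\subseteq\{\bar X_n\in A^\Delta\}$, is correct and actually simplifies the problem relative to the paper: the paper does not use this monotonicity and instead proves the theorem as the $\kappa=0$ special case of Theorem~\ref{theorem: strong efficiency}, invoking Proposition~\ref{proposition, intermediate 1, strong efficiency ARA} to bound a quantity that your argument shows is identically zero. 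Your reduction of \eqref{condition 1, proposition: design of Zn} to a per-piece bound on $\{\,0\le c_i-\hat M^{(i),m}_n\le R^{(i)}_{n,m}\,\}$, split by whether $R^{(i)}_{n,m}\le\epsilon_i$, is also essentially the paper's decomposition (compare the roles of $\widetilde W^{(i),m}_n$ and Lemma~\ref{lemma: algo, bound term 3}).

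The gap is in the ``remaining ingredient,'' and it is not just a technicality. The anti-concentration statement you isolate,
\[
\P\big(\hat M^{(i),m}_n\in[x,x+\epsilon)\big)\le C''\big(|I_i|^{-\lambda}\vee 1\big)\,\epsilon\quad\text{uniformly in }x\in\R,\ n,
\]
is false. The random variable $\hat M^{(i),m}_n=\sum_{j\le m'}(\xi^{(i)}_j)^+$ has a genuine atom at zero: $\P(\hat M^{(i),m}_n=0)=\E\big[\prod_{j\le m'}\P(\xi^{(i)}_j\le 0\mid l^{(i)}_j)\big]>0$ for every fixed $(n,m)$, and this probability does not vanish as $\epsilon\to 0$, so no bound of the form $C''\epsilon$ can hold near $x=0$. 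Your proposed fix — ``grouping together enough increments that the residual has a Lebesgue density'' — does not remove the atom: a sum of independent random variables each having a point mass at zero retains a point mass at zero of positive probability; taking positive parts and grouping cannot dissolve it. Moreover, even away from zero, a pure Lipschitz bound with a constant independent of $m,n$ is not what the SBA structure delivers: the short sticks make the marginal density bounds from Assumption~\ref{assumption: holder continuity strengthened on X < z t} degenerate as the stick length shrinks, and the correct statement (the paper's Lemma~\ref{lemma: algo, bound term 4}) is of the form ``Lipschitz with an $m$-growing constant, plus an additive error term decaying in $m$ and valid only for $y\ge y_0>0$.'' You also flag, correctly, that $\sum_i|I_i|^{-\lambda}$ is not controlled across the random partition — and this is indeed unresolved in your sketch.

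What your argument is missing is the dichotomy the paper builds into Proposition~\ref{proposition, intermediate 2, strong efficiency ARA}: instead of asking $\hat M^{(i),m}_n$ for anti-concentration near a point $c_i$ that may be arbitrarily small, split according to whether the ``past'' partial sum $\Xi_n(u_{i-1})+\sum_{q<i}z_q$ lands within distance $\delta^{m\alpha_2}$ of the threshold. In the affirmative case the anti-concentration comes from the atomless law of $\Xi_n(u_{i-1})=X^{<n\gamma}(u_{i-1})$ directly via Assumption~\ref{assumption: holder continuity strengthened on X < z t} (paying a separate small term for $u_{i-1}$ too short, controlled by $\P(u_1<n\delta^{m\alpha_1})$). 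In the complementary case the center of the interval one hands to $\hat M^{(i),m}_n$ is forced to be at least $\delta^{m\alpha_2}-\delta^m/\sqrt n>0$, which is exactly the regime where Lemma~\ref{lemma: algo, bound term 4} applies despite the atom. This case split, together with the $\eta$-threshold split into long and short sticks inside Lemma~\ref{lemma: algo, bound term 4}, is the mechanism that replaces your would-be uniform Lipschitz lemma. Without it, the per-piece bound you need for \eqref{condition 1, proposition: design of Zn} does not go through.
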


We defer the proof to Section~\ref{subsec: proof of propositions cond 1 and 2}.
In fact, in Section~\ref{subsec: algo, ARA, SBA, and debiasing technique} we propose Algorithm~\ref{algoIS}, which can be seen as an extended version of Algorithm~\ref{algoISnoARA}
with another layer of approximation.
The strong efficiency of Algorithm~\ref{algoISnoARA} follows directly from that of Algorithm~\ref{algoIS}
(i.e., by setting $\kappa = 0$ in the proof of Theorem~\ref{theorem: strong efficiency}).
The choices of $\bar \gamma$ and $\bar \rho$ that ensure strong efficiency are also specified at the end of Section~\ref{subsec: algo, ARA, SBA, and debiasing technique}.

\begin{algorithm}[H]
  \caption{Strongly Efficient Estimation of $\P (A_n)$}\label{algoISnoARA}
  \begin{algorithmic}[1]
    \Require $w \in (0,1),\ \gamma > 0,\ d > 0,\  \rho \in (0,1)$ as the parameters of the algorithm; $a,b>0$ as the characterization of the set $A$; $(c_X,\sigma,\nu)$ as the generating triplet of $X(t)$.

    \Statex
    \State Set $t_n = \ceil{\log_2(n^d)}$

    \Statex

    \If{$\text{Unif}(0,1) < w$} \Comment{Sample $J_n$ from $\Q_n$}
        \State Sample $J_n = \sum_{i = 1}^k z_i \mathbf{I}_{[u_i,n]}$ from $\P$
    \Else
        \State Sample $J_n = \sum_{i = 1}^k z_i \mathbf{I}_{[u_i,n]}$ from $\P(\ \cdot\ |B^\gamma_n)$ using Algorithm \ref{algoSampleJnFromQ}
    \EndIf
    \State Set $u_0 = 0, u_{k+1} = n$.
    
    \Statex 
    \State Sample $\tau \sim \text{Geom}(\rho)$ \Comment{Decide Truncation Index $\tau$}
    
   \Statex
    \For{$i = 1,2,\ldots,k+1$} \Comment{Stick-Breaking Procedure} 
        \For{$j = 1,2,\ldots,t_n + \tau$}
            \State Sample $V^{(i)}_j\sim \text{Unif}(0,1)$
            \State Set $l^{(i)}_j = V^{(i)}_j(u_{i} - u_{i+1} - l^{(i)}_1 - l^{(i)}_2 - \ldots - l^{(i)}_{j - 1})$
            \State Sample $\xi^{(i)}_j \sim \P\big(X^{<n\gamma}(l^{(i)}_j) \in \ \cdot\ \big)$
        \EndFor
        \State Set $l^{(i)}_{ t_n + \tau + 1 } = u_{i} - u_{i-1} - l^{(i)}_1 - l^{(i)}_2 - \ldots - l^{(i)}_{t_n + \tau}$
        \State Sample $\xi^{(i)}_{t_n + \tau + 1} \sim \P\big(X^{<n\gamma}(l^{(i)}_{t_n + \tau + 1}) \in \ \cdot\ \big)$
    \EndFor
    
    \Statex
    \For{$m = 1,\cdots,\tau$} \Comment{Evaluate $\hat Y^m_{n}$}
        \For{$i = 1,2,\ldots,k+1$}
            \State Set $\hat{M}^{(i),m}_n = \sum_{q = 1}^{i-1}\sum_{j =1}^{t_n + \tau + 1}\xi^{(q)}_{j} + \sum_{q =1}^{i-1}z_q + \sum_{j =1}^{ t_n + m}(\xi^{(i)}_{j})^+   $
        \EndFor 
        \State Set $\hat Y^m_{n} = \mathbf{I}\big\{ \max_{i = 1,\ldots,k+1} \hat{M}^{(i),m}_n   \geq na  \big\}$
    \EndFor

\State Set $Z_n = \hat Y^1_n + \sum_{m = 2}^\tau ( \hat Y^m_{n} - \hat Y^{m-1}_{n} )\big/ \rho^{m-1}$ \Comment{Return the Estimator $L_n$}
\If{ $ \max_{i =1,\cdots,k}z_i > b $ }
    \State \textbf{Return} $L_n = 0$.
\Else 
    \State Set $\lambda_n = n\nu[n\gamma,\infty),\ p_n = 1 - \sum_{l = 0}^{l^* - 1}e^{-\lambda_n}\frac{\lambda_n^l}{l!},\ I_n = \mathbf{I}\{ J_n \in B^\gamma_n \}$
    \State \textbf{Return} $L_n = Z_n/(w + \frac{1-w}{p_n}I_n)$
\EndIf 
    
  \end{algorithmic}
\end{algorithm}
\begin{remark}
To conclude, we add a remark regarding the computational complexity of Algorithm~\ref{algoISnoARA} under the goal of attaining a given level of relative error at a specified confidence level.
First, consider the case where the complexity of simulation of
$X^{<z}(t)$ scales linearly with $t$ (uniformly for all $z \in [z_0,\infty]$ for some constant $z_0$).
This is a standard since the number of jumps we expect to simulate over $[0,t]$ grows linearly with $t$.
Then, the complexity of the SBA steps at step 13 of Algorithm~\ref{algoISnoARA} also scales linearly with $n$, as
the stick lengths of $l^{(i)}_j$'s, in expectation, grow linearly with $n$ because we deal with the time horizon $[0,n]$ given the scale factor $n$.
Next, since the same law for the truncation index $\tau$ (see step 8 of Algorithm~\ref{algoISnoARA}) is applied for all $n$, the only other factor that is varying with $n$ is $t_n = \ceil{\log_2(n^d)}$ in the loop at step 10. 
The strong efficiency of the algorithm
then implies a computational complexity of order $O(n \cdot \log_2 n)$.
If we instead assume that the cost of simulating $X^{<z}(t)$ is also uniformly bounded for all $t$, then the overall complexity of Algorithm~\ref{algoISnoARA} is further reduced to $O(\log_2 n)$.

In comparison, the crude Monte Carlo method requires a number of samples that is inversely proportional to the target probability $\P(A_n) \approx O( 1/n^{l^*(\alpha - 1)} )$ (see Lemma~\ref{lemma: LD, events A n})
with $\alpha > 1$ being the heavy-tailed index in Assumption~\ref{assumption: heavy tailed process}
and $l^* \geq 1$ defined in \eqref{def l *}.
Hypothetically, assuming that the evaluation of $\mathbb{I}_{A_n}$ (which at least requires the simulation of $X(t)$ and $M(t)$) is computationally feasible at a cost
that scales linearly with $n$, we end up with a computational complexity of $O(n \cdot n^{l^*(\alpha - 1)})$ (compared to the $O(n \cdot \log_2 n)$ cost of our algorithm).
Similarly, if we assume that the cost of generating $\mathbb{I}_{A_n}$ is uniformly bounded for all $n$, then the complexity of the crude Monte-Carlo method is $O(n^{l^*(\alpha - 1)})$ (compared to the $O(\log_2 n)$ cost of our algorithm).
In summary, 
not only does the proposed importance sampling algorithm address Lévy processes with infinite activities that are not simulatable for crude Monte Carlo methods, but it also enjoys a significant improvement in terms of computational complexity, with the advantage becoming even more evident for multiple-jump events with large $l^*$.
\end{remark}

\subsection{Construction of $\hat Y^m_n$ with ARA}
\label{subsec: algo, ARA, SBA, and debiasing technique}

As stressed earlier, implementing Algorithm~\ref{subsec: strong efficiency and complexity} requires the ability to sample from $\P(X^{<n\gamma}(t) \in\ \cdot\ )$.
The goal of this section is to address the challenge
when the exact simulation of $X^{<n\gamma}(t)$ is not available.
The plan is to incorporate the Asmussen-Rosiński approximation (ARA) in \cite{asmussen2001approximations}
into the design of the approximation $\hat Y^m_n$ proposed in Section~\ref{subsec: algo, SBA, and debiasing technique}.

To be specific, let 
\begin{align}
    \notationdef{notation-kappa-n,m}{\kappa_{n,m}} \delequal \frac{\kappa^m}{n^r}\qquad \forall n \geq 1,\ m \geq 0
    \label{def: kappa n m}
\end{align}
where $\notationdef{algo-param-kappa}{\kappa} \in (0,1)$ and $\notationdef{algo-param-r}{r} > 0$ are two additional parameters of our algorithm.
As a convention, we set $\kappa_{n,-1} \equiv 1$.
Without loss of generality, we consider $n$ large enough such that $n\gamma > 1 = \kappa_{n,-1}$.
For the L\'evy process $\Xi_n = X^{<n\gamma}$ with the generating triplet $(c_X,\sigma,\nu|_{ (-\infty,n\gamma) })$,
consider the following decomposition (with $B(t)$ being a standard Brownian motion) 
\begin{align}
    \Xi_n(t) & = c_Xt + \sigma B(t) + 
    \underbrace{\sum_{s \leq t}\Delta X(s) \mathbf{I}\Big(\Delta X(s) \in (-\infty,-1]\cup [1,n\gamma)\Big)}_{ \delequal J_{n,-1}(t)}
    \label{decomp, Xi n}
    \\
    &\quad
    + \sum_{m \geq 0}\Bigg[
    \underbrace{\sum_{s \leq t}\Delta X(s) \mathbf{I}\Big(|\Delta X(s)| \in[\kappa_{n,m},\kappa_{n,m-1})\Big)
    -
    t \cdot \nu\Big( (-\kappa_{n,m-1},-\kappa_{n,m}] \cup [\kappa_{n,m},\kappa_{n,m-1})\Big)
    }_{ \delequal J_{n,m}(t) }
    \Bigg].
    \nonumber
\end{align}
Here, for any $m \geq 0$, $J_{n,m}$ is a martingale with $var[J_{n,m}(1)] = \bar \sigma^2(\kappa_{n,m-1}) - \bar \sigma^2(\kappa_{n,m})$
where
\begin{align}
    \notationdef{notation-bar-sigma-function}{\bar\sigma^2(c)} \delequal \int_{(-c,c)} x^2 \nu(dx)\qquad \forall c\in (0,1].
    \label{def bar sigma}
\end{align}
Generally speaking, the difficulty of implementing Algorithm~\ref{subsec: strong efficiency and complexity} lies in the exact simulation of the martingale $\sum_{m \geq 0}J_{n,m}$.
In particular, whenever we have $\nu((-\infty,0)\cup(0,\infty))= \infty$ for the L\'evy measure $\nu$,
the expected number of jumps in $\sum_{m \geq 0}J_{n,m}$ (and hence $X^{<n\gamma}$ and $X$) will be infinite over any time interval with positive length.
By applying ARA, our goal is to approximate the jump martingale $J_{n,m}$'s using Brownian motions, which yields a process that is amenable to exact simulation.
To do so, let $(W^m)_{m \geq 1}$ be a sequence of iid copies of standard Brownian motions, which are also independent of $B(t)$.
For each $m \geq 0$, define
\begin{align}
   \notationdef{process-breve-Xi-m-n}{\Breve{\Xi}^{m}_n(t)} & \delequal c_Xt + \sigma B(t) + \sum_{q = -1}^{m}J_{n,q}(t)
        +
        \sum_{q \geq m + 1}\sqrt{\bar \sigma^2(\kappa_{n,q-1}) - \bar \sigma^2(\kappa_{n,q})} \cdot W^{q}(t).
        \label{def breve Xi n m, ARA}
\end{align}
Here, the process $\Breve{\Xi}^{m}_n$ can be interpreted as an approximation to  $\Xi_n$,
where the jump martingale (with jump sizes under $\kappa_{n,m}$) is substituted by a standard Brownian motion with the same variance.
Note that for any $t > 0$, the random variable $\Breve{\Xi}^{m}_n(t)$ is exactly simulatable, as it is a convolution of a compound Poisson process with constant drift and a Gaussian random variable.

Based on the approximations $\breve \Xi^m_n$ in \eqref{def breve Xi n m, ARA},
we apply SBA and reconstruct $\hat M^{(i),m}_n$ (originally defined in \eqref{def hat M i m without ARA})
and
$\hat Y^m_n$ (originally defined in \eqref{def hat Y m n without ARA}) as follows.
Let $\zeta_k(t) = \sum_{i = 1}^k z_i \mathbf{I}_{[u_i,n]}(t)$ be a piece-wise step function with $k$ jumps over $(0,n]$, i.e., admitting the form in \eqref{representation, process J n with k jumps}.
Recall that the jump times in $\zeta_k$ leads to a partition of $[0,n]$ of $(I_i)_{i \in [k+1]}$ defined in \eqref{def, partition of time line, I_i}.
For any $I_i$, let the sequence $l^{(i)}_j$'s be defined as in \eqref{defStickLength1}--\eqref{defStickLength2}.
Next, conditioning on $(l^{(i)}_j)_{j \geq 1}$, one can sample $\notationdef{notation-increment-on-sticks-xi-i-j-m}{\xi^{(i),m}_j,\xi^{(i)}_j}$ as
\begin{align}
    \big(\xi^{(i)}_j,\xi^{(i),0}_j,\xi^{(i),1}_j,\xi^{(i),2}_j,\ldots) \distequal \Big(\Xi_n(l^{(i)}_j),\ \Breve{\Xi}^0_n(l^{(i)}_j),\ \Breve{\Xi}^1_n(l^{(i)}_j),\ \Breve{\Xi}^2_n(l^{(i)}_j),\ldots\Big).
    \label{def xi i m j, ARA plus SBA}
\end{align}
The coupling in \eqref{prelim: SBA, coupling} then implies
\begin{align}
    & \Big( 
         \Xi_n(u_i) - \Xi_n(u_{i-1}),\ 
         \sup_{t \in I_i}\Xi_n(t) - \Xi_n(u_{i-1}),\ 
         \breve \Xi^{0}_n(u_i) - \breve \Xi^{0}_n(u_{i-1}),\ 
         \sup_{t \in I_i}\breve \Xi^{0}_n(t) - \breve \Xi^{0}_n(u_{i-1}),
         \label{coupling, ARA plus SBA}
         \\
         &\qquad\qquad\qquad\qquad \qquad\qquad\qquad\qquad 
         \breve \Xi^{1}_n(u_i) - \breve \Xi^{1}_n(u_{i-1}),\ 
         \sup_{t \in I_i}\breve \Xi^{1}_n(t) - \breve \Xi^{1}_n(u_{i-1}),
    \ldots
    \Big)
    \nonumber
    \\
    &\qquad \distequal 
    \Big(
    \sum_{j \geq 1}\xi^{(i)}_j,\ \sum_{j \geq 1}(\xi^{(i)}_j)^+,\
    \sum_{j \geq 1}\xi^{(i),0}_j,\ \sum_{j \geq 1}(\xi^{(i),0}_j)^+,\
    \sum_{j \geq 1}\xi^{(i),1}_j,\ \sum_{j \geq 1}(\xi^{(i),1}_j)^+,\ldots  
    \Big).
    \nonumber
\end{align}
Now, we define
\begin{align}
    {\hat M^{(i),m}_n(\zeta_k)} = \sum_{j = 1}^{m + \ceil{\log_2(n^d)}  }( \xi^{(i),m}_j)^+
    \label{def hat M i m}
\end{align}
as an approximation to
$
{M_{n}^{(i),*}(\zeta_k)} = \sup_{ t \in I_i}\Xi_n(t) - \Xi_n(u_{i-1}) = \sum_{j \geq 1}(\xi^{(i)}_j)^+
$
using both ARA and SBA.
Compared to the original design in \eqref{def hat M i m without ARA},
the main difference in \eqref{def hat M i m} is that we substitute $\xi^{(i)}_j$ with $\xi^{(i),m}_j$, and the latter is exactly simulatable as, conditioning on the values of $l^{(i)}_j$'s, it admits the law of $\breve \Xi^m_n$.
Similarly, let
\begin{align}
    {\hat Y^m_n(\zeta_k)} = \max_{i \in [k+1]}\mathbf{I}\Big( \sum_{q = 1}^{i-1}\sum_{j \geq 0}\xi^{(q),m}_j + \sum_{q = 1}^{i-1}z_q + \hat M^{(i),m}_n(\zeta_k) \geq na \Big);
    \label{def hat Y m n}
\end{align}
Again, the main difference between \eqref{def hat Y m n} and \eqref{def hat Y m n without ARA} is that we incorporate ARA and substitute $\xi^{(i)}_j$'s with $\xi^{(i),m}_j$'s.

Plugging the design of $\hat Y^m_n(\zeta_k)$ in \eqref{def hat Y m n}
into the estimator $Z_n$ in \eqref{def: estimator Zn},
we propose Algorithm~\ref{algoIS} for rare-event simulation of $\P(A_n)$ when exact simulation of $X^{<n\gamma}$ is not available.
Below is a summary of the parameters of the algorithm.
\begin{itemize}
    \item $\gamma \in (0,b)$: the threshold in $B^\gamma$ defined in \eqref{def set B gamma},
    \item $w \in (0,1)$: the weight of the defensive mixture in $\Q_n$; see \eqref{def: IS distribution Qn},
    \item $\rho \in (0,1)$: the geometric rate of decay for $\P(\tau \geq m)$ in \eqref{def: estimator Zn},
    \item $\kappa \in [0,1),\ r > 0$: determining the truncation threshold $\kappa_{n,m}$ in \eqref{def: kappa n m},
    \item $d > 0$: determining the $\log_2(n^d)$ term in \eqref{def hat M i m}.
\end{itemize}
Theorem~\ref{theorem: strong efficiency} justifies that, under proper parametrization, Algorithm~\ref{algoIS} is unbiased and strongly efficient.

\begin{theorem}
\label{theorem: strong efficiency}
\linksinthm{theorem: strong efficiency}
Let $\mu > 2l^*(\alpha - 1)$ and $\beta_+ \in (\beta,2)$ where $\alpha > 1$ is the heavy-tail index and $\beta \in (0,2)$ is the Blumenthal-Getoor index in Assumption~\ref{assumption: heavy tailed process}.
Let $w \in (0,1)$ and
\begin{align}
    \kappa^{2 - \beta_+} < \frac{1}{2},
    \qquad 
    r(2 - \beta_+) > \max\{2, \mu - 1\},
    \qquad 
    d > \max\{2, 2\mu - 1\}.
    \label{choice of parameters, theorem: strong efficiency}
\end{align}
There exists $\rho_0 \in (0,1)$ such that the following claim holds:
Given $\rho \in (\rho_0,1)$, there exists $\bar \gamma \in (0,b)$ such that
Algorithm~\ref{algoIS} is \textbf{unbiased and strongly efficient} under any $\gamma \in (0,\bar\gamma)$.
\end{theorem}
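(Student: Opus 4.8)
The plan is to deduce Theorem~\ref{theorem: strong efficiency} from Proposition~\ref{proposition: design of Zn}: since Algorithm~\ref{algoIS} is exactly the implementation of the estimator $L_n = Z_n\,\frac{d\P}{d\Q_n}$ with $Z_n$ as in \eqref{def: estimator Zn} and the approximators $\hat Y^m_n$ as in \eqref{def hat Y m n}, it suffices to exhibit constants $C_0>0$, $\rho_0\in(0,1)$, $\bar m\in\mathbb N$ and the value $\mu>2l^*(\alpha-1)$ fixed in the theorem for which the two hypotheses \eqref{condition 1, proposition: design of Zn} and \eqref{condition 2, proposition: design of Zn} of the proposition hold. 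Granting this, the proposition produces, for every $\rho\in(\rho_0,1)$, a threshold $\bar\gamma(\rho)\in(0,b)$ such that the $(L_n)_{n\ge1}$ are unbiased and strongly efficient whenever $\gamma\in(0,\bar\gamma)$; unbiasedness in particular follows because the verified conditions also force the summability \eqref{condition, resultDebias} needed for the debiasing identity of Result~\ref{resultDebias} (so that $\tau$ is a.s.\ finite and the telescoping in $Z_n$ is valid). Everything therefore reduces to two conditional second-moment estimates for the discrepancy $\{Y^*_n\neq\hat Y^m_n\}$.

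The common ingredient is an error decomposition: on $\{\mathcal D(\bar J_n)=k\}$ the path $J_n$ has the form $\zeta_k$ of \eqref{representation, process J n with k jumps}, inducing the partition $(I_i)_{i\in[k+1]}$ of $[0,n]$, and $\hat Y^m_n(\zeta_k)$ can disagree with $Y^*_n(\zeta_k)=\mathbf{I}(\sup_{[0,n]}X\ge na)$ only through two sources of error, piece by piece: the SBA truncation error, i.e.\ the leftover tail $\sum_{j>m+\ceil{\log_2 n^d}}(\xi^{(i)}_j)^+$ and its endpoint analogue, whose conditional second moment given the stick lengths is controlled by $\E[((\Xi_n(t))^+)^2]$ at the residual stick length $t\le|I_i|\,2^{-m}n^{-d}$; and the ARA substitution error $\breve\Xi^m_n-\Xi_n$, which is a mean-zero process with $\mathrm{Var}\big(\breve\Xi^m_n(t)-\Xi_n(t)\big)=2t\,\bar\sigma^2(\kappa_{n,m})$. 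Using the Blumenthal--Getoor bound $\bar\sigma^2(c)\le c^{2-\beta_+}\int_{(-1,1)}|x|^{\beta_+}\nu(dx)$ for $c\le1$, together with the fact that the relevant positive-fluctuation second moments of $\Xi_n=X^{<n\gamma}$ grow only polynomially in $n$ and are $O(t)$ as $t\downarrow0$, and Doob's $L^2$ inequality to exploit that the ARA errors are mean-zero and independent across pieces (so the prefix sums contribute only linearly in $k$), I would show that the aggregated error $\mathcal E_{n,m}$ (maximised over pieces and summed along prefixes) satisfies $\E[\mathcal E_{n,m}^2\mid\mathcal D(\bar J_n)=k]\le C(k+1)\big(2^{-m}n^{c_\alpha-d}+\kappa^{m(2-\beta_+)}n^{1-r(2-\beta_+)}\big)$ for a constant $c_\alpha$ depending on $\alpha$. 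Condition \eqref{condition 2, proposition: design of Zn} then follows for $k\le l^*-1$ (so that $k+1\le l^*$ is a constant) by Chebyshev: on $\{\bar X_n\notin A^\Delta\}$ a flip forces $\mathcal E_{n,m}\ge n\Delta$, hence the conditional probability is at most $\E[\mathcal E_{n,m}^2\mid k]/(n^2\Delta^2)$, and the exponent inequalities $r(2-\beta_+)>\max\{2,\mu-1\}$ and $d>\max\{2,2\mu-1\}$ in \eqref{choice of parameters, theorem: strong efficiency} are precisely calibrated to make this $\le C_0\rho_0^m/(\Delta^2 n^\mu)$ once $\rho_0:=\tfrac12\vee\kappa^{2-\beta_+}$ (which is $<1$ because $\kappa^{2-\beta_+}<\tfrac12$) absorbs the geometric factors $2^{-m}$ and $\kappa^{m(2-\beta_+)}$.

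For condition \eqref{condition 1, proposition: design of Zn} the Chebyshev bound alone is insufficient because $k$ is unbounded, so I would instead split $\{Y^*_n\neq\hat Y^m_n\}\subseteq\{\mathcal E_{n,m}>\delta_m\}\cup\{M(n)\in[na-\delta_m,\,na+\delta_m]\}$ for a geometrically decaying $\delta_m$, bound the first event as above, and control the second through a quantitative smoothness estimate for the law of the running supremum: $\P\big(M(n)\in[x,x+\delta]\ \big|\ \mathcal D(\bar J_n)=k\big)\le C(k+1)\delta$ for all $x\in\R$, $\delta\in(0,1]$, uniformly in large $n$. This is where Assumption~\ref{assumption: holder continuity strengthened on X < z t} enters, via the concave-majorant representation of Result~\ref{result: concave majorant of Levy}: on each piece $M^{(i),*}_n=\sum_j(\xi^{(i)}_j)^+$ with $\xi^{(i)}_1\distequal\Xi_n(l^{(i)}_1)=X^{<n\gamma}(l^{(i)}_1)$, so conditioning on everything except $\xi^{(i)}_1$ transfers the Lipschitz-in-$\delta$ control of $\mathscr L\big(X^{<n\gamma}(t)\big)$ from Assumption~\ref{assumption: holder continuity strengthened on X < z t} to $M^{(i),*}_n$; a union over the $k+1$ pieces gives the claim, the $1/(t^\lambda\wedge1)$ factor is harmless because, $na$ being large, only pieces of length bounded below can contribute to a band around $na$, and the atom of $(\xi^{(i)}_1)^+$ at $0$ is removed by peeling off $\xi^{(i)}_2,\xi^{(i)}_3,\dots$ recursively. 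I expect this supremum-smoothness estimate---precisely the novel Lipschitz characterisation of the law of Lévy processes advertised in the introduction---to be the main obstacle; once it is in hand, balancing $\delta_m$ against the two bounds yields \eqref{condition 1, proposition: design of Zn} with a suitable $\bar m$.

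Finally I would verify that $C_0$, $\rho_0$, $\bar m$ can be chosen uniformly over $\gamma\in(0,b)$---the variances $\int_{1}^{n\gamma}x^2\nu(dx)$ entering the error bounds only shrink as $\gamma\downarrow0$, and Assumption~\ref{assumption: holder continuity strengthened on X < z t} applies as soon as $n\gamma\ge z_0$, which holds for all large $n$---so that the only $\rho$-dependence of the admissible threshold is the one furnished by Proposition~\ref{proposition: design of Zn}. With $\rho_0$ so fixed, the proposition delivers Theorem~\ref{theorem: strong efficiency}; taking $\kappa=0$ collapses the ARA layer and recovers Theorem~\ref{theorem: strong efficiency without ARA} as the special case in which $\Xi_n$ is simulated exactly.
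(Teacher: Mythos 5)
Your high-level plan --- reduce to verifying conditions \eqref{condition 1, proposition: design of Zn} and \eqref{condition 2, proposition: design of Zn} of Proposition~\ref{proposition: design of Zn}, control the first via a Chebyshev/Doob bound on the SBA+ARA error, and handle condition \eqref{condition 1, proposition: design of Zn} with an error-versus-anticoncentration split --- is exactly the structure of the paper's proof, and your error-moment bound corresponds to the paper's Proposition~\ref{proposition, intermediate 1, strong efficiency ARA} (which anchors both $W^{(i),*}_n$ and $\hat W^{(i),m}_n$ against the intermediate, ARA-free but SBA-truncated quantity $\widetilde W^{(i),m}_n$). The verification of condition \eqref{condition 2, proposition: design of Zn} by noting that a flip together with $\bar X_n\notin A^\Delta$ forces a discrepancy of order $n\Delta$ on some piece is also the paper's argument.

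The genuine gap is in the anti-concentration step. You replace the paper's Proposition~\ref{proposition, intermediate 2, strong efficiency ARA} with a \emph{uniform-in-$n$, Lipschitz-in-$\delta$} bound $\P(M(n)\in[x,x+\delta]\,|\,\mathcal D(\bar J_n)=k)\le C(k+1)\delta$. This is strictly stronger than what the paper proves, and the obstacles you flag as surmountable --- the atom of $(\xi^{(i)}_1)^+$ at $0$ and the blow-up of the $1/(t^\lambda\wedge 1)$ factor as the stick length $l^{(i)}_1\downarrow 0$ --- are precisely why the paper does \emph{not} establish a Lipschitz bound. Instead, Lemma~\ref{lemma: algo, bound term 4} yields, for the \emph{truncated} stick sum, a bound of the form $C_1(m,n)\cdot\delta + C_2(m,n)$ in which $C_1(m,n)$ grows like $(m+\log n)\,n^{\alpha_4\lambda}/\delta^{m\alpha_3\lambda}$ (the short-stick Lipschitz constant is \emph{not} uniform) and $C_2(m,n)$ is an additive non-Lipschitz term of order $(m^2+\log^2 n)\delta^{m\alpha_3/2}/(y_0 n^{\alpha_4/2})$ that captures exactly the atom at zero and is only controllable because $y_0$ is bounded below by $\delta^{m\alpha_2}/2$. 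Proposition~\ref{proposition, intermediate 2, strong efficiency ARA} then combines this with the Lipschitz control of $X^{<n\gamma}(u_{i-1})$ from Assumption~\ref{assumption: holder continuity strengthened on X < z t} --- after discarding $\{u_1<n\delta^{m\alpha_1}\}$ --- to obtain a bound that is geometric in $m$ for the specific band width $\delta^m/\sqrt n$, not a Lipschitz bound for a free $\delta$. This is why the paper's $\rho_0$ must satisfy the whole battery of inequalities \eqref{proofChooseRhoTimeBound}--\eqref{proofChooseRhoConstantBound} involving $\lambda,\alpha_1,\alpha_2,\alpha_3$, rather than just $\rho_0=\tfrac12\vee\kappa^{2-\beta_+}$ as you propose; the extra constraints are the price of balancing the short-stick and atom corrections against the band width. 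As written, your sketch neither proves the posited Lipschitz estimate nor substitutes an adequate weaker form of it, so condition \eqref{condition 1, proposition: design of Zn} is not actually established.
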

In Section~\ref{subsec: proof of propositions cond 1 and 2} we provide the proof, the key arguments of which are the verification of conditions \eqref{condition 1, proposition: design of Zn} and \eqref{condition 2, proposition: design of Zn} in Proposition~\ref{proposition: design of Zn}.
Here, we specify the choices of the parameters. 
First, pick $\alpha_3 \in (0,\frac{1}{\lambda}),\ \alpha_4 \in (0, \frac{1}{2\lambda})$
where $\lambda > 0$ is the constant in Assumption~\ref{assumption: holder continuity strengthened on X < z t}.
Next, pick $\alpha_2 \in (0, \frac{\alpha_3}{2}\wedge 1)$ and $\alpha_1 \in (0,\frac{\alpha_2}{\lambda})$.
Also, fix $\delta \in (1/\sqrt{2},1)$.
This allows us to pick $\rho_0 \in (0,1)$ such that
$$
\rho_0 > \max\bigg\{
    \delta^{\alpha_1}, \  \frac{\kappa^{2 - \beta_+}}{\delta^2},\  \frac{1}{\sqrt{2}\delta},\ 
    \delta^{\alpha_2 - \lambda\alpha_1},\ \delta^{1 - \lambda\alpha_3}, \delta^{-\alpha_2 + \frac{\alpha_3}{2}}
    \bigg\}.
$$
After picking $\rho \in (\rho_0, 1)$,
one can find some $q > 1$ such that $\rho_0^{1/q} < \rho$.
Let $p > 1$ be such that $\frac{1}{p} + \frac{1}{q} = 1$.
Let $\Delta > 0$ be small enough such that $a - \Delta > (l^*-1)b$.
Then, we pick $\bar\gamma \in (0,b)$ small enough such that
$$
    \frac{a - \Delta - (l^* - 1)b}{\bar\gamma} + l^* - 1 > 2l^*p.
$$
Again, the details of the parameter choices can be found at the beginning of Section~\ref{sec: proof}.
It is also worth mentioning that, by setting $\kappa = 0$, Algorithm~\ref{algoIS} would reduce to Algorithm~\ref{algoISnoARA}, as $\xi^{(i),m}_j$'s in \eqref{coupling, ARA plus SBA} would reduce to $\xi^{(i)}_j$'s in \eqref{def xi i j, algo without ARA}; in other words, the ARA mechanism is effective only if the truncation threshold $\kappa_{n,m} = \kappa^m/n^r > 0$.
As a result, Theorem~\ref{theorem: strong efficiency without ARA} follows directly from Theorem~\ref{theorem: strong efficiency} by setting $\kappa = 0$.

\begin{remark}
While Algorithm~\ref{algoIS} terminates within finite steps almost surely,
its computational complexity may not be finite in expectation.
This is partially due to the implementation of ARA
as
we approximate the jump martingale $J_{n,m}(t)$ in \eqref{decomp, Xi n}
using a independent Brownian motion term in \eqref{def breve Xi n m, ARA}.
In theory, a potential remedy is to identify a better coupling between the jump martingales and Brownian motions; see, for instance, Theorem 9 of \cite{10.1214/18-EJS1456}.
This would allow us to pick a larger $\kappa$ for the truncation threshold $\kappa_{n,m}$ in ARA, under which the simulation algorithm generates significantly fewer jumps when sampling $\xi^{(i),m}_j$'s.
However, to the best of our knowledge, there is no practical implementation of the coupling in \cite{10.1214/18-EJS1456}.
We note that similar issues arise in works such as \cite{gonzalez2022simulation}, where the coupling in \cite{10.1214/18-EJS1456} imply a much tighter error bound in theory but cannot be implemented in practice.
\end{remark}

\section{Lipschitz Continuity of the Distribution of $X^{<z}(t)$}
\label{sec: lipschitz cont, A2}

This section investigates the sufficient conditions for Assumption \ref{assumption: holder continuity strengthened on X < z t}.
That is, there exist ${z_0},\ {C},\ {\lambda} > 0$ such that
    \begin{align}
        \P\big(X^{<z}(t) \in [x, x + \delta]\big) \leq \frac{C\delta}{t^\lambda\wedge 1}\qquad \forall z \geq z_0,\ t > 0,\ x \in \R,\ \delta > 0.
        \label{condition, sec: lipschitz cont, A2}
    \end{align}
Here, recall that $X^{>z}$ is the L\'evy process with generating triplet $(c_X,\sigma,\nu|_{(-\infty,z)})$.
In other words, this is a modulated version of $X$ where any the upward jump larger than $z$ is removed.

\begin{algorithm}[H]
  \caption{Strongly Efficient Estimation of $\P (A_n)$ with ARA}\label{algoIS}
  \begin{algorithmic}[1]
    \Require $w \in (0,1),\ \gamma > 0,\ r >0,\ d > 0,\ \kappa \in [0,1),\ \rho \in (0,1)$ as the parameters in the algorithm; $a,b>0$ as the characterization of the set $A$; $(c_X,\sigma,\nu)$ as the generating triplet of $X(t)$; 
    $\bar\sigma(\cdot)$ is defined in \eqref{def bar sigma}.

    \Statex
    \State Set $t_n = \ceil{\log_2(n^d)}$ and $\kappa_{n,m} = \kappa^m/n^r$
    \Statex 
    
    \If{$\text{Unif}(0,1) < w$} \Comment{Sample $J_n$ from $\Q_n$}
        \State Sample $J_n = \sum_{i = 1}^k z_i \mathbf{I}_{[u_i,n]}$ from $\P$
    \Else
        \State Sample $J_n = \sum_{i = 1}^k z_i \mathbf{I}_{[u_i,n]}$ from $\P(\ \cdot\ |B^\gamma_n)$ using Algorithm \ref{algoSampleJnFromQ}
    \EndIf
    \State Set $u_0 = 0, u_{k+1} = n$.
    
    \Statex 
    \State Sample $\tau \sim Geom(\rho)$ \Comment{Decide Truncation Index $\tau$}
    
    \Statex
    \For{$i = 1,2,\ldots,k+1$} \Comment{Stick-Breaking Procedure} 
        \For{$j = 1,2,\ldots,t_n + \tau$}
            \State Sample $V^{(i)}_j\sim \text{Unif}(0,1)$
            \State Set $l^{(i)}_j = V^{(i)}_j(u_{i} - u_{i+1} - l^{(i)}_1 - l^{(i)}_2 - \ldots - l^{(i)}_{j - 1})$
        \EndFor
        \State Set $l^{(i)}_{ t_n + \tau + 1 } = u_{i} - u_{i-1} - l^{(i)}_1 - l^{(i)}_2 - \ldots - l^{(i)}_{t_n + \tau}$
    \EndFor
    
    \Statex 
    \For{$i = 1,\cdots,k+1$ } \Comment{Sample $\xi^{(i),m}_j$}
        \For{$ j = 1,2,\cdots,t_n + \tau + 1$}
            \State Sample $x^{(i)}_j\sim N(0, \sigma^2\cdot l^{(i)}_j )$ 
            \State Sample $y^{(i),-1}_{j}\sim \P(J_{n,-1}(l^{(i)}_j) \in \ \cdot\ )$
            \For{$m = 0,1,\ldots,\tau$}
                \State Sample $y^{(i),m}_j \sim \P(J_{n,m}(l^{(i)}_j) \in \ \cdot\ )$
                \State Sample $w^{(i),m}_j \sim N(0,(\bar{\sigma}^2(\kappa_{n,m-1}) - \bar{\sigma}^2(\kappa_{n,m}))\cdot l^{(i)}_j)$
            \EndFor
            \State Sample $w^{(i),\tau+1}_j \sim N(0,\bar{\sigma}^2( \kappa_{n,\tau} ) \cdot l^{(i)}_j)$
            \For{$m = 0,\ldots,\tau$}
                \State Set $\xi^{(i),m}_j = c_X\cdot l^{(i)}_j +x^{(i)}_j + \sum_{q = -1}^{m}y^{(i),q}_j + \sum_{q = m+1}^{\tau+1}w^{(i),q}_j$
            \EndFor
            
        \EndFor 
    \EndFor 
    
    \Statex
    \For{$m = 1,\cdots,\tau$} \Comment{Evaluate $\hat Y^m_{n}$}
        \For{$i = 1,2,\ldots,k+1$}
            \State Set $\hat{M}^{(i),m}_n = \sum_{q = 1}^{i-1}\sum_{j =1}^{t_n + \tau + 1}\xi^{(q),m}_{j} + \sum_{q =1}^{i-1}z_q + \sum_{j =1}^{ t_n + m }(\xi^{(i),m}_{j})^+   $
        \EndFor 
        \State Set $\hat Y^m_{n} = \mathbf{I}\big\{ \max_{i = 1,\ldots,k+1} \hat{M}^{(i),m}_n   \geq na  \big\}$
    \EndFor

\Statex
\State Set $Z_n = \hat Y^1_n + \sum_{m = 2}^\tau ( \hat Y^m_{n} - \hat Y^{m-1}_{n} )\big/ \rho^{m-1}$ \Comment{Return the Estimator $L_n$}
\If{ $ \max_{i =1,\cdots,k}z_i > b $ }
    \State \textbf{Return} $L_n = 0$.
\Else 
    \State Set $\lambda_n = n\nu[n\gamma,\infty),\ p_n = 1 - \sum_{l = 0}^{l^* - 1}e^{-\lambda_n}\frac{\lambda_n^l}{l!},\ I_n = \mathbf{I}\{ J_n \in B^\gamma_n \}$
    \State \textbf{Return} $L_n = Z_n/(w + \frac{1-w}{p_n}I_n)$
\EndIf 
    
  \end{algorithmic}
\end{algorithm}

To demonstrate our approach for establishing condition~\eqref{condition, sec: lipschitz cont, A2},
we start by considering a simple case where the L\'evy process $X(t)$ has generating tripet $(c_X,\sigma,\nu)$ with $\sigma > 0$.
This leads to the decomposition
$$X^{<z}(t) \distequal \sigma B(t) + Y^{<z}(t)\qquad \forall t,z > 0 $$
where $B$ is a standard Brownian motion, $Y^{<z}$ is a L\'evy process with generating triplet $(c_X,0,\nu|_{(-\infty,z)})$, and the two processes are independent.
Now, for any $x \in \mathbb{R},\ t > 0$ and $\delta \in (0,1)$,
\begin{align}
    \P (X^{<z}(t) \in [x,x+\delta])
    &=\int_{\mathbb{R}}\P (\sigma B(t) \in [x-y,x-y+\delta])\cdot\P (Y^{<z}(t) \in dy) 
    \nonumber
    \\
    & = 
    \int_{\mathbb{R}}\P\bigg( \frac{B(t)}{ \sqrt{t} } \in \Big[ \frac{x-y}{ \sigma \sqrt{t} },\frac{x-y+\delta}{ \sigma \sqrt{t} }\Big]\bigg)\cdot\P (Y^{<z}(t) \in dy)
    \nonumber
    \\
    &\leq\frac{1}{\sigma\sqrt{2\pi}}\cdot\frac{\delta}{\sqrt{t}}.
    \label{fact, convolution structure, lipschit cont A2}
\end{align}
The last inequality follows from the fact that a standard Normal distribution admits a density function bounded by $1/\sqrt{2\pi}$.
Therefore, we verified Assumption~\ref{assumption: holder continuity strengthened on X < z t} under $\lambda = 1/2, C = \frac{1}{\sigma \sqrt{2\pi}}$, and any $z_0 > 0$.
The simple idea behind \eqref{fact, convolution structure, lipschit cont A2} is that continuity conditions such as \eqref{condition, sec: lipschitz cont, A2} can be passed from one distribution to another through convolutional structures.
To generalize this approach to the scenarios where $\sigma = 0$ in the generating triplet of the L\'evy process $X$,
we introduce the following definition.

\begin{definition}
Let $\mu_1$ and $\mu_2$ be Borel measures on $\R$.
For any Borel set $A\subset \R$, we say that $\mu_1$ \textbf{majorizes} $\mu_2$ \textbf{when restricted on } $A$
(denoted as $\notationdef{notation-def-measure-majorization}{(\mu_1 - \mu_2)|_A \geq 0}$)
if 
$\mu(B \cap A) = \mu_1(B \cap A) - \mu_2(B \cap A) \geq 0$ for any Borel set $B \subset \R$.
In other words $\mu|_A = (\mu_1 - \mu_2)|_A$ is a \textbf{positive} measure.
\end{definition}


Now, let us consider the case where the generating triplet of $X$ is $(c_X,0,\nu$).
For the L\'evy measure $\nu$,
if we can find some $z_0 > 0$, some Borel set $A \subseteq (-\infty,z_0)$ and some (positive) Borel measure $\mu$ such that $(\nu - \mu)|_A\geq 0$,
then through a straightforward superposition of Poisson random measures,
we obtain the decomposition (let $\mu_A = \mu|_A$)
\begin{align}
    X^{<z}(t) \distequal Y(t) + \widetilde X^{<z,-A}(t)\qquad \forall z \geq z_0
    \label{decomp, X < z without BM}
\end{align}
where 
$Y(t)$ is a L\'evy process with generating triplet $(0,0,\mu_A)$,
$\widetilde X^{<z,-A}(t)$ is a L\'evy process with generating triplet $(c_X,0,\nu -\mu_A)$,
and the two processes are independent.
Furthermore, if Assumption~\ref{assumption: holder continuity strengthened on X < z t} (conditions of form \eqref{condition, sec: lipschitz cont, A2}) holds for the process $Y(t)$
with generating triplet $(0,0,\mu_A )$,
then by repeating the arguments in \eqref{fact, convolution structure, lipschit cont A2} we can show that Assumption~\ref{assumption: holder continuity strengthened on X < z t} holds in $X^{<z}(t)$ for any $z \geq z_0$.

Recall our running assumption that the L\'evy process $X(t)$ is of infinite activities (see Assumption~\ref{assumption: heavy tailed process}).
In case that $\sigma = 0$, we must have $\nu((-\infty,0)\cup(0,\infty)) = \infty$ for $X$ to have infinite activity.
Therefore, the key step is to identify the majorized measure $\mu$ such that 
\begin{itemize}
    \item $(\nu - \mu)|_A \geq 0$
holds for $\nu$ with infinite mass and some set $A$,

\item condition~\eqref{condition, sec: lipschitz cont, A2} holds for the L\'evy process $Y(t)$ in \eqref{decomp, X < z without BM} with generating triplet $(0,0,\mu|_A)$.
\end{itemize}

In the first main result of this section,
we show that measures of form $\mu[x,\infty)$ that roughly increase at a power-law rate $1/x^{\alpha}$ (as $x\downarrow 0$) provide ideal choices for such majorized measures.
In particular, the corresponding L\'evy process $Y(t)$ in \eqref{decomp, X < z without BM} is intimately related to $\alpha$-stable processes that naturally satisfy continuity properties of form \eqref{condition, sec: lipschitz cont, A2}.
We collect the proof in
 Section~\ref{subsec: proof, lipschitz cont}.





\begin{proposition}
\label{proposition: lipschitz cont, 1}
\linksinthm{proposition: lipschitz cont, 1}
Let $\alpha \in (0,2), z_0 > 0$, and $\epsilon \in (0, (2 - \alpha)/2)$.
Suppose that $\mu[x,\infty)$ is regularly varying as $x \downarrow 0$ with index $-(\alpha+2\epsilon)$.
Then the L\'evy process $Y(t)$ with generating triplet $(0,0,\mu|_{(0,z_0)})$ has a continuous density function $f_{Y(t)}$ for each $t > 0$.
Furthermore,
there exists a constant $C < \infty$ such that
$$
\norm{f_{Y(t)}}_\infty \leq \frac{C}{t^{1/\alpha} \wedge 1}\qquad \forall t > 0.
$$
where $\notationdef{notation-density-norm}{\norm{ f }_\infty} = \sup_{x \in \R}|f(x)|$.
\end{proposition}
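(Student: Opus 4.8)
The plan is to control the density of $Y(t)$ via its characteristic function, using the Fourier inversion formula $f_{Y(t)}(x) = \frac{1}{2\pi}\int_{\R} e^{-i\theta x}\,\E[e^{i\theta Y(t)}]\,d\theta$, which is valid (and yields a continuous density) as soon as $\theta \mapsto \E[e^{i\theta Y(t)}]$ is integrable. Since $Y(t)$ is a pure-jump Lévy process with triplet $(0,0,\mu|_{(0,z_0)})$, the Lévy--Khintchine formula gives
\begin{align}
\big|\E[e^{i\theta Y(t)}]\big| = \exp\Big( t\int_{(0,z_0)} \big(\cos(\theta y) - 1\big)\,\mu(dy) \Big) = \exp\big(-t\,\Psi(\theta)\big),
\label{pf: char fun bound}
\end{align}
where $\Psi(\theta) = \int_{(0,z_0)} (1 - \cos(\theta y))\,\mu(dy) \ge 0$. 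So everything reduces to a lower bound of the form $\Psi(\theta) \gtrsim |\theta|^{\alpha}$ for $|\theta|$ large, because then $\int_\R e^{-t\Psi(\theta)}\,d\theta \lesssim \int_\R e^{-ct|\theta|^\alpha}\,d\theta = C t^{-1/\alpha}$ by the change of variables $u = t^{1/\alpha}\theta$, giving the bound $\norm{f_{Y(t)}}_\infty \le \frac{1}{2\pi}\int_\R e^{-t\Psi(\theta)}\,d\theta \le C t^{-1/\alpha}$ for all $t$ in a bounded range, and the $t^{1/\alpha}\wedge 1$ normalization then handles large $t$ trivially since $\Psi(\theta)$ stays bounded below on, say, $|\theta|\ge 1$ once we have the large-$\theta$ estimate plus continuity of $\Psi$.

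The heart of the matter is therefore the estimate $\Psi(\theta)\ge c|\theta|^\alpha$ for $|\theta|$ large. Using $1-\cos(\theta y)\ge c_0 (\theta y)^2$ on the range $|\theta y|\le 1$, restrict the integral to $y\in(0,1/|\theta|)$ (which is contained in $(0,z_0)$ once $|\theta|>1/z_0$) to get
\begin{align}
\Psi(\theta) \ge c_0\,\theta^2 \int_{(0,1/|\theta|)} y^2\,\mu(dy).
\label{pf: Psi lower}
\end{align}
Now I would integrate by parts / use the tail function $\bar\mu(x) = \mu[x,z_0)$ (or $\mu[x,\infty)$; the difference is a bounded correction): $\int_{(0,1/|\theta|)} y^2\,\mu(dy)$ is comparable to $\int_0^{1/|\theta|} x\,\bar\mu(x)\,dx$ up to boundary terms. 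Since $\bar\mu(x)\in\RV_{-(\alpha+2\epsilon)}(x)$ as $x\downarrow 0$ with $\alpha+2\epsilon < 2$, Karamata's theorem gives $\int_0^{s} x\,\bar\mu(x)\,dx \sim \frac{1}{2-(\alpha+2\epsilon)} s^2\,\bar\mu(s)$ as $s\downarrow 0$, hence $\int_{(0,1/|\theta|)} y^2\,\mu(dy) \asymp |\theta|^{-2}\,\bar\mu(1/|\theta|)$. Plugging into \eqref{pf: Psi lower} yields $\Psi(\theta)\gtrsim \bar\mu(1/|\theta|)$, and since $\bar\mu(1/|\theta|)\in\RV_{\alpha+2\epsilon}(|\theta|)$ as $|\theta|\to\infty$, this is $\gtrsim |\theta|^{\alpha+2\epsilon-\eta}$ for any small $\eta$, in particular $\gtrsim |\theta|^\alpha$ (using $\epsilon>0$ to absorb the slowly varying factor and the $\eta$ slack; here the hypothesis $\epsilon < (2-\alpha)/2$ is exactly what keeps $\alpha+2\epsilon$ below $2$ so Karamata applies in the stated direction, and the margin $2\epsilon$ over $\alpha$ is what defeats the slowly varying correction).

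The main obstacle I anticipate is making the slowly-varying bookkeeping in the previous step fully rigorous and uniform: Karamata's theorem for regular variation at $0$ gives an asymptotic equivalence, not a clean inequality, so I need to fix a threshold $s_0 = 1/\theta_0$ below which $\int_0^s x\bar\mu(x)\,dx \ge \frac{1}{2} \cdot \frac{s^2 \bar\mu(s)}{2-(\alpha+2\epsilon)}$, then use Potter's bounds to replace $\bar\mu(1/|\theta|)$ by a genuine power $c|\theta|^{\alpha}$ for all $|\theta|\ge\theta_0$, and finally note that on the compact range $1\le|\theta|\le\theta_0$ the function $\Psi$ is continuous and strictly positive (it vanishes only at $\theta=0$ since $\mu|_{(0,z_0)}$ is nontrivial — which holds because $\bar\mu$ is regularly varying with a nonzero index, hence not identically zero near $0$), so it is bounded below there too. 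Assembling these pieces gives $\Psi(\theta)\ge c(|\theta|^\alpha\wedge 1)$ uniformly, and the integrability/inversion argument above then delivers both the continuity of $f_{Y(t)}$ and the claimed sup-norm bound. Everything else — Fubini to justify \eqref{pf: char fun bound}, the elementary inequality $1-\cos u\ge c_0 u^2$ on $|u|\le 1$, and the change of variables in the final integral — is routine.
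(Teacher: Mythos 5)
Your proposal is correct, and it uses the same top-level strategy as the paper (control $\|f_{Y(t)}\|_\infty$ via the Fourier inversion formula after establishing a lower bound $\Psi(\theta) = \int_{(0,z_0)}(1-\cos(\theta y))\,\mu(dy) \ge \widetilde C|\theta|^\alpha$ for $|\theta|$ large), but the key estimate is proved by a genuinely different route. The paper attacks the lower bound by directly comparing $\int_{(0,z_0)}(1-\cos(zx))\,\mu(dx)$ with the corresponding integral against the stable L\'evy density $\nu_\alpha(dx) = x^{-1-\alpha}dx$: after splitting $\R_+$ into $(0,\theta/|z|)$, a geometric partition of $[\theta/|z|, M/|z|)$, and $[M/|z|,\infty)$, it bounds the two end contributions by $C_0/4$ each and uses Potter's bound on $g(y) = \mu[1/y,\infty)$ slice-by-slice to show $\mu \ge \nu_\alpha$ on each slice, so the middle difference is at worst $-C_0/4$; all three pieces are assembled to conclude $\Psi(z)/|z|^\alpha \ge C_0/4$. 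You instead use the elementary inequality $1-\cos u \ge c_0 u^2$ on $|u|\le 1$ to reduce the claim to a second-moment bound $\int_{(0,1/|\theta|)}y^2\mu(dy)\asymp |\theta|^{-2}\bar\mu(1/|\theta|)$ via integration by parts and Karamata at $0$, and then apply Potter's bound once. Your version is shorter and avoids the explicit comparison with the stable measure and the geometric-partition bookkeeping; the paper's version makes no cosine-quadratic truncation and produces an explicit lower constant $C_0/4$ tied to the stable process. Both deliver what is needed; yours trades the paper's delicate three-term boundary decomposition for Karamata.

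One small inaccuracy you should fix before writing this up: the closing claim that the pieces assemble to $\Psi(\theta)\ge c\,(|\theta|^\alpha\wedge 1)$ \emph{uniformly in $\theta$} is false near $\theta=0$. Since $\mu|_{(0,z_0)}$ has finite second moment (precisely because $\alpha+2\epsilon<2$), $\Psi(\theta) = \tfrac{1}{2}\sigma^2\theta^2 + o(\theta^2)$ as $\theta\to 0$, and $\theta^2 \ll |\theta|^\alpha$ for $\alpha<2$, so no such uniform bound can hold on a neighborhood of the origin. Fortunately you never need it: as in the paper, simply bound $|\varphi_t(\theta)|\le 1$ on the compact range $|\theta|\le M$ contributing $2M$ to the integral, and use the large-$|\theta|$ bound $\Psi\ge c|\theta|^\alpha$ on the complement. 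This gives $\int_\R |\varphi_t|\,d\theta \le 2M + C' t^{-1/\alpha}$, which yields $\le (2M+C')/t^{1/\alpha}$ for $t\le 1$ and $\le 2M+C'$ for $t>1$, i.e.\ exactly the asserted $C/(t^{1/\alpha}\wedge 1)$ bound without any small-$\theta$ lower bound on $\Psi$.
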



Equipped with Proposition~\ref{proposition: lipschitz cont, 1}, we obtain the following set of sufficient conditions for Assumption~\ref{assumption: holder continuity strengthened on X < z t}.

\begin{theorem}\label{ CorollaryRVlevyMeasureAtOrigin }
\linksinthm{ CorollaryRVlevyMeasureAtOrigin }
Let $(c_X,\sigma,\nu)$ be the generating triplet of L\'evy process $X$.
\begin{enumerate}[(i)]
    \item If $\sigma > 0$, then Assumption \ref{assumption: holder continuity strengthened on X < z t} holds for 
     $\lambda = 1/2$ and any $z_0 > 0$.
     \item If there exist Borel measure $\mu$, some $z_0 > 0$, and some $\alpha^\prime \in (0,2)$ such that $(\nu - \mu)|_{ (0,z_0) } \geq 0$ 
     (resp., $(\nu - \mu)|_{ (-z_0,0) } \geq 0$)
     and $\mu[x,\infty)$ (resp., $\mu(-\infty,x]$) is regularly varying with index $-\alpha^\prime$ as $x \downarrow 0$,
     then Assumption \ref{assumption: holder continuity strengthened on X < z t} holds with
     $\lambda = 1/\alpha$ for any $\alpha \in (0,\alpha^\prime)$.
\end{enumerate}
\end{theorem}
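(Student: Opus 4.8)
The plan is to reduce the statement to Proposition~\ref{proposition: lipschitz cont, 1} together with the two convolution identities already recorded in this section. Part~(i) needs no new work: the computation in \eqref{fact, convolution structure, lipschit cont A2} shows that when $\sigma>0$ we may write $X^{<z}(t)\distequal \sigma B(t)+Y^{<z}(t)$ with independent summands, and bounding the Gaussian density by $1/\sqrt{2\pi}$ gives $\P(X^{<z}(t)\in[x,x+\delta])\le \frac{1}{\sigma\sqrt{2\pi}}\cdot\frac{\delta}{\sqrt t}$ uniformly in $z>0$, which is Assumption~\ref{assumption: holder continuity strengthened on X < z t} with $\lambda=1/2$, $C=1/(\sigma\sqrt{2\pi})$ and any $z_0>0$. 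So the real content is Part~(ii).

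For Part~(ii) under the positive-side hypothesis, I would fix $\alpha\in(0,\alpha')$ and set $\epsilon\delequal(\alpha'-\alpha)/2$. The point of this choice is index matching: $\epsilon>0$ since $\alpha<\alpha'$, and $\epsilon<(2-\alpha)/2$ since $\alpha'<2$, so $\epsilon$ lies in the admissible window of Proposition~\ref{proposition: lipschitz cont, 1}; moreover $\alpha\in(0,2)$ because $\alpha<\alpha'<2$, and $\mu[x,\infty)\in\RV_{-(\alpha+2\epsilon)}(x)$ as $x\downarrow0$ because $\alpha+2\epsilon=\alpha'$. Applying Proposition~\ref{proposition: lipschitz cont, 1} to the Lévy process $Y(t)$ with generating triplet $(0,0,\mu|_{(0,z_0)})$ then yields a continuous density $f_{Y(t)}$ for every $t>0$ with $\norm{f_{Y(t)}}_\infty\le C/(t^{1/\alpha}\wedge1)$ for some finite $C$. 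Here the hypothesis $(\nu-\mu)|_{(0,z_0)}\ge0$ does the small but necessary job of ensuring $\mu|_{(0,z_0)}\le\nu|_{(0,z_0)}$, so that both $\mu|_{(0,z_0)}$ and $\nu-\mu|_{(0,z_0)}$ automatically satisfy $\int(|x|^2\wedge1)(\cdot)(dx)<\infty$ and are genuine Lévy measures; this legitimizes the decomposition \eqref{decomp, X < z without BM}, namely $X^{<z}(t)\distequal Y(t)+\widetilde X^{<z,-A}(t)$ with $A=(0,z_0)$ and independent factors, for every $z\ge z_0$ (if $\sigma>0$ one simply carries the Brownian part inside $\widetilde X^{<z,-A}$, and Part~(i) applies in any case).

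It then remains to push the density bound of $Y(t)$ through the convolution, exactly as in \eqref{fact, convolution structure, lipschit cont A2}: conditioning on $\widetilde X^{<z,-A}(t)$ and using independence,
\begin{align*}
    \P\big(X^{<z}(t)\in[x,x+\delta]\big)
    &=\int_{\R}\P\big(Y(t)\in[x-y,x-y+\delta]\big)\,\P\big(\widetilde X^{<z,-A}(t)\in dy\big)\\
    &\le\norm{f_{Y(t)}}_\infty\,\delta\;\le\;\frac{C\delta}{t^{1/\alpha}\wedge1}
\end{align*}
for all $z\ge z_0$, $t>0$, $x\in\R$, $\delta>0$; this is Assumption~\ref{assumption: holder continuity strengthened on X < z t} with $\lambda=1/\alpha$ and the given $z_0$. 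Under the negative-side hypothesis I would run the identical argument after reflecting the restricted measure: replace $\mu|_{(0,z_0)}$ by the pushforward of $\mu|_{(-z_0,0)}$ under $y\mapsto-y$ (so the analogous regular-variation condition near the origin is inherited), apply Proposition~\ref{proposition: lipschitz cont, 1} to obtain a bound on $\norm{f_{-Y(t)}}_\infty=\norm{f_{Y(t)}}_\infty$, and convolve as above.

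I do not expect a serious obstacle here: once Proposition~\ref{proposition: lipschitz cont, 1} is in hand the theorem is essentially bookkeeping, the only delicate points being the index matching in the first step (picking $\epsilon$ with $\alpha+2\epsilon=\alpha'$ inside $(0,(2-\alpha)/2)$) and the observation that domination by $\nu$ makes $\mu|_{(0,z_0)}$ a legitimate Lévy measure. The genuine analytic difficulty of this section -- the uniform-in-$t$ density estimate for the near-$\alpha$-stable process $Y$ -- is confined to Proposition~\ref{proposition: lipschitz cont, 1}, not to this corollary.
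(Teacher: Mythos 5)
Your proposal is correct and follows the paper's own proof almost line for line: Part~(i) is the Gaussian convolution bound of \eqref{fact, convolution structure, lipschit cont A2}, and Part~(ii) fixes $\epsilon=(\alpha'-\alpha)/2$ so that $\alpha+2\epsilon=\alpha'$ lands in the admissible window of Proposition~\ref{proposition: lipschitz cont, 1}, then pushes the resulting density bound on $Y(t)$ through the convolution \eqref{decomp, X < z without BM}. Your extra remarks — that $(\nu-\mu)|_{(0,z_0)}\ge0$ is what guarantees $\mu|_{(0,z_0)}$ and $\nu-\mu|_{(0,z_0)}$ are genuine Lévy measures, the explicit reflection for the $(-z_0,0)$ case, and the observation that a positive $\sigma$ can simply be absorbed into $\widetilde X^{<z,-A}$ — are all correct and make explicit points that the paper leaves implicit (e.g.\ the paper reduces the negative-side case to the positive side by a ``without loss of generality'').
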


\begin{proof}
Part $(i)$ follows immediately from the calculations in \eqref{fact, convolution structure, lipschit cont A2}.
To prove part $(ii)$, we fix some $\alpha \in (0,\alpha^\prime)$,
and without loss of generality assume that 
$(\nu - \mu)|_{ (0,z_0) } \geq 0$ 
     and $\mu[x,\infty)$ is regularly varying with index $\alpha^\prime$ as $x \downarrow 0$.
This allows us to fix some $\epsilon = (\alpha^\prime-\alpha)/2 \in \big(0,(2-\alpha)/2\big)$.

For any $z \geq z_0$, 
let $Y(t)$ and $\widetilde X^{<z,-A}(t)$ be defined as in \eqref{decomp, X < z without BM} with $A = (0,z_0)$.
First of all, applying Proposition \ref{proposition: lipschitz cont, 1}, we can find $C > 0$ such that
$
\norm{f_{Y(t)}}_\infty \leq \frac{C}{t^{1/\alpha} \wedge 1}\ \forall t > 0.
$
Next, due to the independence between $Y$ and $\widetilde X^{<z,-A}(t)$,
it holds for all $x \in \R, \delta \geq 0$, and $t > 0$ that 
$$
\P (X^{<z}(t) \in [x,x+\delta])
    =\int_{\mathbb{R}}\P (Y(t) \in [x-y,x-y+\delta])\cdot\P (\widetilde X^{<z,-A}(t) \in dy)
    \leq\frac{C}{t^{1/\alpha} \wedge 1} \cdot \delta.
$$
This concludes the proof.
\end{proof}

\begin{remark}
It is worth noting that the conditions stated in Theorem \ref{ CorollaryRVlevyMeasureAtOrigin } are mild for L\'evy process $X(t)$ with infinite activities.
In particular, for $X$ to exhibit infinite activity,
we must have either $\sigma > 0$ or $\nu(\R) = \infty$.
Theorem~\ref{ CorollaryRVlevyMeasureAtOrigin } (i) deals with the case where $\sigma > 0$.
On the other hand, when $\sigma = 0$ we must have either $\lim_{\epsilon \downarrow 0}\nu[\epsilon,\infty) = \infty$ or 
$\lim_{\epsilon \downarrow 0}\nu(-\infty,-\epsilon] = \infty$.
To satisfy the conditions in part (ii) of Theorem \ref{ CorollaryRVlevyMeasureAtOrigin },
the only other requirement is that $\nu[\epsilon,\infty)$ (or $\nu(-\infty,-\epsilon]$)
approaches infinity at a rate that is at least comparable to some power-law functions.
\end{remark}

The next set of sufficient conditions for Assumption~\ref{assumption: holder continuity strengthened on X < z t}
revolves around another type of self-similarity structure in the L\'evy measure $\nu$.
\begin{definition}\label{def: semi stable process}
    Given $\alpha \in (0,2)$ and $b > 1$, a L\'evy process $X$ is \textbf{$\alpha$-semi-stable with span $b$} if its L\'evy measure $\nu$ satisfies
\begin{align}
    \nu = b^{-\alpha}T_b \nu \label{semiStable_scaleInvariance_lCont}
\end{align}
where the transformation $T_r$ ($\forall r > 0$) onto a Borel measure $\rho$ on $\mathbb{R}$ is given by $(T_r \rho)(B) = \rho(r^{-1}B)$. 
\end{definition}
As a special case of semi-stable processes, note that $X$ is {$\alpha$-stable} if
$$
\nu(dx) = c_1\cdot\frac{dx}{x^{1+\alpha}}\mathbf{I}\{x > 0\} + c_2\cdot\frac{dx}{|x|^{1+\alpha}}\mathbf{I}\{x < 0\}
$$
where $c_1,c_2 \geq 0,\ c_1+c_2 > 0.$  See Theorem 14.3 in \cite{sato1999levy} for details.
However, it is worth noting that the L\'evy processes with regularly varying L\'evy measures $\nu$ studied in Proposition~\ref{proposition: lipschitz cont, 1} are not strict subsets of the semi-stable processes introduced in Definition~\ref{def: semi stable process}.
For instance,
given
a Borel measure $\nu$, suppose that $f(x) = \nu\big( (-\infty,-x]\cup[x,\infty) \big)$ is regularly varying at 0 with index $\alpha > 0$.
Even if $\nu$ satisfies the scaling-invariant property in \eqref{semiStable_scaleInvariance_lCont} for some $b>1$,
we can fix a sequence of points $\{x_n = \frac{1}{b^n}\}_{n \geq 1}$ and assign an extra mass of $\ln n$ onto $\nu$ at each point $x_n$.
In doing so, we break the scaling-invariant property but still maintain the regular variation of $\nu$.
On the other hand, to show that semi-stable processes may not have regularly varying L\'evy measure (when restricted on some neighborhood of the origin), let us consider a simple example.
For some $b>1$ and $\alpha \in (0,2)$, define the following measure:
$$
 \nu( \{ b^{-n} \} ) = b^{n\alpha}\ \ \forall n \geq 0;\qquad \nu\big( \mathbb{R}\symbol{92}\{b^n:\ n\in\mathbb{N}\} \big) = 0.
$$
Clearly, $\nu$ can be seen as the restriction of the L\'evy measure (restricted on $(-1,1)$) of some $\alpha$-semi-stable process. Now define function $f(x) = \nu[x,\infty)$ on $(0,\infty)$. For any $t > 0$,
$$
\frac{f(tx)}{f(x)} = \frac{ \sum_{n = 0}^{\floor{ \log_b(1/tx) }}b^{n\alpha}    }{ \sum_{n = 0}^{\floor{ \log_b(1/x) }}b^{n\alpha}  }
    = \frac{ b^{ \floor{ \log_b(1/tx) } + 1 } - 1   }{ b^{ \floor{ \log_b(1/x) } + 1 } -1  }.
$$
As $x \rightarrow 0$, we see that $f(tx)/f(x)$ will be very close to 
$$b^{\alpha(  \floor{ \log_b(1/tx) } - \floor{ \log_b(1/x) } )}.$$
As long as we didn't pick $t = b^k$ for some $k \in \mathbb{Z}$,
asymptotically, the value of $f(tx)/f(x)$ will repeatedly cycle through the following three different values
$$\{ b^{\alpha\floor{ \log_b(1/t) }}, b^{\alpha\floor{ \log_b(1/t) }+\alpha},b^{\alpha\floor{ \log_b(1/t) }-\alpha} \},$$
thus implying that $f(tx)/f(x)$ does not converge as $x$ approaches $0$.
This confirms that $\nu[x,\infty)$ is not regularly varying as $x \downarrow 0$.

In Proposition~\ref{proposition: lipschitz cont, 2}, we show that semi-stable processes, as well as their truncated counterparts, satisfy continuity conditions of form \eqref{condition, sec: lipschitz cont, A2}.
We say that the process $Y(t)$ is non-trivial if it is not a deterministic linear function (i.e., $Y(t) \equiv ct$ for some $c \in \R$).
The proof
is again detailed in Section~\ref{subsec: proof, lipschitz cont}.

\begin{proposition}
\label{proposition: lipschitz cont, 2}
\linksinthm{proposition: lipschitz cont, 2}
Let $\alpha \in (0,2)$ and $N \in \mathbb Z$. 
Suppose that $\mu$ is the L\'evy measure of a non-trivial $\alpha$-semi-stable process $Y^\prime(t)$ of span $b > 1$. 
Then under $z_0 = b^N$, the L\'evy process $\{Y(t):\ t>0\}$ with generating triplet $(0,0,\mu|_{(-z_0,z_0)})$ has a continuous density function $f_{Y(t)}$ for any $t > 0$.
Furthermore, there exists some $C\in (0,\infty)$ such that
$$\norm{ f_{Y(t)} }_\infty \leq \frac{C}{t^{1/\alpha}\wedge 1}\qquad \forall t > 0.$$
\end{proposition}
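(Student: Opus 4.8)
The plan is to run the same Fourier–inversion argument as in Proposition~\ref{proposition: lipschitz cont, 1}, the only genuinely new ingredient being how the large‑frequency lower bound on the characteristic exponent is obtained: here from the exact (discrete) scaling of a semi‑stable Lévy measure rather than from a Karamata estimate. Let $\Psi$ be the characteristic exponent of $Y(t)$ (generating triplet $(0,0,\mu|_{(-z_0,z_0)})$), so that the characteristic function of $Y(t)$ equals $e^{t\Psi(\theta)}$ and
$$
|e^{t\Psi(\theta)}| = e^{-tI(\theta)},\qquad I(\theta) \delequal \int_{(-z_0,z_0)}\big(1-\cos(\theta x)\big)\,\mu(dx)\ \geq\ 0
$$
(the drift only affects the imaginary part of $\Psi$). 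As in Proposition~\ref{proposition: lipschitz cont, 1}, it then suffices to exhibit constants $c>0$ and $\theta_1\geq1$ with $I(\theta)\geq c|\theta|^\alpha$ for all $|\theta|\geq\theta_1$: once this is known, $\int_\R e^{-tI(\theta)}\,d\theta \leq 2\theta_1+\int_\R e^{-ct|\theta|^\alpha}\,d\theta\leq 2\theta_1+C_2 t^{-1/\alpha}<\infty$ for every $t>0$, which gives a bounded continuous density $f_{Y(t)}(y)=\frac{1}{2\pi}\int_\R e^{-i\theta y+t\Psi(\theta)}\,d\theta$ (continuity in $y$ by dominated convergence), and, distinguishing $t\leq1$ (where $t^{-1/\alpha}\geq1$) from $t\geq1$ (where $\int_\R e^{-tI(\theta)}\,d\theta\leq\int_\R e^{-I(\theta)}\,d\theta$ since $I\geq0$), one concludes $\norm{f_{Y(t)}}_\infty\leq\frac{1}{2\pi}\int_\R e^{-tI(\theta)}\,d\theta\leq \frac{C}{t^{1/\alpha}\wedge1}$.

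To produce the lower bound on $I$, I would compare it with the untruncated quantity $\psi(\theta)\delequal\int_\R\big(1-\cos(\theta x)\big)\,\mu(dx)$, which is finite (since $\int(x^2\wedge1)\,\mu(dx)<\infty$), even, continuous and nonnegative. A change of variables turns the semi‑stability identity $\mu=b^{-\alpha}T_b\mu$ into the exact scaling $\psi(b\theta)=b^{\alpha}\psi(\theta)$, hence $\psi(b^k\theta)=b^{k\alpha}\psi(\theta)$ for every integer $k\geq0$. Next I would check that $\psi>0$ on $\R\setminus\{0\}$: if $\psi(\theta_0)=0$ for some $\theta_0\neq0$, then $\mu$ would be carried by the lattice $\frac{2\pi}{\theta_0}\Z$, whereas non‑triviality ($\mu\neq0$) together with the scaling relation forces $\mu(\{b^{-n}\leq|x|<b^{-n+1}\})=b^{n\alpha}\,\mu(\{1\leq|x|<b\})>0$ for every $n\geq1$, so $\mu$ charges arbitrarily small punctured neighbourhoods of the origin — impossible on $\frac{2\pi}{\theta_0}\Z$, whose only point near $0$ is $0$ itself. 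Consequently $c_\ast\delequal\inf_{\theta\in[1,b]}\psi(\theta)>0$ by continuity and compactness, and writing any $\theta\geq1$ as $\theta=b^k s$ with $k=\floor{\log_b\theta}\geq0$ and $s\in[1,b)$ gives $\psi(\theta)=b^{k\alpha}\psi(s)\geq c_\ast b^{k\alpha}\geq c_\ast b^{-\alpha}\theta^\alpha$. Finally, since $\mu$ is a Lévy measure, $K\delequal\mu(\{|x|\geq z_0\})<\infty$ and $0\leq\psi(\theta)-I(\theta)=\int_{|x|\geq z_0}(1-\cos\theta x)\,\mu(dx)\leq 2K$, so $I(\theta)\geq c_\ast b^{-\alpha}\theta^\alpha-2K\geq\frac12 c_\ast b^{-\alpha}\theta^\alpha$ once $\theta\geq\theta_1$ with $\theta_1$ large enough, and by evenness $I(\theta)\geq c|\theta|^\alpha$ on $|\theta|\geq\theta_1$ with $c=\frac12 c_\ast b^{-\alpha}$. (The hypothesis $z_0=b^N$ is convenient rather than essential; it also enables the shortcut $b^{-N}Y_{z_0=b^N}(t)\distequal Y_{z_0=1}(b^{-N\alpha}t)$ up to a deterministic shift, reducing the general statement to the base case $z_0=1$, which is still handled by the Fourier estimate above.)

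The step I expect to be the crux is the strict positivity of $\psi$ away from the origin: that is precisely where the non‑triviality hypothesis enters, and it must be reconciled with the fact that a semi‑stable Lévy measure obeys only a \emph{discrete} scaling, so $\psi$ need not be a pure power $|\theta|^\alpha$ but only comparable to it up to constants. Everything downstream — the uniform tail comparison $\psi(\theta)-I(\theta)\leq 2K$ and the final integral estimate — is routine bookkeeping.
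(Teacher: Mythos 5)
Your proof is correct and follows essentially the same route as the paper: Fourier inversion plus a lower bound of order $|\theta|^\alpha$ on the real part of the characteristic exponent at large $|\theta|$, obtained by showing the untruncated exponent is strictly positive on a fundamental annulus of frequencies and propagating via the discrete scaling $\psi(b\theta)=b^\alpha\psi(\theta)$, with the tail $|x|\geq z_0$ absorbed once $|\theta|$ is large. The only surface differences are cosmetic: you derive the scaling identity directly by change of variables where the paper invokes Sato's Proposition 14.9 to write $|\varphi_t(z)|=\exp(-t|z|^\alpha\eta(z))$ with $\eta(bz)=\eta(z)$, and both proofs rule out a zero of the exponent away from the origin by the same lattice-versus-discrete-scaling contradiction.
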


Lastly,
by applying Proposition \ref{proposition: lipschitz cont, 2},
we yield another set of sufficient conditions for Assumption \ref{assumption: holder continuity strengthened on X < z t}.

\begin{theorem}\label{ CorollarySemiStableLevyMeasureAtOrigin }
\linksinthm{ CorollarySemiStableLevyMeasureAtOrigin }
Let $(c_X,\sigma,\nu)$ be the generating triplet of L\'evy process $X$.
Suppose that there exist some Borel measure $\mu$ and some $z^\prime > 0,\ \alpha \in (0,2)$ such that
$
(\nu - \mu)|_{(-z^\prime,z^\prime)} \geq 0,
$
and $\mu$ is the L\'evy measure of some $\alpha$-semi-stable process.
Then  Assumption~\ref{assumption: holder continuity strengthened on X < z t} holds for
     $\lambda = 1/\alpha$.
\end{theorem}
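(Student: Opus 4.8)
The plan is to mirror the proof of part $(ii)$ of Theorem~\ref{ CorollaryRVlevyMeasureAtOrigin }, with Proposition~\ref{proposition: lipschitz cont, 2} playing the role that Proposition~\ref{proposition: lipschitz cont, 1} played there. Let $b > 1$ be the span of the $\alpha$-semi-stable process whose L\'evy measure is $\mu$, pick an integer $N$ with $b^N \leq z^\prime$, and set $z_0 = b^N$, $A = (-z_0,z_0)$, and $\mu_A = \mu|_A$. Since $A \subseteq (-z^\prime,z^\prime)$, the hypothesis $(\nu - \mu)|_{(-z^\prime,z^\prime)} \geq 0$ yields $(\nu - \mu_A)|_A = (\nu - \mu)|_A \geq 0$, while $\mu_A$ vanishes off $A$; hence $\nu - \mu_A$ is a positive measure dominating $\mu_A$ on $A$, and $\mu_A$ is a genuine L\'evy measure, being a restriction of $\mu$.

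Next I would record the independent decomposition of $X^{<z}(t)$. For every $z \geq z_0$ the process $X^{<z}$ deletes only upward jumps of size larger than $z \geq z_0$, so every jump with size in $A$ is kept and $\nu|_{(-\infty,z)}$ agrees with $\nu$ on $A$; therefore $\nu|_{(-\infty,z)} = \mu_A + \big(\nu|_{(-\infty,z)} - \mu_A\big)$ is a splitting into two L\'evy measures. Superposing the corresponding independent Poisson random measures exactly as in \eqref{decomp, X < z without BM} gives
$$
X^{<z}(t) \distequal Y(t) + \widetilde X^{<z,-A}(t) \qquad \forall\, z \geq z_0,\ t > 0,
$$
where $Y$ has generating triplet $(0,0,\mu_A)$, the process $\widetilde X^{<z,-A}$ has generating triplet $(c_X,\sigma,\nu|_{(-\infty,z)} - \mu_A)$, and the two are independent; if $\sigma > 0$ the Brownian term is simply carried by the second factor, which can only make its law more regular.

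The key input is then Proposition~\ref{proposition: lipschitz cont, 2} applied with this $z_0 = b^N$: since $\mu$ is the L\'evy measure of a (non-trivial) $\alpha$-semi-stable process of span $b$, the process $Y$ with triplet $(0,0,\mu|_{(-z_0,z_0)})$ has a continuous density $f_{Y(t)}$, and there is $C \in (0,\infty)$, independent of $t$, with $\norm{f_{Y(t)}}_\infty \leq C/(t^{1/\alpha}\wedge 1)$ for all $t > 0$. Conditioning on $\widetilde X^{<z,-A}(t)$ and repeating the convolution estimate in \eqref{fact, convolution structure, lipschit cont A2},
\begin{align*}
\P\big(X^{<z}(t) \in [x,x+\delta]\big)
&= \int_{\R} \P\big(Y(t) \in [x-y,\,x-y+\delta]\big)\,\P\big(\widetilde X^{<z,-A}(t) \in dy\big) \\
&\leq \norm{f_{Y(t)}}_\infty \cdot \delta \leq \frac{C\,\delta}{t^{1/\alpha}\wedge 1},
\end{align*}
uniformly over $z \geq z_0$, $t > 0$, $x \in \R$ and $\delta > 0$. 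This is exactly Assumption~\ref{assumption: holder continuity strengthened on X < z t} with $\lambda = 1/\alpha$, which completes the argument.

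I do not expect a genuine obstacle in this reduction: essentially all the work has been pushed into Proposition~\ref{proposition: lipschitz cont, 2}, whose proof --- the sup-norm density bound for truncated semi-stable processes, deferred to Section~\ref{subsec: proof, lipschitz cont} --- is the substantive part. The only points needing a little care are: choosing the truncation level to be of the form $b^N$ so that it lies below $z^\prime$ and is compatible with the hypothesis of Proposition~\ref{proposition: lipschitz cont, 2}; verifying that passing from $\mu$ to its restriction $\mu_A$ preserves the majorization $(\nu - \mu_A)|_A \geq 0$; and observing that a nonzero Brownian component, rather than obstructing the argument, only sharpens the density bound and is harmlessly absorbed into the second factor. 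Non-triviality of the semi-stable process, which Proposition~\ref{proposition: lipschitz cont, 2} requires, is understood as part of the hypothesis, since a null $\mu$ would render the conclusion unattainable in general.
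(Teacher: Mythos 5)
Your proof is correct and follows essentially the same route as the paper's: choose $z_0 = b^N \leq z'$, decompose $X^{<z}$ as in \eqref{decomp, X < z without BM} with $A = (-z_0,z_0)$, apply Proposition~\ref{proposition: lipschitz cont, 2} to bound $\norm{f_{Y(t)}}_\infty$, and conclude by the convolution estimate. The only difference is that you spell out some details the paper leaves implicit (preservation of the majorization after restricting $\mu$, absorption of a Brownian component into the second factor, and the non-triviality needed by Proposition~\ref{proposition: lipschitz cont, 2}), which is harmless.
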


\begin{proof}
Let $b > 1$ be the span of the $\alpha$-semi-stable process.
Fix some $N \in \mathbb Z$ such that $z_0 \delequal b^N \leq z^\prime$.
For any $z \geq z_0$,
let $Y(t)$ and $\widetilde X^{<z,-A}(t)$ be defined as in \eqref{decomp, X < z without BM} with $A = (-z_0,z_0)$.
First of all, applying Proposition \ref{proposition: lipschitz cont, 2}, we can find $C > 0$ such that
$
\norm{f_{Y(t)}}_\infty \leq \frac{C}{t^{1/\alpha} \wedge 1}\ \forall t > 0.
$
Next, due to the independence between $Y$ and $\widetilde X^{<z,-A}(t)$,
it holds for all $x \in \R, \delta \geq 0$, and $t > 0$ that 
$$
\P (X^{<z}(t) \in [x,x+\delta])
    =\int_{\mathbb{R}}\P (Y(t) \in [x-y,x-y+\delta])\cdot\P (\widetilde X^{<z,-A}(t) \in dy)
    \leq\frac{C}{t^{1/\alpha} \wedge 1} \cdot \delta.
$$
This concludes the proof.
\end{proof}

\section{Numerical Experiments}
\label{sec: experiment}

In this section, we apply the importance sampling strategy outlined in Algorithms~\ref{algoISnoARA}  and \ref{algoIS} and demonstrate that
$(i)$ the performance of the importance sampling estimators under different scaling factors and tail distributions,
and $(ii)$ the strong efficiency of the proposed algorithms when compared to crude Monte Carlo methods. 
Specifically, consider a L\'evy process
$X(t) = B(t) + \sum_{i=1}^{N(t)}W_i$,
where $B(t)$ is the standard Brownian motion, $N$ is a Poisson process with arrival rate $0.5$, and $\{W_i\}_{i \geq 1}$ is a sequence of iid random variables with law (for some $\alpha > 1$)
$$
    \P(W_1 > x) = \P(-W_1 > x) = \frac{0.5}{ (1 + x)^\alpha  },\qquad 
    \forall x > 0.
$$
For each $n \geq 1$, we define the scaled process $\bar X_n(t) = \frac{X(nt)}{n}$.
The goal is to estimate the probability of $A_n = \{X_n \in A\}$, where the set $A$ is defined as in \eqref{def set A}
with $a = 2$ and $b = 1.15$. 
Note that this is a case with $l^* = \ceil{a/b} = 2$.

To evaluate the performance of the importance sampling estimator under different scaling factors and tail distributions,
we run experiments under $\alpha \in \{1.45,1.6,1.75\}$, and $n \in \{100,200,\cdots,1000\}$. 
The efficiency is evaluated by the {relative error} of the algorithm, namely the ratio between the standard deviation and the estimated mean.
In Algorithm~\ref{algoISnoARA},
we set $\gamma = 0.25,\ w = 0.05, \rho = 0.97$, and $d = 4$.
In Algorithm~\ref{algoIS}, we further set $\kappa = 0.5$ and $r = 1.5$.
For both algorithms, we generate 10,000 independent samples for each combination of 
$\alpha \in \{1.45,1.6,1.75\}$ and $n \in \{1000,2000,\cdots,10000\}$.
For the number of samples in crude Monte Carlo estimation, we ensure that at least $64/\hat{p}_{\alpha,n}$ samples are generated,
where $\hat{p}_{\alpha,n}$ is the probability estimated by Algorithm~\ref{algoISnoARA}.

The results are summarized in Table \ref{tab: IS relative error} and Figure \ref{fig: reinsurance}. In Table \ref{tab: IS relative error}, we see that for a fixed $\alpha$, the relative error of the importance sampling estimators stabilizes around a constant level as $n$ increases.
This aligns with
the strong efficiency established in Theorems~\ref{theorem: strong efficiency without ARA} and \ref{theorem: strong efficiency}.
In comparison, the relative error of crude Monte Carlo estimators continues to increase as $n$ tends to infinity.
Figure~\ref{fig: reinsurance} further highlights that our importance sampling estimators significantly outperform crude Monte Carlo methods by orders of magnitude.
In summary, when  
Algorithms~\ref{algoISnoARA} and \ref{algoIS} are appropriately parameterized,
their efficiency becomes increasingly evident when compared against the crude Monte Carlo approach as the scaling factor $n$ grows larger and the target probability approaches $0$.
\begin{figure}[htb]
{
\centering
\includegraphics[width=0.7\textwidth]{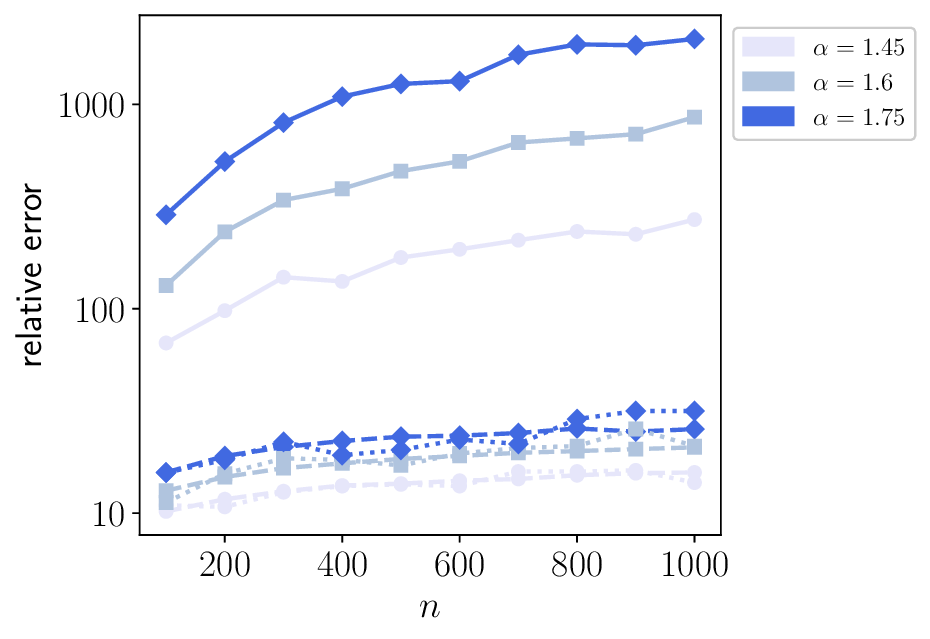}
\caption{
Relative errors of the proposed importance sampling estimator.
Results are plotted under log scale. 
\textbf{Dashed lines}: the importance sampling estimator in
Algorithm~\ref{algoISnoARA},
\textbf{Dotted lines}: 
the importance sampling estimator with ARA in Algorithm~\ref{algoIS}, 
\textbf{Solid lines}: the crude Monte-Carlo methods (solid lines).
\label{fig: reinsurance}}
}
\end{figure}

\begin{table}[htb]
\centering
\caption{Relative errors of Algorithm~\ref{algoISnoARA} (first row),  Algorithm~\ref{algoIS} (second row), and crude Monte Carlo (third row). }\label{tab: IS relative error}
\begin{tabular}{c c c c c c} 
 \hline
 n & 200 & 400 & 600 & 800 & 1000 \\
 \hline
 \multirow{3}{4em}{$\alpha = 1.45$} 
 & 
 $11.70$ & $13.65$ & $14.40$ & $15.33$ & $15.82$ \\
 & $10.74$ & $13.57$ & $13.57$ & $15.98$ & $14.11$ \\
 & 97.86 & 136.03 & 195.40 & 238.81 & 273.13 \\ 
 \hline
 \multirow{3}{4em}{$\alpha = 1.6$} 
 & 
 $15.03$ & $17.53$ & $19.06$ & $20.12$ & $20.98$ \\
 & 15.59 & 18.23 & 19.59 & 21.30 & 21.30 \\
 & 237.82 & 386.35 & 526.13 & 681.79 & 866.02 \\ 
 \hline
\multirow{3}{4em}{$\alpha = 1.75$} 
& 
$19.03$ & $22.54$ & $23.94$ & $25.97$ & $25.77$ \\
 & 18.23 & 19.22 & 22.92 & 28.85 & 31.61 \\
 & 524.78 & 1091.29 & 1298.98 & 1965.22 & 2089.82 \\
 \hline
\end{tabular}
\end{table}

\section{Proofs}
\label{sec: proof}

\subsection{Proof of Proposition~\ref{proposition: design of Zn}}
\label{subsec: proof, prop strong efficiency}

We first prepare two technical lemmas using the sample-path large deviations for heavy-tailed L\'evy processes reviewed in Section~\ref{subsec: review, LD of levy}.

\begin{lemma}
\label{lemma: LD, events A n}
\linksinthm{lemma: LD, events A n}
For the set $A\subset \D$ defined in \eqref{def set A} and the quantity $l^*$ defined in \eqref{def l *},
\begin{align*}
     0 <  \liminf_{n \rightarrow \infty}\frac{\P(\bar X_n \in A)}{ (n\nu[n,\infty))^{l^*} }
    \leq \limsup_{n \rightarrow \infty}\frac{\P(\bar X_n \in A)}{ (n\nu[n,\infty))^{l^*} } < \infty.
\end{align*}
\end{lemma}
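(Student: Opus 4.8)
The plan is to apply the two-sided sample-path large deviations principle (Result~\ref{result: LD of Levy, two-sided}) to the set $A$ in \eqref{def set A}. The first and main task is therefore to identify the relevant combinatorial quantities $\bigl(\mathcal J(A),\mathcal K(A)\bigr)$ and to verify the two hypotheses of Result~\ref{result: LD of Levy, two-sided}: that the argument minimum in \eqref{def: argument minimum LD two sided case} is attained on $A$, and that $A$ is bounded away from $\mathbb D_{<\mathcal J(A),\mathcal K(A)}$ in the $J_1$ metric. I would first argue that a step function $\xi \in \mathbb D_{j,k}$ lies in $A$ only if its upward jumps are all strictly below $b$ and its supremum reaches $a$; since a nondecreasing arrangement of $j$ upward jumps each $<b$ can reach at most (just under) $jb$, and downward jumps only hurt, we need $jb > a$, i.e.\ $j \geq \lceil a/b\rceil = l^*$ (here Assumption~\ref{assumption: choice of a,b}, $a/b\notin\mathbb Z$, makes the inequality strict and clean). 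Conversely $\mathbb D_{l^*,0}\cap A \neq\emptyset$: take $l^*$ upward jumps of common size slightly less than $b$ but large enough that $l^*$ of them sum past $a$, which is possible precisely because $l^* b > a$. Since downward jumps carry positive cost $\alpha'-1>0$, the unique minimizer of $j(\alpha-1)+k(\alpha'-1)$ over $\{(j,k):\mathbb D_{j,k}\cap A\neq\emptyset\}$ is $(l^*,0)$, so $\mathcal J(A)=l^*$, $\mathcal K(A)=0$.

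Next I would check $A$ is bounded away from $\mathbb D_{<l^*,0}=\bigcup_{(l,m)\in\mathbb I_{<l^*,0}}\mathbb D_{l,m}$. The point is that any $\zeta$ in this union has at most $l^*-1$ upward jumps (when it has any downward jumps the upward count can only be $\le l^*-1$ as well, since adding a downward jump raises the cost and upward jumps each cost $\alpha-1$). A path with at most $l^*-1$ upward jumps, all of size $<b$ after the inevitable $J_1$-perturbation, cannot have supremum exceeding $(l^*-1)b + o(1) < a$ unless the perturbation is of size bounded below by a constant; more carefully, if $\bm d(\xi,\zeta)<\epsilon$ with $\xi\in A$, then $\zeta$ has an upward jump close to each large upward jump of $\xi$ and $\sup\zeta \geq \sup\xi - \text{(small)} \geq a - \text{(small)}$, forcing $\zeta$ to have $\geq l^*$ upward jumps of size near those of $\xi$ once $\epsilon$ is small — contradiction. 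I would make this quantitative using the standard fact that $J_1$-closeness controls both the supremum and the jump sizes (e.g.\ $|\sup\xi - \sup\zeta| \le \bm d(\xi,\zeta)$ and each jump of size $>2\bm d(\xi,\zeta)$ in $\xi$ forces a nearby comparable jump in $\zeta$). Hence $\bm d(A,\mathbb D_{<l^*,0})>0$.

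With both hypotheses verified, Result~\ref{result: LD of Levy, two-sided} gives
$$
\mathbf C_{l^*,0}(A^\circ) \;\le\; \liminf_{n\to\infty}\frac{\P(\bar X_n\in A)}{(n\nu[n,\infty))^{l^*}(n\nu(-\infty,-n])^{0}} \;\le\; \limsup_{n\to\infty}\frac{\P(\bar X_n\in A)}{(n\nu[n,\infty))^{l^*}} \;\le\; \mathbf C_{l^*,0}(A^-) < \infty,
$$
and since the $\mathcal K(A)=0$ factor is $1$, this is exactly the claimed two-sided bound once we know $\mathbf C_{l^*,0}(A^\circ)>0$. The remaining point — and the step I expect to require the most care — is the strict positivity $\mathbf C_{l^*,0}(A^\circ)>0$: one must exhibit a subset of $\{\bm y\in(0,\infty)^{l^*}: y_1\ge\cdots\ge y_{l^*}\}$ of positive $\nu_\alpha^{l^*}$-measure and a positive-probability event on the uniform arrival times $U_1,\dots,U_{l^*}$ such that the resulting step function $\sum_l y_l \mathbf I_{[U_l,1]}$ lies in the \emph{interior} $A^\circ$. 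Concretely, pick $y_l$ ranging over a small box with all coordinates strictly between $a/l^*$ and $b$ (nonempty and of positive measure since $a/l^* < b$), so the total upward displacement strictly exceeds $a$ with every jump strictly below $b$; ordering the arrival times arbitrarily, the path strictly crosses $a$ and has max jump strictly below $b$, and a short $J_1$-perturbation argument shows a whole neighborhood stays in $A$, i.e.\ such $\xi$ lie in $A^\circ$. Integrating over this positive-measure set gives $\mathbf C_{l^*,0}(A^\circ)>0$, completing the proof. I would handle the one-sided case of Assumption~\ref{assumption: heavy tailed process} identically using Result~\ref{result: LD of Levy, one-sided}, where it is even simpler since there are no downward jumps to account for.
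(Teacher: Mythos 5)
Your proposal is correct and follows essentially the same route as the paper's proof: identify $(\mathcal J(A),\mathcal K(A))=(l^*,0)$ by bounding the supremum of a step function with $j$ upward jumps each below $b$, verify $\bm d(A,\mathbb D_{<l^*,0})>0$ via the quantitative $J_1$ contradiction argument, show $\mathbf C_{l^*,0}(A^\circ)>0$ by exhibiting an explicit box of jump sizes whose total exceeds $a$ with each entry below $b$, and then invoke Result~\ref{result: LD of Levy, two-sided} (Result~\ref{result: LD of Levy, one-sided} in the one-sided case). The only cosmetic difference is the choice of box in the positivity step — you use $(a/l^*,b)^{l^*}$ where the paper uses $(b-\epsilon,b)^{l^*}$ with $l^*(b-\epsilon)>a$ — but both work and yield an explicit positive lower bound.
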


\begin{proof}
\linksinpf{lemma: LD, events A n}
In this proof, we focus on the two-sided case in Assumption \ref{assumption: heavy tailed process}.
It is worth noticing that analysis for the one-sided case is almost identical, with the only major difference being that we apply Result \ref{result: LD of Levy, one-sided} (i.e., the one-sided version of the large deviations of $\bar X_n$) instead of Result \ref{result: LD of Levy, two-sided} (i.e., the two-sided version).
Specifically, we claim that
\begin{enumerate}[($i$)]
    \item 
$
\big(l^*,0\big)
    \in 
    \underset{ (j,k) \in \mathbb N^2,\ \mathbb D_{j,k} \cap A \neq \emptyset }{\text{argmin}}
    j(\alpha - 1) + k(\alpha^\prime - 1);
$

\item $ \mathbf{C}_{l^*,0}(A^\circ) > 0$; 

\item the set $A$ is bounded away from $\mathbb D_{<l^*,0}$.
\end{enumerate}
Then by applying Result~\ref{result: LD of Levy, two-sided},
we yield
\begin{align*}
0 < {\mathbf C_{l^*,0}(A^\circ)}
\leq 
    \liminf_{n \rightarrow \infty} \frac{\P(\bar X_n \in A)}{ (n\nu[n,\infty))^{l^*} }
    \leq 
    \limsup_{n \rightarrow \infty} \frac{\P(\bar X_n \in A)}{ (n\nu[n,\infty))^{l^*} }
    \leq {\mathbf C_{l^*,0}(A^-)} < \infty
\end{align*}
and conclude the proof.
Now, it remains to prove claims ($i$), ($ii$), and ($iii$).

\medskip
\noindent\textbf{Proof of Claim $(i)$}.

By definitions of $\mathbb D_{j,k}$, for any $\xi \in \mathbb D_{j,k}$
there exist $(u_i)_{i = 1}^j \in (0,\infty)^j$, $(t_i)_{i = 1}^j \in (0,1]^j$ and $(v_i)_{i = 1}^k \in (0,\infty)^k$, $(s_i)_{i = 1}^k \in (0,1]^k$ such that
\begin{align}
    \xi(t) = \sum_{i = 1}^j u_i\mathbf{I}_{ [t_i,1] }(t) - \sum_{i = 1}^k v_i\mathbf{I}_{ [s_i,1] }(t)
    \qquad \forall t \in [0,1].
    \label{proof, property of D j k, proposition: design of Zn}
\end{align}

First, 
from Assumption~\ref{assumption: choice of a,b},
one can choose $\epsilon > 0$ small enough such that $l^*(b - \epsilon) > a$.
Then for the case with $(j,k) = (l^*,0)$ in \eqref{proof, property of D j k, proposition: design of Zn},
by picking $u_i = b - \epsilon$ for all $i \in [l^*]$, we have
$\sup_{t \in [0,1]}\xi(t) = \sum_{i = 1}^{l^*}u_i = l^*(b-\epsilon) > a$, and hence $\xi \in A$.
This verifies $\D_{l^*,0} \cap A \neq \emptyset$.


Next, suppose we can show that 
$j \geq l^*$ is a necessary condition for $\mathbb D_{j,k} \cap A \neq \emptyset$.
Then we get
\begin{align*}
    \big\{(j,k) \in \mathbb N^2:\ \mathbb D_{j,k} \cap A \neq \emptyset\big\} \subseteq
    \big\{(j,k) \in \mathbb N^2:\ j \geq l^*,\ k \geq 0\big\},
\end{align*}
which immediately verifies claim $(i)$ due to $\alpha,\alpha^\prime > 1$; see Assumption~\ref{assumption: heavy tailed process}.
Now, to show that $j \geq l^*$ is a necessary condition for $\mathbb D_{j,k} \cap A \neq \emptyset$,
note that from \eqref{proof, property of D j k, proposition: design of Zn}, it holds for any $\xi \in \D_{j,k} \cap A$ that
$
 a < \sup_{t \in [0,1]}\xi(t) \leq \sum_{i = 1}^j u_i < jb.
$
As a result, we must have $j > a/b$ and hence $j \geq l^* = \ceil{a/b}$ due to $a/b \notin \mathbb Z$; see Assumption~\ref{assumption: choice of a,b}. This concludes the proof of claim $(i)$.


\medskip
\noindent\textbf{Proof of Claim $(ii)$}.

Again, choose some $\epsilon > 0$ small enough such that $l^*(b - \epsilon) > a$.
Given any $u_i \in (b-\epsilon,b)$ and $0 < t_1 < t_2 < \cdots < t_{l^*} < 1$,
 the step function $ \xi(t) = \sum_{i = 1}^{l^*} u_i\mathbf{I}_{ [t_i,1] }(t)$ satisfies $\sup_{t \in [0,1]}\xi(t) \geq l^*(b-\epsilon) > a$, thus implying $\xi \in A$.
Therefore, (for the definition of $\mathbf C_{j,k}$, see \eqref{def: measure C j k})
\begin{align*}
    \mathbf{C}_{l^*,0}(A^\circ) & \geq 
    \nu^{l^*}_\alpha\Big( (b-\epsilon,b)^{l^*} \Big) = \frac{1}{l^*!}\bigg[ \frac{1}{ (b-\epsilon)^\alpha } - \frac{1}{b^\alpha} \bigg]^{l^*} > 0.
\end{align*}

\medskip
\noindent\textbf{Proof of Claim $(iii)$}.

Assumption~\ref{assumption: choice of a,b} implies that $a > (l^*-1)b$,
allowing us to choose $\epsilon > 0$ small enough that $a - \epsilon > (l^* - 1)(b + \epsilon)$.
It suffices to show that
\begin{align}
    \bm{d}(\xi,\xi^\prime) \geq \epsilon
    \qquad 
    \forall \xi \in \mathbb D_{<l^*,0},\ \xi^\prime \in A.
    \label{goal, claim iii, lemma: LD, events A n}
\end{align}
Here, $\bm{d}$ is the Skorokhod $J_1$ metric; see \eqref{def: J 1 metric} for the definition.
To prove \eqref{goal, claim iii, lemma: LD, events A n}, we start with the following observation:
due to claim $(i)$, for any $(j,k) \in \mathbb N^2$ with $(j,k)\in \mathbb I_{<l^*,0}$, we must have $j \leq l^* - 1$.
Now, we proceed with a proof by contradiction. Suppose there is some $\xi \in \D_{j,k}$ with $j \leq l^* - 1$ and some $\xi^\prime \in A$ such that $\bm d(\xi,\xi^\prime) < \epsilon$.
Due to $\xi^\prime \in A$ (and hence no upward jump in $\xi^\prime$ is larger than $b$) and $\bm d(\xi,\xi^\prime) < \epsilon$,
under the representation \eqref{proof, property of D j k, proposition: design of Zn}
we must have $u_i < b + \epsilon\ \forall i \in [j]$.
This implies $\sup_{t \in[0,1]} \xi(t) \leq \sum_{i = 1}^j u_i < j(b+\epsilon) \leq (l^* -1)(b+\epsilon)$.
Due to $\bm d(\xi,\xi^\prime) < \epsilon$ again,
we yield the contradiction that $\sup_{t \in [0,1]}\xi^\prime(t) < (l^*-1)(b+\epsilon) + \epsilon < a$ (and hence $\xi^\prime \notin A$). This concludes the proof of claim $(iii)$.
\end{proof}

\begin{lemma}
\label{lemma: LD, event A Delta}
\linksinthm{lemma: LD, event A Delta}
Let $p > 1$. Let  $\Delta > 0$ be such that $a - \Delta > (l^* - 1)b$ and $[a - \Delta - (l^* - 1)b]/\gamma \notin \mathbb Z$.
Suppose that $
(J_\gamma + l^* - 1)/p > 2l^*
$
holds for
\begin{align}
    J_\gamma  \delequal \ceil{\frac{a - \Delta - (l^* - 1)b}{\gamma}}.
    \label{proof, def J gamma, lemma: LD, event A Delta}
\end{align}
Then
    $$\P\big(\bar X_n \in A^\Delta \cap E,\ \mathcal{D}(\bar J_n) \leq l^* - 1\big) = \bm{o}\Big(\big(n\nu[n,\infty)\big)^{ 2pl^* }\Big)
    \qquad \text{as }n\to\infty
    $$
where 
$A^\Delta = \{\xi \in \mathbb{D}: \sup_{t \in [0,1]}\xi(t)\geq a - \Delta\},\ E \delequal\{ \xi \in \mathbb D: \sup_{t \in (0,1] }\xi(t) - \xi(t-) < b \}$
and
the function $\mathcal{D}(\xi)$ counts the number of discontinuities in $\xi \in \mathbb D$.
\end{lemma}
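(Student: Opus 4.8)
The plan is to recast the event as $\{\bar X_n\in S\}$ for a single measurable set $S\subseteq\mathbb D$ and then invoke the sample-path large deviations of Result~\ref{result: LD of Levy, one-sided} or Result~\ref{result: LD of Levy, two-sided}, exactly as in the proof of Lemma~\ref{lemma: LD, events A n}. Since $J_n$ collects precisely the jumps of $X$ of size at least $n\gamma$, the scaling $\bar J_n(t)=\tfrac1n J_n(nt)$ gives $\mathcal D(\bar J_n)=\#\{t\in[0,1]:\ \Delta\bar X_n(t)\geq\gamma\}$, so that
\[
\big\{\bar X_n\in A^\Delta\cap E,\ \mathcal D(\bar J_n)\leq l^*-1\big\}=\{\bar X_n\in S\},\qquad S:=A^\Delta\cap E\cap G,
\]
where $G:=\{\xi\in\mathbb D:\ \#\{t\in[0,1]:\ \Delta\xi(t)\geq\gamma\}\leq l^*-1\}$. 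Write $\delta_0:=a-\Delta-(l^*-1)b>0$, so $J_\gamma=\ceil{\delta_0/\gamma}$ and, because $\delta_0/\gamma\notin\mathbb Z$, one has $(J_\gamma-1)\gamma<\delta_0<J_\gamma\gamma$; note also that the hypothesis $(J_\gamma+l^*-1)/p>2l^*$ forces $J_\gamma\geq 2$.

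Next I would pin down the large-deviations rate of $S$. For any $\xi\in\mathbb D_{j,k}\cap S$, split its upward jumps into those of size $\geq\gamma$ (there are $j_1\leq l^*-1$ of these by the definition of $G$, each $<b$ since $\xi\in E$) and the remaining $j_2$ upward jumps (each $<\gamma$). From $\sup_t\xi(t)\leq\sum(\text{upward jumps})<j_1 b+j_2\gamma$ and $\xi\in A^\Delta$ we get $j_1 b+j_2\gamma>(l^*-1)b+\delta_0$, and since $b>\gamma$ this yields $j_2\gamma>(l^*-1-j_1)\gamma+\delta_0$, hence $j_2\geq(l^*-1-j_1)+J_\gamma$ and $j\geq J_\gamma+l^*-1$. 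Conversely, a step function carrying $l^*-1$ upward jumps slightly below $b$ together with $J_\gamma$ upward jumps slightly below $\gamma$ lies in $S$, so $\mathbb D_{J_\gamma+l^*-1,0}\cap S\neq\emptyset$. As adding downward jumps only increases the rate, the argument minimum in Result~\ref{result: LD of Levy, two-sided} is $(\mathcal J(S),\mathcal K(S))=(J_\gamma+l^*-1,0)$, and in the one-sided case $\mathcal J(S)=J_\gamma+l^*-1$.

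The main obstacle is to verify that $S$ is bounded away from $\mathbb D_{<J_\gamma+l^*-1,0}$. I would establish this by adapting claim (iii) in the proof of Lemma~\ref{lemma: LD, events A n}, using the standard fact that in the Skorokhod $J_1$ topology the decreasingly-ordered sequence of upward-jump sizes is continuous, uniformly: if $\bm{d}(\xi,\xi')<\epsilon$ then the $r$-th largest upward jump of $\xi$ and of $\xi'$ differ by at most $2\epsilon$ for every $r$. Thus, for $\xi'\in S$ and $\bm{d}(\xi,\xi')$ small, the largest upward jump of $\xi$ is below $b+O(\bm{d}(\xi,\xi'))$ while its $l^*$-th largest upward jump is below $\gamma+O(\bm{d}(\xi,\xi'))$; since any $\xi\in\mathbb D_{<J_\gamma+l^*-1,0}$ has at most $J_\gamma+l^*-2$ upward jumps,
\[
\sup_t\xi(t)\leq\sum(\text{upward jumps of }\xi)<(l^*-1)b+(J_\gamma-1)\gamma+O\big(\bm{d}(\xi,\xi')\big)=(a-\Delta)-\big(\delta_0-(J_\gamma-1)\gamma\big)+O\big(\bm{d}(\xi,\xi')\big),
\]
and $\delta_0-(J_\gamma-1)\gamma>0$ precisely because $\delta_0/\gamma\notin\mathbb Z$. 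Taking $\bm{d}(\xi,\xi')$ below a fixed threshold then forces $\sup_t\xi'(t)\leq\sup_t\xi(t)+\bm{d}(\xi,\xi')<a-\Delta$, contradicting $\xi'\in A^\Delta$; hence $\bm{d}(S,\mathbb D_{<J_\gamma+l^*-1,0})>0$.

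With these ingredients, Result~\ref{result: LD of Levy, two-sided} (or Result~\ref{result: LD of Levy, one-sided}) applies to $S$ and yields $\P(\bar X_n\in S)=\bm{O}\big((n\nu[n,\infty))^{J_\gamma+l^*-1}\big)$. Since $n\nu[n,\infty)=nH_+(n)$ is regularly varying at infinity with index $1-\alpha<0$, it tends to $0$; and rearranging the hypothesis gives $J_\gamma+l^*-1>2pl^*$, so $(n\nu[n,\infty))^{J_\gamma+l^*-1}=\bm{o}\big((n\nu[n,\infty))^{2pl^*}\big)$, which is exactly the claim.
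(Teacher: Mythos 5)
Your proof is correct and follows essentially the same route as the paper's: you rewrite the event as $\{\bar X_n\in F\}$ for the set $F=A^\Delta\cap E\cap G$, identify $(\mathcal J(F),\mathcal K(F))=(J_\gamma+l^*-1,0)$, verify boundedness away from $\mathbb D_{<J_\gamma+l^*-1,0}$, and then apply Result~\ref{result: LD of Levy, two-sided} together with the hypothesis $J_\gamma+l^*-1>2pl^*$. The only stylistic difference is that you phrase the boundedness step via continuity of the ordered upward-jump sizes in the $J_1$ topology (with the correct $2\epsilon$ constant), whereas the paper carries out the same estimate directly via the representation of $\xi\in\mathbb D_{j,k}$; both are sound.
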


\begin{proof}
\linksinpf{lemma: LD, event A Delta}
Similar to the proof of Lemma \ref{lemma: LD, events A n},
we focus on the two-sided case in Assumption \ref{assumption: heavy tailed process}.
Still, it is worth noticing that the proof of the one-sided case is almost identical, with the only major difference being that we apply Result~\ref{result: LD of Levy, one-sided} instead of Result~\ref{result: LD of Levy, two-sided}.

First, observe that 
$
\P(\bar X_n \in A^\Delta \cap E,\ \mathcal{D}(\bar J_n) \leq l^* - 1) = \P(\bar X_n \in F)
$
where 
\begin{align*}
    F & \delequal 
    \big\{ \xi \in \D:\ \sup_{t \in [0,1]}\xi(t)\geq a - \Delta;\ \sup_{t \in (0,1] }\xi(t) - \xi(t-) < b,
    \\ 
    &\qquad \qquad \qquad \qquad \qquad \qquad 
    \#\{ t \in [0,1]:\ \xi(t) - \xi(t-) \geq \gamma \} \leq l^* - 1 \big\}.
\end{align*}
Furthermore, we claim that 
\begin{enumerate}[$(i)$]
    \item 
$
(J_\gamma + l^* - 1,0)
    \in 
    \underset{ (j,k) \in \mathbb N^2,\ \mathbb D_{j,k} \cap F \neq \emptyset }{\text{argmin}}
    j(\alpha - 1) + k(\alpha^\prime - 1);
$

\item the set $F$ is bounded away from $\mathbb D_{<J_\gamma + l^* - 1,0}$.
\end{enumerate}
Then we are able to apply Result~\ref{result: LD of Levy, two-sided} and obtain
$$
\P(\bar X_n \in A^\Delta \cap E,\ \mathcal{D}(\bar J_n) \leq l^* - 1) 
= \P(\bar X_n \in F)
= \bm{O}\big( (n\nu[n,\infty))^{ J_\gamma + l^* - 1 } \big)
\qquad \text{as }n \to \infty.
$$
Lastly, by our assumption $(J_\gamma + l^* - 1)/p > 2l^*$, we get $(n\nu[n,\infty))^{ l^*  - 1+ J_\gamma }  = \bm{o}\big(\big(n\nu[n,\infty)\big)^{ 2pl^* } \big)$
and conclude the proof.
Now, it remains to prove claims $(i)$ and $(ii)$.

\medskip
\noindent\textbf{Proof of Claim $(i)$}.

By definition of $\mathbb D_{j,k}$, given any $\xi \in \mathbb D_{j,k}$
there exist $(u_i)_{i = 1}^j \in (0,\infty)^j, (t_i)_{i = 1}^j \in (0,1]^j$ and $(v_i)_{i = 1}^k \in (0,\infty)^k, (s_i)_{i = 1}^k \in (0,1]^k$ such that the representation \eqref{proof, property of D j k, proposition: design of Zn} holds.
By assumption $[a - \Delta - (l^* - 1)b]/\gamma \notin \mathbb Z$,
for $J_\gamma$ defined in \eqref{proof, def J gamma, lemma: LD, event A Delta} we have
\begin{align}
    (J_\gamma - 1)\gamma < a - \Delta - (l^* - 1)b < J_\gamma \cdot \gamma.
    \label{proof, ineq 1, lemma: LD, event A Delta}
\end{align}
It then holds for all $\epsilon > 0$ small enough that 
$a - \Delta < J_\gamma (\gamma - \epsilon) + (l^* - 1)(b-\epsilon)$.
As a result, for the case with $(j,k) = (l^* - 1 + J_\gamma,0)$ in \eqref{proof, property of D j k, proposition: design of Zn},
by picking 
$u_i = b - \epsilon$ for all $i \in [l^* - 1]$, and $u_i = \gamma - \epsilon$ for all $i = l^*, l^* + 1, \cdots, l^* - 1 + J_\gamma$,
we get $\sup_{t \in [0,1]}\xi(t) = J_\gamma(\gamma - \epsilon) + (l^*-1)(b- \epsilon) > a - \Delta$.
This proves that $\xi \in \D_{l^* - 1 + J_\gamma,0}\cap F$, and hence $\D_{l^* - 1 + J_\gamma,0}\cap F \neq \emptyset$.

Next, suppose we can show that
$j \geq l^* - 1 + J_\gamma$ is the necessary condition for $\D_{j,k}\cap F \neq \emptyset$.
Then, we get
\begin{align*}
    \{(j,k) \in \mathbb N^2:\ \D_{j,k}\cap F \neq \emptyset\}
    \subseteq \{(j,k) \in \mathbb N^2:\ j \geq l^* - 1 + J_\gamma,\ k \geq 0\},
\end{align*}
which immediately verifies claim $(i)$
due to $\alpha,\alpha^\prime > 1$; see Assumption \ref{assumption: heavy tailed process}.
Now, to show that $j \geq l^* - 1 + J_\gamma$ is a necessary condition,
note that, from \eqref{proof, property of D j k, proposition: design of Zn},
it holds for any $\xi \in \D_{j,k} \cap F$ that
$
 a - \Delta < \sup_{t \in [0,1]}\xi(t) \leq \sum_{i = 1}^j u_i.
$
Furthermore, by the definition of the set $F$,
we must have (here, w.l.o.g., we order $u_i$'s by $u_1 \geq u_2 \geq \ldots \geq u_j$) $u_i < b$ for all $i \in [l^* - 1]$ and $u_i < \gamma$ for all $i = l^*, l^* + 1, \cdots, j$.
This implies 
$
(l^*-1)b + (j-l^*+1)\gamma > a - \Delta,
$
and hence
$
j > \frac{a - \Delta - (l^*-1)b}{\gamma} + l^* - 1,
$
which is equivalent to $j \geq J_\gamma + l^* - 1$.

\medskip
\noindent\textbf{Proof of Claim $(ii)$}.

From \eqref{proof, ineq 1, lemma: LD, event A Delta},
we can fix some $\epsilon > 0$
small enough such that
\begin{align}
    a - \Delta - \epsilon > (l^* - 1)(b+ \epsilon) + (J_\gamma - 1)(\gamma + \epsilon).
    \label{proof, choice of epsilon, claim 2, lemma: LD, event A Delta}
\end{align}
It suffices to show that
\begin{align}
    \bm{d}(\xi,\xi^\prime) \geq \epsilon
    \qquad 
    \forall \xi \in \D_{< J_\gamma + l^* - 1,0 },\ \xi^\prime \in F.
    \label{goal, claim ii, lemma: LD, event A Delta}
\end{align}
Here, $\bm{d}$ is the Skorokhod $J_1$ metric; see \eqref{def: J 1 metric} for the definition.
To prove \eqref{goal, claim ii, lemma: LD, event A Delta},
we start with the following observation:
using claim $(i)$,
for any $(j,k) \in \mathbb N^2$ with $(j,k) \in \mathbb I_{ < J_\gamma + l^* - 1, 0 }$,
we must have $j \leq J_\gamma + l^* - 2$.
Next, we proceed with a proof by contradiction.
Suppose there is some $\xi \in \D_{j,k}$ with $j \leq J_\gamma + l^* - 2$
and some $\xi^\prime \in F$ such that $\bm d(\xi,\xi^\prime) < \epsilon$.
By the definition of the set $F$ above,
any upward jump in $\xi^\prime$ is bounded by $b$, and at most $l^* - 1$ of them is larger than $\gamma$.
Then from $\bm d(\xi,\xi^\prime) < \epsilon$,
we know that any upward jump in $\xi$ is bounded by $b + \epsilon$, and at most $l^* - 1$ of them is larger than $\gamma + \epsilon$.
Through \eqref{proof, property of D j k, proposition: design of Zn}, we now have
\begin{align*}
    \sup_{t \in [0,1]}\xi(t) & \leq \sum_{i = 1}^j u_i
    \leq (l^* - 1)(b + \epsilon) + (J_\gamma -  1)(\gamma + \epsilon) < a - \Delta - \epsilon.
\end{align*}
The last inequality follows from \eqref{proof, choice of epsilon, claim 2, lemma: LD, event A Delta}.
Using $\bm d(\xi,\xi^\prime) < \epsilon$ again, 
we yield the contraction that $\sup_{t \in [0,1]}\xi^\prime(t) < a - \Delta$ and hence $\xi^\prime \notin F$.
This concludes the proof of \eqref{goal, claim ii, lemma: LD, event A Delta}.
\end{proof}

Now, we are ready to prove Proposition \ref{proposition: design of Zn}.

\begin{proof}[Proof of Proposition~\ref{proposition: design of Zn}]
\linksinpf{proposition: design of Zn}
We start by proving the \textbf{unbiasedness} of the importance sampling estimator
\begin{align*}
    L_n = Z_n \frac{d \P}{d \Q_n} = 
    \sum_{m = 0}^\tau \frac{ \hat Y^m_n \mathbf{I}_{E_n}\frac{d \P}{d \Q_n} - \hat Y^{m-1}_n \mathbf{I}_{E_n}\frac{d \P}{d \Q_n} }{\P(\tau \geq m)}.
\end{align*}
under $\Q_n$.
Note that under both $\P$ and $\Q_n$, we have $\tau \sim \text{Geom}(\rho)$ (i.e., $\P(\tau \geq m)=\rho^{m-1}$) and that $\tau$ is independent of everything else.
In light of Result~\ref{resultDebias},
it suffices to verify $\lim_{m \to \infty}\E^{\Q_n}[Y_m] = \E^{\Q_n}[Y]$ and condition~\eqref{condition, resultDebias} (under $\Q_n$)
with the choice of
$
Y_m = \hat Y^m_n \mathbf{I}_{E_n}\frac{d\P}{d\Q_n}
$
and
$
Y = Y^*_n\mathbf{I}_{E_n}\frac{d\P}{d\Q_n}.
$
In particular, it suffices to show that (for any $n \geq 1$)
\begin{align}
    \sum_{m \geq 1}\E^{\Q_n}\Bigg[\bigg|  \hat Y^{m-1}_n \mathbf{I}_{E_n}\frac{d \P}{d \Q_n} - Y^*_n \mathbf{I}_{E_n}\frac{d \P}{d \Q_n}\bigg|^2 \Bigg]\Bigg/ \P(\tau \geq m) < \infty.
    \label{proof, goal, unbiasedness, proposition: design of Zn}
\end{align}
To see why, note that \eqref{proof, goal, unbiasedness, proposition: design of Zn} directly verifies condition~\eqref{condition, resultDebias}.
Furthermore,
it implies
$
\lim_{m \to \infty}\E^{\Q_n}\big|  \hat Y^{m-1}_n \mathbf{I}_{E_n}\frac{d \P}{d \Q_n} - Y^*_n \mathbf{I}_{E_n}\frac{d \P}{d \Q_n}\big|^2 = 0.
$
The $\mathcal L_2$ convergence then implies the $\mathcal L_1$ convergence, i.e.,
$
\lim_{m \to \infty}\E^{\Q_n}[\hat Y^{m-1}_n \mathbf{I}_{E_n}\frac{d \P}{d \Q_n}] = \E^{\Q_n}[Y^*_n \mathbf{I}_{E_n}\frac{d \P}{d \Q_n}].
$

To prove claim \eqref{proof, goal, unbiasedness, proposition: design of Zn},
observe that
\begin{align*}
    \E^{\Q_n}\Bigg[\bigg|  \hat Y^{m-1}_n \mathbf{I}_{E_n}\frac{d \P}{d \Q_n} - Y^*_n \mathbf{I}_{E_n}\frac{d \P}{d \Q_n}\bigg|^2\Bigg]
    &
    \leq 
    \E^{\Q_n}
    \bigg[
    |  \hat Y^{m-1}_n  - Y^*_n|^2 \cdot  \bigg(\frac{d \P}{d \Q_n}\bigg)^2
    \bigg]
    \\ 
    & = 
    \E
    \bigg[
    |  \hat Y^{m-1}_n  - Y^*_n|^2 \cdot  \frac{d \P}{d \Q_n}
    \bigg]
    \\ 
    & \leq \frac{1}{w}\E
    |  \hat Y^{m-1}_n  - Y^*_n|^2
    \qquad 
    \text{due to } \frac{d \P}{d \Q_n} \leq \frac{1}{w}\text{, see \eqref{def: estimator Ln}}.
\end{align*}
In particular, since $\hat Y^m_n$ and $Y^*_n$ only take values in $\{0,1\}$,
we have 
$
\E|  \hat Y^{m}_n  - Y^*_n|^2 = \P(\hat Y^{m}_n \neq Y^*_n),
$
and 
\begin{align}
  \P(\hat Y^{m}_n \neq Y^*_n)
  & = \sum_{k \geq 0}\P(Y^*_n \neq \hat Y^m_n\ |\ \mathcal{D}(\bar J_n) = k)\P(\mathcal{D}(\bar J_n) = k)
  \nonumber
  \\ 
  & \leq
  \sum_{k \geq 0} 
   C_0 \rho^m_0 \cdot (k+1) \cdot \P(\mathcal{D}(\bar J_n) = k)
  \qquad \text{for all $m \geq \bar m$ due to \eqref{condition 1, proposition: design of Zn}}
  \nonumber
  \\ 
  & = C_0\rho^m_0 \cdot \E\Big[1 + \text{Poisson}\big( n\nu[n\gamma,\infty)\big)\Big]
  = C_0\rho^m_0 \cdot\big(1 + n\nu[n\gamma,\infty)\big).
  \label{proof: P hat Y m n neq Y * n, proposition: design of Zn}
\end{align}
The last line in the display above follows from the definition of $\bar J_n(t) = \frac{1}{n}J(nt)$ in \eqref{def: process J n Xi n}.
To conclude, 
note that $\nu(x) \in \RV_{-\alpha}(x)$ and hence $n\nu[n\gamma,\infty) \in \RV_{-(\alpha - 1)}(n)$ with $\alpha > 1$,
thus implying $\sup_{n \geq 1}n\nu[n\gamma,\infty) < \infty$;
also, as prescribed in Proposition~\ref{proposition: design of Zn} we have $\rho \in (\rho_0, 1)$.

The rest of the proof is devoted to establishing the \textbf{strong efficiency} of $L_n$.
Observe that
\begin{align*}
    \E^{\Q_n}[L^2_n]
    & = \int Z^2_n \frac{d\P}{d\Q_n}\frac{d\P}{d\Q_n} d\Q_n
    = \int Z^2_n \frac{d\P}{d\Q_n} d\P
    = \int Z^2_n \mathbf{I}_{ B^\gamma_n }\frac{d\P}{d\Q_n} d\P 
    +
    \int Z^2_n \mathbf{I}_{ (B^\gamma_n)^c }\frac{d\P}{d\Q_n} d\P.
\end{align*}
By definitions in \eqref{def: estimator Ln}, on event $(B^\gamma_n)^c$ we have $\frac{d\P}{d\Q_n} \leq \frac{1}{w}$,
while on event $B^\gamma_n$ we have $\frac{d\P}{d\Q_n} \leq \frac{\P(B^\gamma_n)}{1 - w}$.
As a result,
\begin{align}
    \E^{\Q_n}[L^2_n] \leq \frac{\P(B^\gamma_n)}{ 1 - w }\E[ {Z^2_n\mathbf{I}_{B^\gamma_n}}]
    +
    \frac{1}{w}\E[ {Z^2_n\mathbf{I}_{(B^\gamma_n)^c}}].
    \label{proof, decomp Ln, proposition: design of Zn}
\end{align}
Meanwhile, Lemma~\ref{lemma: LD, events A n} implies that
\begin{align}
    0 < \liminf_{n \rightarrow \infty}\frac{\P(A_n)}{ (n\nu[n,\infty))^{l^*} }
    \leq \limsup_{n \rightarrow \infty}\frac{\P(A_n)}{ (n\nu[n,\infty))^{l^*} } < \infty.
    \label{goal 1, proposition: design of Zn}
\end{align}
Let $Z_{n,1} \delequal Z_n\mathbf{I}_{B^\gamma_n}$ and $Z_{n,2} \delequal Z_n\mathbf{I}_{(B^\gamma_n)^\complement}$.
Furthermore, given $\rho \in (\rho_0,1)$, we claim the existence of some $\bar \gamma = \bar \gamma(\rho) \in (0,b)$ such that for any $\gamma \in (0,\bar \gamma)$,
\begin{align}
    \P(B^\gamma_n) & = \bm{O}\big((n\nu[n,\infty))^{l^*}\big),
    \label{goal 2, proposition: design of Zn}
    \\ 
    \E[Z_{n,1}^2] & = \bm{O}\big( (n\nu[n,\infty))^{l^*}\big),
     \label{goal 3, proposition: design of Zn}
     \\
     \E[Z_{n,2}^2] & = \bm{o}\big( (n\nu[n,\infty))^{2l^*}\big),
     \label{goal 4, proposition: design of Zn}
\end{align}
as $n \to \infty$.
Then, using \eqref{goal 2, proposition: design of Zn} and \eqref{goal 3, proposition: design of Zn} we get 
$
\P(B^\gamma_n)\E[ {Z^2_n\mathbf{I}_{B^\gamma_n}}]
=
\bm{O}\big( (n\nu[n,\infty))^{2l^*}\big) = \bm{O}\big(\P^2(A_n)\big).
$
The last equality follows from \eqref{goal 1, proposition: design of Zn}.
Similarly, from \eqref{goal 1, proposition: design of Zn} and \eqref{goal 4, proposition: design of Zn}
we get
$
\E[ {Z^2_n\mathbf{I}_{(B^\gamma_n)^c}}] = \bm{o}\big( (n\nu[n,\infty))^{2l^*}\big) = \bm{o}\big(\P^2(A_n)\big).
$
Therefore, in \eqref{proof, decomp Ln, proposition: design of Zn} we have $\E^{\Q_n}[L^2_n] = \bm{O}\big(\P^2(A_n)\big)$,
thus establishing the strong efficiency.
Now, it remains to prove claims \eqref{goal 2, proposition: design of Zn}, \eqref{goal 3, proposition: design of Zn}, and \eqref{goal 4, proposition: design of Zn}.



\medskip
\noindent\textbf{Proof of Claim }\eqref{goal 2, proposition: design of Zn}.

We show that the claim holds for all $\gamma \in (0,b)$.
For any $c > 0$ and $k \in \mathbb{N}$, note that
\begin{align}
    \P\big(\text{Poisson}(c) \geq k\big) & = \sum_{j \geq k}\exp(-c)\frac{c^j}{j!}
    = 
    c^k \sum_{j \geq k}\exp(-c)\frac{c^{j - k}}{j!}
    \leq 
    c^k \sum_{j \geq k}\exp(-c)\frac{c^{j - k}}{(j - k)!} = c^k.
    \label{proof, bound poisson dist tail, proposition: design of Zn}
\end{align}
Recall that $B^\gamma_n = \{\bar X_n \in B^\gamma\}$ and $B^\gamma \delequal \{ \xi \in \mathbb{D}: \#\{ t \in [0,1]: \xi(t) - \xi(t-) \geq \gamma \} \geq l^* \}.$
Therefore,
\begin{align*}
    \P(B^\gamma_n) &= \P\big( \#\{ t \in [0,n]:\ X(t) - X(t-) \geq n\gamma \} \geq l^* \big)
    \qquad \text{due to }\bar X_n(t) = X(nt)/n
    \\
    & = \sum_{k \geq l^*}\exp\big( - n\nu[n\gamma,\infty)\big) \frac{ \big(n\nu[n\gamma,\infty)\big)^{k}}{k!}
\leq
     \big(n\nu[n\gamma,\infty)\big)^{l^*}
     \qquad \text{due to }\eqref{proof, bound poisson dist tail, proposition: design of Zn}.
\end{align*}
Lastly, the regularly varying nature of $\nu[x,\infty)$ (see Assumption \ref{assumption: heavy tailed process}) implies
$
\lim_{n \rightarrow \infty}\frac{(n\nu[n\gamma,\infty))^{l^*}}{(n\nu[n,\infty))^{l^*}} = 1/\gamma^{\alpha l^*} \in (0,\infty),
$
and hence $\P(B^\gamma_n) = \bm{O}\big((n\nu[n,\infty))^{l^*}\big)$.

\medskip
\noindent\textbf{Proof of Claim }\eqref{goal 3, proposition: design of Zn}.

Again, we prove the claim for all $\gamma \in (0,b)$.
By the definition of $Z_n$ in \eqref{def: estimator Zn},
\begin{align*}
    Z_{n,1} = Z_n\mathbf{I}_{B^\gamma_n} = \sum_{m = 0}^{  \tau}\frac{ \hat Y^m_n\mathbf{I}_{E_n \cap B^\gamma_n } - \hat Y^{m-1}_n\mathbf{I}_{E_n \cap B^\gamma_n } }{ \P(\tau \geq m) }.
\end{align*}
Meanwhile, by the definition of $B^\gamma_n$, we have
$
\mathbf{I}_{B^\gamma_n} = 0
$
on $\{ \mathcal D(\bar J_n) < l^* \}$,
where $\mathcal D(\xi)$ counts the number of discontinuities for any $\xi \in \D$.
By applying Result~\ref{resultDebias}
under the choice of 
$
Y_m = \hat Y^m_n \mathbf{I}_{ E_n \cap B^\gamma_n }
$
and
$
Y = Y^*_n \mathbf{I}_{ E_n \cap B^\gamma_n },
$
we yield
\begin{align}
    \E Z^2_{n,1}
    & \leq 
    \sum_{m \geq 1}
    \frac{ \E\Big[ \big| Y^*_n\mathbf{I}_{E_n \cap B^\gamma_n } - \hat Y^{m-1}_n\mathbf{I}_{E_n \cap B^\gamma_n }  \big|^2 \Big] }{\P(\tau \geq m)}
    \nonumber
    \\
     & \leq \sum_{m \geq 1}\sum_{k \geq l^*}\frac{ \E\Big[ \mathbf{I}\big( Y^*_n \neq \hat Y^{m-1}_n\big)\ \Big|\ \{\mathcal{D}(\bar J_n) = k\}\Big] }{\P(\tau \geq m)}\cdot\P\big(\mathcal{D}(\bar J_n) = k\big)
    \ \ \text{due to }\mathbf{I}_{B^\gamma_n} = 0\text{ on }\{\mathcal{D}(\bar J_n) < l^*\}
    \nonumber
    \\
    & \leq
    \sum_{k \geq l^*}\P\big(\mathcal{D}(\bar J_n) = k\big) \cdot \sum_{m \geq 1}
    \frac{ \P\Big( Y^*_n \neq \hat Y^{m-1}_n\ \Big|\ \{\mathcal{D}(\bar J_n) = k\}\Big) }{\P(\tau \geq m)}
    \nonumber
    \\
    & \leq
    \sum_{k \geq l^*}\P\big(\mathcal{D}(\bar J_n) = k\big) \cdot \bigg[
    \sum_{m = 1}^{\bar m}\frac{1}{\rho^{m-1}}+
    \sum_{m \geq \bar m + 1}
    \frac{C_0 \rho_0^{m-1} \cdot (k+1) }{ \rho^{m-1} }
    \bigg]
    \qquad \text{by condition \eqref{condition 1, proposition: design of Zn}}
    \nonumber
    \\ 
    & \leq \sum_{k \geq l^*}\P\big(\mathcal{D}(\bar J_n) = k\big) \cdot (k + 1)
    \cdot 
    \bigg[
    \underbrace{\sum_{m = 1}^{\bar m}\frac{1}{\rho^{m-1}}+
    \sum_{m \geq \bar m + 1}
    \frac{C_0 \rho_0^{m-1} }{ \rho^{m-1} }}_{ \delequal \widetilde C_{\rho,1} }
    \bigg].
    \label{proof, bound E Z 2 n 1, goal 3, proposition: design of Zn}
\end{align}
In particular, given $\rho \in (\rho_0,1)$, we have $\widetilde C_{\rho,1} < \infty$, and hence
\begin{align*}
   \E Z^2_{n,1} & \leq \widetilde C_{\rho,1} \sum_{k \geq l^*} (k+1) \cdot \P\big(\mathcal{D}(\bar J_n) = k\big)
   \\
   & =  \widetilde C_{\rho,1} \sum_{k \geq l^*} (k+1) \cdot \exp\big( - n\nu[n\gamma,\infty)\big) \frac{ \big(n\nu[n\gamma,\infty)\big)^{k}}{k!}
   \\ 
   & \leq 
  2\widetilde C_{\rho,1} \sum_{k \geq l^*}  k \cdot \exp\big( - n\nu[n\gamma,\infty)\big) \frac{ \big(n\nu[n\gamma,\infty)\big)^{k}}{k!}
   \qquad 
   \text{ due to }l^* \geq 1\ \Longrightarrow\ \frac{k+1}{k}\leq 2\ \forall k \geq l^*
   \\
   & \leq 
   2\widetilde C_{\rho,1} \cdot \big(n\nu[n\gamma,\infty)\big)^{l^*} \sum_{k \geq l^*} \exp\big( - n\nu[n\gamma,\infty)\big) \frac{ \big(n\nu[n\gamma,\infty)\big)^{k - l^*}}{(k-l^*)!}\qquad\text{ due to }l^* \geq 1
   \\
   & =  2\widetilde C_{\rho,1} \cdot \big(n\nu[n\gamma,\infty)\big)^{l^*}.
\end{align*}
Again, the regular varying nature of $\nu[x,\infty)$
allows us to conclude that $ \E Z^2_{n,1} = \bm{O}\big((n\nu[n,\infty))^{l^*}\big)$.

\medskip
\noindent\textbf{Proof of Claim }\eqref{goal 4, proposition: design of Zn}.

Fix some  $\rho \in (\rho_0,1)$ and some $q > 1$ such that $\rho_0^{1/q} < \rho$.
Let $p > 1$ be such that $\frac{1}{p} + \frac{1}{q} = 1$.
By Assumption \ref{assumption: choice of a,b},
we can pick some $\Delta_0 > 0$ small enough 
such that $a - \Delta_0 > (l^*_1)b$.
This allows us to pick $\bar \gamma \in (0,b)$ small enough such that
$(\hat J + l^* - 1)/p > 2l^*$ where
\begin{align}
    \hat J \delequal \frac{a - \Delta_0 - (l^* - 1)b}{\bar \gamma}.
    \label{proof, def: hat J, goal 4, proposition: design of Zn}
\end{align}

We prove the claim for all $\gamma \in (0,\bar \gamma)$.
Specifically, given any $\gamma \in (0,\bar \gamma)$,
one can pick $\Delta \in (0,\Delta_0)$ such that 
$[a - \Delta - (l^* - 1)b]/\gamma \notin \mathbb Z$.
Due to our choice of $\gamma$ and $\Delta$, it follows from \eqref{proof, def: hat J, goal 4, proposition: design of Zn} that
$
(J_\gamma + l^* - 1)/p > 2l^*
$
where
\begin{align*}
    J_\gamma  \delequal \ceil{\frac{a - \Delta - (l^* - 1)b}{\gamma}}.
\end{align*}


Let
$
A^\Delta = \{\xi \in \mathbb{D}: \sup_{t \in [0,1]}\xi(t)\geq a - \Delta \}
$
and $A^\Delta_n = \{\bar X_n \in A^\Delta\}$.
Also, note that
\begin{align*}
    Z_{n,2} & = {Z_n\mathbf{I}_{(B^\gamma_n)^c}}
  = \underbrace{Z_n \mathbf{I}_{ A^\Delta_n \cap (B^\gamma_n)^c }}_{ \delequal Z_{n,3} } + \underbrace{Z_n \mathbf{I}_{ (A^\Delta_n)^c \cap (B^\gamma_n)^c }}_{\delequal Z_{n,4}}.
\end{align*}
Specifically, $Z_{n,3} = \sum_{m = 0}^{\tau}\frac{ \hat Y^m_n\mathbf{I}_{ A^\Delta_n \cap E_n \cap (B^\gamma_n)^c } - \hat Y^{m-1}_n\mathbf{I}_{A^\Delta_n \cap E_n \cap (B^\gamma_n)^c } }{ \P(\tau \geq m) }$.
Analogous to the calculations in \eqref{proof, bound E Z 2 n 1, goal 3, proposition: design of Zn},
by applying Result~\ref{resultDebias}
under the choice of 
$
Y_m = \hat Y^m_n \mathbf{I}_{ A^\Delta_n \cap E_n \cap (B^\gamma_n)^\complement }
$
and
$
Y = Y^*_n \mathbf{I}_{ A^\Delta_n \cap E_n \cap (B^\gamma_n)^\complement },
$
we yield
\begin{align*}
    \E Z_{n,3}^2
    & \leq 
    \sum_{m \geq 1}
    \frac{ \E\Big[ \big| Y^*_n - \hat Y^{m-1}_n  \big|^2\mathbf{I}_{A^\Delta_n \cap E_n \cap (B^\gamma_n)^c } \Big] }{\P(\tau \geq m)}
    \\
    & = 
    \sum_{m \geq 1}
    \frac{ \E\Big[ \mathbf{I}\big(Y^*_n \neq \hat Y^{m-1}_n\big) \cdot \mathbf{I}_{A^\Delta_n \cap E_n \cap (B^\gamma_n)^c} \Big] }{\P(\tau \geq m)}
    \qquad \text{because }\hat Y^m_n\text{ and }Y^*_n\text{ only take values in }\{0,1\}
    \\
    & \leq 
    \sum_{m \geq 1}
    \frac{ 
    \Big(\P\big(Y^*_n \neq \hat Y^{m-1}_n\big)\Big)^{1/q} \cdot \Big(\P\big(A^\Delta_n \cap E_n \cap (B^\gamma_n)^c\big)\Big)^{1/p}
    }{\P(\tau \geq m)}
    \qquad \text{ by Hölder's inequality}.
\end{align*}
Applying Lemma~\ref{lemma: LD, event A Delta},
we get
$
\big(\P(A^\Delta_n \cap E_n \cap (B^\gamma_n)^\complement)\big)^{1/p} = \bm{o}\big( (n\nu[n,\infty))^{2l^*} \big).
$
On the other hand, it has been shown in \eqref{proof: P hat Y m n neq Y * n, proposition: design of Zn} that for any $n \geq 1$ and $m \geq \bar m$,
we have 
$
\P(Y^*_n \neq \hat Y^{m}_n) \leq C_0 C_\gamma \rho^m_0
$
where 
$
C_\gamma \delequal \sup_{n \geq 1}n\nu[n\gamma,\infty) + 1 < \infty.
$
In summary,
\begin{align}
     \E Z_{n,3}^2 & \leq  
    \bm{o}\big( (n\nu[n,\infty))^{2l^*} \big) \cdot 
     \bigg[ 
     \underbrace{\sum_{m = 1}^{\bar m}
    \frac{ 
    1
    }{\rho^{m-1}}
    +
     \sum_{m \geq \bar m + 1}
    \frac{ 
    (C_0C_\gamma)^{1/q} \cdot (\rho_0^{1/q})^{m-1}
    }{\rho^{m-1}} }_{ \delequal \widetilde C_{\rho,2} }
    \bigg].
    \label{proof, bound Z n 3, proposition: design of Zn}
\end{align}
Note that $\widetilde C_{\rho,2} < \infty$ due to our choice of $\rho_0^{1/q} < \rho$.

Similarly, to bound the second-order moment of  $Z_{n,4} = \sum_{m = 0}^{\tau}\frac{ \hat Y^m_n\mathbf{I}_{ (A^\Delta_n)^c \cap E_n \cap (B^\gamma_n)^c } - \hat Y^{m-1}_n\mathbf{I}_{ (A^\Delta_n)^c \cap E_n \cap (B^\gamma_n)^c } }{ \P(\tau \geq m) }$,
we apply Result~\ref{resultDebias} again and get
\begin{align}
    \E Z^2_{n,4} 
    & \leq 
    \sum_{m \geq 1}
    \frac{ \E\Big[ \big| Y^*_n - \hat Y^{m-1}_n  \big|^2\mathbf{I}_{(A^\Delta_n)^c \cap E_n \cap (B^\gamma_n)^c } \Big] }{\P(\tau \geq m)}
    \nonumber
    \\
    & = 
     \sum_{m \geq 1}
    \frac{ \E\Big[ \mathbf{I}\big( Y^*_n \neq \hat Y^{m-1}_n \big)\cdot\mathbf{I}_{(A^\Delta_n)^c \cap E_n \cap (B^\gamma_n)^c } \Big] }{\P(\tau \geq m)}
    \qquad \text{because }\hat Y^m_n\text{ and }Y^*_n\text{ only take values in }\{0,1\}
    \nonumber
    \\
    & \leq 
     \sum_{m \geq 1}
    \frac{ \P\Big( \big\{Y^*_n \neq \hat Y^{m-1}_n,\ \bar X_n \notin A^\Delta\big\} \cap (B^\gamma_n)^c \Big)}{\P(\tau \geq m)}
    \qquad \text{ due to }A^\Delta_n = \{\bar X_n \in A^\Delta\}
    \nonumber
    \\
    & = 
     \sum_{m \geq 1}
    \frac{ \P\Big( \big\{Y^*_n \neq \hat Y^{m-1}_n,\ \bar X_n \notin A^\Delta\big\} \cap \{ \mathcal{D}(\bar J_n) < l^* \} \Big)}{\P(\tau \geq m)}
    \qquad 
    \text{due to }B^\gamma_n =\{ \mathcal{D}(\bar J_n) \geq l^*\}
    \nonumber
    \\
    & = 
     \sum_{m \geq 1}\sum_{k = 0}^{l^* - 1}
     \frac{ \P\big( Y^*_n \neq \hat Y^{m-1}_n,\ \bar X_n \notin A^\Delta\ \big|\ \{\mathcal{D}(\bar J_n) = k\} \big)}{\P(\tau \geq m)}
    \cdot \P\big(\mathcal{D}(\bar J_n) = k\big)
    \nonumber
    \\
    & \leq 
     \sum_{m \geq 1}\sum_{k = 0 }^{l^* - 1}
     \frac{ C_0 \rho^{m-1}_0 }{ \Delta^2 n^\mu \cdot \rho^{m-1} }
    \qquad \text{ due to \eqref{condition 2, proposition: design of Zn}}
    \nonumber
    \\
    & = l^* \sum_{m \geq 1}
     \frac{ C_0 \rho^{m-1}_0 }{ \Delta^2 n^\mu \cdot \rho^{m-1} }
     =
     \frac{C_0 l^*}{\Delta\cdot (1 - \frac{\rho_0}{\rho})} \cdot \frac{1}{n^\mu}
     =
     \bm{o}\Big(\big(n\nu[n,\infty)\big)^{2l^*}\Big).
     \label{proof, bound Z n 4, proposition: design of Zn}
\end{align}
The last equality follows from the condition $\mu > 2l^*(\alpha - 1)$ prescribed in Proposition~\ref{proposition: design of Zn}
and the fact that  $n\nu[n,\infty) \in \RV_{-(\alpha - 1)}(n)$ as $n \to \infty$.
Combining \eqref{proof, bound Z n 3, proposition: design of Zn} and \eqref{proof, bound Z n 4, proposition: design of Zn}
with the preliminary bound $(x+y)^2 \leq 2x^2 + 2y^2$, we yield
$
\E Z^2_{n,2} \leq 2\E Z^2_{n,3} + 2\E Z^2_{n,4} =  \bm{o}\big((n\nu[n,\infty))^{2l^*}\big)
$
and conclude the proof of \eqref{goal 4, proposition: design of Zn}.
\end{proof}

\subsection{Proof of Theorems~\ref{theorem: strong efficiency without ARA} and \ref{theorem: strong efficiency}}
\label{subsec: proof of propositions cond 1 and 2}
We stress again that Theorem~\ref{theorem: strong efficiency without ARA} follows directly from Theorem~\ref{theorem: strong efficiency} with $\kappa = 0$ (i.e., by disabling ARA from Algorithm~\ref{algoIS}).
We devote the remainder of this section to proving Theorem~\ref{theorem: strong efficiency}.

Throughout Section~\ref{subsec: proof of propositions cond 1 and 2},
we fix the following constants and parameters.
First, let $\beta \in [0,2)$ be the Blumenthal-Getoor index of $X(t)$ and $\alpha > 1$ be the regularly varying index of $\nu[x,\infty)$; see Assumption~\ref{assumption: heavy tailed process}.
Fix some
\begin{align}
    \beta_+ \in (\beta, 2),\qquad \mu > 2l^*(\alpha - 1).
    \label{proofFixBetaPlus}
\end{align}
This allows us to pick $d,r$ large enough such that
\begin{align}
    r(2 - \beta_+) > \max\{2, \mu - 1\},
    \qquad 
    d > \max\{2, 2\mu - 1\}\label{proof, choose d and r, proposition: hat Y m n condition 1}
\end{align}
for $d$ in \eqref{def hat M i m} and $r$ in \eqref{def: kappa n m}.
Let $\lambda > 0$ be the constant in Assumption~\ref{assumption: holder continuity strengthened on X < z t}.
Choose 
\begin{align}
    \alpha_3 \in (0,\frac{1}{\lambda}),\qquad \alpha_4 \in (0, \frac{1}{2\lambda}). \label{proofChooseAlpha_34}
\end{align}
Next, fix
\begin{align}
    \alpha_2 \in (0, \frac{\alpha_3}{2}\wedge 1). \label{proofChooseAlpha_2}
\end{align}
Based on the chosen value of $\alpha_2$, fix
\begin{align}
    \alpha_1 \in (0,\frac{\alpha_2}{\lambda}). \label{proofChooseAlpha_1}
\end{align}
Pick
\begin{align}
    \delta 
    \in (1/\sqrt{2},1). \label{proofChooseDelta}
\end{align}
Since we require $\alpha_2$ to be strictly less than $1$, there is some integer $\Bar{m}$ such that
\begin{align}
    \delta^{m\alpha_2} - \delta^{m} \geq \frac{\delta^{m\alpha_2}}{2}\text{ and }\delta^{m\alpha_2} < a
    \qquad \forall m \geq \bar m
    \label{proofChooseMbar}
\end{align}
where $a>0$ is the parameter in set $A$; see Assumption~\ref{assumption: choice of a,b}.
Based on the values of $\delta$ and $\beta_+$, it holds for all $\kappa \in [0,1)$ small enough that
\begin{align}
    \kappa^{2-\beta_+} < \frac{1}{2} < \delta^2 \label{proofChooseKappa}
\end{align}
Then, based on all previous choices, it holds for all $\rho_1 \in (0,1)$ close enough to 1 such that
\begin{align}
    \delta^{\alpha_1} & < \rho_1, \label{proofChooseRhoTimeBound} \\
    \frac{\kappa^{2 - \beta_+}}{\delta^2} 
    & < \rho_1 \label{proofChooseRhoByKappa} \\
    \frac{1}{\sqrt{2}\delta}  & < \rho_1 \label{proofChooseRhoByDelta} \\
    \delta^{\alpha_2 - \lambda\alpha_1} & < \rho_1 \label{proofChooseRhoByAlpha_12} \\
    \delta^{1 - \lambda\alpha_3} & < \rho_1 \label{proofChooseRhoSBALongStick} \\
    \delta^{-\alpha_2 + \frac{\alpha_3}{2}} & < \rho_1, \label{proofChooseRhoSBAShortStick} \\
    (1/\sqrt{2})\vee \kappa^{2 - \beta_+} & < \rho_1. \label{proofChooseRhoConstantBound}
\end{align}
Lastly, pick $\rho_0 \in (\rho_1,1)$. By picking a larger $\bar m$ if necessary, we can ensure that
\begin{align}
    m^2 \rho^m_1 \leq \rho^m_0\qquad \forall m \geq \bar m.
    \label{proof, choose bar m and rho 0}
\end{align}

Next, we make a few observations.
Given some non-negative integer $k$,
let
\begin{align}
    \zeta_k(t) = \sum_{i = 1}^k z_i \mathbf{I}_{[u_i,n]}(t)
    \label{proof: def zeta k}
\end{align}
where $0 < u_1 < u_2 < \ldots < u_k < n$
are the order statistics of $k$ iid samples of Unif$(0,n)$,
and $z_i$'s are iid samples from $\nu(\cdot \cap [n\gamma,\infty))/\nu[n\gamma,\infty)$.
We adopt the convention that $u_0 \equiv 0$ and $u_{k+1} \equiv 1$.
Note that when $k = 0$, we set $\zeta_0(t) \equiv 0$ as the zero function, and set $I_1 = [0,n]$, $u_0 = 0$, and $u_1 = n$.

For $Y^*_n(\cdot)$ defined in \eqref{def, random function Y * n}
and $\hat Y^m_n(\cdot)$ defined in \eqref{def hat Y m n}, note that
\begin{align}
    Y^*_n(\zeta_k) = \max_{i \in [k+1]}\mathbf{I}\{W^{(i),*}_n(\zeta_k) \geq na\},
    \qquad 
    \hat Y^m_n(\zeta_k) = \max_{i \in [k+1]}\mathbf{I}\{\hat W^{(i),m}_n(\zeta_k) \geq na\}
    \label{proof, representation, Y * n and hat Y m n}
\end{align}
 where
\begin{align}
    W^{(i),*}_n(\zeta_k) & \delequal \sum_{q = 1}^{i - 1}\sum_{j \geq 0}\xi^{(q)}_j + \sum_{q = 1}^{i-1}z_q + \sum_{j \geq 1}(\xi^{(i)}_j)^+,
    \label{proof,def: W i * n}
    \\ 
    \hat W^{(i),m}_n(\zeta_k)  & \delequal \sum_{q = 1}^{i - 1}\sum_{j \geq 0}\xi^{(q),m}_j + \sum_{q = 1}^{i-1}z_q + \sum_{j = 1}^{m + \ceil{ \log_2(n^d) }}(\xi^{(i),m}_j)^+.
     \label{proof,def: hat W i m n}
\end{align}
See \eqref{defStickLength1}--\eqref{def xi i j, algo without ARA} and \eqref{def xi i m j, ARA plus SBA}
for the definitions $\xi^{(i)}_j$'s and $\xi^{(i),m}_j$'s, respectively.
Also, define
\begin{align}
        \widetilde  W^{(i),m}_n(\zeta_k) & \delequal \sum_{q = 1}^{i - 1}\sum_{j \geq 0}\xi^{(q)}_j + \sum_{q = 1}^{i-1}z_q + \sum_{j = 1}^{m + \ceil{ \log_2(n^d) }}(\xi^{(i)}_j)^+.
        \label{proof, def: tilde W i m n}
\end{align}
As intermediate steps for the proof of
Theorem~\ref{theorem: strong efficiency},
we present the following two results.
Proposition~\ref{proposition, intermediate 1, strong efficiency ARA} states that, using 
$
\widetilde  W^{(i),m}_n(\zeta_k)
$
as an anchor,
we see that $W^{(i),*}_n(\zeta_k)$ and $\hat W^{(i),m}_n(\zeta_k)$ would stay close enough with high probability, especially for large $m$.
Proposition~\ref{proposition, intermediate 2, strong efficiency ARA} then shows that it is unlikely for the law of 
$
\widetilde  W^{(i),m}_n(\zeta_k)
$
to concentrate around any $y \in \R$.

\begin{proposition}
\label{proposition, intermediate 1, strong efficiency ARA}
\linksinthm{proposition, intermediate 1, strong efficiency ARA}
    There exists some constant $C_1 \in (0,\infty)$ such that the inequality
    \begin{align*}
        \P\bigg( \Big| W^{(i),*}_n(\zeta_k) - \widetilde  W^{(i),m}_n(\zeta_k) \Big| \vee 
        \Big|  \hat W^{(i),m}_n(\zeta_k) - \widetilde  W^{(i),m}_n(\zeta_k) \Big|
        > 
        x
        \bigg) \leq \frac{C_1 \kappa^{m(2-\beta_+)}}{x^2 \cdot n^{r (2- \beta_+) - 1}} + \frac{C_1}{x}\sqrt{\frac{1}{n^{d-1}\cdot 2^m}}
    \end{align*}
    holds for any $k \in \mathbb N$, $i \in [k+1]$, $n \geq 1$, $m \in \mathbb N$, and $x > 0$.
\end{proposition}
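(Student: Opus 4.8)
The plan is to isolate, in each of the two differences, an \textbf{Asmussen--Rosi\'nski (ARA) discrepancy} and a \textbf{stick-breaking truncation error}; the two terms on the right-hand side of the claimed inequality arise respectively as a Chebyshev ($1/x^2$) bound on the former and a Markov ($1/x$) bound on the latter. First I would pass to the coupled probability space of \eqref{coupling, ARA plus SBA} (cf.\ the remark after \eqref{prelim: SBA, coupling}), on which the stick increments recover the path values of $\Xi_n$ and of every $\breve\Xi^m_n$, so that $\sum_{j\ge 1}\xi^{(q)}_j = \Xi_n(u_q)-\Xi_n(u_{q-1})$, $\sum_{j\ge 1}(\xi^{(q)}_j)^+ = \sup_{t\in I_q}\Xi_n(t)-\Xi_n(u_{q-1})$, and likewise for $\breve\Xi^m_n,\xi^{(q),m}_j$. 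Write $D := \breve\Xi^m_n - \Xi_n$ and $t_n := \ceil{\log_2(n^d)}$. Comparing \eqref{proof,def: W i * n} and \eqref{proof, def: tilde W i m n}, the first $i-1$ segments and the term $\sum_{q<i}z_q$ cancel, so $W^{(i),*}_n(\zeta_k)-\widetilde W^{(i),m}_n(\zeta_k) = \sum_{j>m+t_n}(\xi^{(i)}_j)^+$. Comparing \eqref{proof,def: hat W i m n} and \eqref{proof, def: tilde W i m n}, the first $i-1$ segments telescope to $D(u_{i-1})$, and the segment-$i$ part $\sum_{j=1}^{m+t_n}[(\xi^{(i),m}_j)^+-(\xi^{(i)}_j)^+]$ is bounded in absolute value by a constant times $\sup_{s\le n}|D(s)|$ plus the truncation tails $\sum_{j>m+t_n}(\xi^{(i)}_j)^+$ and $\sum_{j>m+t_n}(\xi^{(i),m}_j)^+$ (estimate $\sum_{j=1}^{m+t_n}(\cdot)^+$ from above by $\sum_{j\ge 1}(\cdot)^+$, use $|\sup_{t\in I_i}\breve\Xi^m_n(t)-\sup_{t\in I_i}\Xi_n(t)|\le\sup_{s\le n}|D(s)|$, and symmetrize). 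By a union bound it then suffices to bound $\P(\sup_{s\le n}|D(s)|>x)$ and $\P(\sum_{j>m+t_n}(\xi_j)^+>x)$ for $\xi\in\{\Xi_n,\breve\Xi^m_n\}$ with the stated rates.

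For the ARA discrepancy, from \eqref{def breve Xi n m, ARA} and \eqref{decomp, Xi n} one has $D(t) = \sum_{q>m}\sqrt{\bar\sigma^2(\kappa_{n,q-1})-\bar\sigma^2(\kappa_{n,q})}\,W^q(t) - \sum_{q>m}J_{n,q}(t)$, the difference of an independent Brownian piece and the martingale of jumps of absolute size $<\kappa_{n,m}$; hence $D$ is a mean-zero martingale with $\mathrm{Var}(D(t)) = 2t\,\bar\sigma^2(\kappa_{n,m})$ (both telescoping variance contributions equal $\bar\sigma^2(\kappa_{n,m})$, using $\kappa_{n,q}\downarrow 0$). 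Since $\beta_+>\beta$ (the Blumenthal-Getoor index), $\bar\sigma^2(c) = \int_{(-c,c)}x^2\nu(dx)\le c^{2-\beta_+}\int_{(-1,1)}|x|^{\beta_+}\nu(dx)$ for $c\le 1$, so $\bar\sigma^2(\kappa_{n,m})\le C\kappa^{m(2-\beta_+)}/n^{r(2-\beta_+)}$. Doob's $L^2$-maximal inequality gives $\E[\sup_{s\le n}D(s)^2]\le 4\,\mathrm{Var}(D(n)) = 8n\,\bar\sigma^2(\kappa_{n,m})$, and Chebyshev yields $\P(\sup_{s\le n}|D(s)|>x)\le 8C\kappa^{m(2-\beta_+)}/(x^2 n^{r(2-\beta_+)-1})$, the first term.

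For the truncation error, after the first $m+t_n$ sticks on $I_i$ are drawn the leftover length $R := \sum_{j>m+t_n}l^{(i)}_j$ has $\E R = (u_i-u_{i-1})/2^{m+t_n}\le n/(n^d 2^m) = 1/(n^{d-1}2^m)\le 1$, and the remaining sticks form a fresh stick-breaking of $R$, so Result~\ref{result: concave majorant of Levy} on an interval of length $R$ gives $\sum_{j>m+t_n}(\xi^{(i)}_j)^+ \distequal \sup_{s\le R}\Xi_n(s)$ conditionally on $R$ (and likewise with $\breve\Xi^m_n$). Decomposing the L\'evy process into its drift $c_X$, a Brownian-plus-small-jump ($|x|<1$) martingale of variance rate $\le\sigma^2+2\bar\sigma^2(1)<\infty$ (finite since $\beta<2$), and the positive part of its jumps of size $\ge 1$, whose mean over $[0,t]$ equals $t\int_{x\ge 1}x\,\nu(dx)<\infty$ (finite since $\alpha>1$), one obtains $\E[\sup_{s\le t}\Xi_n(s)]\le C'(\sqrt t+t)$ with $C'$ uniform in $n$ (and in $m$ for $\breve\Xi^m_n$). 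Then $\E[\sum_{j>m+t_n}(\xi^{(i)}_j)^+]\le C'(\sqrt{\E R}+\E R)\le 2C'\sqrt{1/(n^{d-1}2^m)}$ by Jensen, and Markov's inequality gives the second term. Assembling the three pieces (absorbing all constants into $C_1$) proves the proposition.

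The step I expect to be the main obstacle is the uniform-in-$n$ control $\E[\sup_{s\le t}\Xi_n(s)]\le C'(\sqrt t+t)$: because $\Xi_n = X^{<n\gamma}$ removes only large \emph{positive} jumps and $n\gamma\to\infty$, $\Xi_n$ may have infinite variance whenever $\alpha\le 2$ or $\alpha'\le 2$, so one cannot simply invoke an $L^2$ bound on the whole process. The resolution is to split off the jumps of size $\ge 1$ and use only their first moment, which is finite precisely because $\alpha,\alpha'>1$, while the remaining $L^2$-martingale part carries only the $|x|<1$ jumps whose variance $\bar\sigma^2(1)$ is automatically finite. A secondary difficulty is the pathwise bookkeeping in the reduction step -- telescoping the per-segment coupling \eqref{coupling, ARA plus SBA} across segments $1,\dots,i-1$ to identify their discrepancy with $D(u_{i-1})$, and absorbing the kink of $x\mapsto x^+$ into $\sup_{s\le n}|D(s)|$ plus the two truncation tails.
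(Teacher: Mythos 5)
Your proposal is correct and lands on the same three building blocks as the paper (Doob plus a variance bound for the ARA discrepancy, leftover stick length plus a supremum-of-expectation bound for the SBA truncation tail, and a union bound to assemble them), but your handling of the segment-$i$ ARA error $\sum_{j=1}^{m+t_n}\big[(\xi^{(i),m}_j)^+-(\xi^{(i)}_j)^+\big]$ takes a genuinely different route. The paper isolates three events $E_1^{(i)}, E_2^{(i)}, E_3^{(i)}$ and treats the segment-$i$ ARA error with Lemma~\ref{lemma: algo, bound term 2}: it first applies the $1$-Lipschitz inequality $|(a)^+-(b)^+|\le|a-b|$ to reduce to $\sum_j|\xi^{(i)}_j-\xi^{(i),m}_j|$, and then runs a geometrically weighted union bound over the sticks (thresholds $\propto\chi^{-j}$ with $\chi=2^{1/4}$, using $\E\,l^{(i)}_j\le n/2^j$), producing a $\propto n\bar\sigma^2(\kappa_{n,m})/y^2$ tail. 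You instead compare $\sum_{j\le m+t_n}(\cdot)^+$ with the full $\sum_{j\ge 1}(\cdot)^+$ and absorb the kink of $(\cdot)^+$ pathwise into $2\sup_{s\le n}|D(s)|$, at the price of introducing the extra tail $\sum_{j>m+t_n}(\xi^{(i),m}_j)^+$ for $\breve\Xi^m_n$. The cost is one additional application of the truncation lemma; the benefit is that you need only a single Doob/Chebyshev bound on $\sup|D|$ to cover both the cross-segment $D(u_{i-1})$ and the segment-$i$ discrepancy, avoiding the per-stick bookkeeping. Your extra tail is legitimate because $\breve\Xi^m_n$ has exactly the same positive jumps of size $\ge 1$, the same drift, and the same total (Brownian $+$ small-jump) martingale variance rate $\sigma^2+\bar\sigma^2(1)$ as $\Xi_n$, so the bound $\E\sup_{s\le t}\breve\Xi^m_n(s)\le C'(\sqrt t+t)$ holds with a constant uniform in $n$ and $m$ -- a fact the paper's Lemma~\ref{lemma: algo, bound supremum of truncated X} establishes only for $\Xi_n$ (citing \cite{cazares2018geometrically}) and which you essentially re-derive. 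Both arguments yield the claimed rates with constants absorbed into $C_1$.
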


\begin{proposition}
\label{proposition, intermediate 2, strong efficiency ARA}
\linksinthm{proposition, intermediate 2, strong efficiency ARA}
    There exists some constant $C_2 \in (0,\infty)$ such that the inequality
    \begin{align*}
        \P\bigg(
            \widetilde W^{(i),m}_n(\zeta_k) \in \bigg[ y - \frac{\delta^m}{\sqrt{n}},  y + \frac{\delta^m}{\sqrt{n}}  \bigg]
            \text{ for some }i \in [k+1]
        \bigg)
        \leq (k+1) \cdot C_2 \rho^m_0
    \end{align*}
    holds for any $k \in \mathbb N$, $i \in [k+1]$, $n \geq 1$, $m \geq \bar m$, and $y > \delta^{m\alpha_2}$.
\end{proposition}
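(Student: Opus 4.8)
The plan is to fix $i \in [k+1]$ and establish the per-index bound
\[
\P\Big(\widetilde W^{(i),m}_n(\zeta_k) \in [y - \delta^m/\sqrt n,\, y + \delta^m/\sqrt n]\Big) \le C_2 \rho_0^m
\]
uniformly over $k \in \mathbb N$, $n \ge 1$, $m \ge \bar m$ and $y > \delta^{m\alpha_2}$; a union bound over the $k+1$ choices of $i$ then yields the stated estimate. Abbreviate $\epsilon_m \delequal \delta^m/\sqrt n$ and $N \delequal m + \ceil{\log_2(n^d)}$, and recall from \eqref{proof, def: tilde W i m n} that $\widetilde W^{(i),m}_n(\zeta_k) = R_i + \sum_{j=1}^N (\xi^{(i)}_j)^+$, where $R_i \delequal \Xi_n(u_{i-1}) + \zeta_k(u_{i-1})$ is, given $\zeta_k$, independent of the family $(l^{(i)}_j,\xi^{(i)}_j)_{j\ge 1}$ attached to $I_i = [u_{i-1},u_i)$, and conditionally on the stick lengths $(l^{(i)}_j)_j$ the increments $\xi^{(i)}_j$ are independent with $\xi^{(i)}_j \distequal X^{<n\gamma}(l^{(i)}_j)$. (Assumption~\ref{assumption: holder continuity strengthened on X < z t} is then available at truncation level $n\gamma$: the inequality at levels $z \ge z_0$ extends to all $z' \le z_0$ by conditioning on the independent compound Poisson jumps in $[z',z_0)$, so there is no loss.)

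The engine is peeling off a single increment. Let $j^\star \delequal \min\{\, j \le N : l^{(i)}_j \ge \delta^{m\alpha_3}\,\}$ be the first sufficiently long stick (with $j^\star = \infty$ if none), and on $\{j^\star \le N\}$ condition on everything except $\xi^{(i)}_{j^\star}$; writing $S$ for the ($\mathcal F$-measurable) sum of all remaining terms, one has $\widetilde W^{(i),m}_n(\zeta_k) = S + (\xi^{(i)}_{j^\star})^+$. On $\{S < y - \epsilon_m\}$ the target window for $(\xi^{(i)}_{j^\star})^+$ is strictly positive, hence coincides with the window for $\xi^{(i)}_{j^\star} \distequal X^{<n\gamma}(l^{(i)}_{j^\star})$, and Assumption~\ref{assumption: holder continuity strengthened on X < z t} bounds the conditional probability by $2C\epsilon_m /\big((l^{(i)}_{j^\star})^\lambda \wedge 1\big) \le 2C\,\delta^{m(1-\alpha_3\lambda)}/\sqrt n$, which is $\lesssim \rho_1^m$ by \eqref{proofChooseRhoSBALongStick} (here $\alpha_3 < 1/\lambda$ keeps the exponent positive, and the bound is free of $k$ and $n$). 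On $\{S \ge y - \epsilon_m\}$ the event forces $S \in [y - \epsilon_m, y + \epsilon_m]$; since $y > \delta^{m\alpha_2}$ together with \eqref{proofChooseMbar} and $m \ge \bar m$ give $S \ge y - \epsilon_m \ge \tfrac12\delta^{m\alpha_2}$, this can only happen if $R_i$ or the short-stick part $\sum_{j:\, l^{(i)}_j < \delta^{m\alpha_3}}(\xi^{(i)}_j)^+$ already reaches level $\tfrac14\delta^{m\alpha_2}$. The short-stick part is handled by estimating the running supremum of $\Xi_n$ over sticks of length below $\delta^{m\alpha_3}$, using \eqref{proofChooseRhoSBAShortStick} (where $\alpha_2 < \alpha_3/2$ enters); the event $\{j^\star = \infty\}$, i.e.\ all $N$ sticks short, is controlled by the same supremum estimate together with the $k$-free identity $\P(V^{(i)}_1 < \delta^{m\alpha_1}) = \delta^{m\alpha_1}$ via \eqref{proofChooseRhoTimeBound} (with $\alpha_1 < \alpha_2/\lambda$ entering through \eqref{proofChooseRhoByAlpha_12}); and the contribution of $R_i$ is treated by iterating the same peeling into the earlier intervals. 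Since only $O(m)$ error terms arise (the number of sticks and the peeling depth are $\le N = m + \ceil{\log_2 n^d}$), passing from $\rho_1$ to $\rho_0$ via \eqref{proof, choose bar m and rho 0} absorbs all polynomial-in-$m$ factors and produces $C_2\rho_0^m$.

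The main obstacle — and the reason the auxiliary exponents $\alpha_1,\alpha_2,\alpha_3$ are introduced at all — is uniformity in $k$ and $n$. The stick lengths $l^{(i)}_j$ are built from the spacing $u_i - u_{i-1}$ of $k$ iid uniform order statistics on $[0,n]$, which can be arbitrarily small, so any step invoking $\E[(l^{(i)}_j)^{-\lambda}]$ or $\E[(u_{i-1})^{-\lambda}]$ picks up factors of order $(k/n)^\lambda$ that destroy the $k$-free per-index bound. The fix is to phrase every estimate in terms of the \emph{absolute} scales $\delta^{m\alpha_1}$ (a fraction-within-its-interval event, hence a $\mathrm{Unif}(0,1)$ event and $k$-free), $\delta^{m\alpha_2}$ (the lower bound on $y$), and $\delta^{m\alpha_3}$ (the stick-length threshold triggering the Lipschitz bound), with the relations $\alpha_3 < 1/\lambda$, $\alpha_2 < \alpha_3/2$, $\alpha_1 < \alpha_2/\lambda$ and the inequalities \eqref{proofChooseRhoTimeBound}, \eqref{proofChooseRhoByAlpha_12}, \eqref{proofChooseRhoSBALongStick}, \eqref{proofChooseRhoSBAShortStick} calibrated exactly so that each of the finitely many error contributions decays at least like $\rho_1^m$, independently of $k$ and $n$.
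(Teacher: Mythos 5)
Your approach --- peel off the first long stick $j^\star$, apply the Lipschitz bound on $\{S < y - \delta^m/\sqrt n\}$, then pigeonhole the complementary event $\{S \ge y - \delta^m/\sqrt n\}$ into ``$R_i$ large'' or ``short-stick sum large'' --- differs structurally from the paper's proof and has two gaps that prevent it from closing uniformly in $n$. The more serious one is the missing $n^{\alpha_4}$ factor: your long/short threshold is $\delta^{m\alpha_3}$, whereas the paper's Lemma~\ref{lemma: algo, bound term 4} uses $\eta = \delta^{m\alpha_3}/n^{\alpha_4}$. With your threshold, the Markov bound for the short-stick sum gives something of order $N\,\delta^{m(\alpha_3/2-\alpha_2)}$ with $N = m + \ceil{\log_2 n^d}$; the $m$-factor is absorbed by \eqref{proof, choose bar m and rho 0}, but $\ceil{\log_2 n^d}\cdot\rho_1^m$ is unbounded in $n$ at fixed $m$. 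Your $\delta^m/\sqrt n$ half-width supplies the needed $n^{-1/2}$ in the Lipschitz branch, but there is nothing analogous in the Markov branch; that is exactly what the extra $n^{-\alpha_4/2}$ coming from $\sqrt\eta$ provides, with $\alpha_4 < 1/(2\lambda)$ from \eqref{proofChooseAlpha_34} ensuring compatibility. The fact that $\alpha_4$ never appears in your argument is the signature of this gap.

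Second, the pigeonhole on $\{S \ge y - \delta^m/\sqrt n\}$ is incomplete: $j^\star$ is the \emph{first} stick of length $\ge \delta^{m\alpha_3}$, not the only one, so $S$ can still contain other long sticks whose positive parts push it past $\tfrac12\delta^{m\alpha_2}$ even when $R_i$ and the short-stick sum are both small. Relatedly, your handling of $\{j^\star = \infty\}$ via $\P(V^{(i)}_1 < \delta^{m\alpha_1})$ does not work as stated: $V^{(i)}_1$ small is a statement about the fraction of $u_i-u_{i-1}$, not about absolute stick length, so it does not control whether $l^{(i)}_1 < \delta^{m\alpha_3}$. The paper avoids all of this by a different split: it conditions on $\{u_1 \ge n\delta^{m\alpha_1}\}$ and then decomposes on whether $R_i \in [y - \delta^{m\alpha_2}, y + \delta^{m\alpha_2}]$; in the in-window case it applies Assumption~\ref{assumption: holder continuity strengthened on X < z t} to $X^{<n\gamma}(u_{i-1})$ with $u_{i-1} \ge u_1 \ge n\delta^{m\alpha_1}$, giving $\delta^{m(\alpha_2-\lambda\alpha_1)} < \rho_1^m$ by \eqref{proofChooseRhoByAlpha_12}, and in the out-of-window case it invokes Lemma~\ref{lemma: algo, bound term 4}, which orders the $m+\ceil{\log_2 n^d}$ sticks by length and conditions on the \emph{first (in that ordering) stick with a strictly positive increment} --- a decomposition that covers all long sticks simultaneously, not just $j^\star$. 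Finally, your parenthetical about extending Assumption~\ref{assumption: holder continuity strengthened on X < z t} from $z \ge z_0$ to $z' < z_0$ runs the wrong way: the convolution $X^{<z_0} = X^{<z'} + (\text{independent Poisson jumps in }[z',z_0))$ transfers the Lipschitz bound from smaller to larger truncation levels, not the reverse. That remark is not needed, though, since $n\gamma \ge z_0$ for all $n$ large enough, which is all the argument requires.
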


First, equipped with Propositions \ref{proposition, intermediate 1, strong efficiency ARA} and \ref{proposition, intermediate 2, strong efficiency ARA},
we are able to prove the main results of Section~\ref{subsec: algo, ARA, SBA, and debiasing technique}, i.e., Theorem~\ref{theorem: strong efficiency}.

\begin{proof}[Proof of Theorem~\ref{theorem: strong efficiency}]
    \linksinpf{theorem: strong efficiency}
In light of Proposition~\ref{proposition: design of Zn}, it suffices to verify conditions \eqref{condition 1, proposition: design of Zn} and \eqref{condition 2, proposition: design of Zn}.

\medskip
\noindent\textbf{Verification of \eqref{condition 1, proposition: design of Zn}}.

Conditioning on $\{ \mathcal D(\bar J_n) = k \}$,
 the conditional law of $J_n = \{J_n(t):\ t \in [0,n]\}$
 is the same as the law of the process $\zeta_k$ specified in \eqref{proof: def zeta k}.
This implies
\begin{align*}
    \P\big(Y^*_n(J_n) \neq \hat Y^{ m }_n(J_n)\ \big|\ \mathcal{D}(\bar J_n) = k\big) 
    & =
    \P\big(Y^*_n(\zeta_k) \neq \hat Y^{ m }_n(\zeta_k)\big).
\end{align*}
Next, on event 
\begin{align*}
    &\bigcap_{ i \in [k+1] }
    \Bigg(\bigg\{
    \Big| W^{(i),*}_n(\zeta_k) - \widetilde  W^{(i),m}_n(\zeta_k) \Big| \vee 
        \Big|  \hat W^{(i),m}_n(\zeta_k) - \widetilde  W^{(i),m}_n(\zeta_k) \Big|
        \leq 
        \frac{\delta^m}{\sqrt{n}}
    \bigg\}
    \\ 
    &\qquad\qquad \qquad \qquad\qquad \qquad\qquad \qquad\cap 
    \bigg\{
    \widetilde W^{(i),m}_n(\zeta_k) \notin \bigg[ na - \frac{\delta^m}{\sqrt{n}},  na + \frac{\delta^m}{\sqrt{n}}  \bigg]
    \bigg\}
    \Bigg),
\end{align*}
we must have (for any $i \in [k+1]$)
\begin{align*}
    W^{(i),*}_n(\zeta_k) \vee  \hat W^{(i),m}_n(\zeta_k) < na
    \qquad\text{ or }\qquad
    W^{(i),*}_n(\zeta_k) \wedge \hat W^{(i),m}_n(\zeta_k) > na.
\end{align*}
It then follows from \eqref{proof, representation, Y * n and hat Y m n}
that, on this event, we have $Y^*_n(\zeta_k) = \hat Y^m_n(\zeta_k)$.
Therefore,
\begin{align}
    & \P\big(Y^*_n(\zeta_k) \neq \hat Y^{ m }_n(\zeta_k)\big)
    \label{proof, ineq 1, proposition: hat Y m n condition 1}
    \\
    & \leq 
    \sum_{i \in [k+1]}
    \P\bigg( \Big| W^{(i),*}_n(\zeta_k) - \widetilde  W^{(i),m}_n(\zeta_k) \Big| \vee 
        \Big|  \hat W^{(i),m}_n(\zeta_k) - \widetilde  W^{(i),m}_n(\zeta_k) \Big|
        > 
        \frac{\delta^m}{\sqrt{n}}
        \bigg)
    \nonumber
    \\ 
    & +
    \P\bigg(
            \widetilde W^{(i),m}_n(\zeta_k) \in \bigg[ na - \frac{\delta^m}{\sqrt{n}},  na + \frac{\delta^m}{\sqrt{n}}  \bigg]
            \text{ for some }i \in [k+1]
        \bigg).
    \nonumber
\end{align}
Applying Proposition~\ref{proposition, intermediate 1, strong efficiency ARA}
(with $x = \delta^m/\sqrt{n})$, we get (for any $i \in [k+1]$)
\begin{align}
    & \P\bigg( \Big| W^{(i),*}_n(\zeta_k) - \widetilde  W^{(i),m}_n(\zeta_k) \Big| \vee 
        \Big|  \hat W^{(i),m}_n(\zeta_k) - \widetilde  W^{(i),m}_n(\zeta_k) \Big|
        > 
        \frac{\delta^m}{\sqrt{n}}
        \bigg)
    \nonumber
    \\ 
    & \leq C_1 \cdot \Bigg[
        \frac{ \kappa^{m(2-\beta_+)} \cdot n  }{ \delta^{2m} \cdot n^{r(2 - \beta_+) - 1 }}
            +
        \frac{\sqrt{n}}{ (\sqrt{2}\delta)^m \cdot\sqrt{n^{ d-1  }}  }
    \Bigg]
    \nonumber
    \\ 
    & = 
    C_1 \cdot \Bigg[
    \bigg( \frac{\kappa^{2 - \beta_+}}{\delta^2} \bigg)^m \cdot \frac{1}{ n^{ r(2 - \beta_+) - 2 } }
        + 
    \bigg( \frac{1}{\sqrt{2}\delta} \bigg)^m \cdot  \sqrt{\frac{1}{n^{d-2}}}\ 
    \Bigg]
    \nonumber
    \\ 
    & \leq 
    C_1 \cdot \Bigg[
        \bigg( \frac{\kappa^{2 - \beta_+}}{\delta^2} \bigg)^m +  \bigg( \frac{1}{\sqrt{2}\delta} \bigg)^m
    \Bigg]
    \qquad \text{due to the choices of $d$ and $r$ in \eqref{proof, choose d and r, proposition: hat Y m n condition 1}}
    \nonumber
    \\ 
    & \leq 2C_1\rho^m_0
    \qquad 
    \text{due to the choices in \eqref{proofChooseRhoByKappa} and \eqref{proofChooseRhoByDelta}, and $\rho_0 \in (\rho_1,1)$}.
    \label{proof, ineq 2, proposition: hat Y m n condition 1}
\end{align}
On the other hand, 
due to \eqref{proofChooseMbar}, we have $na - \delta^{m\alpha_2} \geq a  - \delta^{m\alpha_2} > 0$.
for all $n \geq 1$ and $m \geq \bar m$.
This allows us to apply Proposition~\ref{proposition, intermediate 2, strong efficiency ARA} (with $y = na$)
and yield (for any $i \in [k+1]$)
\begin{align}
    \P\bigg(
            \widetilde W^{(i),m}_n(\zeta_k) \in \bigg[ na - \frac{\delta^m}{\sqrt{n}},  na + \frac{\delta^m}{\sqrt{n}}  \bigg]
            \text{ for some }i \in [k+1]
        \bigg)
    \leq (k+1) \cdot C_2 \rho^m_0\qquad \forall m \geq \bar m.
    \label{proof, ineq 3, proposition: hat Y m n condition 1}
\end{align}
Plugging \eqref{proof, ineq 2, proposition: hat Y m n condition 1} and \eqref{proof, ineq 3, proposition: hat Y m n condition 1} into \eqref{proof, ineq 1, proposition: hat Y m n condition 1},
we conclude the proof by setting $C_0 = 2C_1 + C_2$.

\medskip
\noindent\textbf{Verification of \eqref{condition 2, proposition: design of Zn}}.

Fix some $\Delta \in (0,1)$ and $k = 0,1,\ldots,l^*-1$.
Again, conditioning on $\{ \mathcal D(\bar J_n) = k \}$,
 the conditional law of $J_n = \{J_n(t):\ t \in [0,n]\}$
 is the same as the law of the process $\zeta_k$ specified in \eqref{proof: def zeta k}.
 This implies
\begin{align}
    & \P\Big(Y^*_n(J_n) \neq \hat Y^{ m }_n(J_n),\ \bar X_n \notin A^\Delta \ \Big|\ \mathcal{D}(\bar J_n) = k\Big)  
    \nonumber
    \\
     & = 
     \P\Big(Y^*_n(J_n) \neq \hat Y^{ m }_n(J_n),\ \sup_{t \in [0,n]}X(t) < n(a-\Delta) \ \Big|\ \mathcal{D}(\bar J_n) = k\Big)
     \qquad \text{by definition of set $A^\Delta$}
     \nonumber
     \\ 
     & = 
     \P\Big( \max_{i \in [k+1]}\hat W^{(i),m}_n(\zeta_k) \geq na,\ \max_{i \in [k+1]} W^{(i),*}_n(\zeta_k) < n(a-\Delta) \Big)
     \nonumber
     \\ 
     & 
     \leq \sum_{i \in [k+1]}\P\Big( \big|\hat W^{(i),m}_n(\zeta_k) - W^{(i),*}_n(\zeta_k)\big| > n\Delta\Big)
     \nonumber
     \\ 
     & \leq \sum_{i \in [k+1]}
     \P\bigg( \Big| W^{(i),*}_n(\zeta_k) - \widetilde  W^{(i),m}_n(\zeta_k) \Big| \vee 
        \Big|  \hat W^{(i),m}_n(\zeta_k) - \widetilde  W^{(i),m}_n(\zeta_k) \Big|
        > 
        \frac{n\Delta}{2}
        \bigg)
        \nonumber
    \\
    & \leq 
    (k+1) \cdot \Bigg[
    \frac{4C_1}{\Delta^2 n^2} \cdot \frac{\kappa^{ m(2 - \beta_+) }}{ n^{r(2 - \beta_+) - 1 } }
    +
    \frac{2C_1}{\Delta}\cdot \frac{1}{n}\sqrt{\frac{1}{n^{d-1}\cdot 2^m}}\ 
    \Bigg]
    \qquad 
    \text{by Proposition~\ref{proposition, intermediate 1, strong efficiency ARA}}
    \nonumber
    \\ 
    & =
    (k+1)\cdot 
    \Bigg[
    \frac{4C_1}{\Delta^2} \cdot  \frac{\kappa^{ m(2 - \beta_+) }}{ n^{r(2 - \beta_+) + 1 } }
    +
    \frac{2C_1}{\Delta} \cdot \frac{(1/\sqrt{2})^m}{ n^{\frac{d+1}{2}} }
    \Bigg]
    \nonumber
    \\ 
    & \leq 
    \frac{k+1}{n^\mu}\cdot 
    \Bigg[
    \frac{4C_1}{\Delta^2} \cdot \kappa^{ m(2 - \beta_+) }
    +
    \frac{2C_1}{\Delta} \cdot (1/\sqrt{2})^m
    \Bigg]
    \qquad 
    \text{by the choices of $r$ and $d$ in \eqref{proof, choose d and r, proposition: hat Y m n condition 1}}
    \nonumber
    \\
    & \leq 
    \frac{k+1}{n^\mu}\cdot 
    \Bigg[
    \frac{4C_1}{\Delta^2} \cdot \rho_0^m
    +
    \frac{2C_1}{\Delta} \cdot \rho_0^m
    \Bigg]
    \quad 
    \text{due to the choice of $\rho_1$ in \eqref{proofChooseRhoConstantBound} and $\rho_0 \in (\rho_1,1)$}.
    \nonumber
\end{align}
Due to $\Delta \in (0,1)$ (and hence $\frac{1}{\Delta} < \frac{1}{\Delta^2}$) and $k \leq l^* - 1$,
we conclude the proof by setting $C_0 = 6l^*C_1$.
\end{proof}

The rest of this section is devoted to proving Propositions~\ref{proposition, intermediate 1, strong efficiency ARA} and \ref{proposition, intermediate 2, strong efficiency ARA}.
First, we collect a useful result.
\begin{result}[Lemma~1 of \cite{cazares2018geometrically}]\label{result: bound bar sigma}
    Let $\nu$ be the L\'evy measure of a L\'evy process $X$. Let
    $
    I_0^p(\nu) \delequal \int_{(-1,1)}|x|^p \nu(dx).
    $
    Suppose that $\beta < 2$ for the Blumenthal-Getoor index $\beta \delequal \inf\{p >0:\ I^p_0(\nu)<\infty\}$.
    Then
    \begin{align*}
        \int_{(-\kappa,\kappa)}x^2\nu(dx) \leq \kappa^{2 - \beta_+} I^{\beta_+}_0(\nu)\qquad \forall \kappa \in (0,1],\ \beta_+ \in (\beta,2).
    \end{align*}
\end{result}

Next, we prepare two lemmas regarding the expectations of the supremum of $\Xi_n$ (see \eqref{def: process J n Xi n} for the definition)
and the difference between $\Xi_n$ and $\breve \Xi^m_n$ (see \eqref{def breve Xi n m, ARA}).

\begin{lemma}
\label{lemma: algo, bound supremum of truncated X}
\linksinthm{lemma: algo, bound supremum of truncated X}
There exists a constant $C_X < \infty$ (depending only on the law of L\'evy process $X(t)$) such that
    \begin{align*}
    \E\bigg[ \sup_{s \in [0,t]}\Xi_n(t)\bigg]
    \leq C_X(\sqrt{t} + t)\qquad \forall t > 0,\ n \geq 1.
\end{align*}
\end{lemma}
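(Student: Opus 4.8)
The plan is to exploit the decomposition of $\Xi_n = X^{<n\gamma}$ into a drift, a Brownian part, a compensated small-jump martingale, and a positive-jump part whose sizes lie in $[1,n\gamma)$, and to bound the supremum of each piece separately. Recall from \eqref{decomp, Xi n} that
\begin{align*}
    \Xi_n(t) = c_X t + \sigma B(t) + J_{n,-1}(t) + \sum_{m\geq 0} J_{n,m}(t),
\end{align*}
where $J_{n,-1}$ collects jumps of modulus in $[1,n\gamma)$ and each $J_{n,m}$ ($m\geq 0$) is a mean-zero martingale with $\mathrm{var}[J_{n,m}(1)] = \bar\sigma^2(\kappa_{n,m-1}) - \bar\sigma^2(\kappa_{n,m})$. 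The drift term contributes $|c_X|t$. For $\sigma B$, the reflection principle (or $L^2$ maximal inequality) gives $\E[\sup_{s\le t}\sigma B(s)] \le \sigma\sqrt{2t/\pi} \le C\sqrt t$. For the compensated small-jump part $J_{\text{small}}(t) \delequal \sum_{m\ge0}J_{n,m}(t)$, which is a square-integrable martingale, Doob's $L^2$ inequality yields $\E[\sup_{s\le t}J_{\text{small}}(s)] \le 2\,\E[\sup_{s\le t}|J_{\text{small}}(s)|] \le 2\big(\E[J_{\text{small}}(t)^2]\big)^{1/2} = 2\sqrt{t}\,\big(\sum_{m\ge 0}(\bar\sigma^2(\kappa_{n,m-1})-\bar\sigma^2(\kappa_{n,m}))\big)^{1/2} = 2\sqrt t\,\bar\sigma(1)$, using that the telescoping sum equals $\bar\sigma^2(1) = \int_{(-1,1)}x^2\nu(dx)$, which is finite since $\beta<2$ (indeed by Result~\ref{result: bound bar sigma} it is bounded by $I_0^{\beta_+}(\nu)$). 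Crucially this bound is uniform in $n$.

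The remaining piece is $J_{n,-1}(t) = \sum_{s\le t}\Delta X(s)\mathbf I(|\Delta X(s)|\in[1,n\gamma))$, a compound Poisson process with jump-intensity $\nu(\{x: |x|\in[1,n\gamma)\}) \le \nu(\{|x|\ge 1\}) < \infty$ and jump sizes bounded in modulus by $n\gamma$ — but $n\gamma$ grows with $n$, so one cannot bound $\sup_{s\le t}J_{n,-1}(s)$ crudely. Instead I would split $J_{n,-1} = J_{n,-1}^{\le 1} + \widetilde J_{n,-1}$ where $J^{\le1}_{n,-1}$ keeps jumps of modulus in $[1,n\gamma)$ but... actually the cleaner route: bound $\E[\sup_{s\le t}J_{n,-1}(s)] \le \E[\sum_{s\le t}|\Delta X(s)|\mathbf I(|\Delta X(s)|\ge 1)] = t\int_{|x|\ge1}|x|\nu(dx)$. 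This integral is finite because $\nu$ satisfies $\int(|x|^2\wedge1)\nu(dx)<\infty$ \emph{and} the heavy-tail assumption (Assumption~\ref{assumption: heavy tailed process}) gives $\int_{|x|\ge1}|x|\nu(dx)<\infty$ since $\alpha,\alpha'>1$, so $\nu[x,\infty),\nu(-\infty,-x]\in\RV_{-\alpha},\RV_{-\alpha'}$ with indices $<-1$ and hence integrable against $|x|$ on $[1,\infty)$; the restriction to $[1,n\gamma)$ only shrinks the integral, so the bound $t\int_{|x|\ge 1}|x|\nu(dx) \le C_{\text{jump}}\,t$ is uniform in $n$.

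Assembling: $\E[\sup_{s\le t}\Xi_n(s)] \le |c_X|t + C\sqrt t + 2\bar\sigma(1)\sqrt t + C_{\text{jump}}\,t \le C_X(\sqrt t + t)$ with $C_X$ depending only on $c_X$, $\sigma$, and $\nu$ (through $\bar\sigma(1)$ and $\int_{|x|\ge1}|x|\nu(dx)$), none of which involve $n$. I expect the only real subtlety to be the treatment of $J_{n,-1}$: one must resist the temptation to use the $n\gamma$ bound on jump sizes and instead use the first-moment (absolute-variation) bound, observing that finiteness of $\int_{|x|\ge1}|x|\nu(dx)$ follows from the regular-variation assumption with tail indices strictly greater than $1$. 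Everything else is a routine application of Doob's inequality and the reflection principle, with the key point being to track that every constant is $n$-free.
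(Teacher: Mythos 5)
Your proposal is correct, but it follows a genuinely different route from the paper. The paper's proof of Lemma~\ref{lemma: algo, bound supremum of truncated X} simply invokes Lemma~2 (equation~(26)) of \cite{cazares2018geometrically} applied to the truncated L\'evy measure $\nu_n = \nu|_{(-\infty,n\gamma)}$, then observes that $I_0^{\beta_+}(\nu_n)\le I_0^{\beta_+}(\nu)$ and $I_+^1(\nu_n)\le I_+^1(\nu)$ so that the constant from the cited lemma is automatically $n$-free. You instead re-derive the estimate from scratch: reflection principle for the Brownian part, Doob's $L^2$ maximal inequality for the compensated small-jump martingale (with the telescoping variance collapsing to $\bar\sigma^2(1)$ regardless of $n$), and a first-absolute-moment bound for the remaining jump part. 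This is essentially the argument that \emph{underlies} the cited lemma, so the two approaches are equivalent in substance; yours is self-contained, the paper's is shorter. Two small points worth flagging. First, your characterization of $J_{n,-1}$ as collecting jumps of modulus in $[1,n\gamma)$ is not quite what \eqref{decomp, Xi n} defines: $J_{n,-1}$ collects upward jumps in $[1,n\gamma)$ but \emph{all} downward jumps $\le -1$, of arbitrary magnitude. This is harmless for your bound, since only positive jumps contribute to $\sup_{s\le t}J_{n,-1}(s)$, and the bound $t\int_{|x|\ge 1}|x|\,\nu(dx)$ is finite because the tail indices $\alpha,\alpha'>1$ (Assumption~\ref{assumption: heavy tailed process}) make $|x|$ integrable against $\nu$ on $\{|x|\ge 1\}$. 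Second, the ARA decomposition \eqref{decomp, Xi n} is only stated for $n$ with $n\gamma>1$; for general $n\ge 1$ you should decompose $\Xi_n$ directly from its own triplet $(c_X,\sigma,\nu|_{(-\infty,n\gamma)})$ into drift, Brownian, compensated jumps in $(-1,1)$, and jumps with $|x|\ge 1$; the calculation is unchanged. Net: correct, $n$-uniform, and the key observation you rightly emphasize — that truncating $\nu$ only shrinks the relevant integrals — is exactly what the paper uses too.
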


\begin{proof}\linksinpf{lemma: algo, bound supremum of truncated X}
   Recall that the generating triplet of $X$ is $(c_X,\sigma,\nu)$
and for the  Blumenthal-Getoor index $\beta \delequal \inf\{p > 0: \int_{(-1,1)}|x|^p\nu(dx) < \infty\}$ we have $\beta < 2$; see Assumption \ref{assumption: heavy tailed process}.
Fix some $\beta_+ \in (1 \vee \beta,2)$ in this proof. We prove the lemma for
\begin{align*}
    C_X \delequal \max\Big\{|\sigma|\sqrt{\frac{2}{\pi}} + 2\sqrt{I^{\beta_+}_0(\nu)},\ (c_X)^+ + I^1_+(\nu) + 2I_0^{\beta_+}(\nu)\Big\}
\end{align*}
where $(x)^+ = x \vee 0$, $I^1_+(\nu) = \int_{[1,\infty)}x\nu(dx)$, and $I^p_0(\nu) = \int_{(-1,1)}|x|^p\nu(dx).$

Recall that $ \Xi_n$ is a L\'evy process with generating triplet $(c_X,\sigma,\nu|_{ (-\infty,n\gamma) })$.
Let $\nu_n \delequal \nu|_{ (-\infty,n\gamma) }$. 
It follows from Lemma~2 of \cite{cazares2018geometrically} (specifically, by setting $t=T$ in equation (26)) that, for all $t > 0$ and $n \geq 1$,
\begin{align}
    \E \sup_{s \in [0,t]} \Xi_n(t)
    \leq 
    \bigg( |\sigma|\sqrt{\frac{2}{\pi}} + 2\sqrt{I^{\beta_+}_0(\nu_n)} \bigg)\sqrt{t} 
    + \Big( (c_X)^+ + I^1_+(\nu_n) + 2I_0^{\beta_+}(\nu_n) \Big)t.
    \label{proof, ineq, lemma: algo, bound supremum of truncated X}
\end{align}
In particular, note that
$
I^{\beta_+}_0(\nu_n) = \int_{(-1,1)}|x|^p\nu_n(dx) = \int_{(-1,1) \cap (-\infty,n\gamma)}|x|^p \nu(dx) \leq I^{\beta_+}_0(\nu)
$
and
$
I^1_+(\nu_n) = \int_{[1,\infty)}x\nu_n(dx) = \int_{[1,\infty) \cap (-\infty,n\gamma)}x\nu(dx) \leq I^1_+(\nu).
$
Plugging these two bounds into \eqref{proof, ineq, lemma: algo, bound supremum of truncated X}, we conclude the proof.
\end{proof}

\begin{lemma}
\label{lemma: algo, bound term 1}\linksinthm{lemma: algo, bound term 1}
There exists some $C \in (0,\infty)$ (only depending on the choice of $\beta_+ \in (\beta,2)$ in \eqref{proofFixBetaPlus} and the law of L\'evy process $X$) such that
$$
\P\Big( \sup_{t \in [0,n]}\Big| \Xi_n(t) - \breve \Xi^m_n(t) \Big| > x \Big) \leq \frac{C\kappa^{m(2-\beta_+)}}{x^2 n^{r(2 - \beta_+) - 1} }\qquad \forall x > 0,\ n \geq 1,\ m \in \mathbb N
$$
where $r$ is the parameter in the truncation threshold $\kappa_{n,m} = \kappa^m/n^r$ (see \eqref{def: kappa n m}).
\end{lemma}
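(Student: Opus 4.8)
The plan is to recognize that $\Xi_n - \breve\Xi^m_n$ is exactly the process obtained by removing the portion of the small-jump martingale of $\Xi_n$ with jump sizes below $\kappa_{n,m}$ and putting back an independent Brownian motion of the same variance, and then to control its supremum by Doob's $L^2$ maximal inequality. Concretely, from the decomposition \eqref{decomp, Xi n} and the definition \eqref{def breve Xi n m, ARA} of $\breve\Xi^m_n$, the drift $c_Xt$, the $\sigma B$ term and the martingales $J_{n,-1},J_{n,0},\ldots,J_{n,m}$ all cancel, leaving
\[
    D^m_n(t) \delequal \Xi_n(t) - \breve\Xi^m_n(t) = \sum_{q\geq m+1}J_{n,q}(t) \;-\; \sum_{q\geq m+1}\sqrt{\bar\sigma^2(\kappa_{n,q-1}) - \bar\sigma^2(\kappa_{n,q})}\;W^q(t).
\]
Here the first sum is the $L^2$-convergent superposition of the orthogonal mean-zero martingales $J_{n,q}$, $q\geq m+1$ (equivalently, the compensated jump martingale of $\Xi_n$ restricted to jump sizes in $(-\kappa_{n,m},\kappa_{n,m})$), and the second sum is a Brownian motion with variance parameter $\sum_{q\geq m+1}\big(\bar\sigma^2(\kappa_{n,q-1})-\bar\sigma^2(\kappa_{n,q})\big)$. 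These two processes are independent (one is driven by the Poisson random measure of $X$, the other by the auxiliary Brownian motions $W^q$), so $D^m_n$ is a càdlàg square-integrable martingale with $D^m_n(0)=0$.

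Next I would compute $\E[D^m_n(n)^2]$. By orthogonality and the telescoping identity
\[
    \sum_{q\geq m+1}\big(\bar\sigma^2(\kappa_{n,q-1}) - \bar\sigma^2(\kappa_{n,q})\big) = \bar\sigma^2(\kappa_{n,m}) - \lim_{q\to\infty}\bar\sigma^2(\kappa_{n,q}) = \bar\sigma^2(\kappa_{n,m}),
\]
where the limit is $0$ because $\kappa_{n,q}=\kappa^q/n^r\downarrow 0$ as $q\to\infty$ and $\bar\sigma^2(c)=\int_{(-c,c)}x^2\nu(dx)\downarrow 0$ as $c\downarrow 0$ by dominated convergence (the L\'evy measure satisfies $\int_{(-1,1)}x^2\nu(dx)<\infty$), each of the two terms in $D^m_n(n)$ has second moment $n\,\bar\sigma^2(\kappa_{n,m})$, so by independence $\E[D^m_n(n)^2]=2n\,\bar\sigma^2(\kappa_{n,m})$. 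Doob's $L^2$ maximal inequality then gives $\E\big[\sup_{t\in[0,n]}D^m_n(t)^2\big]\leq 4\,\E[D^m_n(n)^2]=8n\,\bar\sigma^2(\kappa_{n,m})$, and Markov's inequality yields
\[
    \P\Big(\sup_{t\in[0,n]}\big|\Xi_n(t)-\breve\Xi^m_n(t)\big|>x\Big) \leq \frac{8n\,\bar\sigma^2(\kappa_{n,m})}{x^2}\qquad \forall x>0,\ n\geq 1,\ m\in\mathbb N.
\]

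Finally I would bound $\bar\sigma^2(\kappa_{n,m})$ using Result~\ref{result: bound bar sigma}: since $\kappa\in[0,1)$ and $r>0$ we have $\kappa_{n,m}=\kappa^m/n^r\in(0,1]$ for all $n\geq 1,\ m\in\mathbb N$, and with $\beta_+\in(\beta,2)$ (where $\beta$ is the Blumenthal--Getoor index) Result~\ref{result: bound bar sigma} gives $\bar\sigma^2(\kappa_{n,m})\leq \kappa_{n,m}^{2-\beta_+}\,I_0^{\beta_+}(\nu)=\kappa^{m(2-\beta_+)}\,n^{-r(2-\beta_+)}\,I_0^{\beta_+}(\nu)$, with $I_0^{\beta_+}(\nu)=\int_{(-1,1)}|x|^{\beta_+}\nu(dx)<\infty$ by definition of $\beta$. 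Substituting into the displayed bound gives the claim with $C=8\,I_0^{\beta_+}(\nu)$, which depends only on $\beta_+$ and the law of $X$. I do not expect a genuine obstacle: the argument is essentially a second-moment computation, and the only points needing care are the justification that the infinite tail sums define honest square-integrable martingales with variances that telescope to $\bar\sigma^2(\kappa_{n,m})$ (both standard facts of the $L^2$ theory of L\'evy processes) and the verification that $\kappa_{n,m}\leq 1$ so that Result~\ref{result: bound bar sigma} applies.
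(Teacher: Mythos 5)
Your proof is correct and follows the same route as the paper: identify $\Xi_n - \breve\Xi^m_n$ (in law) as the compensated small-jump martingale $X^{(-\kappa_{n,m},\kappa_{n,m})}$ minus an independent Brownian motion $\bar\sigma(\kappa_{n,m})B$, control the supremum via Doob, and bound $\bar\sigma^2(\kappa_{n,m})$ by Result~\ref{result: bound bar sigma}. The only cosmetic difference is that the paper applies Doob's weak-type $L^2$ maximal inequality directly, $\P(\sup_{t\le n}|M(t)|>x)\le \E[M(n)^2]/x^2$, giving $C=2I_0^{\beta_+}(\nu)$, whereas your strong Doob followed by Markov loses a harmless factor of $4$.
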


\begin{proof}\linksinpf{lemma: algo, bound term 1}
From the definitions of $\Xi_n$ and $\breve \Xi^m_n$ in \eqref{decomp, Xi n} and \eqref{def breve Xi n m, ARA}, respectively, we have
\begin{align*}
    \Xi_n(t) - \breve \Xi^m_n(t) \distequal X^{ (-\kappa_{n,m},\kappa_{n,m}) }(t) - \bar\sigma(\kappa_{n,m})B(t)
\end{align*}
where $\notationdef{notation-X-jump-bounded-by-c}{X^{(-c,c)}}$ is the L\'evy process with generating triplet $(0,0,\nu|_{(-c,c)})$,
$
\kappa_{n,m} = \kappa^m/n^r,
$
and $B$ is a standard Brownian motion independent of $X^{ (-\kappa_{n,m},\kappa_{n,m}) }$.
In particular, $X^{ (-\kappa_{n,m},\kappa_{n,m}) }$ is a martingale with variance $var\big[X^{ (-\kappa_{n,m},\kappa_{n,m}) }(1)\big] = \bar\sigma^2(\kappa_{n,m})$; see \eqref{def bar sigma} for the definition of $\bar\sigma^2(\cdot)$.
Therefore, 
\begin{align*}
    & \P\Big( \sup_{t \in [0,n]}\Big| \Xi_n(t) - \breve \Xi^m_n(t) \Big| > x \Big)
    \\
    & \leq 
    \frac{1}{x^2}\E\Big| X^{ (-\kappa_{n,m},\kappa_{n,m}) }(n) - \bar\sigma(\kappa_{n,m})B(n)\Big|^2
    \qquad \text{using Doob's inequality}
    \\
    & = \frac{2n}{x^2}\bar\sigma^2(\kappa_{n,m})
    \qquad \text{due to the independence between $X^{ (-\kappa_{n,m},\kappa_{n,m}) }$ and $B$}
    \\
    & \leq 
    \frac{2n}{x^2} \cdot \kappa^{2 - \beta_+}_{ n,m }I_0^{\beta_+}(\nu)
    \qquad \text{ using Result \ref{result: bound bar sigma}}
    \\
    & = \frac{2I_0^{\beta_+}(\nu)}{x^2} \cdot \frac{ n\kappa^{m(2-\beta_+)} }{ n^{r(2-\beta_+)} } = \frac{2I_0^{\beta_+}(\nu)}{x^2} \cdot \frac{\kappa^{m(2-\beta_+)}}{ n^{r(2-\beta_+) - 1} }
    \qquad \text{ due to }\kappa_{n,m} = \kappa^m/n^r.
\end{align*}
To conclude the proof, we set $C = 2I_0^{\beta_+}(\nu) = 2\int_{(-1,1)}\int |x|^{\beta_+}\nu(dx)$.
\end{proof}

To facilitate the presentation of the next few lemmas,
we consider a slightly more general version of the stick-breaking procedure
described in \eqref{defStickLength1}--\eqref{def xi i m j, ARA plus SBA},
to allow for arbitrary stick length.
Specifically, for any $l > 0$,
let
\begin{align}
    l_1(l) = V_1 \cdot l,\qquad 
    \notationdef{notation-stick-length-l-j-l}{l_j(l)} = V_j \cdot \big(l - l_1(l) - l_2(l) - \cdots - l_{j-1}(l)\big)\quad \forall j \geq 2,
    \label{def general xi n m l, 2}
\end{align}
where $V_j$'s are iid copies of Unif$(0,1)$.
Independent of $V_j$'s, for any $n$ and $m$,
let $\Xi_n$ and $\breve \Xi^m_n$ be L\'evy processes with joint law specified in \eqref{decomp, Xi n} and \eqref{def breve Xi n m, ARA}, respectively.
Conditioning on the values of $l_j(l)$, define $\notationdef{notation-increments-on-sticks-xi-n-j-l}{\xi^{[n]}_j(l),\xi^{[n],m}_j(l)}$ using (for all $j \geq 1$)
\begin{align}
    \Big(\xi^{[n]}_j(l),\xi^{[n],0}_j(l),\xi^{[n],1}_j(l),\xi^{[n],2}_j(l),\ldots \Big)
    =
    \Big( \Xi_n\big(l_j(l)\big),\ \breve \Xi^0_n\big(l_j(l)\big),\ \breve \Xi^1_n\big(l_j(l)\big),\ \breve \Xi^2_n\big(l_j(l)\big),\ \ldots \Big).
    \label{def general xi n m l, 3}
\end{align}

\begin{lemma}
\label{lemma: algo, bound term 2}
\linksinthm{lemma: algo, bound term 2}
There exists some $C \in (0,\infty)$ (only depending on the choice of $\beta_+ \in (\beta,2)$ in \eqref{proofFixBetaPlus} and the law of L\'evy process $X$) such that, for all $m \in \mathbb N$ and $n \geq 1$,
$$
\P\bigg(\bigg| \sum_{j = 1}^{ m + \ceil{ \log_2(n^d) } } \big(\xi^{[n]}_j(l)\big)^+ - \sum_{j = 1}^{m + \ceil{ \log_2(n^d) }} \big(\xi^{[n],m}_j(l)\big)^+ \bigg| > y \bigg) \leq \frac{C\kappa^{m(2-\beta_+)}}{y^2 n^{r(2 - \beta_+) - 1} }
\qquad \forall y > 0,\ l \in [0,n]
$$
where $r$ is the parameter in the truncation threshold $\kappa_{n,m} = \kappa^m/n^r$ (see \eqref{def: kappa n m})
and $(x)^+ = x \vee 0$.
\end{lemma}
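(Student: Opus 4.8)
The plan is to reduce the claim to a second‑moment estimate for the sum of the stickwise discrepancies and then apply Markov's inequality together with Result~\ref{result: bound bar sigma}. Write $N \delequal m + \ceil{\log_2(n^d)}$ and $D_j \delequal \xi^{[n]}_j(l) - \xi^{[n],m}_j(l)$. First I would use the fact that $x \mapsto (x)^+$ is $1$‑Lipschitz to bound $\bigl|\sum_{j=1}^{N}(\xi^{[n]}_j(l))^+ - \sum_{j=1}^{N}(\xi^{[n],m}_j(l))^+\bigr| \le \sum_{j=1}^{N}|D_j| \delequal S$, so it suffices to bound $\P(S > y)$. Next, by the joint law in \eqref{def general xi n m l, 3} and the identity already established in the proof of Lemma~\ref{lemma: algo, bound term 1}, conditionally on the stick length $l_j(l)$ the variable $D_j$ is distributed as $\Xi_n(l_j(l)) - \breve\Xi^m_n(l_j(l)) \distequal X^{(-\kappa_{n,m},\kappa_{n,m})}(l_j(l)) - \bar\sigma(\kappa_{n,m})B(l_j(l))$, a mean‑zero random variable whose two independent pieces each have variance $l_j(l)\bar\sigma^2(\kappa_{n,m})$, so $\E[D_j^2 \mid l_j(l)] = 2 l_j(l)\bar\sigma^2(\kappa_{n,m})$. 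Combining this with $\E l_j(l) = l/2^j$ from the stick‑breaking recursion \eqref{def general xi n m l, 2} gives $\E D_j^2 = 2\bar\sigma^2(\kappa_{n,m})\,l/2^j$.

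The key step is to turn these per‑stick bounds into a bound on $\E S^2$ that is \emph{uniform in $N$}: a naive Cauchy--Schwarz with uniform weights would cost a spurious factor $N \asymp m + \log n$, which the right‑hand side of the lemma does not contain. Instead I would apply a weighted Cauchy--Schwarz inequality with geometric weights $a_j^2 = (2/3)^j$: writing $S = \sum_{j=1}^{N} a_j \cdot (|D_j|/a_j)$ yields $S^2 \le \bigl(\sum_{j\ge 1} a_j^2\bigr)\sum_{j=1}^{N} D_j^2/a_j^2$, and taking expectations (only linearity is needed, no independence across $j$) gives $\E S^2 \le \bigl(\sum_{j\ge1}(2/3)^j\bigr)\sum_{j\ge1} \frac{2\bar\sigma^2(\kappa_{n,m})\,l}{2^j(2/3)^j} = 2\cdot 2\bar\sigma^2(\kappa_{n,m})\,l\sum_{j\ge1}(3/4)^j \le 12\,\bar\sigma^2(\kappa_{n,m})\,n$, using $l \le n$ and the convergence of the geometric series.

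Finally, Markov's inequality gives $\P(S > y) \le \E S^2/y^2 \le 12\,n\,\bar\sigma^2(\kappa_{n,m})/y^2$, and Result~\ref{result: bound bar sigma}, applicable since $\kappa_{n,m} = \kappa^m/n^r \in (0,1]$, yields $\bar\sigma^2(\kappa_{n,m}) = \int_{(-\kappa_{n,m},\kappa_{n,m})}x^2\nu(dx) \le \kappa_{n,m}^{2-\beta_+}I_0^{\beta_+}(\nu) = \kappa^{m(2-\beta_+)}\,n^{-r(2-\beta_+)}\,I_0^{\beta_+}(\nu)$; substituting and setting $C \delequal 12\,I_0^{\beta_+}(\nu)$ produces the asserted bound $\P(S>y) \le \frac{C\kappa^{m(2-\beta_+)}}{y^2 n^{r(2-\beta_+)-1}}$, uniformly in $l \in [0,n]$. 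The one place where care is genuinely required is the uniform‑in‑$N$ control of $\E S^2$ — this is exactly what the geometric weighting buys us — while the rest of the argument is routine and closely parallels the proof of Lemma~\ref{lemma: algo, bound term 1}.
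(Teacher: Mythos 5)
Your proof is correct and delivers the right bound with the right dependence on $m,n,y,l$. It parallels the paper's proof in all the underlying ingredients — the $1$-Lipschitz reduction to $\sum_j |D_j|$, the conditional identity $D_j \mid l_j(l) \distequal X^{(-\kappa_{n,m},\kappa_{n,m})}(l_j(l)) - \bar\sigma(\kappa_{n,m})B(l_j(l))$, the mean stick length $\E l_j(l) = l/2^j$, and the appeal to Result~\ref{result: bound bar sigma} — but the aggregation step is organized differently. The paper writes $1 \geq (\chi - 1)\sum_{j=1}^{N}\chi^{-j}$ with $\chi = 2^{1/4}$, applies a union bound over $j$, and then runs a per-term Chebyshev argument $\P(|D_j| > y(\chi-1)\chi^{-j}) \leq \chi^{2j}(\chi-1)^{-2}y^{-2}\cdot 2\bar\sigma^2(\kappa_{n,m})\, l/2^j$, with the choice of $\chi$ ensuring that $\chi^{2j}/2^j$ is still geometrically summable. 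You instead apply a weighted Cauchy--Schwarz with weights $a_j^2 = (2/3)^j$ to get a deterministic inequality $S^2 \leq (\sum_j a_j^2)(\sum_j D_j^2/a_j^2)$, then take expectations once and apply Markov once. Both routes trade a geometric weight against the $1/2^j$ decay of $\E l_j(l)$ to avoid a spurious factor of $N = m + \ceil{\log_2(n^d)}$; yours folds the geometric weighting into a single second-moment bound (one Markov step, no union bound), whereas the paper's distributes it over a countable family of tail events. The net constants differ but both are absolute, and both arguments rely only on linearity of expectation — no independence across $j$ is needed in either — so your explicit remark on that point is accurate. This is a clean alternative; if anything it is slightly more compact than the published proof.
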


\begin{proof}
\linksinpf{lemma: algo, bound term 2}
For notational simplicity, set $k(n) = \ceil{ \log_2(n^d) }$.
Due to $|(x)^+ - (y)^+| \leq |x - y|$,
\begin{align}
    & \P \Big( \Big| \sum_{j = 1}^{ m + k(n) } \big(\xi^{[n]}_j(l)\big)^+ - \sum_{j = 1}^{m + k(n)} \big(\xi^{[n],m}_j(l)\big)^+ \Big| > y \Big)
    \nonumber \\
    & \leq 
 \P \Big( \sum_{j = 1}^{ m + k(n)} \Big|\big(\xi^{[n]}_j(l)\big)^+ - \big(\xi^{[n],m}_j(l)\big)^+ \Big| > y \Big)
    \leq   \P \Big(  \sum_{j = 1}^{ m + k(n) } \Big|\underbrace{\xi^{[n]}_j(l) - \xi^{[n],m}_j(l)}_{\delequal q_j} \Big| > y \Big).
    \label{proofIntermediateARAerrorCurrent}
\end{align}
Furthermore, we claim the existence of some constant $\tilde C \in (0,\infty)$
such that (for any $y,d > 0$, $l \in [0.n]$, and any $n \geq 1,\ m \in \mathbb N$)
\begin{align}
    \P(\sum_{j= 1}^{m + k(n)}|q_j| > y) \leq \tilde C \cdot \frac{n \bar \sigma^2(\kappa_{n,m}) }{y^2}.
    \label{proof, goal, lemma: algo, bound term 2}
\end{align}
Then 
using Result~\ref{result: bound bar sigma}, we yield
 \begin{align*}
     n\bar\sigma^2(\kappa_{n,m}) \leq n \cdot \kappa^{2 - \beta_+}_{ n,m }I_0^{\beta_+}(\nu)
     =
      \frac{\kappa^{m(2-\beta_+)}}{n^{r(2-\beta_+)-1}}
      \cdot I_0^{\beta_+}(\nu)
 \end{align*}
where $I_0^{\beta_+}(\nu) = \int_{(-1,1)}\int |x|^{\beta_+}\nu(dx)$.
Setting $C = \tilde C I_0^{\beta_+}(\nu)$, we conclude the proof.

Now, it only remains to prove claim \eqref{proof, goal, lemma: algo, bound term 2}.
Let $\chi = 2^{1/4}$.
Note that
\begin{align*}
1 
=
(\chi - 1)\sum_{j \geq 1}\frac{1}{\chi^j}
\geq (\chi - 1)\Big( \frac{1}{\chi} + \frac{1}{\chi^2} + \cdots + \frac{1}{\chi^{m +k(n) }}  \Big).
\end{align*}{}
As a result,
\begin{align}
    \P \Big( \sum_{j = 1}^{ k(n) + m } |q_j| > y   \Big) 
    \leq & \P \Big( \sum_{j = 1}^{ k(n) + m } |q_j| > y(\chi - 1)\sum_{j = 1}^{ k(n) + m }\frac{1}{\chi^j}   \Big)
    \leq 
    \sum_{j = 1}^{ k(n) + m } \P \Big( |q_j| > y\cdot \frac{\chi - 1}{\chi^j} \Big) \label{proofSumOfBj}
\end{align}
Next, we bound each $\P( |q_j| > y\frac{\chi - 1}{\chi^j})$. 
Conditioning on $l_j(l) = t$ (for any $t \in [0,l]$), we get
\begin{align*}
    \P \bigg( |q_j| > y\frac{\chi - 1}{\chi^j}\ \bigg|\ l_j(l) = t \bigg)
    & = 
    \P\bigg(\Big| \Xi_n(t) - \breve \Xi^m_n(t) \Big| > y\frac{\chi - 1}{\chi^j}\bigg)
    \qquad \text{ due to \eqref{def general xi n m l, 3}}
    \\
    & \leq 
    \frac{\chi^{2j}}{y^2 (\chi - 1)^2 }\E\Big|X^{ (-\kappa_{n,m},\kappa_{n,m}) }(t) - \bar\sigma(\kappa_{n,m})B(t)\Big|^2
    \\ 
    & = \frac{\chi^{2j}}{y^2 (\chi - 1)^2 } \cdot 2\bar\sigma^2(\kappa_{n,m})t
    \\
\Longrightarrow 
\P \bigg( |q_j| > y\frac{\chi - 1}{\chi^j}\bigg) 
    & \leq 
    \frac{\chi^{2j}}{y^2 (\chi - 1)^2 } \cdot 2\bar\sigma^2(\kappa_{n,m}) \cdot \E[l_j(l)]
    \\
    & = 
    \frac{\sqrt{2^j}}{y^2 (2^{1/4} - 1)^2 } \cdot 2\bar\sigma^2(\kappa_{m,n}) \cdot \E[l_j(l)]
    \qquad\text{ due to }\chi = 2^{1/4}
    \\
    & = \frac{\sqrt{2^j}}{y^2 (2^{1/4} - 1)^2 } \cdot 2\bar\sigma^2(\kappa_{m,n}) \cdot \frac{l}{2^j}
    \qquad \text{by definition of $l_j(l)$ in \eqref{def general xi n m l, 2}}
    \\
    & \leq \frac{2}{ (2^{1/4} - 1)^2 \sqrt{2^j} } \cdot \frac{n\bar\sigma^2(\kappa_{m,n})}{y^2}
    \qquad \text{ due to }l \leq n.
\end{align*}
Therefore, in \eqref{proofSumOfBj}, we get
\begin{align*}
\P \Big( \sum_{j = 1}^{ k(n) + m } |q_j| > y   \Big) 
& \leq 
\frac{n\bar\sigma^2(\kappa_{m,n})}{y^2}\sum_{j \geq 1}\frac{2}{(2^{1/4} - 1)^2 \sqrt{2^j} }
=
\frac{n\bar\sigma^2(\kappa_{m,n})}{y^2}
\cdot \underbrace{\frac{2\sqrt{2}}{(2^{1/4} - 1)^2(\sqrt{2}-1) }}_{ \delequal \tilde C },
\end{align*}
thus establishing claim \eqref{proof, goal, lemma: algo, bound term 2}.
\end{proof}

\begin{lemma}
\label{lemma: algo, bound term 3}
\linksinthm{lemma: algo, bound term 3}
    Let $n \in \mathbb Z_+$ and $l \in [0,n]$.
    Let $C_X < \infty$ be the constant characterized in Lemma \ref{lemma: algo, bound supremum of truncated X} that only  depends on the law of L\'evy process $X$.
    The inequality
    \begin{align*}
        \P\Big( \sum_{j > m + \ceil{ \log_2(n^d) }} \big(\xi^{[n]}_j(l)\big)^+ > x \Big) \leq \frac{2C_X}{x}\sqrt{\frac{1}{n^{d-1} \cdot 2^m}}
    \end{align*}
    holds for all $x >0$, $n \geq 1$, and $m \geq 0$, where $(x)^+ = x \vee 0$.
\end{lemma}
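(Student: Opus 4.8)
### Proof proposal for Lemma \ref{lemma: algo, bound term 3}

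\textbf{The plan.} The quantity $\sum_{j > m + \ceil{\log_2(n^d)}}(\xi^{[n]}_j(l))^+$ is, by the stick-breaking coupling in \eqref{def general xi n m l, 3}, the tail contribution of the concave-majorant series for $\sup_{t\in[0,L]}\Xi_n(t)$ where $L$ is the leftover stick length after $m + \ceil{\log_2(n^d)}$ steps. First I would invoke Result~\ref{result: concave majorant of Levy} in the conditional form used throughout Section~\ref{sec: algorithm}: conditionally on the values of $l_1(l),\dots,l_{M}(l)$ (write $M = m + \ceil{\log_2(n^d)}$), the residual stick $L_M \delequal l - \sum_{j=1}^{M}l_j(l)$ has the property that $\sum_{j > M}(\xi^{[n]}_j(l))^+$ has the same law as $\sup_{t\in[0,L_M]}\Xi_n(t) - $ (value at the left endpoint), i.e. it is stochastically dominated by $\sup_{s\in[0,L_M]}\bigl(\Xi_n(s) - \Xi_n(0)\bigr)$ run for time $L_M$. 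Then I would condition on $L_M$ and apply Markov's inequality together with Lemma~\ref{lemma: algo, bound supremum of truncated X}:
\begin{align*}
\P\Big(\sum_{j>M}(\xi^{[n]}_j(l))^+ > x\Big) \le \frac{1}{x}\,\E\Big[\sup_{s\le L_M}\Xi_n(s)\Big] \le \frac{C_X}{x}\,\E\bigl[\sqrt{L_M} + L_M\bigr].
\end{align*}

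\textbf{Controlling the residual stick.} The next step is to bound $\E[\sqrt{L_M} + L_M]$. From the recursion \eqref{def general xi n m l, 2}, $L_M = l\prod_{j=1}^{M}(1 - V_j)$ with $V_j$ iid $\mathrm{Unif}(0,1)$, so $\E[L_M] = l/2^{M}$ and, by Jensen (concavity of $\sqrt{\cdot}$), $\E[\sqrt{L_M}] \le \sqrt{\E[L_M]} = \sqrt{l/2^{M}}$. Using $l \le n$ and $M = m + \ceil{\log_2(n^d)} \ge m + \log_2(n^d)$, so that $2^M \ge n^d\cdot 2^m$, I get $\E[L_M] \le n/(n^d 2^m) = 1/(n^{d-1}2^m)$ and $\E[\sqrt{L_M}] \le \sqrt{1/(n^{d-1}2^m)}$. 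For $n\ge 1$ and $d > 1$ (which holds since the running choice of $d$ in \eqref{proof, choose d and r, proposition: hat Y m n condition 1} forces $d>2$, but in any case $d>1$ suffices here), the linear term is dominated by the square-root term: $1/(n^{d-1}2^m) \le \sqrt{1/(n^{d-1}2^m)}$ because $n^{d-1}2^m \ge 1$. Hence $\E[\sqrt{L_M}+L_M] \le 2\sqrt{1/(n^{d-1}2^m)}$, which plugged into the display above yields exactly $\frac{2C_X}{x}\sqrt{1/(n^{d-1}2^m)}$.

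\textbf{Main obstacle.} The one point requiring care is the first step — justifying that the tail sum $\sum_{j>M}(\xi^{[n]}_j(l))^+$ is (conditionally on the first $M$ stick lengths) distributed as, or stochastically dominated by, the running supremum of an independent copy of $\Xi_n$ over the residual time $L_M$. This is a direct consequence of the recursive/self-similar structure of the Pitman stick-breaking construction in Result~\ref{result: concave majorant of Levy}: after peeling off $M$ sticks, the remaining sticks $(l_j(l))_{j>M}$ form a stick-breaking sequence on $[0, L_M]$, and conditionally on $L_M$ the increments $\xi^{[n]}_j(l)$ for $j>M$ are independent copies of $\Xi_n$ evaluated at those stick lengths, so $\sum_{j>M}(\xi^{[n]}_j(l))^+ \distequal \sup_{s\le L_M}\Xi_n(s)$ (taking left endpoint value $0$). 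Once this reduction is in place the rest is the routine Markov/Jensen bookkeeping above. I would state the reduction cleanly, cite Result~\ref{result: concave majorant of Levy}, and then carry out the two-line estimate, matching the constant $C_X$ from Lemma~\ref{lemma: algo, bound supremum of truncated X} without needing a fresh constant.
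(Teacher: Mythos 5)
Your proposal is correct and follows essentially the same route as the paper: conditioning on the residual stick length $\breve l_{M}(l)$, identifying the tail sum $\sum_{j>M}(\xi^{[n]}_j(l))^+$ with $\sup_{s\le \breve l_M(l)}\Xi_n(s)$ via Result~\ref{result: concave majorant of Levy}, applying Markov together with Lemma~\ref{lemma: algo, bound supremum of truncated X}, and then Jensen plus $\E\,\breve l_M(l) = l/2^M \le 1/(n^{d-1}2^m)$. You make explicit the final absorption of the linear term into the square-root term (via $n^{d-1}2^m\ge 1$), which the paper leaves tacit; otherwise the two arguments coincide step for step.
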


\begin{proof}
\linksinpf{lemma: algo, bound term 3}
For this proof, we adopt the notation $\breve l_k(l) \delequal l - l_1(l) - l_2(l) - \ldots - l_k(l)$
for the remaining stick length after the first $k$ sticks.
Conditioning on $\breve l_{m + \ceil{\log_2(n^d)}}(l) = t$, 
\begin{align*}
    \P\Big( \sum_{j > m + \ceil{ \log_2(n^d) }} \big(\xi^{[n]}_j(l)\big)^+ > x \ \bigg|\ \breve l_{m + \ceil{\log_2(n^d)}}(l) = t\Big) 
    & = \P\Big(\sup_{s \in [0,t]}\Xi_n(s) > x\Big)
    \qquad 
    \text{ by Result~\ref{result: concave majorant of Levy}}
    \\ 
    & \leq 
    \frac{C_X}{x}(\sqrt{t}+t)
    \qquad\text{using Lemma~\ref{lemma: algo, bound supremum of truncated X}}.
\end{align*}
Therefore, unconditionally,
\begin{align*}
    \P\Big( \sum_{j > m + \ceil{ \log_2(n^d) }} \big(\xi^{[n]}_j(l)\big)^+ > x\Big)
    & \leq \frac{C_X}{x}\E\Big[ \sqrt{ \breve l_{m + \ceil{\log_2(n^d)}}(l)} + \E \breve l_{m + \ceil{\log_2(n^d)}}(l) \Big]
    \\
    & \leq 
     \frac{C_X}{x}\Big[ \sqrt{\E \breve l_{m + \ceil{\log_2(n^d)}}(l)} + \E \breve l_{m + \ceil{\log_2(n^d)}}(l) \Big]
\end{align*}
The last line follows from Jensen's inequality.
Lastly, by definition of $l_j(l)$'s in \eqref{def general xi n m l, 2}, we have
\begin{align*}
    \E \breve l_{m + \ceil{\log_2(n^d)}}(l) & = \frac{l}{2^{m + \ceil{\log_2(n^d)}}} 
    \leq \frac{l}{2^m \cdot n^d} \leq \frac{n}{2^m \cdot n^d} = \frac{1}{n^{d-1} \cdot 2^m}
    \qquad \text{due to $l \in [0,n]$}.
\end{align*}
This concludes the proof.
\end{proof}

\begin{lemma}
\label{lemma: algo, bound term 4}
\linksinthm{lemma: algo, bound term 4}
 Let $n \in \mathbb Z_+$ and $l \in [0,n]$.
 Let $C$ and $\lambda$ be the constants in Assumption \ref{assumption: holder continuity strengthened on X < z t}.
 Let  $C_X < \infty$ be the constant characterized in Lemma \ref{lemma: algo, bound supremum of truncated X} that only  depends on the law of L\'evy process $X$.
The inequality
\begin{align*}
     \P\Big( \sum_{j = 1}^{m + \ceil{ \log_2(n^d) }} \big(\xi^{[n]}_j(l)\big)^+ \in [y,y + c ]\Big)
     \leq 
     C\frac{ (m + (\ceil{\log_2(n^d)}) n^{\alpha_4 \lambda}}{ \delta^{\alpha_3 \lambda}  }c
    +
    4C_X\big(m^2 + (\ceil{\log_2(n^d)})^2\big)\frac{ \delta^{\alpha_3/2} }{y_0 \cdot  n^{\alpha_4/2} }.
\end{align*}
holds for all $y \geq y_0 > 0$, $c > 0$, $n \geq 1$, and $m \in \mathbb N$.
\end{lemma}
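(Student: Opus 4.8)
The plan is to establish the stated anti-concentration bound by isolating a single ``decisive'' stick whose positive increment carries the partial sum across the level $y$, applying the Lipschitz bound of Assumption~\ref{assumption: holder continuity strengthened on X < z t} to that one increment, and bounding the residual event (no decisive stick of favorable length) by the $\mathbf{O}(\sqrt t)$ growth of $\E\sup_{s\le t}\Xi_n(s)$ from Lemma~\ref{lemma: algo, bound supremum of truncated X}. Throughout, write $N\delequal m+\ceil{\log_2(n^d)}$, $S\delequal\sum_{j=1}^{N}\big(\xi^{[n]}_j(l)\big)^+$, and $A_j\delequal S-\big(\xi^{[n]}_j(l)\big)^+=\sum_{i\in[N],\,i\ne j}\big(\xi^{[n]}_i(l)\big)^+$. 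Fix the length threshold $L\delequal \delta^{\alpha_3}/n^{\alpha_4}$, which lies in $(0,1)$ by \eqref{proofChooseAlpha_34} and \eqref{proofChooseDelta}; call stick $j$ \emph{heavy} if $l_j(l)\ge L$ and \emph{light} otherwise. The two analytic inputs are: (i) since $\xi^{[n]}_j(l)\distequal X^{<n\gamma}\big(l_j(l)\big)$, conditionally on $l_j(l)=t$ the law of $\xi^{[n]}_j(l)$ assigns to any interval of length $c$ a mass at most $Cc/(t^\lambda\wedge1)$ (Assumption~\ref{assumption: holder continuity strengthened on X < z t}, valid once $n\gamma\ge z_0$); and (ii) conditionally on $l_j(l)=t$, $\E\big[(\xi^{[n]}_j(l))^+\big]\le \E\big[\sup_{s\le t}\Xi_n(s)\big]\le C_X(\sqrt t+t)$, which is at most $2C_X\sqrt t$ when $t<1$ (Lemma~\ref{lemma: algo, bound supremum of truncated X}).

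The key device is the decomposition $\{S\in[y,y+c]\}=\mathcal G\cup\mathcal B$, where $\mathcal G\delequal\{S\in[y,y+c]\}\cap\bigcup_{j\in[N]}\big(\{l_j(l)\ge L\}\cap\{A_j<y\}\big)$ and $\mathcal B$ is the remainder. I would handle $\mathcal G$ first, as it is the cleaner half. On the $j$-th set in the union we have $S=A_j+\big(\xi^{[n]}_j(l)\big)^+$ with $A_j<y$, so $\{S\in[y,y+c]\}$ equals $\{\big(\xi^{[n]}_j(l)\big)^+\in[y-A_j,\,y-A_j+c]\}$, an interval of width $c$ whose left endpoint $y-A_j$ is strictly positive, hence equals $\{\xi^{[n]}_j(l)\in[y-A_j,\,y-A_j+c]\}$. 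Conditioning on all stick lengths and on $\{\xi^{[n]}_i(l)\}_{i\ne j}$ makes $A_j$ and $l_j(l)$ measurable, so input (i), together with $l_j(l)\ge L$ and $L<1$ (so $l_j(l)^\lambda\wedge1\ge L^\lambda$), bounds the conditional probability by $Cc/L^\lambda$. A union bound over $j\in[N]$ then gives $\P(\mathcal G)\le NCc/L^\lambda = C\big(m+\ceil{\log_2(n^d)}\big)n^{\alpha_4\lambda}\delta^{-\alpha_3\lambda}c$, which is exactly the first asserted term.

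On $\mathcal B$, every heavy stick $j\in[N]$ satisfies $A_j\ge y$, hence $\big(\xi^{[n]}_j(l)\big)^+=S-A_j\le S-y\le c$; since there are at most $N$ heavy sticks among $[N]$, the light part $S_{\mathrm{light}}\delequal\sum_{j\in[N],\,\text{light}}\big(\xi^{[n]}_j(l)\big)^+=S-\sum_{j\in[N],\,\text{heavy}}\big(\xi^{[n]}_j(l)\big)^+$ obeys $S_{\mathrm{light}}\ge y-Nc\ge y_0-Nc$. Each light stick has $l_j(l)<L<1$, so input (ii) yields $\E\big[\mathbf{I}(l_j(l)<L)(\xi^{[n]}_j(l))^+\big]\le 2C_X\sqrt L$ and thus $\E S_{\mathrm{light}}\le 2C_X N\sqrt L$. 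Provided $Nc\le y_0/2$, Markov's inequality gives $\P(\mathcal B)\le\P(S_{\mathrm{light}}\ge y_0/2)\le 4C_XN\sqrt L/y_0$; using $\sqrt L=\delta^{\alpha_3/2}n^{-\alpha_4/2}$ and the elementary inequality $N=m+\ceil{\log_2(n^d)}\le m^2+\ceil{\log_2(n^d)}^2$ (valid since $m$ and $\ceil{\log_2(n^d)}$ are non-negative integers), this is at most the second asserted term. Adding $\P(\mathcal G)$ and $\P(\mathcal B)$ then proves the lemma whenever $Nc\le y_0/2$; in the complementary regime $Nc>y_0/2$ the inequality is essentially vacuous — after enlarging $C$ if necessary, the product of the two right-hand terms exceeds $1$ (this is where $\lambda\ge1/2$, which holds in all cases covered by Theorems~\ref{ CorollaryRVlevyMeasureAtOrigin } and \ref{ CorollarySemiStableLevyMeasureAtOrigin }, is used), so the right-hand side exceeds $1$ and dominates the probability trivially.

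The conceptual obstacle, and the reason for the case split, is the atom of $(\cdot)^+$ at the origin: the continuity estimate of Assumption~\ref{assumption: holder continuity strengthened on X < z t} can only be exploited on the event that the selected heavy stick's increment is the ``decisive'' one pushing the running sum past $y$ (equivalently $A_j<y$), so that the target interval avoids $0$; when no such stick exists, the mass of $S$ near $y$ must be supplied entirely by the light increments, which is improbable because each lives on a time interval shorter than $L$. The remaining work is bookkeeping: balancing the heavy-stick density bound (which fixes the optimal threshold $L$, hence the powers $\delta^{\alpha_3}$ and $n^{\alpha_4}$) against the light-stick Markov bound, verifying the conditional-independence structure of the $\xi^{[n]}_j(l)$ implied by the construction in \eqref{def general xi n m l, 3}, and tracking the constant $C$ through the $Nc>y_0/2$ case. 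I expect this reconciliation of the estimates across all ranges of $c$ and $y_0$ to be the most delicate part.
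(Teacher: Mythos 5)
Your template---pick a stick long enough that Assumption~\ref{assumption: holder continuity strengthened on X < z t} gives a density bound, control the short-stick residual by Markov via Lemma~\ref{lemma: algo, bound supremum of truncated X}---is the same as the paper's, but your decomposition differs in a way that opens a gap. The paper conditions on the order statistics of the stick lengths, fixes the threshold $\eta = \delta^{m\alpha_3}/n^{\alpha_4}$, and splits on the index of the \emph{first} strictly positive increment in the sorted order: if this first positive increment belongs to one of the $J$ long sticks, that single increment absorbs the density bound; otherwise \emph{every} long increment is $\leq 0$, hence contributes exactly zero to $\sum(\cdot)^+$, so the residual event is $\{\sum_{j>J}(\Xi_n^{(j)}(t_j))^+\in[y,y+c]\}$ and Markov applies directly at level $y_0$, with no compatibility constraint linking $Nc$ and $y_0$. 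Your partition instead designates a heavy stick ``decisive'' when $A_j<y$; on the complement $\mathcal B$ the heavy sticks may still carry positive increments (each you can only bound above by $c$), which leaves $S_{\mathrm{light}}\geq y_0 - Nc$ and forces the split on whether $Nc\leq y_0/2$.

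The branch $Nc>y_0/2$ is where the argument fails as written. Establishing that the right-hand side exceeds $1$ there requires both $\lambda\geq1/2$ and $2CC_X\geq1$, and neither is supplied by the lemma's hypotheses: Assumption~\ref{assumption: holder continuity strengthened on X < z t} only provides \emph{some} $\lambda>0$, and the fact that $\lambda\geq1/2$ under the sufficient conditions of Theorems~\ref{ CorollaryRVlevyMeasureAtOrigin } and \ref{ CorollarySemiStableLevyMeasureAtOrigin } cannot be invoked inside a lemma stated purely under Assumption~\ref{assumption: holder continuity strengthened on X < z t}. Nor can you simply ``enlarge $C$'': the constant on the right-hand side is the specific one of Assumption~\ref{assumption: holder continuity strengthened on X < z t}, and the whole parameter chain $\alpha_3,\alpha_4,\alpha_2,\alpha_1,\delta,\rho_1,\rho_0,\bar m$ in \eqref{proofChooseAlpha_34}--\eqref{proof, choose bar m and rho 0} is anchored to the chosen $\lambda$, so a post hoc replacement would force you to re-verify that chain. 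Both enlargements can in principle be made WLOG by rebuilding the argument from the choice of $(\lambda,C)$, so the gap is repairable, but it is a genuine gap in what you wrote, and the paper's first-positive-increment decomposition avoids the need for any such case split. (One small mismatch to flag: your threshold $L=\delta^{\alpha_3}/n^{\alpha_4}$ matches the lemma as typeset, but the paper's proof and its downstream use in the proof of \eqref{proof, goal 5, proposition: hat Y m n condition 1} both take $\eta=\delta^{m\alpha_3}/n^{\alpha_4}$, carrying the extra $m$ in the $\delta$-exponents; the lemma's display appears to have dropped that $m$, and your proof would need it to match what is actually consumed later.)
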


\begin{proof}
\linksinpf{lemma: algo, bound term 4}
To simplify notations, in this proof we set $k(n) = \ceil{\log_2(n^d)}$ and write $l_j = l_j(l)$ when there is no ambiguity.
For the sequence of random variables $(l_1,\cdots,l_{m + k(n)})$,
let $\Tilde{l}_1 \geq \Tilde{l}_2 \geq \cdots \geq \Tilde{l}_{m + k(n)}$ be its order statistics.
Given any ordered positive real sequence $t_1 \geq t_2 \geq \cdots \geq t_{m + k(n)} > 0$, by conditioning on $\Tilde{l}_j = t_j\ \forall j \in [m + k(n)]$,
it follows from \eqref{def general xi n m l, 3} that
\begin{align}
    \P\Big( \sum_{j = 1}^{m + k(n)}\big(\xi^{[n]}_j(l)\big)^+ \in [y,y+c]\ \Big|\ \Tilde{l}_j = t_j\ \forall j \in [m + k(n)]\Big)
    =
    \P\Big( \sum_{j = 1}^{m + k(n)} \big(\Xi_{n}^{(j)}(t_j)\big)^+ \in [y,y+c] \Big)
    \label{proof, eq 1, lemma: algo, bound term 4}
\end{align}
where $\Xi_{n}^{(j)}$'s are iid copies of the L\'evy processes $\Xi_n = X^{<n\gamma}$.
Next, fix
\begin{align*}
    \eta = \delta^{m\alpha_3}/n^{\alpha_4}.
\end{align*}
Given the sequence of real numbers $t_j$'s, we define 
$
J \delequal \#\{ j \in [m + k(n)]:\ t_j > \eta \}
$
as the number of elements in the sequence that are larger than $\eta$.
In case that $t_1 \leq \eta$, we set $J = 0$.
With $J$ defined,
we consider a decomposition of events in \eqref{proof, eq 1, lemma: algo, bound term 4}
based on the first $j \in [m + k(n)]$ such that $\Xi_{n}^{(j)}(t_j) > 0$ (and hence $\big(\Xi_{n}^{(j)}(t_j)\big)^+ > 0$),
especially if such $t_j$ is larger than $\eta$ or not.
To be specific,
\begin{equation}
\label{proof, decomp, lemma: algo, bound term 4}
    \begin{split}
    & \P\Big( \sum_{j = 1}^{m + k(n)} \big(\Xi_{n}^{(j)}(t_j)\big)^+ \in [y,y+c] \Big)
    \\
    & = 
    \sum_{j = 1}^J \underbrace{ \P\bigg( \Xi_{n}^{(i)}(t_i) \leq 0\ \forall i \in [j-1];\  \Xi_{n}^{(j)}(t_j) > 0;\ 
    \sum_{i = j}^{m + k(n)}\big(\Xi_{n}^{(i)}(t_i)\big)^+  \in [y,y+c]
    \bigg) }_{ \delequal p_j }
    \\& + \underbrace{ 
    \P\bigg( 
    \Xi_{n}^{(i)}(t_i) \leq 0\ \forall i \in [J];
    \sum_{j = J + 1}^{m + k(n)}\big(\Xi_{n}^{(j)}(t_j)\big)^+  \in [y,y+c]
    \bigg)
    }_{\delequal p_*}.
    \end{split}
\end{equation}
We first bound terms $p_j$'s.
For any $j \in [J]$, observe that
\begin{align}
    p_j & \leq 
    \P\bigg( \Xi_{n}^{(j)}(t_j) > 0;\ 
    \sum_{i = j}^{m + k(n)}\big(\Xi_{n}^{(i)}(t_i)\big)^+  \in [y,y+c]
    \bigg)
    \nonumber
    \\
    & = \int_\R
    \P\bigg( \Xi_{n}^{(j)}(t_j) \in [y - x, y - x + c] \cap (0,\infty)  \bigg)
    \P\bigg( \sum_{i = j+ 1}^{m + k(n)}\big(\Xi_{n}^{(i)}(t_i)\big)^+ \in dx \bigg)
    \nonumber
    \\
    & \leq \frac{Cc}{ t_j^\lambda \wedge 1 }
    \qquad \text{ by Assumption~\ref{assumption: holder continuity strengthened on X < z t}}
    \nonumber
    \\
    & \leq \frac{C n^{\alpha_4 \lambda}}{ \delta^{m\alpha_3 \lambda}  }\cdot c
    \qquad\text{ due to $j \leq J$, and hence $t_j > \eta = \delta^{m\alpha_3}/n^{\alpha_4}$}.
    \label{proof, bound term pj, lemma: algo, bound term 4}
\end{align}
On the other hand,
\begin{align}
    p_* & \leq 
     \P\bigg( 
    \sum_{j = J + 1}^{m + k(n)}\big(\Xi_{n}^{(j)}(t_j)\big)^+  \in [y,y+c]
    \bigg)
    \leq 
    \P\bigg( 
    \sum_{j = J + 1}^{m + k(n)}\big(\Xi_{n}^{(j)}(t_j)\big)^+  \geq y_0
    \bigg)
    \qquad \text{ due to }y \geq y_0 > 0
    \nonumber
    \\
    & \leq
    \sum_{j = J + 1}^{m + k(n)}\P\Big( 
   \Xi_{n}^{(j)}(t_j) \geq y_0/N
    \Big)
    \qquad \text{ with }N \delequal m + k(n) - J
    \nonumber
    \\
    & \leq 
     \sum_{j = J + 1}^{m + k(n)}\frac{ C_X(\sqrt{t_j} + t_j) \cdot N }{y_0}
     \qquad \text{ by Lemma~\ref{lemma: algo, bound supremum of truncated X}}
\nonumber
     \\
     & \leq 
     \sum_{j = J + 1}^{m + k(n)}\frac{ C_X(\sqrt{\eta} + \eta) \cdot N }{y_0}
     \qquad \text{due to }j > J\text{, and hence }t_j \leq \eta = \delta^{m\alpha_3}/n^{\alpha_4}
     \nonumber
     \\
     & = 
     N^2 \cdot \frac{ C_X(\sqrt{\eta} + \eta) }{y_0}
     \leq 
     \big(m + k(n)\big)^2\cdot \frac{ C_X(\sqrt{\eta} + \eta) }{y_0}
     \qquad \text{ due to }N \leq m+k(n)
     \nonumber
     \\
     & \leq 
     2C_X\big(m^2 + (\ceil{\log_2(n^d)})^2\big)\frac{\sqrt{\eta} + \eta}{y_0}
     \qquad \text{ using }(u+v)^2\leq 2(u^2 + v^2)
     \nonumber
     \\
     & \leq 
     4C_X\big(m^2 + (\ceil{\log_2(n^d)})^2\big)\frac{\sqrt{\eta}}{y_0}
     =
     4C_X\big(m^2 + (\ceil{\log_2(n^d)})^2\big)\frac{ \delta^{m\alpha_3/2} }{y_0 \cdot  n^{\alpha_4/2} }.
     \label{proof, bound term p*, lemma: algo, bound term 4}
\end{align}
Plugging \eqref{proof, bound term pj, lemma: algo, bound term 4} and \eqref{proof, bound term p*, lemma: algo, bound term 4} into \eqref{proof, decomp, lemma: algo, bound term 4}, we yield
\begin{align*}
    & \P\Big( \sum_{j = 1}^{m + k(n)} \big(\xi^{[n]}_j(l)\big)^+ \in [y,y + c ]\ \Big|\ \Tilde{l}_j = t_j\ \forall j \in [m + k(n)]\Big) 
    \\
    & \leq 
    J \cdot \frac{C n^{\alpha_4 \lambda}}{ \delta^{m\alpha_3 \lambda}  }c
    +
    4C_X\big(m^2 + (\ceil{\log_2(n^d)})^2\big)\frac{ \delta^{m\alpha_3/2} }{y_0 \cdot  n^{\alpha_4/2} }
    \\ 
    & 
    \leq 
    C\frac{ (m + (\ceil{\log_2(n^d)}) n^{\alpha_4 \lambda}}{ \delta^{m\alpha_3 \lambda}  }c
    +
    4C_X\big(m^2 + (\ceil{\log_2(n^d)})^2\big)\frac{ m\delta^{\alpha_3/2} }{y_0 \cdot  n^{\alpha_4/2} }
    \qquad 
    \text{ due to }J \leq m + \ceil{ \log_2(n^d) }.
\end{align*}
To conclude the proof, just note that the inequality above holds when conditioning on any sequence of $t_1 \geq t_2 \geq \cdots \geq t_{m + k(n)} > 0$, so it would also hold unconditionally.
\end{proof}

Now, we are ready to prove Propositions~\ref{proposition, intermediate 1, strong efficiency ARA} and \ref{proposition, intermediate 2, strong efficiency ARA}.

\begin{proof}[Proof of Proposition~\ref{proposition, intermediate 1, strong efficiency ARA}]
\linksinpf{proposition, intermediate 1, strong efficiency ARA}
   In this proof, we fix some $k \in \mathbb N$, $n \geq 1$, and $m \in \mathbb N$.
Let the process $\zeta_k$ be defined as in \eqref{proof: def zeta k}.
Recall the definitions of $W^{(i),*}_n(\zeta_k)$, $\hat W^{(i),m}_n(\zeta_k)$, and $\widetilde  W^{(i),m}_n(\zeta_k)$
in \eqref{proof,def: W i * n}--\eqref{proof, def: tilde W i m n}. 
See also \eqref{defStickLength1}--\eqref{def xi i m j, ARA plus SBA}
for the definitions $\xi^{(i)}_j$'s and $\xi^{(i),m}_j$'s.

To simplify notations, define $t(n) = \ceil{\log_2(n^d)}$.
Define events
\begin{align*}
    E_{1}^{(i)}(x) & \delequal \bigg\{ \Big|\sum_{q = 1}^{i - 1}\sum_{j \geq 0}\xi^{(q)}_j - \sum_{q = 1}^{i - 1}\sum_{j \geq 0}\xi^{(q),m}_j\Big| \leq \frac{x}{2} \bigg\},
    \\ 
    E_{2}^{(i)}(x) & \delequal \bigg\{
    \Big|\sum_{j = 1}^{m + t(n)}(\xi^{(i)}_j)^+ - \sum_{j = 1}^{m + t(n)}(\xi^{(i),m}_j)^+\Big| \leq \frac{x}{2}
    \bigg\},
    \\
    E_{3}^{(i)}(x) & \delequal \bigg\{
    \sum_{j \geq m + t(n) + 1}(\xi^{(i)}_j)^+ \leq x
    \bigg\}.
\end{align*}
Note that on event $E_{1}^{(i)}(x) \cap E_{2}^{(i)}(x) \cap E_{3}^{(i)}(x)$,
we must have 
$
| W^{(i),*}_n(\zeta_k) - \widetilde  W^{(i),m}_n(\zeta_k)| \leq x
$
and
$
|\hat W^{(i),m}_n(\zeta_k) - \widetilde  W^{(i),m}_n(\zeta_k)| \leq x.
$
As a result,
\begin{align*}
    \P\bigg( \Big| W^{(i),*}_n(\zeta_k) - \widetilde  W^{(i),m}_n(\zeta_k) \Big| \vee 
        \Big|  \hat W^{(i),m}_n(\zeta_k) - \widetilde  W^{(i),m}_n(\zeta_k) \Big|
        > 
        x
        \bigg)
        \leq \sum_{q = 1}^3 \P\Big(\big(E^{(i)}_q(x)\big)^\complement\Big).
\end{align*}
Furthermore, we claim the existence of constant $(\widetilde C_q)_{q = 1,2,3}$, the values of which do not depend on $x,k,n$, and $m$,
such that (for any $x > 0$ and $i \in [k+1]$)
\begin{align}
    \P\Big(\big(E_{1}^{(i)}(x)\big)^c\Big)
    =
    \P\bigg(\Big|\sum_{q = 1}^{i - 1}\sum_{j \geq 0}\xi^{(q)}_j - \sum_{q = 1}^{i - 1}\sum_{j \geq 0}\xi^{(q),m}_j\Big| > 
    \frac{x}{2}
    \bigg) & \leq \frac{\widetilde C_1\kappa^{m(2-\beta_+)}}{x^2 n^{r(2 - \beta_+) - 1} },
    \label{proof, goal 1, proposition: hat Y m n condition 1}
    \\ 
    \P\Big(\big(E_{2}^{(i)}(x)\big)^c\Big)
    =
    \P\bigg(
    \Big|\sum_{j = 1}^{m + t(n)}(\xi^{(i)}_j)^+ - \sum_{j = 1}^{m + t(n)}(\xi^{(i),m}_j)^+\Big| > \frac{x}{2}
    \bigg)
     & \leq \frac{\widetilde C_2\kappa^{m(2-\beta_+)}}{x^2 n^{r(2 - \beta_+) - 1} }, 
     \label{proof, goal 2, proposition: hat Y m n condition 1}
     \\ 
     \P\Big(\big(E_{3}^{(i)}(x)\big)^c\Big)
    =
    \P\bigg(
    \sum_{j \geq m + t(n) + 1}(\xi^{(i)}_j)^+ > x
    \bigg)
    & \leq \frac{\widetilde C_3}{x}\sqrt{\frac{1}{n^{d-1} \cdot 2^m}}.
    \label{proof, goal 3, proposition: hat Y m n condition 1}
\end{align}
This allows us to conclude the proof by setting $C_1 = \sum_{q = 1}^3 \widetilde C_q$.
Now, it remains to prove claims \eqref{proof, goal 1, proposition: hat Y m n condition 1}--\eqref{proof, goal 3, proposition: hat Y m n condition 1}.

\medskip
\noindent\textbf{Proof of Claim }\eqref{proof, goal 1, proposition: hat Y m n condition 1}

The claim is trivial if $i \leq 1$, so we only consider the case where $i \geq 2$. 
Due to the coupling between $\xi^{(i)}_j$ and $\xi^{(i),m}_j$ in \eqref{def xi i m j, ARA plus SBA}\eqref{coupling, ARA plus SBA}, we have
\begin{align*}
    \Big(\sum_{q = 1}^{i - 1}\sum_{j \geq 0}\xi^{(q)}_j,\sum_{q = 1}^{i - 1}\sum_{j \geq 0}\xi^{(q),m}_j\Big)
    \distequal
    \big(\Xi_n(u_{i-1}),\breve \Xi^m_n(u_{i-1})\big)
\end{align*}
where the laws of processes $\Xi_n,\breve \Xi^m_n$ are stated in \eqref{decomp, Xi n} and \eqref{def breve Xi n m, ARA}, respectively.
Applying Lemma \ref{lemma: algo, bound term 1}, we yield
\begin{align*}
    \P\bigg(\bigg|\sum_{q = 1}^{i - 1}\sum_{j \geq 0}\xi^{(q)}_j - \sum_{q = 1}^{i - 1}\sum_{j \geq 0}\xi^{(q),m}_j\bigg| > \frac{x}{2} \bigg)
    & 
    \leq \P\bigg( \sup_{t \in [0,n]}|\Xi_n(t) - \breve \Xi^m_n(t)  | >\frac{x}{2}\bigg)
    \leq 
    \frac{4C\kappa^{m(2-\beta_+)}}{x^2 n^{r(2 - \beta_+) - 1} },
\end{align*}
where $C < \infty$ is the constant characterized in Lemma~\ref{lemma: algo, bound term 1} that only depends on $\beta_+$ and the law of the L\'evy process $X$.
To conclude the proof of claim \eqref{proof, goal 1, proposition: hat Y m n condition 1}, we pick $\widetilde C_1 = 4C$.

\medskip
\noindent\textbf{Proof of Claim }\eqref{proof, goal 2, proposition: hat Y m n condition 1}

It follows directly from Lemma~\ref{lemma: algo, bound term 2} that
\begin{align*}
    & \P\bigg(
      \bigg|\sum_{j = 1}^{m + t(n)}(\xi^{(i)}_j)^+ - \sum_{j = 1}^{m + t(n)}(\xi^{(i),m}_j)^+\bigg|  > \frac{x}{2}
     \bigg)
     \leq 
     \frac{4C\kappa^{m(2-\beta_+)}}{x^2 n^{r(2 - \beta_+) - 1} },
\end{align*}
where $C < \infty$ is the constant characterized in Lemma~\ref{lemma: algo, bound term 2} that only depends on $\beta_+$ and the law of the L\'evy process $X$.
To conclude the proof of claim \eqref{proof, goal 2, proposition: hat Y m n condition 1}, we pick $\widetilde C_2 = 4C$.

\medskip
\noindent\textbf{Proof of Claim }\eqref{proof, goal 3, proposition: hat Y m n condition 1}

Using Lemma \ref{lemma: algo, bound term 3},
\begin{align*}
    \P\bigg(
     \sum_{j \geq m + t(n) + 1}(\xi^{(i)}_j)^+ > x \bigg)
     & 
     \leq 
     \frac{2C_X}{x}  \cdot\sqrt{\frac{1}{n^{d-1} \cdot 2^m}}
\end{align*}
where $C_X < \infty$ is the constant characterized in Lemma \ref{lemma: algo, bound term 3}  that only depends on the law of the L\'evy process $X$.
By setting $\widetilde C_3 = 2C_X$, we conclude the proof of claim \eqref{proof, goal 3, proposition: hat Y m n condition 1}.
\end{proof}

\begin{proof}[Proof of Proposition~\ref{proposition, intermediate 2, strong efficiency ARA}]
\linksinpf{proposition, intermediate 2, strong efficiency ARA}
In this proof, we fix some $k \in \mathbb N$.
Recall the representation 
$
\zeta_k(t) = \sum_{i = 1}^k z_i \mathbf{I}_{[u_i,n]}(t)
$
in \eqref{proof: def zeta k}
where $0 < u_1 < u_2 < \ldots < u_k < n$
are the order statistics of $k$ iid samples of Unif$(0,n)$.
Recall the definition of $\widetilde  W^{(i),m}_n(\zeta_k)$
in \eqref{proof, def: tilde W i m n}. 
See also \eqref{defStickLength1}--\eqref{def xi i m j, ARA plus SBA}
for the definitions $\xi^{(i)}_j$'s and $\xi^{(i),m}_j$'s.

We start with the following decomposition of events:
\begin{align*}
    & \P\bigg(\exists i \in [k+1]\ s.t.\
            \widetilde W^{(i),m}_n(\zeta_k) \in \bigg[ y - \frac{\delta^m}{\sqrt{n}},  y + \frac{\delta^m}{\sqrt{n}}  \bigg]
        \bigg)
        \\ 
        & \leq \P(u_1 < n\delta^{m\alpha_1})
        +
        \P\bigg(\exists i \in [k+1]\ s.t.\
            \widetilde W^{(i),m}_n(\zeta_k) \in \bigg[ y - \frac{\delta^m}{\sqrt{n}},  y + \frac{\delta^m}{\sqrt{n}}  \bigg],
            \ u_1 \geq n\delta^{m\alpha_1}
        \bigg).
\end{align*}
First, 
$
\P(u_1 < n\delta^{m\alpha_1}) \leq k \cdot \P(\text{Unif}(0,n) < n\delta^{m\alpha_1}) = k \cdot \delta^{m\alpha_1} < k \cdot \rho^m_0.
$
The last inequality follows from our choice of $\rho_1$ in \eqref{proofChooseRhoTimeBound} and $\rho_0 \in (\rho_1,1)$.
Furthermore, for each $i \in [k+1]$
\begin{align*}
     & \P\bigg(\widetilde W^{(i),m}_n(\zeta_k) \in [y - \frac{\delta^m}{\sqrt{n}},y+ \frac{\delta^m}{\sqrt{n}}],\ u_1 \geq n\delta^{m\alpha_1}\bigg)
\\
     & = \P\bigg(\sum_{q = 1}^{i - 1}\sum_{j \geq 0}\xi^{(q)}_j + \sum_{q = 1}^{i-1}z_q \in [ y - \delta^{m\alpha_2}, y + \delta^{m\alpha_2} ],\ \widetilde W^{(i),m}_n(\zeta_k) \in [y - \frac{\delta^m}{\sqrt{n}},y+ \frac{\delta^m}{\sqrt{n}}],\ u_1 \geq n\delta^{m\alpha_1}\bigg)
     \\ 
     &\qquad + 
     \P\bigg(\sum_{q = 1}^{i - 1}\sum_{j \geq 0}\xi^{(q)}_j + \sum_{q = 1}^{i-1}z_q \notin [ y - \delta^{m\alpha_2}, y + \delta^{m\alpha_2} ],\ \widetilde W^{(i),m}_n(\zeta_k) \in [y - \frac{\delta^m}{\sqrt{n}},y+ \frac{\delta^m}{\sqrt{n}}],\ u_1 \geq n\delta^{m\alpha_1}\bigg)
     \\ 
     & \leq 
     \P\bigg(\sum_{q = 1}^{i - 1}\sum_{j \geq 0}\xi^{(q)}_j + \sum_{q = 1}^{i-1}z_q \in [ y - \delta^{m\alpha_2}, y + \delta^{m\alpha_2} ],\ u_1 \geq n\delta^{m\alpha_1}\bigg)
     \\ 
     &\qquad + 
     \P\bigg(\sum_{q = 1}^{i - 1}\sum_{j \geq 0}\xi^{(q)}_j + \sum_{q = 1}^{i-1}z_q \notin [ y - \delta^{m\alpha_2}, y + \delta^{m\alpha_2} ],\ \widetilde W^{(i),m}_n \in [y - \frac{\delta^m}{\sqrt{n}},y+ \frac{\delta^m}{\sqrt{n}}]\bigg)
     \\ 
     & \leq \P\Big( \sum_{q = 1}^{i - 1}\sum_{j \geq 0}\xi^{(q)}_j + \sum_{q = 1}^{i-1}z_q \in [ y - \delta^{m\alpha_2}, y + \delta^{m\alpha_2} ],\ u_1 \geq n\delta^{m\alpha_1} \Big)
     \\ 
     &\qquad +
     \int_{ \R \symbol{92} [ y - \delta^{m\alpha_2}, y + \delta^{m\alpha_2} ]  }\P\bigg(\sum_{j = 1}^{m+t(n)}(\xi^{(i)}_j)^+ \in [y - x - \frac{\delta^m}{\sqrt{n}},y - x+ \frac{\delta^m}{\sqrt{n}}]\bigg)
     \P\Big( \sum_{q = 1}^{i - 1}\sum_{j \geq 0}\xi^{(q)}_j + \sum_{q = 1}^{i-1}z_q \in dx  \Big)
     \\ 
     & = \P\Big( \sum_{q = 1}^{i - 1}\sum_{j \geq 0}\xi^{(q)}_j + \sum_{q = 1}^{i-1}z_q \in [ y - \delta^{m\alpha_2}, y + \delta^{m\alpha_2} ],\ u_1 \geq n\delta^{m\alpha_1}\Big)
     \\ 
     &\qquad +
     \int_{ (-\infty,  y - \delta^{m\alpha_2} ]  }\P\bigg(\sum_{j = 1}^{m+t(n)}(\xi^{(i)}_j)^+ \in [y - x - \frac{\delta^m}{\sqrt{n}},y - x+ \frac{\delta^m}{\sqrt{n}}]\bigg)
     \P\Big( \sum_{q = 1}^{i - 1}\sum_{j \geq 0}\xi^{(q)}_j + \sum_{q = 1}^{i-1}z_q \in dx  \Big).
\end{align*}
The last equality follows from the simple fact that $\sum_{j \geq 1}(\xi^{(i)}_j)^+ \geq 0$.
Furthermore, we claim the existence of constants $\widetilde C_1$ and $\widetilde C_2$, the values of which do not vary with parameters $n,m,k,y,i$, such that for all $n \geq 1$ and $m \geq \bar m$,
\begin{align}
        \P\Big( \sum_{q = 1}^{i - 1}\sum_{j \geq 0}\xi^{(q)}_j + \sum_{q = 1}^{i-1}z_q \in [ y - \delta^{m\alpha_2}, y + \delta^{m\alpha_2} ],\ u_1 \geq n\delta^{m\alpha_1} \Big)
    & \leq \widetilde C_1 \rho_0^m\qquad \forall y > \delta^{m\alpha_2},
    \label{proof, goal 4, proposition: hat Y m n condition 1}
    \\ 
    \P\bigg(\sum_{j = 1}^{m+t(n)}(\xi^{(i)}_j)^+ \in [w,w +\frac{2\delta^m}{\sqrt{n}}]\bigg)
    & \leq \widetilde C_2 \rho_0^m\qquad \forall w \geq \delta^{m\alpha_2} - \frac{\delta^m}{\sqrt{n}}.
    \label{proof, goal 5, proposition: hat Y m n condition 1}
\end{align}
Then, we conclude the proof by setting $C_2 = 1 + \widetilde C_1 + \widetilde C_2$.
Now, we prove claims \eqref{proof, goal 4, proposition: hat Y m n condition 1} and \eqref{proof, goal 5, proposition: hat Y m n condition 1}

\medskip
\noindent\textbf{Proof of Claim }\eqref{proof, goal 4, proposition: hat Y m n condition 1}

If $i \leq 1$, the claim is trivial due to $y > \delta^{m\alpha}$ and hence $0 \notin [y-\delta^{m\alpha_2},y + \delta^{m\alpha_2}]$.
Now, we consider the case where $i \geq 2$.
Due to the independence between $z_i$ and $\xi^{(i)}_j$,
\begin{align*}
&  \P\Big( \sum_{q = 1}^{i - 1}\sum_{j \geq 0}\xi^{(q)}_j + \sum_{q = 1}^{i-1}z_q \in [ y - \delta^{m\alpha_2}, y + \delta^{m\alpha_2} ],\ u_1 \geq n\delta^{m\alpha_1} \Big)
\\ 
& =
\int_{\R} \P\Big(
 \sum_{q = 1}^{i - 1}\sum_{j \geq 0}\xi^{(q)}_j  \in [ y - x - \delta^{m\alpha_2}, y - x + \delta^{m\alpha_2} ],\ u_1 \geq n\delta^{m\alpha_1}
\Big) \P(\sum_{q = 1}^{i-1}z_q \in dx)
\\ 
& \leq 
\int_{\R} \P\Big(
 \sum_{q = 1}^{i - 1}\sum_{j \geq 0}\xi^{(q)}_j  \in [ y - x - \delta^{m\alpha_2}, y - x + \delta^{m\alpha_2} ]\ \Big|\ u_1 \geq n\delta^{m\alpha_1}
\Big) \P(\sum_{q = 1}^{i-1}z_q \in dx)
\\ 
& = 
\int_{\R} \P\Big( X^{<n\gamma}(u_{i-1}) \in [ y - x - \delta^{m\alpha_2}, y - x + \delta^{m\alpha_2} ]\ \Big|\ u_1 \geq n\delta^{m\alpha_1}
\Big) \P(\sum_{q = 1}^{i-1}z_q \in dx)
\end{align*}
where $(u_i)_{i = 1}^k$ are independent of the L\'evy process $X^{<n\gamma}$.
In particular, recall that $0 = u_0 < u_1 < u_2 < \ldots < u_k < n$ are order statistics.
Therefore, on event $\{u_1 \geq n\delta^{m\alpha_1}\}$ we must have $u_{i-1} \geq u_1 \geq n\delta^{m\alpha_1}$.
It then follows directly from Assumption~\ref{assumption: holder continuity strengthened on X < z t} that
\begin{align*}
    & \P\Big( X^{<n\gamma}(u_{i-1}) \in [ y - x - \delta^{m\alpha_2}, y - x + \delta^{m\alpha_2} ]\ \Big|\ u_1 \geq n\delta^{m\alpha_1}
\Big)
\\
& \leq \frac{C}{ (n^\lambda\delta^{m\alpha_1\lambda})\wedge 1 }\cdot 2\delta^{m\alpha_2}
\leq 
2 C \cdot \bigg( \frac{\delta^{ \alpha_2}}{ \delta^{\lambda \alpha_1} } \bigg)^m
\leq 
2 C \cdot \rho_0^m\qquad \text{due to \eqref{proofChooseRhoByAlpha_12} and }\rho_0 \in (\rho_1,1),
\end{align*}
where $C$ and $\lambda$ are the constants specified in Assumption~\ref{assumption: holder continuity strengthened on X < z t}.
To conclude, it suffices to set $\widetilde C_1 = 2C$.


\medskip
\noindent\textbf{Proof of Claim }\eqref{proof, goal 5, proposition: hat Y m n condition 1}

Applying Lemma \ref{lemma: algo, bound term 4} with $y_0 = \delta^{m\alpha_2} - \frac{\delta^m}{\sqrt{n}}$ and $c = \frac{2\delta^m}{\sqrt{n}}$,
we get (for all $n \geq 1, m \geq \bar m, y \geq y_0$)
\begin{align*}
    & \P\bigg(\sum_{j = 1}^{m+t(n)}(\xi^{(i)}_j)^+ \in [y,y +\frac{2\delta^m}{\sqrt{n}}]\bigg)
    \\ 
    & \leq 
    C\frac{ (m + (\ceil{\log_2(n^d)}) n^{\alpha_4 \lambda}}{ \delta^{m\alpha_3 \lambda}  }
    \cdot \frac{ 2 \delta^{m} }{\sqrt{n}}
    +
    4C_X\big(m^2 + (\ceil{\log_2(n^d)})^2\big)\frac{ \delta^{m\alpha_3/2} }{(\delta^{m\alpha_2} - \frac{\delta^m}{\sqrt{n}}) \cdot  n^{\alpha_4/2} }
    \\ 
    & \leq
    C\frac{ (m + (\ceil{\log_2(n^d)}) n^{\alpha_4 \lambda}}{ \delta^{m\alpha_3 \lambda}  }
    \cdot \frac{ 2 \delta^{m} }{\sqrt{n}}
    +
     8C_X\big(m^2 + (\ceil{\log_2(n^d)})^2\big)
     \frac{ \delta^{m\alpha_3/2} }{{\delta^{m\alpha_2}} \cdot  n^{\alpha_4/2} }
     \qquad \text{due to \eqref{proofChooseMbar}}
     \\ 
     & = 
     \underbrace{ 2 C \cdot \frac{ m }{n^{ \frac{1}{2} - \lambda\alpha_4}} \cdot \bigg( \frac{\delta}{\delta^{\lambda\alpha_3}} \bigg)^m }_{ \delequal p_{n,m,1} }
     + 
      \underbrace{ 2 C \cdot \frac{  \ceil{ \log_2(n^d) }  }{n^{ \frac{1}{2} - \lambda\alpha_4}} \cdot \bigg( \frac{\delta}{\delta^{\lambda\alpha_3}} \bigg)^m }_{ \delequal p_{n,m,2} }
      \\ 
      & \qquad 
      + 
      \underbrace{
      8C_X  \cdot \frac{m^2}{ n^{{\alpha_4}/{2} } } \cdot \bigg( \frac{ \delta^{\alpha_3/2} }{ \delta^{\alpha_2} }\bigg)^m
      }_{\delequal p_{n,m,3}}
      + 
      \underbrace{
      8C_X  \cdot \frac{ \big( \ceil{ \log_2(n^d) } \big)^2  }{ n^{{\alpha_4}/{2} } } \cdot \bigg( \frac{ \delta^{\alpha_3/2} }{ \delta^{\alpha_2} }\bigg)^m.
      }_{\delequal p_{n,m,4}}
\end{align*}
Here, $C_X < \infty$ is the constant in Lemma \ref{lemma: algo, bound supremum of truncated X} that only  depends on the law of L\'evy process $X$,
and 
$C \in (0,\infty),\lambda > 0$ are the constants in Assumption~\ref{assumption: holder continuity strengthened on X < z t}.
First, for any $n \geq 1$ and $m \geq \bar m$,
\begin{align*}
    p_{n,m,1} & \leq 2 C \cdot m \cdot \bigg( \frac{\delta}{\delta^{\lambda\alpha_3}} \bigg)^m
    \qquad \text{due to $\frac{1}{2} > \lambda\alpha_4$; see \eqref{proofChooseAlpha_34}}
    \\ 
    & \leq 2 C \cdot m \rho_1^m
    \qquad \text{due to \eqref{proofChooseRhoSBALongStick}}
    \\ 
    & \leq 2 C \cdot \rho_0^m
    \qquad \text{ due to \eqref{proof, choose bar m and rho 0}}.
\end{align*}
For term $p_{n,m,2}$, note that $\frac{  \ceil{ \log_2(n^d) }  }{n^{ \frac{1}{2} - \lambda\alpha_4}} \rightarrow 0$ as $n \rightarrow \infty$ due to $\frac{1}{2} > \lambda\alpha_4$.
This allows us to fix some $C_{d,1} < \infty$ such that $\sup_{n = 1,2,\cdots}\frac{  \ceil{ \log_2(n^d) }  }{n^{ \frac{1}{2} - \lambda\alpha_4}} \leq C_{d,1}$.
As a result, for any $n \geq 1, m \geq 0$,
\begin{align*}
    p_{n,m,2} & \leq 2 C C_{d,1} \cdot \bigg( \frac{\delta}{\delta^{\lambda\alpha_3}} \bigg)^m
    \leq  2 C C_{d,1} \cdot  \rho^m_0
    \qquad \text{ due to \eqref{proofChooseRhoSBALongStick} and }\rho_0 \in (\rho_1,1).
\end{align*}
Similarly, for all $n \geq 1$ and $m \geq \bar m$,
\begin{align*}
    p_{n,m,3} & \leq 8 C_X \cdot m^2 \cdot \bigg( \frac{ \delta^{\alpha_3/2} }{ \delta^{\alpha_2} }\bigg)^m
    \leq  8 C_X \cdot m^2\rho^m_1
    \qquad \text{due to \eqref{proofChooseRhoSBAShortStick}}
    \\ 
    & \leq 8 C_X \cdot \rho^m_0\qquad \text{ due to \eqref{proof, choose bar m and rho 0}}.
\end{align*}
Besides, due to $\frac{ ( \ceil{ \log_2(n^d) } )^2  }{ n^{{\alpha_4}/{2} } } \rightarrow 0$ as $n \rightarrow \infty$,
we can find $C_{d,2} < \infty$ such that $\sup_{n = 1,2,\cdots,}\frac{ ( \ceil{ \log_2(n^d) } )^2  }{ n^{{\alpha_4}/{2} } } \leq C_{d,2}$.
This leads to (for all $n \geq 1,m \geq 0$)
\begin{align*}
    p_{n,m,4} & \leq 8C_XC_{d,2} \cdot \bigg( \frac{ \delta^{\alpha_3/2} }{ \delta^{\alpha_2} }\bigg)^m
    \leq 
     8C_XC_{d,2} \cdot \rho^m_0
     \qquad \text{ due to \eqref{proofChooseRhoSBALongStick} and }\rho_0 \in (\rho_1,1).
\end{align*}
To conclude the proof, we can simply set
$
\widetilde C_2 =  2 C + 2 C C_{d,1} + 8C_X + 8C_XC_{d,2}.
$
\end{proof}

\subsection{Proof of Propositions~\ref{proposition: lipschitz cont, 1} and \ref{proposition: lipschitz cont, 2}}
\label{subsec: proof, lipschitz cont}

The proof of Proposition \ref{proposition: lipschitz cont, 1} is
based on the inversion formula of the characteristic functions (see, e.g., Theorem 3.3.14 of \cite{durrett2019probability}).
Specifically, we compare the characteristic function of $Y(t)$ with an $\alpha$-stable process to draw connections between their distributions.

\begin{proof}[Proof of Proposition \ref{proposition: lipschitz cont, 1}]
\linksinpf{proposition: lipschitz cont, 1}
The Lévy-Khintchine formula (see e.g.\ Theorem~8.1 of \cite{sato1999levy}) leads to the following expression for the characteristic function of $\varphi_{t}(z) = \E\exp(i z Y(t))$:
\begin{align*}
    \varphi_{t}(z) = \exp\Big( t\int_{(0,z_0)}\big[\underbrace{\exp(izx) -1 - izx\mathbf{I}_{(0,1]}(x)}_{ \delequal \phi(z,x) }\big]\mu(dx)\Big)\qquad \forall z\in\mathbb{R},\ t > 0.
\end{align*}
Note that 
\begin{align*}
    \phi(z,x) & = \cos(zx) - 1 + i\big( \sin(zx) - zx \mathbf{I}_{(0,1]}(x)  \big).
\end{align*}
Then from $|e^{x+iy}| = e^x$ for all $x,y \in \R$,
\begin{align}
    |\varphi_t(z)| = \exp\Big(-t\int_{(0,z_0)}\big(1 - \cos(zx)\big)\mu(dx) \Big)
    \qquad \forall z \in \R, \ t > 0.
    \label{normOfCF_lcont}
\end{align}
Furthermore, we claim the existence of some $\widetilde M,\widetilde C\in (0,\infty)$ such that
\begin{align}
    \int_{(0,z_0)}\big(1 - \cos(zx)\big)\mu(dx) \geq \widetilde C |z|^\alpha
    \qquad \forall z \in \R\text{ with }|z| \geq \widetilde M.
    \label{goal, proposition: lipschitz cont, 1}
\end{align}
Plugging \eqref{goal, proposition: lipschitz cont, 1} into \eqref{normOfCF_lcont},
we yield that for all $|z| \geq \widetilde M$ and $t > 0$,
$
|\varphi_t(z)| \leq \exp(-t\widetilde C |z|^\alpha).
$
It then follows directly from the inversion formula (see Theorem 3.3.14 of \cite{durrett2019probability}) that, for all $t > 0$, $Y(t)$ admits a continuous density function $f_{Y(t)}$ with a uniform bound
\begin{align*}
    \norm{f_{Y(t)}}_\infty & \leq \frac{1}{2\pi}\int |\varphi_t(z)|dz \\
    & \leq \frac{1}{2\pi}\Big( 2\widetilde{M} + \int_{|z| \geq \Tilde{M}}\exp\big( -t\widetilde C |z|^\alpha \big)dz \Big) \\
    & \leq \frac{1}{2\pi}\Big( 2\widetilde{M} + \frac{1}{t^{1/\alpha}}\int_\R\exp(-\widetilde C|x|^\alpha)dx \Big)
    \qquad\text{by letting $x = zt^{1/\alpha}$}\\
    & = \frac{\widetilde{M}}{\pi} + \frac{C_1}{t^{1/\alpha}}
    \qquad \text{where $C_1 = \frac{1}{2\pi}\int_\R \exp(-\widetilde C |x|^\alpha)dx < \infty$}.
\end{align*}
To conclude the proof, pick $C = \frac{\widetilde{M}}{\pi} + C_1$.
Now, it only remains to prove claim~\eqref{goal, proposition: lipschitz cont, 1}.

\medskip
\noindent\textbf{Proof of Claim }\eqref{goal, proposition: lipschitz cont, 1}.

We start by fixing some constants.
\begin{align}
    C_0 = \int_{0}^\infty (1 - \cos{y})\frac{dy}{y^{1 + \alpha}}.
\end{align}
For $y \in (0,1]$, note that $1 - \cos y \leq y^2/2$, and hence $\frac{|1 - \cos y|}{y^{1 + \alpha}} \leq \frac{1}{2y^{\alpha - 1}}$.
For $y \in (1,\infty)$, note that $1 - \cos y \in [0,1]$ and hence $\frac{|1 - \cos y|}{y^{1 + \alpha}} \leq 1/y^{\alpha + 1}$.
Due to $\alpha \in (0,2)$, we have $C_0 = \int_{0}^\infty (1 - \cos{y})\frac{dy}{y^{1 + \alpha}} \in (0,\infty)$.
Next, choose positive real numbers $\theta,\ \delta$ such that
\begin{align}
    \frac{\theta^{2 - \alpha}}{2(2 - \alpha)}\leq \frac{C_0}{4}, \label{chooseTheta_Lcont} \\
    \frac{\delta}{\alpha\theta^\alpha}\leq \frac{C_0}{4}. \label{chooseDelta_Lcont}
\end{align}
For any $M > 0$ and $z \neq 0$, observe that (by setting $y = |z|x$ in the last step)
\begin{align*}
    \frac{\int_{x \geq \frac{M}{|z|}}\big( 1 - \cos(zx)  \big)\frac{dx}{x^{1 + \alpha}}  }{|z|^\alpha} & = \frac{\int_{x \geq \frac{M}{|z|}}\big( 1 - \cos(|z|x)  \big)\frac{dx}{x^{1 + \alpha}}  }{|z|^\alpha} 
    = \int_{M}^\infty \big(  1 -\cos{y} \big)\frac{dy}{y^{1+\alpha}}.
\end{align*}
Therefore, by fixing some $M > \theta$ large enough, we have
\begin{align}
    \frac{1}{|z|^\alpha}\int_{x \geq M/|z|}\big( 1 - \cos(zx)  \big)\frac{dx}{x^{1 + \alpha}} \leq \frac{C_0}{4}
    \qquad \forall z \neq 0.
    \label{endNonRVterm_lCont}
\end{align}

To proceed, we compare $\int_{(0,z_0)}\big( 1 - \cos(zx) \big)\mu(dx)$ with $\int_0^{M/z}\big(1 - \cos(zx)\big)\frac{dx}{x^{1 + \alpha}}$.
Recall that $z_0$ is the constant prescribed in the statement of Proposition~\ref{proposition: lipschitz cont, 1}.
For any $z \in \R$ such that $|z|>M/z_0$,
\begin{equation}\label{proof, ineq 1, proposition: lipschitz cont, 1}
    \begin{split}
        & \frac{1}{|z|^\alpha}\bigg[ \int_{(0,z_0)}\Big( 1 - \cos(zx) \Big)\mu(dx) - \int_0^{\infty}\Big( 1 - \cos(zx)\Big)\frac{dx}{x^{1 + \alpha}} \bigg] \\
    & \geq 
    \frac{1}{|z|^\alpha}\bigg[ \int_{(\theta/|z|,M/|z|)}\Big( 1 - \cos(zx) \Big)\mu(dx) - \int_0^{\infty}\Big( 1 - \cos(zx)\Big)\frac{dx}{x^{1 + \alpha}} \bigg]
    \\
    &
     \qquad \text{due to our choice of $M > \theta$ and $|z| > M/z_0$}
    \\
    & \geq 
    -\underbrace{\frac{1}{|z|^\alpha}\int_{0}^{\theta/|z|}\Big( 1 - \cos(zx)\Big)\frac{dx}{x^{1 + \alpha}}}_{ \delequal I_1(z) } 
    - 
    \underbrace{
    \frac{1}{|z|^\alpha}\int_{M/|z|}^\infty 
    \Big( 1 - \cos(zx)\Big)\frac{dx}{x^{1 + \alpha}}
    }_{\delequal I_2(z)}
    \\
    &\qquad + \underbrace{\frac{1}{|z|^\alpha}\bigg[ \int_{[\theta/|z|,M/|z|)}\Big( 1 - \cos(zx) \Big)\mu(dx) - \int_{[\theta/|z|,M/|z|)}\Big( 1 - \cos(zx)\Big)\frac{dx}{x^{1 + \alpha}}   \bigg]}_{\delequal I_3(z) }.
    \end{split}
\end{equation}
We bound the terms $I_1(z)$, $I_2(z)$, and $I_3(z)$ separately.
First, for any $z \neq 0$,
\begin{align}
  I_1(z) & \leq \frac{1}{|z|^\alpha}\int_{0}^{\theta/|z|}\frac{z^2x^2}{2}\frac{dx}{x^{1+\alpha}}
  \qquad \text{ due to }1 - \cos w \leq \frac{w^2}{2} \ \forall w \in \R
  \nonumber  \\
   & = 
\frac{1}{2}\int_0^\theta y^{1 - \alpha} dy
   \qquad \text{by setting } y = |z|x
   \nonumber \\
   & = \frac{1}{2}\cdot \frac{ \theta^{2 - \alpha} }{2 - \alpha} 
   \leq  \frac{C_0}{4}\ \ \ \text{due to \eqref{chooseTheta_Lcont}}. \label{endTerm1RV_lCont}
\end{align}
For $I_2(z)$, it follows immediately from \eqref{endNonRVterm_lCont} that 
\begin{align}
    I_2(z) \leq \frac{C_0}{4}\qquad \forall z \neq 0.
    \label{proof: bound term I 2, proposition: lipschitz cont, 1}
\end{align}
Next, in order to bound $I_3(z)$,
we consider the function
$h(z) \delequal 1 - \cos{z}.$
Since $h(z)$ is uniformly continuous on $[\theta,M]$, we can find some $N \in \mathbb N,\ t_0 > 1$, and a sequence of real numbers $\theta = x_0 > x_1 > \cdots > x_N = M$ such that
\begin{equation}\label{uContOfG_lCont}
    \begin{split}
        &\frac{x_{j-1}}{x_j} = t_0\qquad\forall j = 1,2,\cdots,N, \\
    & |h(x) - h(y)| < \delta\qquad\forall j = 1,2,\cdots,N,\ x,y \in [x_j,x_{j-1}]. 
    \end{split}
\end{equation}
In other words, we use a geometric sequence $\{x_0,x_1,\cdots,x_{N}\}$ to partition $[\theta,M]$ into $N$ intervals.
On any of these intervals, the fluctuations of $h(z) = 1 - \cos{z}$ is bounded by the constant $\delta$ fixed in \eqref{chooseDelta_Lcont}.
Now fix some $\Delta > 0$ such that (recall that $\epsilon > 0$ is prescribed in the statement of this proposition)
\begin{align}
    (1 - \Delta)t_0^{\alpha+\epsilon} > 1. \label{chooseCapDelta_LCont}
\end{align}
Since
$\mu[x,\infty)$ is regularly varying as $x \rightarrow 0$ with index $-(\alpha + 2\epsilon)$,
for $g(y) = \mu[1/y,\infty)$ we have $g\in \RV_{\alpha + 2\epsilon}(y)$ as $y \to \infty$. 
By Potter's bound (see Proposition~2.6 in \cite{resnick2007heavy}),there exists $\bar y_1 > 0$ such that
\begin{align}
    \frac{g(ty)}{g(y)} \geq (1 - \Delta)t^{\alpha + \epsilon}\qquad\forall y \geq \bar y_1,\ t \geq 1. \label{potterBound_lCont}
\end{align}
Meanwhile, define
$$\widetilde{g}(y) = y^\alpha,\qquad \nu_\alpha(dx) = \mathbf{I}_{(0,
\infty)}(x)\frac{dx}{x^{1+\alpha}}$$
and note that $\widetilde{g}(y) = \nu_\alpha(1/y,\infty)$.
Due to $g \in \RV_{\alpha + 2\epsilon}$, we can find some $\bar y_2 > 0$ such that
\begin{align}
    g(y)\geq \frac{t_0^\alpha - 1}{ (1 - \Delta)t_0^{\alpha + \epsilon} - 1 }\cdot  \widetilde{g}(y)\ \ \ \forall y \geq \bar y_2. \label{gBound_lCont}
\end{align}
Let $\widetilde{M} = \max\{ M/z_0, M\bar y_1, M\bar y_2 \}$. 
For any $|z| \geq \widetilde M$, we have $|z| \geq M/z_0$ and
$
\frac{|z|}{x_j} \geq
\frac{|z|}{M} \geq \bar y_1 \vee \bar y_2
$
for any $j = 0,1,\cdots,N$.
As a result,
for $z \in \mathbb{R}$ with $|z| \geq \widetilde{M}$ and any $j = 1,2,\cdots,N$,
\begin{align*}
    \mu[x_j/|z|, x_{j-1}/|z|) & = g(|z|/x_{j}) - g(|z|/x_{j-1}) 
    \qquad \text{ by definition of }g(y) = \mu[1/y,\infty)
    \\
    & = g(t_0|z|/x_{j-1}) - g(|z|/x_{j-1}) 
    \qquad \text{ due to $x_{j - 1} = t_0 x_j$; see \eqref{uContOfG_lCont}}
    \\
    & \geq g(|z|/x_{j-1})\cdot\Big( (1 - \Delta)t_0^{\alpha+\epsilon} - 1 \Big)\qquad \text{due to $\frac{|z|}{x_j} \geq \bar y_1 \vee \bar y_2$ and \eqref{potterBound_lCont}} \\
    & \geq \widetilde{g}(|z|/x_{j - 1})\cdot( t_0^\alpha - 1 )\qquad \text{due to \eqref{gBound_lCont}}.
\end{align*}
On the other hand,
\begin{align*}
    \nu_\alpha[x_j/|z|,x_{j - 1}/|z|) & = \widetilde{g}(|z|/x_j) - \widetilde{g}(|z|/x_{j-1}) = \widetilde{g}(|z|/x_{j - 1})\cdot( t_0^\alpha - 1 ).
\end{align*}{}
Therefore, given any $z \in \R$ such that $|z|\geq \widetilde{M}$,
we have $\mu\big(E_j(z)\big) \geq \nu_\alpha\big(E_j(z)\big)$ for all $j \in [N]$
where $E_j(z) = [x_j/|z|,x_{j-1}/|z|)$.
This leads to
\begin{align}
    & I_3(z)
     \nonumber \\
    & = \frac{1}{|z|^\alpha}\sum_{j = 1}^N\bigg[  \int_{E_j(z)}\Big( 1 - \cos(zx) \Big)\mu(dx) - \int_{E_j(z)}\Big( 1 - \cos(zx)\Big)\frac{dx}{x^{1 + \alpha}}  \bigg]\nonumber \\
    & \geq 
    \frac{1}{|z|^\alpha}\sum_{j = 1}^N
    \Big[
    \underline m_j \cdot \mu\big(E_j(z)\big) - \bar m_j\cdot\nu_\alpha\big(E_j(z)\big)
    \Big]
    \nonumber \\
    &
    \qquad \text{with $\bar m_j = \max\{ h(z):\ z \in [x_j,x_{j-1}]\},\ \underline m_j= \min\{ h(z):\ z \in [x_j,x_{j-1}]\}$}
    \nonumber \\
    & = 
    \frac{1}{|z|^\alpha}\sum_{j = 1}^N
    \Big[
    \underline m_j \cdot \mu\big(E_j(z)\big) - \underline m_j\cdot\nu_\alpha\big(E_j(z)\big)
    \Big]
    +
    \frac{1}{|z|^\alpha}\sum_{j = 1}^N
    \Big[
    \underline m_j \cdot \nu_\alpha\big(E_j(z)\big) - \bar m_j\cdot\nu_\alpha\big(E_j(z)\big)
    \Big]
    \nonumber \\
    & \geq 
    0 + \frac{1}{|z|^\alpha}\sum_{j = 1}^N
    \Big[
    \underline m_j \cdot \nu_\alpha\big(E_j(z)\big) - \bar m_j\cdot\nu_\alpha\big(E_j(z)\big)
    \Big]
    \qquad \text{due to }\mu\big(E_j(z)\big) \geq \nu_\alpha\big(E_j(z)\big)
    \nonumber \\
& \geq 
-\frac{\delta}{|z|^\alpha} \sum_{j = 1}^N\nu_\alpha\big(E_j(z)\big)
=
-\frac{\delta}{|z|^\alpha} \nu_c[\theta/|z|,M/|z|)
\qquad \text{due to \eqref{uContOfG_lCont}}
    \nonumber \\
& = -\frac{\delta}{|z|^\alpha}
\int_{ \theta/|z| }^{M/|z|}\frac{dx}{x^{1 + \alpha}}
\nonumber \\
&
\geq 
-\frac{\delta}{|z|^\alpha}
\int_{ \theta/|z| }^{\infty}\frac{dx}{x^{1 + \alpha}} = - \frac{\delta}{\alpha \theta^\alpha}
    \nonumber \\
    & \geq  -\frac{C_0}{4}\qquad\text{due to \eqref{chooseDelta_Lcont}.} \label{endTerm2RV_lCont}
\end{align}{}

Plugging \eqref{endTerm1RV_lCont}, \eqref{proof: bound term I 2, proposition: lipschitz cont, 1}, and \eqref{endTerm2RV_lCont} 
back into \eqref{proof, ineq 1, proposition: lipschitz cont, 1},
we have shown that for all $|z| \geq \widetilde M$,
\begin{align*}
    & \frac{1}{|z|^\alpha} \int_{(0,z_0)}\big( 1 - \cos(zx) \big)\mu(dx)
    \\ 
    & \geq -\frac{3C_0}{4} + 
     \frac{1}{|z|^\alpha}\int_0^\infty\Big( 1 - \cos(zx)\Big)\frac{dx}{x^{1 + \alpha}}
     \\ 
& = 
-\frac{3C_0}{4} + 
    \int_0^\infty\Big( 1 - \cos y\Big)\frac{dy}{y^{1 + \alpha}}
    \qquad \text{by setting }y = |z|x
     \\
     & = 
     -\frac{3C_0}{4} + C_0 = \frac{C_0}{4}
     \qquad\text{ by definition of }C_0 = \int_{0}^\infty (1 - \cos{y})\frac{dy}{y^{1 + \alpha}}.
\end{align*}
To conclude the proof of claim \eqref{goal, proposition: lipschitz cont, 1}, we set $\widetilde C = C_0/4.$
\end{proof}

Again, the proof of Proposition \ref{proposition: lipschitz cont, 2} makes use of the inversion formula.

\begin{proof}[Proof of Proposition \ref{proposition: lipschitz cont, 2}]
\linksinpf{proposition: lipschitz cont, 2}
Let us denote the characteristic functions of $Y^\prime(t)$ and $Y(t)$ by ${\varphi}_t$ and $\widetilde\varphi_t$, respectively. 
Repeating the arguments using complex conjugates in \eqref{normOfCF_lcont}, we obtain
\begin{align*}
    |\widetilde{\varphi}_t(z)| = \exp\Big( -t\int_{|x|<b^N}\big(1 - \cos(zx)\big)\mu(dx)\Big).
\end{align*}{}
As for $\varphi_t$, using proposition 14.9 in \cite{sato1999levy}, we get
\begin{align}
    |\varphi_t(z)| = \exp\Big(-t|z|^\alpha\eta(z)\Big)
\end{align}{}
where $\eta(z)$ is a non-negative function continuous on $\mathbb{R}\symbol{92}\{0\}$ satisfying $\eta(bz)=\eta(z)$ and
\begin{align*}
    \eta(z) = \frac{ \int_\mathbb{R}\big(1 - \cos(zx)\big)\mu(dx) }{|z|^\alpha}\qquad \forall z\neq 0.
\end{align*}
This implies $\eta(z) = \eta(-z)$ for all $z \neq 0$.
Furthermore, we claim the existence of some $c > 0$ such that
\begin{align}
    \eta(z) \geq c\qquad \forall z \in [1,b].
    \label{goal, proposition: lipschitz cont, 2}
\end{align}
Then due to the self-similarity of $\mu$ (i.e., $\eta(bz)=\eta(z)$), we have $\eta(z) \geq c$ for all $z \neq 0$.
In the meantime, note that
\begin{align*}
    \frac{1}{|z|^\alpha}\int_{|x|\geq b^N}\big(1 - \cos(zx)\big)\mu(dx)\leq \frac{\mu\{x:|x|\geq b^N\}}{|z|^\alpha}.
\end{align*}{}
By picking $M > 0$ large enough, it holds for any $|z|\geq M$ that
\begin{align}
    \frac{1}{|z|^\alpha}\int_{|x|\geq b^N}\big(1 - \cos(zx)\big)\mu(dx) \leq \frac{c}{2}.
    \label{fact 2, proposition: lipschitz cont, 2}
\end{align}
Therefore, for any $|z| \geq M$,
\begin{align*}
    \int_{|x| < b^N}\big(1 - \cos(zx)\big)\mu(dx)
    & = \int_{x \in \R}\big(1 - \cos(zx)\big)\mu(dx) - \int_{|x|\geq b^N}\big(1 - \cos(zx)\big)\mu(dx)
    \\
    & =
    \eta(z) \cdot |z|^\alpha - \int_{|x|\geq b^N}\big(1 - \cos(zx)\big)\mu(dx)
    \\ 
    & \geq c|z|^\alpha - \frac{c}{2}|z|^\alpha  = \frac{c}{2}|z|^\alpha
    \qquad \text{using \eqref{goal, proposition: lipschitz cont, 2} and \eqref{fact 2, proposition: lipschitz cont, 2}},
\end{align*}
and hence
$
|\widetilde{\varphi}_t(z)|\leq \exp\big(-\frac{c}{2}t|z|^\alpha\big)
$
for all $|z| \geq M$.
Applying inversion formula, we get (for any $t > 0$)
\begin{align*}
    \norm{ f_{Y(t)} }_\infty & \leq \frac{1}{2\pi} \int |\widetilde \varphi_t(z)|dz
    \\ 
    & \leq \frac{1}{2\pi}\bigg[
    2M + \int_{|z| \geq M} |\widetilde \varphi_t(z)|dz
    \bigg]
    \\ 
    & \leq \frac{M}{\pi} + \frac{1}{2\pi}\int\exp\bigg(-\frac{c}{2}t|z|^\alpha\bigg) dz
    \\ 
    & = 
    \frac{M}{\pi} + \frac{1}{2\pi} \cdot \frac{1}{t^{1/\alpha}} \int \exp\bigg(-\frac{c}{2} |x|^\alpha\bigg)dx
    \qquad \text{using }x = t^{1/\alpha} \cdot z
    \\ 
    & \leq \frac{M}{\pi} + \frac{C_1}{t^{1/\alpha}}
    \qquad \text{ where }C_1 = \frac{1}{2\pi} \int \exp\bigg(-\frac{c}{2} |x|^\alpha\bigg)dx.
\end{align*}
To conclude the proof, we set $C = \frac{M}{\pi} + C_1$.
Now it only remains to prove claim \eqref{goal, proposition: lipschitz cont, 2}.

\medskip
\noindent\textbf{Proof of Claim }\eqref{goal, proposition: lipschitz cont, 2}

We proceed with a proof by contradiction. 
If $\inf_{z \in [1,b]}\eta(z) = 0$,
then by continuity of $\eta(z)$,
there exists some $z \in [1,b]$ such that
\begin{align*}
    \int_\mathbb{R}\big(1 - \cos(zx)\big)\mu(dx) = 0.
\end{align*}{}
Now for any $\epsilon > 0$, define the following sets:
\begin{align}
    S & = \{ x \in \mathbb{R}:\ 1 - \cos(zx) > 0  \} = \mathbb{R}\symbol{92}\{\frac{2\pi}{z}k:\ k \in \mathbb{Z}\};\nonumber \\
    S_\epsilon & = \{ x \in \mathbb{R}:\ 1 - \cos(zx) \geq \epsilon  \}.\nonumber
\end{align}{}
Observe that
\begin{itemize}
    \item For any $\epsilon > 0$, we have
    $
    \epsilon \cdot\mu(S_\epsilon) \leq \int_{S_\epsilon}\big(1 - \cos(zx)\big)\mu(dx)\leq \int_\mathbb{R}\big(1 - \cos(zx)\big)\mu(dx) = 0,
    $
    which implies $\mu(S_\epsilon) = 0$;
    
    \item Meanwhile, $\lim_{\epsilon \rightarrow 0}\mu(S_\epsilon) = \mu(S) = 0$.
\end{itemize}
Together with the fact that $\mu(\mathbb{R})>0$ (so that the process is non-trivial), there must be some $m \in \mathbb{Z},\ \delta > 0$ such that
$$\mu(\{ \frac{2\pi}{z}m \}) = \delta > 0.$$
Besides, from $\mu(S) = 0$ we know that $\mu\big(\{-\frac{2\pi}{z},\frac{2\pi}{z}\}\setminus\{0\}\big) = 0$.
However, by definition of semi-stable processes in \eqref{semiStable_scaleInvariance_lCont} we know that $\mu = b^{-\alpha}T_b\mu$ where the transformation $T_r$ ($\forall r > 0$) onto a Borel measure $\rho$ on $\mathbb{R}$ is defined as $(T_r \rho)(B) = \rho(r^{-1}B)$. This implies
\begin{align*}
    \mu(\{\frac{2\pi m}{z}b^{-k}\}) > 0 \ \ \forall k = 1,2,3,\cdots
\end{align*}
which would contradict $\mu\big(\{-\frac{2\pi}{z},\frac{2\pi}{z}\}\setminus\{0\}\big) = 0$ eventually for $k$ large enough.
This concludes the proof of $\eta(z) > 0$ for all $z \in [1,b]$.
\end{proof}

\bibliographystyle{abbrv} 
\bibliography{bib_appendix} 

\newpage
\appendix

\section{Barrier Option Pricing}
\label{sec: barrier option pricing}

\subsection{Problem Setting}

This section considers the estimation of probabilities $P(A_n)$ with $A_n = \{\bar X_n \in A\}$ and
$$
    A \delequal 
    \{
        \xi \in \D:\ 
        \xi(1) \leq -b,\ \sup_{t \leq 1}\xi(t) + ct \geq a
    \},
$$
which corresponds to rare-event simulation in the context of down-and-in option.
Here, we assume that $a,\ b > 0$ and $c < a$.
We consider the two-sided case in Assumption~\ref{assumption: heavy tailed process}.
That is, $X(t)$ is a centered L\'evy process with L\'evy measures $\nu$,
and there exists some $\alpha,\ \alpha^\prime > 1$ such that $\nu[x,\infty) \in \RV_{-\alpha}(x)$ and $\nu(-\infty,-x] \in \RV_{-\alpha^\prime}(x)$ as $x \to \infty$.
Also, we impose an alternative version of Assumption~\ref{assumption: holder continuity strengthened on X < z t} throughout.
Let $X^{(-z,z)}(t)$ be the L\'evy process with 
with generating triplet $(c_X,\sigma,\nu|_{(-z,z)})$.
That is, $X^{(-z,z)}(t)$ is a modulated version of $X$ where all jumps with size larger than $z$ are removed.
\begin{assumption}
\label{assumption: holder continuity strengthened on X < z t, alternative }
    There exist ${z_0},\ {C},\ {\lambda} > 0$ such that
    \begin{align*}
        \P\big(X^{(-z,z)}(t) \in [x, x + \delta]\big) \leq \frac{C\delta}{t^\lambda\wedge 1}\qquad \forall z \geq z_0,\ t > 0,\ x \in \R,\ \delta > 0.
    \end{align*}
\end{assumption}

\subsection{Importance Sampling Algorithm}

Below, we present the design of the importance sampling algorithm.
For any $\xi \in \D$ and $t \in (0,1]$, let $\Delta \xi(t) = \xi(t) - \xi(t-)$ be the discontinuity in $\xi$ at time $t$,
and we set $\Delta \xi(0) \equiv 0$.
Let
\begin{align*}
    B^\gamma = 
    \Big\{\xi \in \D:\ 
        \#\{ t \in [0,1]:\ \Delta\xi(t) \geq \gamma \} \geq 1,
        \ 
        \#\{ t \in [0,1]:\ \Delta\xi(t) \leq -\gamma \} \geq 1
    \Big\}
\end{align*}
and let $B^\gamma_n = \{\bar X_n \in B^\gamma_n\}$.
Intuitively speaking, on event $B^\gamma_n$ there is at least one upward and one downward ``large'' jump in $\bar X_n$,
where $\gamma > 0$ is understood as the threshold for jump sizes to be considered ``large''.

Fix some $w \in (0,1)$, and let
$$
    \Q_n(\cdot)
    =
    w\P(\cdot) + (1-w)\P(\ \cdot\ |B^\gamma_n).
$$
The algorithm samples
$$
    {L_n} = Z_n \frac{d\P}{d\Q_n} 
    =
    \frac{Z_n}{ w + \frac{1 - w}{ \P(B^\gamma_n) }\mathbf{I}_{ B^\gamma_n } }
$$
under $\Q_n$. Now, we discuss the design of $Z_n$ to ensure the strong efficiency of $L_n$.
Analogous to the decomposition in \eqref{def: process J n Xi n}, let
\begin{align*}
     {J_n(t)} & = \sum_{s \in [0,t]}\Delta X(s) \mathbf{I}\big( |\Delta X(s)| \geq n\gamma \big),
        \\
    {\Xi_n(t)} & = X(t) - J_n(t) = X(t) - \sum_{s \in [0,t]}\Delta X(s) \mathbf{I}\big( |\Delta X(s)| \geq n\gamma \big).
\end{align*}
Let ${\bar J_n(t)} = \frac{1}{n}J_n(nt)$, $\bar J_n = \{\bar J_n(t):\ t \in [0,1]\}$,
${\bar \Xi_n(t)} = \frac{1}{n}\Xi_n(nt)$, and $\bar \Xi_n = \{\bar \Xi_n(t):\ t \in [0,1]\}$.
Meanwhile, set 
\begin{align*}
    {M_c(t)} \delequal \sup_{s \leq t}X(s) + cs,
    \qquad 
    {Y^*_{n;c}} \delequal \mathbf{I}\big( M_c(n) \geq na,\ X(n) \leq -nb \big),
\end{align*}
We have $\mathbf{I}_{A_n} = Y^*_{n;c}$.
Under the convention $\hat Y^{-1}_n \equiv 0$, consider estimators $Z_n$ of form
\begin{align}
    {Z_n} = \sum_{m = 0}^{ \tau}\frac{ \hat Y^m_{n;c} - \hat Y^{m-1}_{n;c} }{ \P(\tau \geq m) }
    \label{def: Z_n, option pricing}
\end{align}
where ${\tau}$ is $\text{Geom}(\rho)$ for some ${\rho} \in (0,1)$ and is independent of everything else.
Analogous to Proposition~\ref{proposition: design of Zn},
the following result provides sufficient conditions on $\hat Y^{m}_{n;c}$ for $L_n$ to attain strong efficiency.

\begin{proposition}
\label{proposition: design of Zn, barrier option pricing}
Let $C_0 > 0$, $\rho_0 \in (0,1)$, $\mu > \alpha + \alpha^\prime - 2$, and $\bar m \in \mathbb N$. Suppose that
\begin{align}
    \P\Big(Y^*_{n;c} \neq \hat Y^m_{n;c}\ \Big|\ \mathcal{D}^+(\bar J_n) = k,\ \mathcal{D}^-(\bar J_n) = k^\prime\Big) \leq C_0 \rho^m_0 \cdot (k+k^\prime+1)\qquad \forall k,\ k^\prime \geq 0,\ n\geq 1,\  m \geq \bar m
    \label{condition 1, proposition: design of Zn, option pricing}
\end{align}
where $\mathcal{D}^+(\xi)$ and $\mathcal{D}^-(\xi)$ count the number of discontinuities of positive and negative sizes in $\xi$, respectively.
Besides, suppose that for all $\Delta \in (0,1)$,
\begin{align}
    \P\Big(Y^*_{n;c} \neq \hat Y^m_{n;c},\ \bar X_n \notin A^\Delta\ \Big|\ \mathcal{D}^+(\bar J_n) = 0\text{ or }\mathcal{D}^-(\bar J_n) = 0\Big)
    \leq \frac{C_0 \rho^m_0}{ \Delta^2 n^{\mu} }\qquad \forall n\geq 1,\ m \geq 0
    \label{condition 2, proposition: design of Zn, option pricing}
\end{align}
where 
${A^\Delta} = \big\{\xi \in \mathbb{D}: \sup_{t \in [0,1]}\xi(t) + ct\geq a - \Delta,\ \xi(1) \leq -b \big\}$.
Then given $\rho \in (\rho_0,1)$,
there exists some $\bar \gamma = \bar \gamma(\rho) \in (0,b)$
such that
for all $\gamma \in (0,\bar \gamma)$,
the estimators $(L_n)_{n \geq 1}$ are \textbf{unbiased and strongly efficient} for $\P(A_n) = \P(\bar X_n \in A)$ under the importance sampling distribution $\Q_n$.
\end{proposition}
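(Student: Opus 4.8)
The plan is to follow the proof of Proposition~\ref{proposition: design of Zn} in Section~\ref{subsec: proof, prop strong efficiency} essentially line by line, with the two-sided large deviations of Result~\ref{result: LD of Levy, two-sided} replacing the one-sided Result~\ref{result: LD of Levy, one-sided}. The only genuinely new work is to prove, for the barrier target set $A=\{\xi\in\D:\ \xi(1)\le-b,\ \sup_{t\le1}(\xi(t)+ct)\ge a\}$, the two sample-path large-deviation lemmas that play the roles of Lemmas~\ref{lemma: LD, events A n} and~\ref{lemma: LD, event A Delta}; once those are in hand, the unbiasedness and the strong-efficiency arguments are a transcription of the existing ones, and one may take $\bar\gamma$ and $\rho_0$ by the recipe at the start of Section~\ref{sec: proof}.

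For the first lemma I would show $\P(A_n)$ is of order $(n\nu[n,\infty))\,(n\nu(-\infty,-n])$, which via Result~\ref{result: LD of Levy, two-sided} reduces to: $(\mathcal J(A),\mathcal K(A))=(1,1)$ is the unique minimizer in~\eqref{def: argument minimum LD two sided case}, $\mathbf C_{1,1}(A^\circ)>0$, and $A$ is bounded away from $\mathbb D_{<1,1}$. The facts I would use are that the functionals $\xi\mapsto\sup_{t\le1}(\xi(t)+ct)$ and $\xi\mapsto\xi(1)$ are $\bm{d}$-Lipschitz (a time change fixes the endpoint and perturbs the drift $ct$ by at most $|c|\,\bm{d}(\xi,\xi')$); that, since $c<a$, no step function with only downward jumps makes the running maximum with drift reach $a$, so $\mathcal J(A)\ge1$, while $\xi(1)\ge0>-b$ for step functions with only upward jumps, so $\mathcal K(A)\ge1$; that one large upward jump near time $0$ followed by a strictly larger downward jump lies in $A^\circ$, giving $\mathbb D_{1,1}\cap A^\circ\ne\emptyset$ and hence $\mathbf C_{1,1}(A^\circ)>0$; and that $\mathbb D_{<1,1}$ is a finite union of one-sided classes $\mathbb D_{l,0}$ and $\mathbb D_{0,m}$, each violating one of the two defining inequalities of $A$ by a fixed positive margin, so the Lipschitz bounds give $\bm{d}(A,\mathbb D_{<1,1})>0$. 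For the second lemma I would bound $\P(\bar X_n\in A^\Delta\cap(B^\gamma_n)^c)$, where $(B^\gamma_n)^c=\{\mathcal D^+(\bar J_n)=0\ \text{or}\ \mathcal D^-(\bar J_n)=0\}$, by splitting on which count vanishes: on $\{\mathcal D^+(\bar J_n)=0\}$, reaching level $a-\Delta$ with $c<a$ and no large upward jump forces a number of near-$\gamma$ upward jumps that grows as $\gamma\downarrow0$, while $\xi(1)\le-b$ still forces a downward jump; symmetrically on $\{\mathcal D^-(\bar J_n)=0\}$. Taking $\gamma$ small enough pushes the minimal jump count above the threshold needed for Result~\ref{result: LD of Levy, two-sided} to yield the required $\bm{o}$-bound, the attendant ``bounded away'' verification again being the Lipschitz argument.

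Given the two lemmas, unbiasedness follows as before: apply Result~\ref{resultDebias} under $\Q_n$ with $Y_m=\hat Y^m_{n;c}\,\tfrac{d\P}{d\Q_n}$ and $Y=Y^*_{n;c}\,\tfrac{d\P}{d\Q_n}$, use $\tfrac{d\P}{d\Q_n}\le 1/w$, and bound $\P(\hat Y^m_{n;c}\ne Y^*_{n;c})\le C_0\rho_0^m\,\E[1+\mathcal D^+(\bar J_n)+\mathcal D^-(\bar J_n)]$ through~\eqref{condition 1, proposition: design of Zn, option pricing} after conditioning on the joint law of $(\mathcal D^+(\bar J_n),\mathcal D^-(\bar J_n))$, two independent Poissons with means $n\nu[n\gamma,\infty)\in\RV_{-(\alpha-1)}(n)$ and $n\nu(-\infty,-n\gamma]\in\RV_{-(\alpha^\prime-1)}(n)$, hence uniformly bounded in $n$, so that~\eqref{condition, resultDebias} holds whenever $\rho>\rho_0$. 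For strong efficiency I would write $\E^{\Q_n}[L_n^2]\le\tfrac{\P(B^\gamma_n)}{1-w}\E[Z_n^2\mathbf{I}_{B^\gamma_n}]+\tfrac{1}{w}\E[Z_n^2\mathbf{I}_{(B^\gamma_n)^c}]$ and prove, for all $\gamma$ below a threshold, that $\P(B^\gamma_n)=\bm{O}\big((n\nu[n,\infty))(n\nu(-\infty,-n])\big)$ by a product-of-Poisson-tail bound as in~\eqref{proof, bound poisson dist tail, proposition: design of Zn}, that $\E[Z_n^2\mathbf{I}_{B^\gamma_n}]=\bm{O}\big((n\nu[n,\infty))(n\nu(-\infty,-n])\big)$ via Result~\ref{resultDebias} and~\eqref{condition 1, proposition: design of Zn, option pricing}, and that $\E[Z_n^2\mathbf{I}_{(B^\gamma_n)^c}]=\bm{o}\big(\P^2(A_n)\big)$, splitting this last term along $\{\bar X_n\in A^\Delta\}$ and its complement exactly as in~\eqref{proof, bound Z n 3, proposition: design of Zn}--\eqref{proof, bound Z n 4, proposition: design of Zn}, with Hölder's inequality plus the second new lemma handling the $A^\Delta$ piece and~\eqref{condition 2, proposition: design of Zn, option pricing} (together with the hypothesis on $\mu$ and the regular variation of $n\nu[n,\infty)$ and $n\nu(-\infty,-n]$) handling the complement. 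Combining with the first lemma yields $\E^{\Q_n}[L_n^2]=\bm{O}(\P^2(A_n))$.

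The main obstacle is the large-deviation analysis in the second paragraph. Unlike the ruin set in~\eqref{def set A}, the barrier set couples a running-maximum-with-drift constraint with an endpoint constraint of the opposite sign, so one must simultaneously control the minimal numbers of upward and downward jumps a realizing step function needs and check that the two $\bm{d}$-continuous functionals separate $A$, and $A^\Delta\cap(B^\gamma_n)^c$, from each relevant one-sided class by a uniform positive margin. The drift $ct$ is the source of the extra bookkeeping: time changes perturb it, and it shifts the effective level the small jumps must cross. Everything else is a faithful adaptation of Section~\ref{subsec: proof, prop strong efficiency}.
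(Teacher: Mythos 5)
Your proposal follows essentially the same route as the paper's brief proof sketch: derive two sample-path large-deviation lemmas for the barrier set $A$ playing the roles of Lemmas~\ref{lemma: LD, events A n} and~\ref{lemma: LD, event A Delta}, then transcribe the argument of Proposition~\ref{proposition: design of Zn}. Your identification of $(\mathcal J(A),\mathcal K(A))=(1,1)$ as the unique minimizer, the positivity of $\mathbf C_{1,1}(A^\circ)$, the bounded-away conditions via the $\bm d$-Lipschitz continuity of $\xi\mapsto\sup_{t}\bigl(\xi(t)+ct\bigr)$ and $\xi\mapsto\xi(1)$, and the decomposition of $(B^\gamma_n)^\complement$ into the two one-sided events with jump counts that diverge as $\gamma\downarrow 0$ are all consistent with what the paper intends and correctly track the new bookkeeping introduced by the drift $ct$ and the coupled endpoint constraint.

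One concrete point you should resolve before declaring the transcription complete: the analog of the bound on $\E Z_{n,4}^2$ at~\eqref{proof, bound Z n 4, proposition: design of Zn} yields $\E Z_{n,4}^2 = \bm O(n^{-\mu})$, and strong efficiency requires this to be $\bm o\bigl(\P^2(A_n)\bigr)$. Your first new lemma gives $\P(A_n)$ of the same order as $\bigl(n\nu[n,\infty)\bigr)\bigl(n\nu(-\infty,-n]\bigr) \in \RV_{-(\alpha+\alpha'-2)}(n)$, so $\P^2(A_n)\in\RV_{-2(\alpha+\alpha'-2)}(n)$, and the exponent condition actually needed is $\mu > 2(\alpha+\alpha'-2)$ — mirroring $\mu>2l^*(\alpha-1)$ in Proposition~\ref{proposition: design of Zn}. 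The hypothesis $\mu>\alpha+\alpha'-2$ as stated (and as you inherit it into your plan without comment) is missing a factor of $2$; with $\mu$ between $\alpha+\alpha'-2$ and $2(\alpha+\alpha'-2)$ the $Z_{n,4}$ term would only be $\bm o\bigl(\P(A_n)\bigr)$, not $\bm o\bigl(\P^2(A_n)\bigr)$, and the claimed bounded relative error fails. Everything else in your outline goes through once this exponent is corrected.
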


The proof is almost identical to that of Proposition~\ref{proposition: design of Zn}.
In particular, the proof requires that
$$
    \P(A_n) = \bm{O}\big(n\nu[n,\infty)\cdot n \nu(-\infty,-n]\big)
$$
and that, for any $\beta > 0$, it holds for all $\gamma$ small enough that
$$
    \P(A^\Delta_n \setminus B^\gamma_n)  = \bm{o}(n^\beta)
$$
where $A^\Delta_n = \{\bar X_n \in A^\Delta\}$.
These can be obtained directly using sample path large deviations for heavy-tailed L\'evy processes in Result~\ref{result: LD of Levy, two-sided}.
The Proposition~\ref{proposition: design of Zn, barrier option pricing} is then established by repeating the arguments in 
Proposition~\ref{proposition: design of Zn} using Result~\ref{resultDebias} for randomized debiasing technique.

\subsection{Construction of $\hat Y^m_{n;c}$}

Next, we describe the construction of $\hat Y^m_{n;c}$ that can satisfy the conditions in Proposition~\ref{proposition: design of Zn, barrier option pricing}.
Specifically, we consider the case where ARA is involved. 
Let
$$
    \Xi_{n;c}(t) \delequal \Xi_n(t) + ct.
$$
Under both $\P$ and $\Q_n$,  $\Xi_{n;c}(t)$ admits the law of a L\'evy process with generating triplet  $(c_X + c,\sigma,\nu|_{ (-n\gamma,n\gamma) })$.
This leads to the L\'evy-Ito decomposition
\begin{align*}
    \Xi_{n,c}(t) & \distequal 
    (c_X+c)t + \sigma B(t) + 
    \underbrace{\sum_{s \leq t}\Delta X(s) \mathbf{I}\Big(\Delta X(s) \in (-n\gamma,-1]\cup [1,n\gamma)\Big)}_{ \delequal J_{n,-1}(t)}
    \\
    &
    + \sum_{m \geq 0}\Bigg[
    \underbrace{\sum_{s \leq t}\Delta X(s) \mathbf{I}\Big(|\Delta X(s)| \in[\kappa_{n,m},\kappa_{n,m-1})\Big)
    -
    t \cdot \nu\Big( (-\kappa_{n,m-1},-\kappa_{n,m}] \cup [\kappa_{n,m},\kappa_{n,m-1})\Big)
    }_{ \delequal J_{n,m}(t) }
    \Bigg]
\end{align*}
with $\kappa_{n,m}$ defined in \eqref{def: kappa n m}.
Besides, let $\bar \sigma^2(\cdot)$ be defined as in \eqref{def bar sigma}.
For each $n \geq 1$ and $m \geq 0$, consider the approximation 
\begin{align*}
    {\Breve{\Xi}^{m}_{n;c}(t)} & \delequal (c_X+c)t + \sigma B(t) + \sum_{q = -1}^{m}J_{n,q}(t)
        +
        \sum_{q \geq m + 1}\sqrt{\bar \sigma^2(\kappa_{n,q-1}) - \bar \sigma^2(\kappa_{n,q})} \cdot W^{q}(t)
\end{align*}
where $(W^m)_{m \geq 1}$ is a sequence of iid copies of standard Brownian motions independent of everything else.

Next, we discuss how to apply SBA and construct approximators $\hat Y^m_{n;c}$'s in \eqref{def: Z_n, option pricing}.
Let $\zeta_k(t) = \sum_{i = 1}^k z_i \mathbf{I}_{[u_i,n]}(t)$ be a piece-wise step function with $k$ jumps over $(0,n]$,
where $0 < u_1 < u_2 < \ldots < u_k \leq n$,
and $z_i \neq 0$ for each $i \in [k]$.
Recall that the jump times in $\zeta_k$ leads to a partition of $[0,n]$ of $(I_i)_{i \in [k+1]}$ defined in \eqref{def, partition of time line, I_i}.
For any $I_i$, let the sequence $l^{(i)}_j$'s be defined as in \eqref{defStickLength1}--\eqref{defStickLength2}.
Conditioning on $(l^{(i)}_j)_{j \geq 1}$, one can then sample ${\xi^{(i),m}_{j;c},\xi^{(i)}_{j;c}}$ using
\begin{align}
    \big(\xi^{(i)}_{j;c},\xi^{(i),0}_{j;c},\xi^{(i),1}_{j;c},\xi^{(i),2}_{j;c},\ldots) \distequal \Big(\Xi_{n;c}(l^{(i)}_{j}),\ \Breve{\Xi}^0_{n;c}(l^{(i)}_{j}),\ \Breve{\Xi}^1_{n;c}(l^{(i)}_{j}),\ \Breve{\Xi}^2_{n;c}(l^{(i)}_{j}),\ldots\Big).
    \nonumber
\end{align}
The coupling in \eqref{prelim: SBA, coupling} then implies
\begin{align*}
    & \Big( 
         \Xi_{n;c}(u_i) - \Xi_{n;c}(u_{i-1}),\ 
         \sup_{t \in I_i}\Xi_{n;c}(t) - \Xi_{n;c}(u_{i-1}),\ 
         \breve \Xi^{0}_{n;c}(u_i) - \breve \Xi^{0}_{n;c}(u_{i-1}),\ 
         \sup_{t \in I_i}\breve \Xi^{0}_{n;c}(t) - \breve \Xi^{0}_{n;c}(u_{i-1}),
         \\
         &\qquad\qquad\qquad\qquad \qquad\qquad\qquad\qquad 
         \breve \Xi^{1}_{n;c}(u_i) - \breve \Xi^{1}_{n;c}(u_{i-1}),\ 
         \sup_{t \in I_i}\breve \Xi^{1}_{n;c}(t) - \breve \Xi^{1}_{n;c}(u_{i-1}),
    \ldots
    \Big)
    \\
    &\qquad \distequal 
    \Big(
    \sum_{j \geq 1}\xi^{(i)}_{j;c},\ \sum_{j \geq 1}(\xi^{(i)}_{j;c})^+,\
    \sum_{j \geq 1}\xi^{(i),0}_{j;c},\ \sum_{j \geq 1}(\xi^{(i),0}_{j;c})^+,\
    \sum_{j \geq 1}\xi^{(i),1}_{j;c},\ \sum_{j \geq 1}(\xi^{(i),1}_{j;c})^+,\ldots  
    \Big).
\end{align*}
Now, we define
\begin{align}
    {\hat M^{(i),m}_{n;c}(\zeta_k)} = \sum_{j = 1}^{m + \ceil{\log_2(n^d)}  }( \xi^{(i),m}_{j;c})^+
    \nonumber
\end{align}
as an approximation to
$
{M_{n;c}^{(i),*}(\zeta_k)} = \sup_{ t \in I_i}\Xi_{n;c}(t) - \Xi_{n;c}(u_{i-1}) = \sum_{j \geq 1}(\xi^{(i)}_{j;c})^+.
$
Now, set 
\begin{align}
    {\hat Y^m_{n;c}(\zeta_k)} & = 
    \bigg[
        \max_{i \in [k+1]}\mathbf{I}\Big( \sum_{q = 1}^{i-1}\sum_{j \geq 0}\xi^{(q),m}_{j;c} + \sum_{q = 1}^{i-1}z_q + \hat M^{(i),m}_{n;c}(\zeta_k) \geq na \Big)
    \bigg]
    \cdot 
    \mathbf{I}\Big(
    \sum_{q = 1}^{k+1}\sum_{j \geq 0}\xi^{(q),m}_{j;c} + \sum_{q = 1}^{k}z_q - cn \leq -nb
    \Big).
    \nonumber
\end{align}
In \eqref{def: Z_n, option pricing}, we plug in $\hat Y^m_{n;c} = \hat Y^m_{n;c}(J_n)$.

The proof of the strong efficiency is almost identical to that of Theorem~\ref{theorem: strong efficiency}.
The only major difference is that in Lemma~\ref{lemma: algo, bound term 4}, we apply Assumption~\ref{assumption: holder continuity strengthened on X < z t, alternative } instead of Assumption~\ref{assumption: holder continuity strengthened on X < z t}.

\ifshowreminders
\newpage
\newgeometry{left=1cm,right=1cm,top=0.5cm,bottom=1.5cm}
\footnotesize
\section*{\linkdest{location of reminders}Some Reminders}
    
\fi

\ifshownotationindex
\newpage
\section*{\linkdest{location of notation index}Notation Index}
\begin{itemize}[leftmargin=*]

    
\item
    \notationdef{def-strong-efficiency}{Strong efficiency}:
     $(L_n)_{n \geq 1}$ are {unbiased and strongly efficient} estimators of $(A_n)_{n \geq 1}$ if
     $
     \E L_n = \P(A_n) \ \forall n \geq 1,\
    \E L^2_n = \bm{O}\big(\P^2(A_n)\big)
     $
     as $n \to \infty$

\item 
    \notationidx{set-for-integers-below-n}{[k]}: $[k] = \{1,2,\cdots,k\}$ for any $k \in \mathbb{Z}^+$.

\item 
    $\notationidx{wedge-min-operator}{x \wedge y}:
    {x \wedge y}\delequal \min\{x,y\}$

\item 
    $\notationdef{vee-max-operator}{x\vee y}:
    {x\vee y}\delequal \max\{x,y\}$

\item 
    \notationidx{floor-operator}{$\floor{x}$}:
    $\floor{x}\delequal \max\{n \in \mathbb{Z}:\ n \leq x\}$.

\item 
    \notationidx{ceil-operator}{$\ceil{x}$}:
    $
    {\ceil{x}} \delequal  \min\{n \in \mathbb{Z}:\ n \geq x\}
    $

\item
    $\notationidx{notation-density-norm}{\norm{ f }_\infty}:
    {\norm{ f }_\infty} = \sup_{x \in \R}|f(x)|$ for the densify $f$ of some random variable $X$.

\item 
    $\notationidx{measure-restricted-on-A}{\mu|_A( \cdot )}:
    {\mu|_A( \cdot )}\delequal \mu( A \cap \cdot )$

\item
    $\notationidx{notation-def-measure-majorization}{(\mu_1 - \mu_2)|_A \geq 0}$:
    We say that Borel measure $\mu_1$ majorizes Borel measure $\mu_2$ when restricted on $A$ (denoted as ${(\mu_1 - \mu_2)|_A \geq 0}$)
    if $\mu(B \cap A) = \mu_1(B \cap A) - \mu_2(B \cap A) \geq 0$ for any Borel set $B \subset \R$.

\linkdest{location, notation index A}

\item
    $\notationidx{def-parameter-a-in-set-A}{a}$: parameter $a > 0$ in set $A$ defined in \eqref{def set A}

\item
    $\notationidx{def-target-set-A}{A}:
    A\delequal \{\xi \in \mathbb{D}: \sup_{t \in [0,1]}\xi(t)\geq a; \sup_{t \in (0,1] }\xi(t) - \xi(t-) < b \}$

\item 
    $\notationidx{def-rare-event-A-n}{A_n}:
    {A_n}\delequal \{\bar X_n \in A\}$

\item 
    $\notationidx{notation-set-A-Delta}{A^\Delta}:
    {A^\Delta} = \big\{\xi \in \mathbb{D}: \sup_{t \in [0,1]}\xi(t)\geq a - \Delta \big\}$

\item 
    $\notationidx{notation-heavy-tailed-index-alpha}{\alpha,\alpha^\prime}:
    {\alpha,\alpha^\prime}> 1$, heavy-tailed indices in Assumption \ref{assumption: heavy tailed process}

\linkdest{location, notation index B}

\item
    $\notationidx{def-parameter-b-in-set-A}{b}$: parameter $b > 0$ in set $A$ defined in \eqref{def set A}

\item
    $\notationidx{notation-set-B-gamma}{B^\gamma}:
    {B^\gamma}\delequal \{ \xi \in \mathbb{D}: \#\{ t \in [0,1]: \xi(t) - \xi(t-) \geq \gamma \} \geq l^* \}.$

\item 
    $\notationidx{notaiton-set-B-gamma-n}{B^\gamma_n}:
    {B^\gamma_n}\delequal \{\bar{X}_n \in B^\gamma\}$

\item
     $\notationidx{def-beta-blumenthal-getoor-index}{\beta}:
     \beta \delequal \inf\{p > 0: \int_{(-1,1)}|x|^p\nu(dx) < \infty\}$,
     Blumenthal-Getoor index.

\linkdest{location, notation index C}

\item 
    $\notationidx{constant-C-A2}{C}$: constant $C > 0$ in Assumption \ref{assumption: holder continuity strengthened on X < z t}

\item
    $
    \notationidx{measure-C-l-beta}{\mathbf C_{\beta}^l(\cdot)}:
    {\mathbf C_{\beta}^l(\cdot)}\delequal \E\Bigg[ \nu_\beta^l\big\{ y \in (0,\infty)^l:\ \sum_{j = 1}^l y_j \mathbf{I}_{[U_j,1]}\in\cdot\big\} \Bigg]
    $
    where $(U_j)_{j \geq 1}$ is an iid sequence of $\text{Unif}(0,1)$

\item 
    $\notationidx{measure-C-j,k}{\mathbf C_{j,k}(\cdot)}
    \delequal 
    \E\Bigg[
    \nu^j_\alpha \times \nu^k_{\alpha^\prime}
    \bigg\{
    (x,y) \in (0,\infty)^j \times (0,\infty)^k:\ 
    \sum_{l = 1}^j x_l \mathbf{I}_{ [U_l,1] } - \sum_{m = 1}^k y_m \mathbf{I}_{[V_m,1]} \in \cdot  
    \bigg\}
    \Bigg]$
    where $U_l,V_m$ are two independent sequences of iid Unif$(0,1)$ RVs.

\linkdest{location, notation index D}

\item 
    $\notationidx{algo-param-d}{d}$:
    Parameter of the algorithm $d > 0$, determining the $\ceil{\log_2(n^d)}$ term in ${\hat M^{(i),m}_n(\zeta)}$ defined in \eqref{def hat M i m}

\item 
    $\notationidx{cadlag-space-D-01}{\mathbb{D}}:
    {\mathbb{D}}= \mathbb{D}_{[0,1],\R}$, the space of all real-valued càdlàg functions with domain $[0,1]$

\item 
    $\notationidx{set-D-l}{\mathbb{D}_l}$:
     subset of $\mathbb D$ containing all the non-decreasing step functions that has $l$ jumps and vanishes at the origin, with convention $\D_0 =\{\bm 0\}$ where $\bm{0}(t) \equiv 0$ is the zero function.

\item 
    $\notationidx{set-D-<-l}{\mathbb D_{<l}}:
    {\mathbb D_{<l}} = \cup_{j = 0,1,\cdots,l-1}\mathbb{D}_l$

\item
    $\notationidx{def-set-D-j,k}{\mathbb D_{j,k}}$:
    the set containing all step functions in $\mathbb D$ vanishing at the origin that has exactly $j$ upward jumps and $k$ downward jumps, with convention $\mathbb D_{0,0} = \{\bm 0\}$

\item 
    $\notationidx{def-set-D-<-j,k}{\mathbb D_{<j,k}}:
    {\mathbb D_{<j,k}} \delequal \bigcup_{ (l,m) \in \mathbb I_{<j,k} }\mathbb D_{l,m}$

\item
    $\notationidx{skorokhod-j1-metric-d}{\bm{d}(x,y)}:
    {\bm{d}(x,y)} \delequal \inf_{\lambda \in \Lambda} \sup_{t \in [0,1]}|\lambda(t) - t| \vee |x(\lambda(t)) - y(t)|$ 
    with $\Lambda$ being the set of all increasing homeomorphisms from $[0,1]$ to itself.

\linkdest{location, notation index E}

\item
    $\notationidx{notation-set-E-in-estimator}{E}:
    E \delequal \{ \xi \in \mathbb{D}:\ \sup_{t \in (0,1]}\xi(t) - \xi(t-) < b  \}$

\item 
    $\notationidx{notation-event-bar-En}{E_n}:
    E_n \delequal \{\bar J_n \in E\}$

\linkdest{location, notation index F}

\linkdest{location, notation index G}

\item 
    $\notationidx{algo-param-gamma}{\gamma}$:
    Parameter of the algorithm $\gamma \in (0,b)$ characterizing the set $B^\gamma$ defined in \eqref{def set B gamma}

\linkdest{location, notation index H}

\linkdest{location, notation index I}

\item 
    $\notationidx{index-set-I-<-j,k}{\mathbb I_{<j,k}}:
    {\mathbb I_{<j,k}} \delequal \big\{ (l,m) \in \mathbb N^2 \symbol{92} \{(j,k)\}:\ l(\alpha-1) + m(\alpha^\prime - 1) \leq j(\alpha - 1) + k(\alpha^\prime - 1) \big\}.$

\item 
    $\notationidx{notation-partition-I-i}{I_i}$: Given $\zeta_k = \sum_{i = 1}^k z_i \mathbf{I}_{[u_i,n]}$, ${I_i} = [u_{i-1},u_i)\ \forall i \in [k]$ and $I_{k+1} = [u_k,1].$

\linkdest{location, notation index J}

\item
    $\notationidx{notation-process-Jn}{J_n(t)}:
    {J_n(t)}\delequal \sum_{s \leq t}\Delta X(t) \mathbf{I}\big( \Delta X(t) \geq n\gamma \big)$

\item 
    $\notationidx{notation-process-bar-Jn}{\bar J_n}:
    \bar J_n(t) = \frac{1}{n}J_n(nt), \bar J_n = \{\bar J_n(t):\ t \in [0,1]\}$

 \linkdest{location, notation index K}

 \item 
    $\notationidx{algo-param-kappa}{\kappa}$:
    Parameter of the algorithm, $\kappa \in (0,1)$ decides the truncation level of $\kappa_{n,m}$ in \eqref{def: kappa n m}

\item
    $\notationidx{notation-kappa-n,m}{\kappa_{n,m}}:
    {\kappa_{n,m}} = \frac{\kappa^m}{n^r}$;
    the truncation level for ARA in $\breve \Xi^m_n$ in \eqref{def breve Xi n m, ARA}

\linkdest{location, notation index L}
\item
    $\notationidx{notation-l*-jump-number}{l^*}:
    {l^*}\delequal \ceil{a/b}$

\item 
    $\notationidx{notation-stick-length-l-i-j}{l^{(i)}_j}$:
    $
    l^{(i)}_1 = V^{(i)}_1(u_{i+1} - u_i),\ 
    {l^{(i)}_j} = V^{(i)}_j(u_{i+1} - u_i - l^{(i)}_1 - l^{(i)}_2 -\cdots - l^{(i)}_{j-1})
    $
    where $V^{(i)}_j$ is an iid sequence of Unif$(0,1)$.

\item 
    $\notationidx{notation-stick-length-l-j-l}{l_j(l)}$:
    Given $l \geq 0$, sample
    $l_1(l) = V_1 \cdot l$ and ${l_j(l)} = V_j \cdot (l - l_1 - l_2 - \cdots - l_{j-1})\ \forall j \geq 2$
    where $V_j$ is an iid sequence of Unif$(0,1)$ RVs.

\item
    $\notationidx{notation-IS-esitmator-Ln}{L_n} \delequal Z_n \frac{d\P}{d\Q_n} 
    =
    \frac{Z_n}{ w + \frac{1 - w}{ \P(B^\gamma_n) }\mathbf{I}_{ B^\gamma_n } }$

\item 
    $\notationidx{law-of-X}{\mathscr{L}(X)}$: Law of the random variable $X$

\item 
      $\notationidx{law-of-X-on-A}{\mathscr{L}(X|A)}$: Law of random variable $X$ when conditioning on $\{X \in A\}$.

\item   
    $\notationidx{constant-lambda-A2}{\lambda}$: constant $\lambda > 0$ in Assumption \ref{assumption: holder continuity strengthened on X < z t}

\linkdest{location, notation index M}
    
\item 
    $\notationidx{notaiton-running-supremum-Mt}{M(t)}:
    {M(t)}\delequal \sup_{s \leq t}X(s)$

\item
    $\notationidx{notation-M-n-i}{M_{n}^{(i),*}(\zeta_k)}:
    {M_{n}^{(i),*}(\zeta_k)} \delequal \sup_{ t \in I_i}\Xi_n(t) - \Xi_n(u_{i-1})$
    given $\zeta_k(t) = \sum_{i = 1}^k z_i \mathbf{I}_{[u_i,1]}(t)$

\item 
    $\notationidx{notation-hat-M-n-i-m}{\hat M^{(i),m}_n(\zeta_k)}$:
    Approximators to ${M_{n}^{(i),*}(\zeta_k)}$.
    In Algorithm~\ref{algoISnoARA} (i.e., without ARA), we set
    ${\hat M^{(i),m}_n(\zeta_k)}= \sum_{j = 1}^{m + \ceil{\log_2(n^d)}  }( \xi^{(i)}_j)^+$.
    In Algorithm~\ref{algoIS} (i.e., with ARA), we set
    ${\hat M^{(i),m}_n(\zeta_k)}= \sum_{j = 1}^{m + \ceil{\log_2(n^d)}  }( \xi^{(i),m}_j)^+$.

\linkdest{location, notation index N}

\item 
    $\notationidx{def-measure-nu-beta}{\nu_\beta}$: 
    the measure concentrated on $(0,\infty)$ with $\nu_\beta(x,\infty) = x^{-\beta}$

\item 
    $\notationidx{def-nu-beta-l-fold}{\nu^l_\beta}$: 
    the $l-$fold product measure of $\nu_\beta$ restricted on $\{ y \in (0,\infty)^l:\ y_1 \geq y_2 \geq \cdots \geq y_l \}$





    


\linkdest{location, notation index Q}

\item
    $\notationidx{notation-ISDM-measure-Qn}{\mathbf{Q}_n(\cdot)}:
    {\mathbf{Q}_n(\cdot)}\delequal w\P(\cdot) + (1 - w) \P(\ \cdot\ | B^\gamma_n).$

\linkdest{location, notation index R}

\item 
    $\notationidx{algo-param-r}{r}$:
    Parameter of the algorithm, $r > 0$ decides the truncation level of $\kappa_{n,m}$ in \eqref{def: kappa n m}

\item 
    $\notationidx{algo-param-rho}{\rho}$:
    Parameter of the algorithm, $\rho \in (0,1)$ is the rate of decay in the law of $\tau \sim \text{Geom}(\rho)$

\item 
    \notationidx{notation-RV-LDP}{$\RV_\beta$}:
    $\phi \in \RV_\beta$ (as $x \rightarrow \infty$) if $\lim_{x \rightarrow \infty}\phi(tx)/\phi(x) = t^\beta$ for any $t>0$;
    $\phi \in \RV_\beta(\eta)$ (as $\eta \downarrow 0$) if $\lim_{\eta \downarrow 0}\phi(t\eta)/\phi(\eta) = t^\beta$ for any $t>0$

    
\linkdest{location, notation index S}

\item 
    $\notationidx{notation-bar-sigma-function}{\bar\sigma^2(\cdot)}:
    {\bar\sigma^2}(c) \delequal \int_{(-c,c)} x^2 \nu(dx)$

\linkdest{location, notation index T}

\item 
    $\notationidx{notation-truncation-time-tau}{\tau}$: $\text{Geom}(\rho)$ that is independent of everything else; the truncation index in $Z_n$

\linkdest{location, notation index W}

\item 
    $\notationidx{algo-param-w}{w}$:
    Parameter of the algorithm $w \in (0,1)$, determining the relative weights of the defensive mixture in the importance sampling distribution $\Q_n$ in \eqref{def: IS distribution Qn}

\linkdest{location, notation index X}

\item   
    $\notationidx{Levy-process-X}{X(t)}$: L\'evy process with generating triplet $(c_X,\sigma,\nu)$
    where the L\'evy measure $\nu$ satisfies Assumption \ref{assumption: heavy tailed process}

\item
    $\notationidx{process-X-<z}{X^{<z}(t)}$: the L\'evy process with generating triplet $(c_X,\sigma,\nu|_{(-\infty,z)})$.
    That is, $X^{<z}$ is the modulated version of $X$ where all the upward jumps with size larger than $z$ are removed.

\item
    $\notationidx{notation-X-jump-bounded-by-c}{X^{(-c,c)}(t)}$: Given any $c \in (0,1)$, $X^{(-c,c)}(t)$ is the L\'evy process with generating triplet $(0,0,\nu|_{(-c,c)})$. That is, it is jump martingale with all jumps bounded by $c$.

\item 
    $\notationidx{scaled-process-bar-X-n}{\bar X_n}:
    \bar X_n = \{\bar X_n(t):\ t \in [0,1]\},\ {\bar X_n(t)} = \frac{1}{n}X(nt)
    $

\item
    $\notationidx{notation-increment-on-sticks-xi-i-j-m}{\xi^{(i),m}_j,\xi^{(i)}_j}$:
    Conditioning on the values of $l^{(i)}_j$, sample
    $\big(\xi^{(i)}_j,\xi^{(i),1}_j,\xi^{(i),2}_j,\xi^{(i),3}_j,\cdots) \distequal \Big(\Xi_n(l^{(i)}_j),\Breve{\Xi}^1_n(l^{(i)}_j),\Breve{\Xi}^2_n(l^{(i)}_j),\Breve{\Xi}^3_n(l^{(i)}_j),\cdots\Big). $

\item
    $\notationidx{notation-increments-on-sticks-xi-n-j-l}{\xi^{[n]}_j(l),\xi^{[n];m}_j(l)}$:
    Conditioning on the values of $l_j(l)$, sample 
    $
    \big(\xi^{[n]}_j(l),\xi^{[n],1}_j(l),\xi^{[n],2}_j(l),\xi^{[n],3}_j(l),\cdots \big)
    =
    \Big( \Xi_n\big(l_j(l)\big), \breve \Xi^1_n\big(l_j(l)\big),\breve \Xi^2_n\big(l_j(l)\big),\breve \Xi^3_n\big(l_j(l)\big),\cdots \Big)
    $

\item
    $\notationidx{notation-process-Xi-n}{\Xi_n(t)}:
    {\Xi_n(t)}\delequal X(t) - J_n(t) = X(t) - \sum_{s \leq t}\Delta X(t) \mathbf{I}\big( \Delta X(t) \geq n\gamma \big).$
    That is, it is a L\'evy process with the law of $X^{<n\gamma}$

\item 
    $\notationidx{notation-process-bar-Xi-n}{\bar \Xi_n}:
    {\bar \Xi_n(t)} = \frac{1}{n}\Xi_n(nt), \bar \Xi_n = \{\bar \Xi_n(t):\ t \in [0,1]\}$

\item
    $\notationidx{process-breve-Xi-m-n}{\Breve{\Xi}^{m}_n(t)}$:
    A modulated version of L\'evy process $\Xi_n(t)$ where the jump martingale of jumps bounded by $\kappa_{n,m}$ is substituted by a standard Brownian motion with the same variance; see \eqref{def breve Xi n m, ARA} for the definition.

\linkdest{location, notation index Y}

\item
    $\notationidx{notation-Y-*-n}{Y^*_n}:
    {Y^*_n}\delequal \mathbf{I}\big( M(n) \geq na \big).$
    Note that $Y^*_n(J_n) = \mathbf{I}\big( M(n) \geq na \big)$ for $Y^*_n(\cdot)$ defined right below.

\item 
    $\notationidx{notation-Y-*-n-zeta}{Y^*_n(\zeta_k)}:
    {Y^*_n(\zeta_k)}
    \delequal
    \max_{i \in [k+1]}\mathbf{I}\Big( \Xi_n(u_{i-1}) + \zeta(u_{i-1}) + M^{(i),*}_n \geq na \Big)
    =
    \max_{i \in [k+1]}\mathbf{I}\Big( \sum_{q = 1}^{i-1}\sum_{j \geq 0}\xi^{(q)}_j + \sum_{q = 1}^{i-1}z_q + M^{(i),*}_n \geq na \Big)
    $
    given 
    $\zeta_k(t) = \sum_{i = 1}^k z_i \mathbf{I}_{[u_i,1]}(t)$.

\item
    $\notationidx{notation-approximation-hat-Y-m-n}{\hat Y^m_n(\zeta_k)}:$
    Approximators to $Y^*_n(\zeta_k)$.
    In Algorithm~\ref{algoISnoARA} (i.e., without ARA), we set
    $
    {\hat Y^m_n(\zeta_k)} = \max_{i \in [k+1]}\mathbf{I}\Big( \sum_{q = 1}^{i-1}\sum_{j \geq 0}\xi^{(q)}_j + \sum_{q = 1}^{i-1}z_q + \hat M^{(i),m}_n(\zeta) \geq na \Big)$.
    In Algorithm~\ref{algoIS} (i.e., with ARA), we set
    $
    {\hat Y^m_n(\zeta_k)} = \max_{i \in [k+1]}\mathbf{I}\Big( \sum_{q = 1}^{i-1}\sum_{j \geq 0}\xi^{(q),m}_j + \sum_{q = 1}^{i-1}z_q + \hat M^{(i),m}_n(\zeta) \geq na \Big)$.

\linkdest{location, notation index Z}

\item 
    $\notationidx{constant-z0-A2}{z_0}$: constant $z_0 > 0$ in Assumption \ref{assumption: holder continuity strengthened on X < z t}

\item
    $ \notationidx{notation-estimator-Zn}{Z_n}:
    {Z_n}= \sum_{m = 0}^{ \tau}\frac{ \hat Y^m_n - \hat Y^{m-1}_n }{ \P(\tau \geq m) }\mathbf{I}_{E_n}$

\end{itemize}
\fi

\ifshowtheoremtree
\newpage
\section*{\linkdest{location of theorem tree}Theorem Tree}
\begin{thmdependence}[leftmargin=*]

\thmtreenode{-}
    {Proposition}{proposition: design of Zn}{0.8}{}
    \begin{thmdependence}
        \thmtreeref{Result}{resultDebias}
        \thmtreenode{-}
            {Lemma}{lemma: LD, events A n}{0.8}{}
            \begin{thmdependence}
                \thmtreeref{Result}{result: LD of Levy, two-sided}
            \end{thmdependence}

        \thmtreenode{-}
            {Lemma}{lemma: LD, event A Delta}{0.8}{}
            \begin{thmdependence}
                \thmtreeref{Result}{result: LD of Levy, two-sided}
            \end{thmdependence}
    \end{thmdependence}

\bigskip 

\thmtreenode{-}
    {Theorem}{theorem: strong efficiency}{0.8}{}
    \begin{thmdependence}
        \thmtreeref{Proposition}{proposition: design of Zn}
        \thmtreeref{Proposition}{proposition, intermediate 1, strong efficiency ARA}
        \thmtreeref{Proposition}{proposition, intermediate 2, strong efficiency ARA}
    \end{thmdependence}

\thmtreenode{-}
    {Theorem}{theorem: strong efficiency without ARA}{0.8}{}
    \begin{thmdependence}
        \thmtreeref{Theorem}{theorem: strong efficiency}
    \end{thmdependence}

\bigskip 

\thmtreenode{-}
    {Proposition}{proposition, intermediate 1, strong efficiency ARA}{0.8}{}
    \begin{thmdependence}
        \thmtreenode{-}
            {Lemma}{lemma: algo, bound term 1}{0.8}{}
            \begin{thmdependence}
                \thmtreeref{Result}{result: bound bar sigma}
            \end{thmdependence}

        \thmtreenode{-}
            {Lemma}{lemma: algo, bound term 2}{0.8}{}
            \begin{thmdependence}
                \thmtreeref{Result}{result: bound bar sigma}
            \end{thmdependence}

        \thmtreenode{-}
            {Lemma}{lemma: algo, bound term 3}{0.8}{}
            \begin{thmdependence}
                \thmtreeref{Result}{result: concave majorant of Levy}

                \thmtreenode{-}
                    {Lemma}{lemma: algo, bound supremum of truncated X}{0.8}{}
            \end{thmdependence}
    \end{thmdependence}

\bigskip

\thmtreenode{-}
    {Proposition}{proposition, intermediate 2, strong efficiency ARA}{0.8}{}
    \begin{thmdependence}
        \thmtreenode{-}
            {Lemma}{lemma: algo, bound term 4}{0.8}{}
            \begin{thmdependence}
                \thmtreeref{Lemma}{lemma: algo, bound supremum of truncated X}
            \end{thmdependence}
    \end{thmdependence}

\bigskip 
\thmtreenode{-}
    {Theorem}{ CorollaryRVlevyMeasureAtOrigin }{0.8}{}
    \begin{thmdependence}
        \thmtreenode{-}
            {Proposition}{proposition: lipschitz cont, 1}{0.8}{}
    \end{thmdependence}

\thmtreenode{-}
    {Theorem}{ CorollarySemiStableLevyMeasureAtOrigin }{0.8}{}
    \begin{thmdependence}
        \thmtreenode{-}
            {Proposition}{proposition: lipschitz cont, 2}{0.8}{}
    \end{thmdependence}

\end{thmdependence}
\fi

\ifshowtheoremlist
\newpage
\linkdest{location of theorem list}
\listoftheorems
\fi

\ifshowequationlist
\newpage
\linkdest{location of equation number list}
\section*{Numbered Equations}
\fi

\ifshownavigationpage
\newpage
\normalsize
\noindent
\hyperlink{location of notation index}{Notation Index}
\begin{itemize}
\item[] 
    \hyperlink{location, notation index A}{A},
    \hyperlink{location, notation index B}{B},
    \hyperlink{location, notation index C}{C},
    \hyperlink{location, notation index D}{D},
    \hyperlink{location, notation index E}{E},
    \hyperlink{location, notation index F}{F},
    \hyperlink{location, notation index G}{G},
    \hyperlink{location, notation index H}{H},
    \hyperlink{location, notation index I}{I},
    \hyperlink{location, notation index J}{J},
    \hyperlink{location, notation index K}{K},
    \hyperlink{location, notation index L}{L},
    \hyperlink{location, notation index M}{M},
    \hyperlink{location, notation index N}{N},
    \hyperlink{location, notation index O}{O},
    \hyperlink{location, notation index P}{P},
    \hyperlink{location, notation index Q}{Q},
    \hyperlink{location, notation index R}{R},
    \hyperlink{location, notation index S}{S},
    \hyperlink{location, notation index T}{T},
    \hyperlink{location, notation index U}{U},
    \hyperlink{location, notation index V}{V},
    \hyperlink{location, notation index W}{W},
    \hyperlink{location, notation index X}{X},
    \hyperlink{location, notation index Y}{Y},
    \hyperlink{location, notation index Z}{Z}
\end{itemize}

\tableofcontents

\section*{Navigation Links}
\fi

\end{document}